\theoremstyle{plain}
\newtheorem{theorem*}{Theorem}
\newtheorem*{lemma*}{Lemma}
\newtheorem{corollary*}{Corollary}
\newtheorem*{proposition*}{Proposition}
\newtheorem{conjecture*}{Conjecture}
\newtheorem{Itheorem}{Theorem}
\newtheorem{Iprop}[Itheorem]{Proposition}
\newtheorem*{prop*}{Proposition}
\newtheorem{Icor}[Itheorem]{Corollary}
\newtheorem{theorem}{Theorem}[section]
\newtheorem{lemma}[theorem]{Lemma}
\newtheorem{corollary}[theorem]{Corollary}
\newtheorem{proposition}[theorem]{Proposition}
\newtheorem*{thm*}{Theorem}
\theoremstyle{remark}
\newtheorem*{definition}{Definition}
\newtheorem{example}{Example}[section]
\newtheorem*{example*}{Example}
\newtheorem*{claim}{Claim}
\theoremstyle{definition}
\newtheorem{remark}[theorem]{Remark}
\newtheorem{defn}[theorem]{Definition}
\newtheorem*{defn*}{Definition}
\newtheorem*{conv}{Conventions}
 \def\Q{\Bbb{Q}} \def\F{\Bbb{F}} \def\Z{\Bbb{Z}} \def\R{\Bbb{R}} \def\T{\Bbb{T}}\def\C{\Bbb{C}}
   \def\ll{\langle} \def\rr{\rangle}
 \def\a{\alpha} \def\g{\gamma} \def\tor{\mbox{Tor}} \def\bp{\begin{pmatrix}}
\def\sm{\setminus} \def\ep{\end{pmatrix}} \def\bn{\begin{enumerate}} \def\Hom{\mbox{Hom}}
   \def\en{\end{enumerate}}
\def\ba{\begin{array}} \def\ea{\end{array}}  
\def\intt{\mbox{Int}} \def\S{\Sigma} \def\s{\sigma} \def\a{\alpha} \def\b{\beta} \def\ti{\tilde}
\def\id{\mbox{id}}  \def\im{\mbox{Im}} \def\sign{\mbox{sign}}
\def\ker{\mbox{Ker}}\def\be{\begin{equation}} \def\ee{\end{equation}} 
  \def\Ext{\mbox{Ext}} 
   \def\eps{\epsilon}
 \def\dim{\mbox{dim}}
\def\zt{\Z[t^{\pm 1}]}    
\def\w{\omega}   
    \def\fr12{\frac{1}{2}} \def\z12{\Z[\fr12]}
\def\eul{\mbox{Eul}}
\def\spinc{\mbox{Spin}^c}
\def\max{\mbox{max}}
\def\min{\mbox{min}}
\def\lift{\mbox{Lift}}
\def\v{\varphi}
\def\S{\Sigma}
\def\s{\mathfrak{s}}
\def\t{\mathfrak{t}}
\def\bolda{\boldsymbol{\alpha}}
\def\boldb{\boldsymbol{\beta}}
\def\boldx{{\bf x}}
\def\boldy{{\bf y}}
\def\bbx{\mathfrak{l}(\boldx)}
\def\frakl{\mathfrak{l}}
\def\ts{\thinspace}
\def\HH{{G}}
\def\bbar{\overline{\beta}}
\def\E{\mathcal{E}}
\def\A{\mathcal{A}}
\def\e{\mathfrak{e}}
\begin{document}

\title{The decategorification of sutured Floer homology}

\author{Stefan Friedl}
\address{Mathematisches Institut\\ Universit\"at zu K\"oln\\ Weyertal 86-90\\ 50931 K\"oln\\ Germany}
\email{sfriedl@gmail.com}

\author{Andr\'as Juh\'asz}
\address{DPMMS\\ University of Cambridge\\Wilberforce Road\\ Cambridge CB3 0WA\\ UK}
\email{A.Juhasz@dpmms.cam.ac.uk}

\author{Jacob Rasmussen}
\address{DPMMS\\ University of Cambridge\\Wilberforce Road\\ Cambridge CB3 0WA\\ UK}
\email{J.Rasmussen@dpmms.cam.ac.uk}
\subjclass[2000]{57M27; 57R58}

\date{\today}
\begin{abstract}
We define a torsion invariant $T$ for every balanced sutured manifold $(M,\g),$ and show that it agrees with the Euler characteristic of sutured Floer homology $SFH.$  The invariant $T$ is easily computed using Fox calculus. With the help of $T,$ we prove that if $(M,\g)$ is complementary to a Seifert surface of an alternating knot, then $SFH(M,\g)$ is either $0$ or $\Z$ in every $\spinc$ structure. $T$ can also be used to show that a sutured manifold is not disk decomposable, and to distinguish between Seifert surfaces.

The support of $SFH$ gives rise to a norm $z$ on $H_2(M,\partial M;\R).$
The invariant $T$ gives a lower bound on the norm $z,$ which in turn is at most the sutured Thurston norm $x^s.$ For closed three-manifolds, it is well
known that Floer homology determines the Thurston norm, but we show that $z < x^s$ can happen in general. Finally, we compute $T$ for several wide classes of sutured manifolds.
\end{abstract}
\maketitle

%=====================================
\section{Introduction}

Sutured Floer homology  is an invariant of balanced sutured manifolds introduced by the second author \cite{Ju06}. It is an offshoot of the Heegaard Floer homology of Ozsv{\'a}th and Szab{\'o} \cite{OS04a}, and contains  knot Floer homology \cite{OS04c,Ra03} as a special case. The Euler characteristics of these homologies are torsion invariants of three--manifolds. For example, the Euler characteristic of the Heegaard Floer homology \(HF^+\) is given by Turaev's refined torsion \cite{Tu97,Tu02} and the Euler characteristic of knot Floer homology is given by the Alexander polynomial. In this paper, we investigate the torsion invariant which is the Euler characteristic of sutured Floer homology.

To make a more precise statement, we recall some basic facts about sutured Floer homology. Given a balanced sutured manifold \((M, \g)\) and a relative \(\spinc\) structure \(\s \in \spinc(M, \g)\), the sutured Floer homology is a finitely generated abelian group \(SFH(M, \g,\s)\). {\it A priori}, \(SFH(M, \g,\s)\) is relatively \(\Z/2\) graded. To fix an absolute \(\Z/2\) grading, we must specify a homology orientation \(\w\) of the pair \((M,R_-(\gamma))\); i.e., an orientation of the vector space $H_*(M,R_-(\g);\R).$ We denote the resulting invariant by $SFH(M,\g,\s,\w)$.

Following Turaev, we define a torsion invariant \(T_{(M, \g)}\) for {weakly} balanced sutured manifolds, which is essentially the maximal abelian torsion of the pair \((M,R_-(\gamma))\). Our construction is generally very close to Turaev's, but we use handle decompositions of sutured manifolds in place  of triangulations. This makes it easier to define a correspondence between lifts and $\spinc$ structures.
\(T_{(M,\g)}\) is a function which assigns an integer  to each \(\s \in \spinc(M, \g)\). The torsion function is well-defined up to a global factor of \(\pm 1\). Again, to fix the sign, we must specify a homology orientation \(\w\) of \((M,R_-(\gamma))\). Then we obtain a function
$T_{(M,\g,\w)} \colon \spinc(M,\g) \to \Z.$
% In our definition of $T_{(M,\g)}$ we use handle decompositions of sutured manifolds instead of triangulations, because this way it is easier to find a correspondence between lifts and $\spinc$ structures.

\begin{Itheorem} \label{maintheorem}
Let $(M,\gamma)$ be a balanced sutured manifold. Then  for
any  $\s\in \spinc(M,\g)$ and homology orientation \(\w\) of \((M,R_-(\gamma))\),  $$T_{(M,\g,\w)}(\s)=\chi(SFH(M,\g,\s,\w)).$$
\end{Itheorem}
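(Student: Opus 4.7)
The plan is to compute both sides of the claimed equality from the same combinatorial data, namely a handle decomposition of $(M,\gamma)$ compatible with a sutured Heegaard diagram, and then match them \(\spinc\)-structure by \(\spinc\)-structure.

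First I would choose a handle decomposition of $(M,\gamma)$ of the kind used to define $T_{(M,\gamma)}$ and extract from it a sutured Heegaard diagram $(\Sigma,\bolda,\boldb)$ for $(M,\gamma)$, so that the $\alpha$- and $\beta$-curves correspond to the attaching circles of the 1- and 2-handles. By the definition of $SFH$, generators of the chain complex are the intersection points $\boldx\in\tab$, each carrying a sign $\e(\boldx)=\pm 1$ coming from the homology orientation $\omega$, and each determining a relative \(\spinc\) structure $\s(\boldx)\in\spinc(M,\gamma)$. Thus
\begin{equation*}
\chi(SFH(M,\gamma,\s,\omega))=\sum_{\boldx:\;\s(\boldx)=\s}\e(\boldx).
\end{equation*}
By the Heegaard-diagram description of the map $\boldx\mapsto\s(\boldx)$ (an affine translate of an Euler structure), this signed sum can be organized into a single element of $\Z[\spinc(M,\gamma)]$.

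Second, I would recognize this element as the determinant of a square matrix over $\Z[H_1(M;\Z)]$ built from the intersection pattern of $\bolda$ and $\boldb$ on $\Sigma$: expanding the determinant along the symmetric group reproduces precisely the signed sum of generators, once one labels each intersection by its contribution to the abelianized $\pi_1$. This is the same matrix, up to reordering and an overall unit, as the one obtained by applying Fox calculus to the presentation of $\pi_1(M)$ supplied by the handle decomposition. Hence the signed generator count matches (up to sign) the maximal abelian torsion of $(M,R_-(\gamma))$, which by construction is $T_{(M,\gamma)}(\s)$ evaluated on $\spinc$ structures through the handle-decomposition lift/$\spinc$ correspondence.

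The delicate step will be matching the overall sign, i.e.\ checking that the two a priori sign conventions really agree \emph{on the nose} once the homology orientation $\omega$ is fixed. The torsion sign is pinned down by a chosen basis of $H_*(M,R_-(\gamma);\R)$, while the $SFH$ sign comes from the isomorphism between the chain complex and an exterior product of relative chain complexes on $\Sigma$ with appropriate orientations of $\T_\a$ and $\T_\b$. Both bases descend from the cellular chain complex of a Heegaard splitting associated to the handle decomposition, so the comparison reduces to tracking how the induced orientations on $\T_\a,\T_\b$ relate to the basis of $H_*(M,R_-(\gamma);\R)$ used to orient the torsion; this is essentially a linear-algebraic identity about Heegaard splittings, and should follow from the analogous sign comparison in the closed case \cite{Tu02} combined with a relative stabilization/destabilization argument to reduce to a model handle decomposition. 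This sign bookkeeping, rather than the algebraic identification of the two expressions, is the principal obstacle.

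Finally I would verify invariance of both sides under the moves relating any two such adapted handle decompositions (isotopy, handleslide, (de)stabilization) to conclude that the equality is independent of the choices, establishing the theorem for arbitrary balanced $(M,\gamma)$.
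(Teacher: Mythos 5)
Your overall strategy --- work from a handle decomposition compatible with a balanced diagram, express $\chi(SFH)$ as a signed sum over $\T_\a\cap\T_\b$, identify this with a determinant of Fox derivatives via the expansion over $S_d$, and then match signs using the homology orientation --- is exactly the strategy the paper follows (Sections~\ref{Subsection:DiagtoPres}--\ref{section:maintheorem}). So the skeleton is right.

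Where I would push back is on your assessment of where the real difficulty lies, and on one gap. You single out the overall $\pm1$ as ``the principal obstacle'' and propose to settle it by reducing to a model handle decomposition via stabilization/destabilization together with the closed-case sign comparison from Turaev. The paper does not need a reduction to a model: it instead \emph{builds} the orientation conventions so that the match is immediate. Definition~\ref{def:torussign} produces a canonical bijection between homology orientations of $(M,R_-(\g))$ and orientations of $\Lambda^d(A)\otimes\Lambda^d(B)$ directly from the based handle chain complex $C_*(M,R_-\times I;\R)$; Lemma~\ref{lem:taudet} then yields $\tau=(-1)^{b_1(M,R_-)}\det A$; and Definition~\ref{def:SFHsign} inserts the compensating $(-1)^{b_1(M,R_-)}$ into the grading of $SFH$, so that Lemma~\ref{lem:m} ($m(\boldx)=\operatorname{sign}(\sigma)\prod m(x_i)$) finishes the sign comparison in one line. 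Your stabilization route could likely be made to work, but it is heavier than what is needed, and you haven't indicated how both sides transform under a single stabilization, which is exactly the kind of sign-chasing a direct convention makes unnecessary.

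More seriously, the step you pass over with ``once one labels each intersection by its contribution to the abelianized $\pi_1$'' is where much of the technical work in the paper actually sits. The bijection between $\T_\a\cap\T_\b$ and monomials in $\det\varphi(\partial\b_j/\partial\a_i^*)$ only gives a \emph{global} identity in $\Z[H_1(M)]$ up to a unit; to conclude $\spinc$-by-$\spinc$ you must show that the monomial $A(\boldx)$ attached to a generator coincides, under $s_Z$ and $E_Z$, with the $\spinc$ structure $\s(\boldx)$. That is Lemma~\ref{spincdiff} plus the displayed lemma $A(\boldx)-A(\boldy)=\boldx-\boldy$ in Section~\ref{section:maintheorem} (a genuine lift-chasing argument in the universal abelian cover), together with the explicit construction in the ``$\spinc$ structures'' subsection that pins down a single generator's $\spinc$ structure via an Euler chain through its intersection points. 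Your plan should flag this as the place where the argument is geometric rather than formal. You also omit the preliminary reduction (via Lemma~\ref{lem:product}) to $R_-(\g)$ connected, which the paper uses to make the handle-decomposition bookkeeping tractable; and your final ``verify invariance under moves'' step is redundant once both sides have been independently shown to be invariants, which the paper already establishes (Proposition~\ref{prop:1} for the torsion, and invariance of $\chi(CF)$ for $SFH$).
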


Related invariants have also been studied by Benedetti and Petronio \cite{BP01} using a slightly different approach and terminology, and by Goda and Sakasai \cite{GS08} in the case of homology products.

It is often convenient to combine the torsion invariants of \((M,\g)\) into a single generating function, which we view as an element of the group ring \(\Z[H_1(M)]\). To do so, we fix an affine $H_1(M)$--isomorphism \(\iota:\spinc(M,\g) \to H_1(M)\), and write
$$ \tau(M,\g) = \sum_{\s \in \text{Spin}^c(M,\g)} T_{(M,\g)}(\s)  \cdot \iota(\s).  $$
%\footnote{Technically the $[\, ]$ are not necessary since we already have a homology class. \bf Changed.}
The invariant \(\tau(M,\g)\) is best thought of as a generalization of the  Alexander polynomial to sutured manifolds.
Like the classical Alexander polynomial, it is well defined up to multiplication by elements of the form \(\pm [h]\) for \(h \in H_1(M)\). Many  properties of the Alexander polynomial have  analogues for \(\tau(M,\g)\).

For example,   if \(Y\) is a three-manifold with toroidal boundary, then its Alexander polynomial \(\Delta(Y)\) is by definition an invariant of \(\pi_1(Y)\). Similarly, for $\tau(M,\g)$ the following holds. Note that by Lemma \ref{lem:product} we can always assume that both $R_+(\g)$ and $R_-(\g)$ are connected for the purpose of computing $\tau(M,\g).$

\begin{Iprop}
\label{prop:pi1}
%If \(H_1(M)\) is torsion free,
Let $(M,\g)$ be a balanced sutured manifold such that $M$ is irreducible and both $R_+(\g)$ and $R_-(\g)$ are connected. Then the invariant  \(\tau(M,\g)\) can be computed from the map $\pi_1(R_-(\gamma)) \to \pi_1(M) $ using Fox calculus.
%\footnote{[S] The statement is slightly imprecise when $R_-$ is disconnected. We can either leave it as it is, assume that $R_-$ is connected or just write `computed using Fox calculus'. [J] I'd favor either the first or the second option}
%\footnote{I strengthened the proposition.
%{\bf The stronger statement is true if you have a ``geometric" presentation of \(\pi_1(M)\) (i.e. a balanced one.) But if you just want to make a statement about groups, it seems harder.} \\
%I now removed $H_1(M)$ torsion free again. I think the proposition as it stands is fair, since all we need is a presentation of $\pi_1(M)$ which has deficiency $\chi(M)$, but that presentation
%doesn't have to have any topological meaning.}
\end{Iprop}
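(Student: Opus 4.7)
The plan is to put $M$ together from $R_-(\gamma)$ by attaching only $1$- and $2$-handles, read off a presentation of $\pi_1(M)$ in terms of $\pi_1(R_-(\gamma))$ together with the new generators, and then identify $\tau(M,\gamma)$ with the determinant of a square Fox Jacobian matrix over $\Z[H_1(M)]$, in direct analogy with the classical computation of the Alexander polynomial of a link complement.

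First I would produce a handle decomposition of $M$ relative to $R_-(\gamma)$ that contains no $0$- or $3$-handles. View $(M,\gamma)$ as a sutured cobordism from $R_-(\gamma)$ to $R_+(\gamma)$ and start with any relative handle structure coming from a Morse function respecting the side annuli $A(\gamma)$. Because $R_-(\gamma)$ is connected, each $0$-handle can be cancelled against a $1$-handle; because $R_+(\gamma)$ is connected, each $3$-handle can be cancelled against a $2$-handle. A short Euler-characteristic calculation using the balanced-sutured identity $\chi(M) = \chi(R_-(\gamma))$ then forces the numbers of $1$- and $2$-handles to be equal; call this common number $n$.

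Next I would read off the presentation
$$\pi_1(M) = \langle \pi_1(R_-(\gamma)),\, x_1,\dots,x_n \mid r_1,\dots,r_n \rangle,$$
where the $x_i$ correspond to the $1$-handles and each $r_j$ is the word traced by the attaching circle of the $j$-th $2$-handle, expressed in the $x_i$ and in elements of $\pi_1(R_-(\gamma))$ entering via the inclusion. The irreducibility hypothesis ensures that $M$ is aspherical, so the $2$-complex obtained from $R_-(\gamma)$ by attaching these cells is a genuine homotopy model for $M$ relative to $R_-(\gamma)$ and nothing is lost on passage to cellular chains.

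Finally, by the handle-theoretic definition of $T_{(M,\gamma)}$ underlying Theorem~\ref{maintheorem}, $\tau(M,\gamma)$ is the Reidemeister torsion of the cellular chain complex of the universal abelian cover of $(M,R_-(\gamma))$ associated to this decomposition. The relative complex is concentrated in degrees $1$ and $2$, and after choosing lifts of the cells the only non-trivial differential $\partial_2 \colon C_2 \to C_1$ is represented by the $n \times n$ Fox Jacobian $\bigl(\partial r_j/\partial x_i\bigr)$, with entries pushed from $\Z[\pi_1(M)]$ to $\Z[H_1(M)]$. Hence $\tau(M,\gamma) = \det\bigl(\partial r_j/\partial x_i\bigr)$ up to the usual unit indeterminacy $\pm[h]$, $h\in H_1(M)$, which is exactly Fox calculus applied to the map $\pi_1(R_-(\gamma)) \to \pi_1(M)$. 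The main obstacle is the first step: executing the cancellation of $0$- and $3$-handles as a \emph{sutured} handle decomposition compatible with the annular region $A(\gamma)$, rather than as an ordinary handle decomposition of the underlying $3$-manifold; matching the resulting lifts of cells with $\spinc$ structures so as to recover the correct overall sign is a secondary but unavoidable bookkeeping point.
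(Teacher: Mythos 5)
Your proposal correctly reproduces the paper's ``computational'' step, Proposition~\ref{prop:comptau}: starting from a relative handle decomposition of $(M,\gamma)$ built on $R_-(\gamma)\times I$ with only $1$--handles and $2$--handles, the relative chain complex of the universal abelian cover is concentrated in degrees $1$ and $2$, and $\partial_2$ is the Fox Jacobian $\bigl(\partial \beta_j/\partial\alpha_i^*\bigr)$ pushed to $\Z[H_1(M)]$, so $\tau\sim\det$ of that matrix. That much is fine, but notice that Proposition~\ref{prop:comptau} requires no irreducibility at all --- a handle decomposition always gives a relative CW model for $(M,R_-)$, and asphericity plays no role in reading off cellular chains. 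Your invocation of irreducibility at that point is a non sequitur, which is a hint that something harder is being papered over.

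The actual content of Proposition~\ref{prop:pi1}, as the paper realizes it in Proposition~\ref{prop:AMatrix}, is stronger: $\tau(M,\gamma)$ is computable from \emph{any} geometrically balanced presentation $\langle a_1,\dots,a_m\mid b_1,\dots,b_n\rangle$ of $\pi_1(M)$ together with \emph{any} free basis $d_1,\dots,d_l$ of $\pi_1(R_-)$ and the words expressing $\iota_*(d_k)$ in the $a_i$'s --- not just from a presentation adapted to a sutured handle decomposition. The square matrix used there is $m\times m$: it juxtaposes the Fox derivatives of the relations with the Fox derivatives of the images $e_k=\iota_*(d_k)$, all with respect to all $m$ generators. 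Passing from the handle-adapted presentation to an arbitrary one is the genuine mathematical content, and it is where irreducibility is used. The paper's argument builds a $2$--complex $X'$ from the arbitrary presentation, shows it maps to the handle $2$--complex $X$ inducing $\pi_1$-- and $H_2$--isomorphisms (using asphericity of $M$, a Serre spectral sequence, and local indicability of $\pi_1(M)$ to kill $H_3(U,X';\Z[\pi])$), and then upgrades this homotopy equivalence of pairs to a \emph{simple} homotopy equivalence via Waldhausen's vanishing of the Whitehead group of a Haken $3$--manifold group. None of this appears in your sketch. As written, your proposal proves that a handle decomposition yields $\tau$ via Fox calculus; it does not prove that $\tau$ is computable ``from the map $\pi_1(R_-(\gamma))\to\pi_1(M)$'' in the sense the paper intends, because a handle decomposition carries strictly more information than the abstract map and presentation.
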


%It is worth noting that in order to find Example \ref{ex:disk}, we used a computer program that can calculate $\tau(M,\g)$ whenever $M$ is a genus two handlebody.
%\footnote{[S] is this program available online? If it is presentable, then it might be nice to make it available.} Moved to later - [J]

A well-known theorem of McMullen \cite{Mc02} says that  \(\Delta(Y)\) gives a lower bound on the Thurston norm of \(Y\). There is a natural extension of the Thurston norm to sutured manifolds due to Scharlemann \cite{Sc89};  we recall its definition
in Section \ref{section:norm}.  For \(\a \in H_2(M, \partial M;\R)\), let \(x^s(\a)\) denote this {sutured Thurston norm}.

\begin{Iprop}
\label{prop:Anorm}
Suppose that $(M,\g)$ is a balanced sutured manifold such that $M$ is irreducible. Let \(\mathcal{S} \subset H_1(M)\) be the support of   \(\tau (M,\g)\). Then  \emph{
$$
\max_{\s, \mathfrak{t} \in S} \langle \s - \mathfrak{t}, \alpha \rangle  \leq x^s(\alpha).
$$}
\end{Iprop}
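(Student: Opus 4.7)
The plan is to route the inequality through the norm $z$ on $H_2(M,\partial M;\R)$ coming from the support of $SFH,$ factoring the claim as
$$\max_{\mathfrak{s},\mathfrak{t}\in\mathcal{S}}\langle\mathfrak{s}-\mathfrak{t},\alpha\rangle \;\leq\; z(\alpha) \;\leq\; x^s(\alpha).$$
The first inequality is immediate from Theorem \ref{maintheorem}: since $T_{(M,\gamma)}(\mathfrak{s})=\chi(SFH(M,\gamma,\mathfrak{s})),$ any $\mathfrak{s}$ with $T_{(M,\gamma)}(\mathfrak{s})\neq 0$ must have $SFH(M,\gamma,\mathfrak{s})\neq 0.$ Hence, identifying $\spinc(M,\gamma)$ with $H_1(M)$ via $\iota,$ the support $\mathcal{S}$ is contained in the support $\mathcal{S}'$ of $SFH,$ so the inequality follows from the definition $z(\alpha)=\max_{\mathfrak{s},\mathfrak{t}\in\mathcal{S}'}\langle\mathfrak{s}-\mathfrak{t},\alpha\rangle.$

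To establish the second inequality $z(\alpha)\leq x^s(\alpha),$ my plan is to apply Juh\'asz's surface decomposition theorem. For a rational class $\alpha$ it suffices (the general case then following by homogeneity and continuity of both sides) to choose, via Scharlemann's theory applied to the irreducible $M,$ a norm-realizing decomposing surface $S$ representing a positive multiple of $\alpha$ whose Scharlemann complexity equals $x^s(\alpha).$ Decomposing $(M,\gamma)$ along $S$ yields a taut balanced sutured manifold $(M_+,\gamma_+),$ and Juh\'asz's theorem identifies
$$SFH(M_+,\gamma_+)\cong\bigoplus_{\mathfrak{s}\in\mathrm{OP}_S}SFH(M,\gamma,\mathfrak{s}),$$
where $\mathrm{OP}_S\subset\spinc(M,\gamma)$ is the set of outer spin$^c$ structures with respect to $S.$ By Juh\'asz's nonvanishing theorem for taut balanced sutured manifolds, $SFH(M_+,\gamma_+)\neq 0,$ producing an extremal $\mathfrak{s}_+\in\mathcal{S}'.$ Running the same argument with $-S$ in place of $S$ produces an $\mathfrak{s}_-\in\mathcal{S}'$ at the opposite extreme, and unwinding the definition of outer/inner spin$^c$ structures shows $\langle\mathfrak{s}_+-\mathfrak{s}_-,\alpha\rangle$ is bounded by $x^s(\alpha).$ Since $\mathfrak{s}_\pm$ realize the extremes of $\langle\cdot,\alpha\rangle$ on $\mathcal{S}',$ this yields $z(\alpha)\leq x^s(\alpha).$

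The main obstacle will be the spin$^c$ bookkeeping in the second step: one must arrange the norm-realizing $S$ to be in the form needed for Juh\'asz's decomposition theorem (so that $(M_\pm,\gamma_\pm)$ are balanced and taut, for instance by requiring $S$ to be well-groomed), and then carefully match the affine gap between the outer and inner extremal spin$^c$ structures, viewed as elements of $H_1(M)$ via $\iota,$ with Scharlemann's sutured norm $x^s(\alpha)$ including its suture-boundary contribution. By contrast, the first step is a purely formal consequence of Theorem \ref{maintheorem}, so essentially all of the geometric content of the proposition is packaged into the inequality $z\leq x^s,$ which I expect to be treated in the norm section referenced as Section \ref{section:norm}.
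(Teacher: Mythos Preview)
Your reduction of Proposition~\ref{prop:Anorm} to Theorem~\ref{maintheorem} together with the inequality $z\le x^s$ is exactly the paper's argument: the paper states that Proposition~\ref{prop:Anorm} ``is an immediate consequence of Theorem~\ref{maintheorem} and [Theorem~\ref{thm:Tnorm}],'' and your first paragraph reproduces that deduction verbatim.

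Where you diverge is in your sketch of $z\le x^s$ itself. The paper (Theorem~\ref{thm:Tnorm1}) does \emph{not} argue by decomposing $(M,\gamma)$ along a norm-realizing surface. Instead it doubles $(M,\gamma)$ along $R(\gamma)$ to obtain a sutured manifold $(X,\gamma_X)$ with toroidal boundary, invokes the already-established equality $z=x^s$ in the toroidal case (Proposition~\ref{thm:torusboundary}, which ultimately rests on the Ozsv\'ath--Szab\'o result that link Floer homology detects the Thurston norm), and then compares the full polytope $P(X,\gamma_X)$ with its extremal face $P_R$ using Cantwell--Conlon's Theorem~\ref{thm:CC} (Lemmas~\ref{Lem:Double} and~\ref{Lem:Face}). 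Your direct decomposition approach is closer to the arguments in \cite{Ju10} and can be made to work, but the ``main obstacle'' you flag---matching the affine gap between outer $\spinc$ structures for $S$ and for $-S$ with $x^s(\alpha)$, and arranging $S$ to be a nice/groomed decomposing surface so that the decomposition theorem applies---is real and not entirely routine; it is precisely this bookkeeping that the paper sidesteps by doubling. The trade-off is that the paper's route imports the torus-boundary case and Cantwell--Conlon as black boxes, while yours would give a more self-contained argument at the cost of redoing the $\spinc$/Chern-class computations from \cite{Ju08,Ju10}.
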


Proposition~\ref{prop:Anorm} can be proved analogously to McMullen \cite{Mc02} and Turaev \cite{Tu02}. However, we will not do that here since it is an immediate consequence of
Theorem~\ref{maintheorem} and the following result.

%\footnote{I would now change the order along the following lines:\\
%Corollary 4\\
%Corollary 4 can be proved analogously to McMullen \cite{Mc02} and Turaev \cite{Tu02}. It is also an immediate consequence of \\
%Theorem 3.\\
%In analogy to closed 3--manifolds ... \\
%This way I think the flow of the logic is more natural.
%\\I would also mention that SFH detects the sutured Thurston norm for toroidal boundary and any sutures (not just `link sutures'). We should compare it to OS, where they relate HFL to
%the ordinary Thurston norm and get a somewhat more awkward formula because in some sense they compare different
%objects.\\
%Perhaps we should also use a common notation for the formulation of Thm 3 and Cor 4.}

\begin{Itheorem}
\label{thm:Tnorm}
Suppose that $(M,\g)$ is a balanced sutured manifold such that $M$ is irreducible. Let \(S \subset \spinc(M,\g)\) be the support of   \(SFH(M,\g)\). Then  \emph{
$$
\max_{\s, \mathfrak{t} \in S} \langle \s - \mathfrak{t}, \alpha \rangle
  \leq x^s(\alpha).
$$}
Furthermore, equality holds if $\partial M$ consists only of tori.
\end{Itheorem}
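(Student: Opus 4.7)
The plan is to combine the second author's sutured surface decomposition theorem with the detection of tautness by $SFH$. The inequality is an adjunction-type bound obtained by decomposing along a norm-minimizing surface representing $\alpha$, while the equality in the toroidal case relies on the non-vanishing of $SFH$ for taut decompositions.

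For the inequality, by homogeneity and continuity of both sides, one reduces to the case of integral $\alpha \in H_2(M,\partial M;\Z)$. Represent $\alpha$ by a norm-minimizing surface $\Sigma$ without sphere or disk components, and, after the standard modifications of Scharlemann, arrange $\Sigma$ to be a groomed decomposing surface. The sutured decomposition $(M,\g) \leadsto (M',\g')$ along $\Sigma$ yields, by the decomposition theorem,
$$SFH(M',\g') \;\cong\; \bigoplus_{\s \in O_\Sigma} SFH(M,\g,\s),$$
where $O_\Sigma \subset \spinc(M,\g)$ is precisely the set of $\s$ that maximize $\langle c_1(\s),[\Sigma,\partial\Sigma]\rangle$, and by a direct computation the maximal value equals $x^s(\alpha)$ after the appropriate sutured-category normalization. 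Applying the same construction to $-\Sigma$ picks out the opposite extremum $O_{-\Sigma}$. For any $\s$ in the support $S$, the pairing $\langle c_1(\s),[\Sigma]\rangle$ must lie between these extrema: if it strictly exceeded $-\chi(\Sigma)$ (plus suture correction), then $\s$ would witness a decomposing surface of smaller sutured Thurston norm representing $\alpha$, contradicting minimality of $\Sigma$. This yields $\max_{\s,\t \in S}\langle \s - \t,\alpha\rangle \leq x^s(\alpha)$.

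For equality when $\partial M$ consists only of tori, it suffices to produce extremal $\s^+ \in S \cap O_\Sigma$ and $\s^- \in S \cap O_{-\Sigma}$, which via the decomposition theorem amounts to showing $SFH(M^\pm,\g^\pm) \neq 0$, where $(M^\pm,\g^\pm)$ denote the decompositions along $\pm\Sigma$. Since $M$ is irreducible and $\Sigma$ is norm-minimizing, the sutured manifolds $(M^\pm,\g^\pm)$ are taut, and the theorem of Juh\'asz asserting that $SFH$ detects tautness gives the required non-vanishing; thus $\langle \s^+ - \s^-, \alpha\rangle = x^s(\alpha)$, forcing equality. The main technical obstacle I anticipate is the careful bookkeeping of $\spinc$-grading corrections under the decomposition, and in particular verifying that in the purely toroidal case the suture contributions combine with $-\chi(\Sigma)$ to match $x^s(\alpha)$ exactly rather than overshoot it; this is ultimately why equality can fail when non-toroidal boundary components are present, as the subsequent examples in the paper are expected to illustrate.
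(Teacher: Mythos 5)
Your proposal takes a fundamentally different route from the paper, and the inequality step contains a real gap. You propose to decompose $(M,\g)$ directly along a norm-minimizing surface $\Sigma$ representing $\alpha$ and deduce the bound by asserting that any $\s$ in the support must have its pairing $\langle c_1(\s),[\Sigma]\rangle$ between the extremal values. But the justification you offer --- \emph{``if it strictly exceeded $-\chi(\Sigma)$ (plus suture correction), then $\s$ would witness a decomposing surface of smaller sutured Thurston norm, contradicting minimality''} --- is not an argument; there is no mechanism by which a wayward $\spinc$ structure in the support would produce a smaller-norm surface. What you are claiming at this step is precisely the adjunction inequality for sutured Floer homology, which is the content of the theorem and cannot be assumed. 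The decomposition theorem gives $SFH(M',\g') \cong \bigoplus_{\s \in O_\Sigma} SFH(M,\g,\s)$ for the face $O_\Sigma$, but it is silent about whether the support contains $\spinc$ structures far outside $O_\Sigma$. Moreover, your assertion that the outer face always realizes the exact value $x^s(\alpha)$ (``\emph{the maximal value equals $x^s(\alpha)$ after the appropriate normalization}'') would, if true, yield $z \geq x^s$ unconditionally and hence $z=x^s$ always --- which is \emph{false}, as the Cantwell--Conlon-based example in Proposition~\ref{prop:2} shows. So the obstacle you flag at the end is not bookkeeping; it is the reason your argument cannot be made to work.

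The paper avoids all of this by a doubling argument. It forms the double $X = DM$ along $R(\g)$ and invokes the theorem of Cantwell and Conlon (Theorem~\ref{thm:CC}) that $x(D_*(\alpha)) = 2x^s(\alpha)$. Since $X$ has toroidal boundary, the equality case $z=x^s$ is already available there (Proposition~\ref{thm:torusboundary}), which in turn is proved by translating to a link complement and citing Ozsv\'ath--Szab\'o's result that link Floer homology detects the Thurston norm, not by decomposing. Lemma~\ref{Lem:Double} then identifies $2x^s(\alpha)$ with the diameter of the support of $SFH(X,\g_X)$ paired against $D_*(\alpha)$, while Lemma~\ref{Lem:Face} (using the decomposition theorem and the duality of Proposition~\ref{prop:duality}) identifies $2z(\alpha)$ as the corresponding quantity over the \emph{subset} $O_R \cap S(X,\g_X)$. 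The inequality $z \leq x^s$ is then the trivial observation that the max over a subset is smaller. If you want to salvage a direct decomposition argument, you would need to supply an independent proof of the adjunction bound for $SFH$; as written, your inequality step is circular.
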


In analogy with the situation for closed manifolds, it is tempting to guess that one always has equality in Theorem \ref{thm:Tnorm}, but using an example of Cantwell and Conlon in \cite{CC06} (cf. Proposition \ref{prop:2}) we show that this is not the case.

The unit Thurston norm ball of a link complement is always centrally symmetric. We demonstrate
in Example \ref{ex:nonsymmetric}
that $S$ and $\mathcal{S}$ can be centrally asymmetric in general.

One final property of \(\tau\) is that its   ``evaluation'' under the map \(\Z[H_1(M)] \to \Z[H_1(M,R_-(\gamma))]\) is very simple. More precisely, we show the following.
\begin{Iprop}
\label{Prop:eval}
Let  \(p_*:H_1(M) \to H_1(M,R_-(\gamma))\) be the natural map. Then
\[ p_*(\tau(M,\g)) = \pm I_{H_1(M,R_-(\gamma))},\]
where given a group \(G\)  we define
\(I_G \in \Z[G]\) to be
$$I_G = \begin{cases}  \sum_{g\in G} g \quad & |G|<\infty, \\
					0 \quad & |G| = \infty. \end{cases}
					$$	
\end{Iprop}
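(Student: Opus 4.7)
The plan is to push the torsion computation down to the intermediate cover of $M$ corresponding to $G:=H_1(M,R_-(\g))$, in which $R_-(\g)$ lifts trivially, and then extract everything from the fundamental formula of Fox calculus. By Lemma~\ref{lem:product}, we may assume $R_\pm(\g)$ are connected. Choose a handle decomposition of $M$ built on $R_-(\g)\times I$ with only 1- and 2-handles; the balanced hypothesis (equivalently $\chi(M,R_-(\g))=0$) forces their numbers to agree, say $k$ of each. The cores of the 1-handles give generators $y_1,\dots,y_k$, the attaching circles of the 2-handles give words $r_1,\dots,r_k$ in the $y_j$ and in generators $x_l$ of $\pi_1(R_-(\g))$, and together they furnish a presentation of $\pi_1(M)$. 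By the chain-level definition of $\tau$ via this handle decomposition,
\[ \tau(M,\g)=\det(A)\in\Z[H_1(M)] \]
up to units, where $A$ is the $k\times k$ matrix with entries $A_{ij}=\overline{\partial r_i/\partial y_j}$, the abelianized Fox derivatives. Applying $p_*$ gives $p_*(\tau(M,\g))=\pm h\cdot\det(\hat A)$ for some $h\in G$, where $\hat A:=p_*(A)\in M_k(\Z[G])$.

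The key input is the fundamental formula of Fox calculus applied to each relator:
\[ \sum_j \frac{\partial r_i}{\partial y_j}(y_j-1)+\sum_l \frac{\partial r_i}{\partial x_l}(x_l-1)=r_i-1 \qquad \text{in }\Z[\pi_1(M)]. \]
Under $p_*$, each $x_l$ maps to $1\in G$ because $R_-(\g)\subset\ker(\pi_1(M)\twoheadrightarrow G)$, and each $r_i$ maps to $1$ because it is a relator of $\pi_1(M)$. Thus the $(x_l-1)$ and $(r_i-1)$ terms vanish, leaving $\sum_j \hat A_{ij}(\bar y_j-1)=0$ for each $i$. Setting $v:=(\bar y_1-1,\dots,\bar y_k-1)^T$, this reads $\hat A\,v=0$, and multiplying by the adjugate gives $\det(\hat A)\cdot v=0$. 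Hence $\det(\hat A)$ annihilates each $\bar y_j-1$, and since these generate the augmentation ideal $I(G)\subset\Z[G]$, it annihilates all of $I(G)$.

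Finally, identify $\mathrm{ann}_{\Z[G]}(I(G))$. An element $x\in\Z[G]$ killed by every $g-1$ satisfies $gx=x$ for all $g$, i.e., has constant coefficients; when $|G|<\infty$ this means $x\in\Z\cdot I_G$, and when $|G|=\infty$ the finite-support condition forces $x=0$. In the infinite case we conclude $\det(\hat A)=0=I_G$, matching the definition. In the finite case, write $\det(\hat A)=c\,I_G$ with $c\in\Z$; applying the augmentation $\epsilon\colon\Z[G]\to\Z$, which commutes with $\det$, yields $c|G|=\det(\epsilon\hat A)$, but $\epsilon\hat A$ is an integer presentation matrix of the finite abelian group $G=H_1(M,R_-(\g))$, so $|\det(\epsilon\hat A)|=|G|$ and $c=\pm 1$. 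Since $h\cdot I_G=I_G$ for any $h\in G$, the unit factor is absorbed and $p_*(\tau(M,\g))=\pm I_G$.

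The principal obstacle is the opening step: identifying $\tau$ as $\det(A)$ modulo units from a given handle decomposition. This requires arranging a handle decomposition with only 1- and 2-handles (justified by the balanced condition together with a cancellation argument for 0- and 3-handles) and carefully tracking Euler-structure/sign conventions, but it is built into the construction of $\tau$ used in the paper (cf.\ Proposition~\ref{prop:pi1}). Once this is in hand, the rest of the proof is formal linear algebra fed by the fundamental formula of Fox calculus.
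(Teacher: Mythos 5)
Your proof is correct, and it takes a genuinely different route from the paper's. The paper pushes the coefficients down to $\Z[G]$ with $G=H_1(M,R_-(\gamma))$, then invokes the maximal-abelian-torsion machinery: splitting $\Q[\mathrm{tor}(G)]$ into cyclotomic field factors $\F_{\chi_i}$, showing via the long exact homology sequence of $(\widetilde M,\widetilde R_-)$ that the twisted complex is non-acyclic for every nontrivial character (because $H_0(\widetilde R_-)\cong\Z[G]$ has rank one over each factor while $H_0(\widetilde M)$ vanishes unless the factor is trivial), and then computing the one surviving factor as $\pm|G|$. Your argument bypasses this decomposition entirely: starting from the determinant presentation $\tau(M,\g)\sim\det A$ with $A_{ij}=\overline{\partial r_i/\partial y_j}$ (which is exactly the content of Proposition~\ref{prop:comptau}), you apply the fundamental formula of Fox calculus, observe that under $p_*$ the generators of $\pi_1(R_-)$ and the relators both map to $1\in G$, conclude $\det(\hat A)$ annihilates the augmentation ideal $I(G)$, and identify $\mathrm{ann}_{\Z[G]}(I(G))$ as $\Z\cdot I_G$ (or $0$ if $G$ is infinite). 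The normalization $c=\pm 1$ via the augmentation map, using that $\epsilon(\hat A)$ presents the finite group $G$, is clean and correct. Both arguments are sound; yours is more elementary and more self-contained (it uses nothing beyond the Fox-calculus determinant formula and commutative algebra of group rings), while the paper's proof fits naturally into Turaev's framework and makes the non-acyclicity of the twisted complexes geometrically visible. The one point you flag as the ``principal obstacle'' --- the identification of $\tau$ as $\det(A)$ up to units --- is indeed the substantive input, and it is exactly Proposition~\ref{prop:comptau}, so citing that rather than Proposition~\ref{prop:AMatrix} (which requires irreducibility) is the cleaner choice here.
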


For example, suppose \(K\) is a knot in a homology sphere, and let \((M,\g)\) be the sutured manifold whose total space is the complement of \(K\) and whose boundary contains two sutures parallel to the meridian of \(K\). Then it can be shown that \(\tau(M,\g) = \Delta_K(t) \). On the other hand,  \(H_1(M,R_-(\gamma)) = 0\), so
\(I_{H_1(M,R_-(\gamma))} = 1\). Thus in this case the proposition reduces to the  fact that \(\Delta_K(1)=\pm 1\).

\begin{defn}
A balanced sutured manifold \((M,\g)\) is a {\it sutured \(L\)-space} if the group \(SFH(M,\g)\) is torsion-free and is supported in a single \(\Z/2\) homological grading.
%\footnote{Can the top group of HFK have torsion? Or is that not known? \\ {\bf No examples that I know of, but also no reason why not.}  \\
%Presumably we can define $SFH$ and torsion also as $\F_p$--valued functions, isn't it? If Proposition 5 also holds for $\F_p$--coefficients, then we would be able to drop the torsion--free condition. But that requires some careful thinking. \\{\bf Seems unlikely, since if there's torsion with \(\Z\) coefficients, the homology with \(\F_p\) coefficients is not supported in a single grading. We could say 'supported in a single grading with field coefficients over any field.' But that seems like more work than 'torsion free.'}}
\end{defn}

Examples of such manifolds are  easy  to find; e.g., if \(R \subset S^3\) is a Seifert surface of an alternating link, then we will show in Corollary \ref{cor:alt}
that the sutured manifold complementary to \(R\) is a sutured \(L\)-space.
The next result follows from Proposition~\ref{Prop:eval}.
\begin{Icor}
\label{Cor:Lspace}
If \((M,\g)\) is a sutured \(L\)-space, then for each \(\s \in \spinc(M,\g)\) the group \(SFH(M,\g,\s)\) is either trivial or isomorphic to \(\Z\).
\end{Icor}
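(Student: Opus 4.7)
The plan is to deduce the corollary from Proposition~\ref{Prop:eval} by combining it with Theorem~\ref{maintheorem} and the defining properties of a sutured $L$--space. First I would unpack the hypothesis: for each $\s\in\spinc(M,\g)$ the group $SFH(M,\g,\s)$ is free of some rank $r_\s\ge 0$, and all nonzero such groups lie in the same $\Z/2$ homological grading. Once a homology orientation $\w$ is fixed, this common grading produces a \emph{single} global sign $\epsilon=\pm 1$ for which
\[
\chi\bigl(SFH(M,\g,\s,\w)\bigr)\;=\;\epsilon\cdot r_\s
\]
holds simultaneously for every $\s$. Applying Theorem~\ref{maintheorem} together with an affine isomorphism $\iota\colon\spinc(M,\g)\to H_1(M)$, this translates to
\[
\epsilon\cdot\tau(M,\g)\;=\;\sum_{\s\in\spinc(M,\g)} r_\s\cdot\iota(\s)\;\in\;\Z[H_1(M)],
\]
an element of the group ring with \emph{non-negative} integer coefficients.

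The main step is then to push through the natural map $p_*\colon\Z[H_1(M)]\to\Z[G]$ with $G=H_1(M,R_-(\g))$ and invoke Proposition~\ref{Prop:eval}, giving $p_*\bigl(\epsilon\cdot\tau(M,\g)\bigr)=\pm I_G$. I would then inspect coefficients on both sides. The coefficient of $g\in G$ on the left is $\sum_{\s:\,p_*\iota(\s)=g} r_\s$, a sum of non-negative integers and therefore itself non-negative. If $|G|=\infty$ then $I_G=0$, so every such sum vanishes, every $r_\s$ equals $0$, and the corollary holds vacuously. If $|G|<\infty$ then the right-hand side is $\pm\sum_{g\in G} g$; non-negativity forces the sign to be $+$ and each coefficient to equal exactly $1$. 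In particular each individual summand satisfies $r_\s\in\{0,1\}$, and combined with the torsion-freeness hypothesis this yields $SFH(M,\g,\s)\cong 0$ or $\Z$, as required.

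The one genuinely delicate point, and the one I would want to double-check, is the global sign step: it really does need the \emph{single} grading clause in the definition of a sutured $L$--space, not merely freeness. Without it, the coefficients of $\epsilon\cdot\tau(M,\g)$ could have mixed signs, terms could cancel under $p_*$, and the coefficient-wise non-negativity argument that drives the whole proof would collapse.
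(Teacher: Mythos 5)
Your proof is correct and follows essentially the same route as the paper's: the single-grading hypothesis turns the torsion function into (a global sign times) the rank function, and Proposition~\ref{Prop:eval} then forces each fiber-sum of ranks over $\ker p_*$ to be $0$ or $1$, hence each individual rank to be $0$ or $1$. Your write-up is just somewhat more explicit about the sign bookkeeping and the finite/infinite dichotomy for $H_1(M,R_-(\g))$ than the paper's terse version.
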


In the last section, we compute the torsion for a variety of examples, including pretzel surface complements, and for all sutured manifolds complementary to Seifert surfaces of knots with $\le 9$ crossings. In all these examples, the sutured Floer homology is easily determined from the torsion.
As an application, we give a simple example of a phenomenon first demonstrated by Goda \cite{Go94}: There exist sutured manifolds whose total space is a handlebody, but which are not disk decomposable.
In fact, such examples are not difficult to come by; we found this one by writing a computer program which  calculates $\tau(M,\g)$ when $M$ is a genus two handlebody, and looking at the output in some simple cases.
%\footnote{[S] is this program available online? If it is presentable, then it might be nice to make it available. [J] I could clean it up and put it on my web page, but won't be able to get to it until after next week. It's not much more than a short program for fox calculus, plus a short and stupid program for enumerating sutured genus 2 handlebodies.}

% Note that we have also written a program that computes $\Delta(M,\g)$ whenever $M$ is a genus two or three handlebody.

The paper is organized as follows. In Section \ref{section:SFH}, we recall the relevant facts about sutured Floer homology.
We furthermore show how sutured Floer homology behaves under orientation reversal. Section~\ref{section:torsion} contains the definition of the torsion, and
section~\ref{section:comp} explains how to compute it using Fox calculus. Section~\ref{section:maintheorem} contains the proof of Theorem~\ref{maintheorem}. In Section~\ref{section:algebra}, we discuss some algebraic properties of the torsion, including Proposition~\ref{Prop:eval}. Section~\ref{section:norm} discusses the relation between \(SFH\) and the sutured Thurston norm. Finally, Section~\ref{examples} is devoted to examples.

The authors would like to thank Marc Lackenby for pointing out the connection to reference \cite{GS08}. The second author would also like to thank IH\'ES for its hospitality during the course of this work, and the Herchel Smith Fund for their generous support. Finally, we would like to thank Irida Altman and the anonymous referee for carefully reading our manuscript and for many helpful comments.
%\footnote{[S] Perhaps we should check with Wehrli whether he ever wrote up something. [A] He said he gave up, what he wanted to do didn't work.}
%We would like to mention that we have recently learned that Stephan Wehrli is independently working on defining the Alexander polynomial $\Delta(M,\g)$ for balanced sutured manifolds.

\begin{conv}
All 3--manifolds are understood to be oriented and compact. All homology groups are with integral coefficients unless otherwise specified. Given a 3-manifold $Y$ with boundary, we routinely identify $H_i(M)$ with $H^{3-i}(M,\partial M)$.
If $X$ is a submanifold of $Y,$ then $N(X)$ denotes an open tubular neighborhood of $X$ in $Y.$
\end{conv}

%=====================================
\section{Sutured Floer homology}
%Balanced sutured manifolds and $\spinc$--structures}
\label{section:SFH}

In this section, we recall some relevant facts about sutured manifolds and sutured Floer homology. For full details, we refer the reader to \cite{Ju06}.

%=====================================
\subsection{Balanced sutured manifolds} \label{section:bsm}
%The notion of a sutured manifold is due to \cite{Ga83}.
%\begin{definition}
 For our purposes, a \emph{sutured manifold} $(M, \g)$ is a compact oriented 3-manifold $M$
with boundary together with a set $s(\g)$ of oriented simple closed curves on $\partial M$, called \emph{sutures}.
We fix a closed tubular neighborhood \(\g \subset \partial M\) of  the sutures, hence $\g$ is a union  of pairwise disjoint annuli.
Finally, we require that each component of $R(\g) = \partial M \sm \intt(\g)$ be oriented, and that this orientation is coherent with respect to $s(\g).$ I.e.,
if $\delta$ is a component of $\partial R(\g)$ and is given the boundary orientation,
then $\delta$ must
represent the same homology class in $H_1(\g)$ as some suture.
Define $R_+(\g)$ to be the union of those components of $R(\g)$ whose orientation is consistent with the orientation on \(\partial M\)  induced by \(M,\) and let $R_-(\gamma) = R(\g) \setminus R_+(\g).$

The notion of a sutured manifold is due to Gabai \cite{Ga83}. The description given above is slightly less general than Gabai's, in that we have omitted the possibility of toroidal sutures.

\begin{example} Let $R$ be a compact oriented surface with no closed components.
Then there is an induced orientation on $\partial R$. Let $M = R \times [-1,1]$, define $\gamma = \partial R \times [-1,1]$,
finally put $s(\gamma) = \partial R \times \{0\}$. Such a pair $(M, \gamma)$
is called a \emph{product sutured manifold}.
\end{example}

\begin{example} \label{ex:1}
Suppose \(Y\) is a closed connected oriented three-manifold. Let \(M=Y\setminus \mbox{Int}(B^3)\), and let \(s(\g)\) be an oriented simple closed curve on \(\partial B^3.\) We denote the resulting sutured manifold by \(Y(1)\).
\end{example}

\begin{example} \label{ex:2}
Suppose that $L$ is a link in the oriented three-manifold $Y.$ Then the sutured manifold $Y(L) = (M,\g)$ is given by $M = Y \setminus N(L),$ and for each component $L_0$ of $L$ we take $s(\gamma) \cap \partial N(L_0)$ to be two oppositely oriented meridians of $L_0.$
\end{example}

\begin{example} \label{ex:3}
Let \(L\) be a null-homologous link in a closed oriented three-manifold \(Y\), and let \(R\) be a Seifert surface for \(L\).
If \(U \simeq \intt(R) \times (-1,1) \) is a regular neighborhood of \(\intt(R)\), then the complement  \(M = Y \sm U\) is a sutured manifold with \(\gamma = \partial R \times [-1,1].\) The curve \(s(\gamma)\) is \(\partial R \times \{0\}.\) Then $(M,\gamma)$ is called the \emph{sutured manifold complementary to} $R,$ and is  denoted by $Y(R).$
\end{example}
%\end{definition}

%In this paper we will restrict our attention to a special class of sutured manifolds.

\begin{defn} A \emph{weakly balanced sutured manifold} is a sutured manifold $(M,\g)$ such that for each component $M_0$ of $M$ we have $$\chi(R_+(\g) \cap M_0) = \chi(R_-(\gamma) \cap M_0).$$ A \emph{balanced sutured manifold} is a weakly balanced sutured manifold $(M, \g)$ such that $M$ has no closed components and the map $\pi_0(\g) \to \pi_0(\partial M)$ is surjective. Finally, we say that $(M,\g)$ is \emph{strongly balanced} if it is balanced and for each component $V$ of $\partial M$ we have $\chi(R_+(\g) \cap V) = \chi(R_-(\gamma) \cap V).$
\end{defn}

Balanced sutured manifolds were defined in \cite{Ju06} and strongly balanced sutured manifolds in \cite{Ju08}. The examples given  above are all strongly balanced. Since
$2\chi(M)= \chi(\partial M) = \chi(R_-(\gamma))+\chi(R_+(\g))$, for a weakly balanced sutured manifold  $$\chi(M,R_-(\gamma))=\chi(M,R_+(\g))=0.$$

Sutured Floer homology is only defined for balanced sutured manifolds. However, we can define the torsion for any weakly balanced sutured manifold.

\subsection{$\spinc$--structures on sutured manifolds}\label{section:spinc}

Suppose that  $(M, \gamma)$ is a sutured manifold.
Let $v_0$ be a nowhere zero vector field along $\partial M$ that points into $M$
along $\mbox{Int}\,R_-(\gamma)$, points out of $M$ along $\mbox{Int}\,R_+(\gamma)$, and on $\gamma$ is given by the gradient of a height function $s(\gamma) \times [-1,1] \to [-1,1]$.

%Note that
%$v_0^\perp$ is an oriented two-plane field along $\partial M$.
%We will use the notation
%\[ \delta = c_1(v_0^\perp ) = e(v_0^\perp )\in H_2(\partial M; \Z).\]

\begin{defn} Let $v$ and $w$ be nowhere zero vector fields on $M$ that agree with $v_0$ on $\partial M$. We say that $v$ and $w$ are \emph{homologous} if in each component $M'$ of $M$ there is an open ball $B\subset \intt(M')$ such that
$v|(M' \setminus B)$ is homotopic to $w|(M' \setminus B) \mbox{ rel } \partial M$ through nowhere zero vector fields. We define $\spinc(M, \gamma)$ to be the set of homology classes of nowhere zero vector fields $v$ on $M$ such that $v|\partial M = v_0$.
\end{defn}

\emph{A priori}, this definition appears to depend on the choice of \(v_0\).
However, the space of such vector fields is contractible, so there is a canonical identification between equivalence classes coming from different choices of \(v_0\).
In the case of a closed, oriented 3--manifold the definition is equivalent to the standard definition given  in terms of bundles (cf. \cite{Tu97}). We expect that there is a bundle theoretic interpretation of $\spinc(M,\gamma)$, but we have not explored this question. Note that $\spinc$ structures on sutured manifolds were first introduced by Benedetti and Petronio \cite{BP01}, and they called them smooth Euler structures.

\begin{lemma}
$\spinc(M,\g) \neq \emptyset$ if and only if $(M,\g)$ is weakly balanced. Furthermore, there exists a free and transitive action of $H^2(M, \partial M) \cong H_1(M)$ on the set $\spinc(M, \gamma)$.
\end{lemma}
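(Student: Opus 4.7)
The plan is to address the two assertions separately using obstruction theory for sections of the unit tangent sphere bundle $S(TM)$. First I would view $v_0/|v_0|$ as a section of $S(TM)|_{\partial M}$ and ask for an extension over a relative CW decomposition of $(M,\partial M)$. Since $\pi_0(S^2)=\pi_1(S^2)=0$, sections extend uniquely over the relative $1$- and $2$-skeleta, so the only obstruction lies in $H^3(M,\partial M;\pi_2(S^2))\cong H^3(M,\partial M;\Z)$; by Poincar\'e--Lefschetz duality this splits as a sum of copies of $\Z$, one per connected component $M_0$. To evaluate this integer on a given $M_0$, I would perturb $v_0$ slightly along $\g$ to be transverse to $\partial M$ (pushing outward is natural since $v_0$ already flows upward along $\g$), extend to a vector field $v$ with isolated zeros in $\intt(M_0)$, and apply the Morse index formula $\sum_x \mathrm{ind}(v,x)=\chi(M_0)-\chi(\partial_- M_0)$. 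After the perturbation $\partial_- M_0=R_-(\g)\cap M_0$; combining this with $\chi(\g)=0$, which yields $\chi(\partial M_0)=\chi(R_+(\g)\cap M_0)+\chi(R_-(\g)\cap M_0)$, and with $\chi(\partial M_0)=2\chi(M_0)$, the obstruction becomes $\tfrac{1}{2}(\chi(R_+(\g)\cap M_0)-\chi(R_-(\g)\cap M_0))$, which vanishes exactly when $M_0$ is weakly balanced. Closed components are immediate: the weakly balanced condition is vacuous and $\chi(M_0)=0$ already guarantees a nowhere-zero vector field.

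For the torsor structure, I would compare any two representatives $v,w$ with $v|_{\partial M}=w|_{\partial M}=v_0$ via their difference cochain in $S(TM)$. Standard obstruction theory produces a primary class $d(v,w)\in H^2(M,\partial M;\pi_2(S^2))\cong H^2(M,\partial M)$ and, if it vanishes, a secondary class in $H^3(M,\partial M;\pi_3(S^2))\cong H^3(M,\partial M)$. The point of the ``homologous'' equivalence relation is to kill exactly this secondary obstruction: modifying $v$ inside an open ball $B\subset\intt(M_0)$ realizes any element of $[B/\partial B,S^2]\cong \pi_3(S^2)\cong\Z$, and since each connected component contributes exactly one $\Z$ summand to $H^3(M,\partial M)$, ball modifications surject onto the secondary obstruction group. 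Consequently $d$ descends to a well-defined map $\spinc(M,\g)\times\spinc(M,\g)\to H^2(M,\partial M)$ that vanishes iff $[v]=[w]$, which gives freeness.

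Transitivity then amounts to exhibiting, for each $\alpha\in H^2(M,\partial M)\cong H_1(M)$ and each $[v]$, a vector field $v'$ with $d(v,v')=\alpha$. I would represent the Poincar\'e--Lefschetz dual of $\alpha$ by an oriented link $L\subset\intt(M)$ and modify $v$ inside a tubular neighborhood of $L$ by inserting a full positive twist around each component (a Pontryagin--Thom style construction), then verify directly that the resulting difference cochain equals $\alpha$. I expect the main technical hurdle to be the careful bookkeeping showing that ball modifications are precisely in bijection with the potential secondary obstructions (not merely contained in them) and that the difference cochain is additive under concatenation of homotopies; both facts are standard but require some attention when $M$ is disconnected, so I would state and use the decomposition $H^k(M,\partial M)\cong\bigoplus_{M_0} H^k(M_0,\partial M_0)$ throughout to keep the component-wise argument transparent.
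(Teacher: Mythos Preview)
Your proposal is correct and follows essentially the same approach as the paper: the paper's proof is a brief sketch that invokes obstruction theory for sections of the unit sphere bundle, cites \cite[Proposition~3.6]{Ju10} for the nonemptiness criterion, and points to Reeb turbularization \cite{Tu90} for the explicit action, while you have spelled out the details of that obstruction-theoretic argument (the Poincar\'e--Hopf computation identifying the primary obstruction with $\tfrac12(\chi(R_+\cap M_0)-\chi(R_-\cap M_0))$, the role of ball modifications in killing the $H^3(M,\partial M;\pi_3(S^2))$ secondary obstruction, and the link-twist construction realizing the $H^2(M,\partial M)$ action). Your ``full positive twist along a tubular neighborhood of a link'' is exactly the Reeb turbularization the paper cites.
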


\begin{proof}
An analogous argument as in the proof of \cite[Proposition 3.6]{Ju10} implies that $\spinc(M,\g) \neq \emptyset$ if and only if $(M,\g)$ is weakly balanced. It follows from obstruction theory that $\spinc(M,\g)$ is an affine space over $H^2(M,\partial M;\mathbb{Z}),$ since nowhere zero vector fields can be thought of as sections of the unit sphere bundle $STM.$ If  $\s_1,\s_2 \in \spinc(M,\g),$ then $\s_1 -\s_2$ is the first obstruction to homotoping vector fields representing $\s_1$ and $\s_2.$

If $v$ is a representative of $\s$ and the simple closed curve $c$ represents $h \in H_1(M),$ then an explicit representative of $\s + h$ can be obtained by Reeb turbularization, which is described in \cite[p.639]{Tu90}.
\end{proof}

%===================================================
\subsection{Sutured Floer homology}

We now sketch the construction of \(SFH(M,\g)\). Our starting point is a Heegaard diagram adapted to the pair \((M, \g)\).

\begin{defn}
A \emph{balanced sutured Heegaard diagram}, in short a balanced diagram, is a triple
$(\S,\bolda,\boldb),$ where $\S$ is a compact
oriented surface with boundary and $\bolda = \{ \a_1,\dots,\a_d\}$ and $\boldb=  \{ \b_1,\dots,\b_d\}$
are two sets of pairwise disjoint simple closed curves in $\intt(\S)$ such that $\pi_0(\partial \S)\to \pi_0(\S\sm \bigcup\bolda)$
and $\pi_0(\partial \S)\to \pi_0(\S\sm \bigcup\boldb)$ are both surjective.
\end{defn}

Note that the restrictions on $\bolda$ and $\boldb$ are equivalent to the conditions that $\S$ has no closed components and that the elements of $\bolda$ and $\boldb$ are both linearly independent in $H_1(\S)$.

Every balanced diagram $(\S,\bolda,\boldb)$ uniquely defines a
sutured manifold $(M,\gamma)$ using the following construction.
Let $M$ be the 3-manifold obtained from $\Sigma\times [-1,1]$ by attaching 2--handles along the curves
$\a_i\times \{-1\}$ for $i=1,\dots,d$ and along $\b_j\times \{1\}$ for $j=1,\dots,d.$
The sutures are defined by taking $\gamma = \partial \S \times [-1,1]$ and $s(\gamma) = \partial \S \times \{0\}$.

Equivalently,  $(M,\g)$ can be constructed from the product sutured manifold \(R_-(\gamma) \times I\) by first adding \(d\) one--handles to \(R_-(\gamma) \times \{1\}\), and then \(d\) two--handles. The Heegaard surface \(\S\) is the upper boundary of the manifold obtained by adding the one--handles.  The \(\alpha\) curves are the belt circles of the one--handles, and the \(\b\) curves are the attaching circles of the two--handles.

The following proposition combines \cite[Proposition~2.9]{Ju06} and \cite[Proposition~2.13]{Ju06}.

\begin{proposition}\label{prop:suturedheegaard}
The sutured manifold defined by a balanced diagram is balanced, and for every balanced sutured manifold there exists a balanced diagram defining it.
\end{proposition}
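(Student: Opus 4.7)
The proof plan splits along the two assertions, each handled by a dictionary between Heegaard data and handle decompositions.

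For the forward direction (a balanced diagram yields a balanced sutured manifold), I would work from the equivalent description recalled just before the proposition: $(M,\g)$ is built from the product $R_-(\g)\times I$ by attaching $d$ one-handles to $R_-(\g)\times\{1\}$ to produce a compression body with upper boundary $\S$, followed by $d$ two-handles along the $\boldb$-curves. Three things then need checking. First, $M$ has no closed components: each component of $M$ is built on top of a piece of $R_-(\g)\times I$, and the surjectivity condition $\pi_0(\partial\S)\to\pi_0(\S\sm\bigcup\bolda)$ ensures $R_-(\g)$ has no closed components. Second, $\pi_0(\g)\to\pi_0(\partial M)$ is surjective, since every boundary component of $M$ contains part of the annuli $\partial\S\times[-1,1]$ and $\S$ has no closed components. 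Third, the Euler-characteristic balance: surgery on a simple closed curve in a surface increases $\chi$ by $2$, so $\chi(R_+(\g))=\chi(\S)+2d=\chi(R_-(\g))$, and the linear-independence hypotheses on $\bolda$ and $\boldb$ distribute these surgeries across components of $M$ so that the equality holds componentwise.

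For the backward direction (a balanced sutured manifold admits such a diagram), I would fix a self-indexing Morse function $f\colon M\to[-1,4]$ with $f^{-1}(-1)=R_-(\g)$, $f^{-1}(4)=R_+(\g)$, and $f$ agreeing with the standard height function on a collar of $\g$. Cancel all index $0$ and index $3$ critical points against suitable index $1$ and index $2$ critical points via gradient trajectories reaching $R_-(\g)$ and $R_+(\g)$. Set $\S:=f^{-1}(3/2)$, let $\bolda$ be the intersection of $\S$ with the descending manifolds of the index $1$ critical points, and $\boldb$ the intersection with the ascending manifolds of the index $2$ critical points. The equality $|\bolda|=|\boldb|$ is forced by $\chi(M,R_-(\g))=0$, which follows from the balanced condition combined with $2\chi(M)=\chi(R_+(\g))+\chi(R_-(\g))$. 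The surjectivity conditions on $\pi_0(\partial\S)\to\pi_0(\S\sm\bigcup\bolda)$ and its $\boldb$-analogue hold because every component of $\S$ has nonempty boundary after the cancellations.

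The main obstacle is the cancellation of the extremal critical points, which is precisely where the full balanced hypothesis (as opposed to merely weak balancing) is needed. An index $0$ critical point sitting inside a closed component of $M$ cannot be cancelled, because there is no $R_-(\g)$ in that component to slide it down to; the assumption that $M$ has no closed components rules this out. A component of $M$ whose only boundary piece met by a suture lay in $R_+(\g)$ would similarly obstruct the cancellation of a $0$-handle, and the surjectivity of $\pi_0(\g)\to\pi_0(\partial M)$ is exactly what prevents this, producing a gradient trajectory from the $0$-handle down to a suture and hence into $R_-(\g)$. A symmetric argument, using the existence of $R_+(\g)$ in every component, handles index $3$ critical points. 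Once the cancellations are carried out, the remainder is a routine translation between the handle decomposition and the Heegaard data, matching the forward direction.
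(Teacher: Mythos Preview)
Your approach is the standard one and matches \cite[Propositions~2.9 and~2.13]{Ju06}, which is all the paper offers by way of proof here.

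Two spots are imprecise. In the forward direction, linear independence of $\bolda$ and $\boldb$ in $H_1(\Sigma)$ does \emph{not} force the same number of $\alpha$'s and $\beta$'s in each component of a disconnected $\Sigma$, so your claim that it ``distributes the surgeries'' to give componentwise balance is wrong; the componentwise equality really requires either that $\Sigma$ be connected (the working assumption in \cite{Ju06}) or an explicit per-component count in the definition. In the backward direction, the surjectivity of $\pi_0(\partial\Sigma)\to\pi_0(\Sigma\sm\bigcup\bolda)$ needs more than ``$\Sigma$ has nonempty boundary'': what you use is that compressing $\Sigma$ along $\bolda$ recovers $R_-(\g)$, and $R_-(\g)$ has no closed components precisely because $\pi_0(\g)\to\pi_0(\partial M)$ is surjective. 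Also, your description of cancelling index-$0$ points is slightly off---a local minimum cannot be ``slid down'' anywhere; rather, the presence of $R_-(\g)$ in every component of $M$ is what guarantees each index-$0$ critical point has an index-$1$ critical point to cancel against (equivalently, lets you choose $f$ with no local minima to begin with).
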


If $(\S,\bolda,\boldb)$ is a
balanced diagram  for $(M,\gamma),$ then the \(\alpha\) and \(\beta\) curves define the tori
$\T_\a  = \a_1\times \ldots \times \a_d $ and
$\T_\b   = \b_1\times \ldots \times \b_d $
%\end{equation*}
in the symmetric product $\text{Sym}^d(\S).$ We can suppose that $\bigcup\bolda$ and $\bigcup\boldb$ intersect transversally. Then \(SFH(M, \g)\) is the homology of a chain complex whose generators are the intersection points of \(\T_\a\) and \(\T_\b\). More concretely, an element of
\(\T_\a \cap \T_\b\) is a set \(\boldx = \{x_1,\ldots, x_d\}\), where each \(x_i \) is in some \(\a_j \cap \b_k\), and each
\(\a\) and \(\b\) curve is represented exactly once among the \(x_i\)'s.
Still more concretely, for each permutation $\sigma\in S_d$  we define
\[ (\T_\a\cap \T_\b)_\sigma = \{ (x_1,\dots,x_d) \, \colon\, x_i\in \a_i\cap \b_{\sigma(i)}, i=1,\dots,d\}.\]
Then
\[ \T_\a \cap \T_\b =\bigcup_{\sigma\in S_d} (\T_\a\cap \T_\b)_\sigma.\]
%If \(\boldx\) and \(\boldy\) are both elements of \((\T_\a\cap \T_\b)_\sigma\), we say that \(\boldx\) and \(\boldy\) have the same combinatorics.

The differential in the chain complex is defined by counting rigid holomorphic disks in \(\text{Sym}^d (\S).\) Since we are mostly interested in the Euler characteristic of \(SFH\), we will have little need to understand these disks; in fact, the only place they appear is in the proof of Proposition~\ref{prop:duality}. For the full definition of the differential, the interested reader is referred to \cite{Ju06}.

%==============================================

\subsection{Orientations and Grading} \label{sec:ori}
Next, we consider the homological grading on the sutured Floer chain complex.
In its simplest form, this grading is a relative \(\Z/2\) grading given by the
sign of intersection in $\text{Sym}^d(\S)$ --- two generators have the same grading if the corresponding intersection points in $\T_\a\cap \T_\b$ have the same sign. To fix the sign of intersection, or equivalently, to turn this relative \(\Z/2\) grading into an absolute one, we must orient $\text{Sym}^d(\S),$ and the tori \(\T_\a\) and \(\T_\b.\)

The orientation of \(\Sigma\) is determined by the orientation of the sutures, as we require that $\partial \S = s(\g).$ Equivalently, \(\Sigma\) is always oriented as the boundary of the compression body determined by the \(\a\) curves (the part of $M \setminus \S$ containing $R_-(\g)$), which we view as a submanifold of $M.$ Using this orientation of $\S,$ we get the product orientation on $\text{Sym}^d(\S).$ If $\S$ is endowed with a complex structure compatible with its orientation, then the complex orientation on $\text{Sym}^d(\S)$ agrees with the product orientation. However, to get a well-defined $\Z/2$ grading on $SFH(M,\g),$ we will always consider $\text{Sym}^d(\S)$ with $(-1)^{d(d-1)/2}$ times the product orientation.

Choosing an orientation of \(\T_\a\) is the same as choosing a generator of \(\Lambda^d(A)\), where \(A \subset H_1(\Sigma;\R)\) is the \(d\)-dimensional subspace spanned by the \(\a\)'s. Similarly, an orientation of \(\T_\b\) is specified by a choice of generator for \(\Lambda^d(B)\), where \(B\) is the subspace of $H_1(\Sigma;\R)$ spanned by the \(\b\)'s. To fix the sign of intersection, we must orient the tensor product
\(\Lambda^d(A) \otimes \Lambda^d(B)\). This turns out to be  equivalent to choosing a homology orientation of  \((M,R_-(\gamma)).\)

\begin{defn}
\label{def:torussign}
Suppose we are given a balanced Heegaard diagram $(\S,\bolda,\boldb)$ for $(M,\g).$ Then we define a bijection $o$ from the set of orientations of $H_*(M,R_-(\g))$ to the set of orientations of $\Lambda^d(A) \otimes \Lambda^d(B).$

For simplicity, write $R_- = R_-(\g).$ The balanced diagram $(\S,\bolda,\boldb)$ gives a relative handle decomposition of $M$ built on $R_- \times I:$ attach one-handles $A_1,\dots,A_d$ to $R_- \times I$ with belt circles $\a_1,\dots,\a_d,$ followed by two-handles $B_1,\dots,B_d$ with attaching circles $\b_1,\dots,\b_d.$ Let $C_* = C_*(M,R_- \times I;\R)$ be the handle homology complex corresponding to this handle decomposition.

An orientation $\w$ of $H_*(M,R_-;\R)$ determines an orientation $\w'$ of $C_*$ as follows. First, choose an ordered basis $h_1^1,\ldots h_m^1,h_1^2,\ldots h_m^2$ of $H_*(M,R_-;\R)$ compatible with $\w$ such that $h_j^i \in H_i(M,R_-;\R),$ and pick chains $c_j^i$ representing the $h_j^i.$ Next, choose chains $b_1,\ldots,b_{d-m} \in C_2(M,R_-;\R)$ such that $c_1^2,\ldots,c_m^2,b_1,\ldots,b_{d-m}$ is a basis of $C_2(M,R_-;\R).$ Then $$c_1^1,\ldots c_m^1, \partial b_1,\ldots \partial b_{d-m}, c_1^2,\ldots c_m^2,b_1,\ldots, b_{d-m}$$ is an oriented basis of $C_*.$ The reader can easily verify that the corresponding orientation $\w'$ of $C_*$ does not depend on the choice of $c^i_j$ and $b_k.$

Given the orientation $\w'$ of $C_*,$ we orient $\Lambda^d(A) \otimes \Lambda^d(B)$ as follows. Suppose that the handles $A_1,\ldots, A_d,B_1,\ldots,B_d$ give an ordered basis of $C_*$ compatible with $\w'.$ This gives rise to an orientation and ordering $\a_1,\dots,\a_d, \b_1, \dots, \b_d$  of the $\a$ and $\b$ curves. Let $o(\w)$ be the orientation of $\Lambda^d(A) \otimes \Lambda^d(B)$ given by $$[\alpha_1] \wedge \dots \wedge [\alpha_d] \wedge [\beta_1] \wedge \dots \wedge [\beta_d].$$ It is easy to see that $o(-\w) = -o(\w),$ hence $o$ is indeed a bijection.
\end{defn}

\begin{defn}
Suppose that $(\S,\bolda,\boldb)$ is a balanced diagram such that both $\S$ and \(\Lambda^d(A) \otimes \Lambda^d(B)\) are oriented.
Then for $\boldx = (x_1,\dots,x_d) \in \T_{\a} \cap \T_{\b}$ let $m(\boldx)$ be the intersection sign of $\T_{\a}$ and $\T_{\b}$ at $\boldx.$

Now assume that each $\a \in \bolda$ and each $\b \in \boldb$ is oriented.  If $x \in \a \cap \b,$ then let $m(x)$ denote the intersection sign of $\a$ and $\b$ at $x.$
\end{defn}

\begin{lemma} \label{lem:m}
Let $(\S,\bolda,\boldb)$ be a balanced diagram such that \(\Lambda^d(A) \otimes \Lambda^d(B)\) is oriented.
Suppose that each $\a \in \bolda$ and each $\b \in \boldb$ is oriented such that the product orientations on $\T_{\a}$ and $\T_{\b}$ are consistent with the orientation on \(\Lambda^d(A) \otimes \Lambda^d(B).\) If $\boldx = (x_1,\dots,x_d) \in \T_{\a} \cap \T_{\b},$
and $x_i \in \a_i \cap \b_{\sigma(i)}$ for some $\sigma \in S_d,$ then
\em $$m(\boldx) =  \text{sign}(\sigma) \cdot \prod_{i=1}^d m(x_i).$$
\end{lemma}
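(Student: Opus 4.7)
The plan is to reduce the computation of $m(\boldx)$ to a computation in the exterior algebra of $V := T_{\boldx}\text{Sym}^d(\Sigma)$. Since $x_1,\dots,x_d$ are distinct, a neighborhood of $\boldx$ in $\text{Sym}^d(\Sigma)$ is canonically diffeomorphic to $U_1\times\cdots\times U_d$, where $U_i$ is a neighborhood of $x_i$ in $\Sigma$; under this identification $V = V_1\oplus\cdots\oplus V_d$ with $V_i=T_{x_i}\Sigma$, and the prescribed orientation on $\text{Sym}^d(\Sigma)$ becomes $o(V) = (-1)^{d(d-1)/2}\,[V_1]\wedge\cdots\wedge[V_d]$. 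I would then choose positively oriented tangent vectors $a_i\in T_{x_i}\alpha_i\subset V_i$ and $b_j\in T_{x_{\sigma^{-1}(j)}}\beta_j\subset V_{\sigma^{-1}(j)}$; by definition of the intersection sign of two oriented curves on $\Sigma$, one has $[V_i]=m(x_i)\,a_i\wedge b_{\sigma(i)}$ in $\Lambda^2V_i$.

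Next, I would identify the oriented tangent spaces of $\T_\alpha$ and $\T_\beta$ at $\boldx$ as subspaces of $V$. In the chosen chart, $\T_\alpha$ is locally $\alpha_1\times\cdots\times\alpha_d$, so the product orientation required by the hypothesis becomes $a_1\wedge\cdots\wedge a_d\in\Lambda^dV$. For $\T_\beta$, the local parametrisation $\beta_1\times\cdots\times\beta_d\to U_1\times\cdots\times U_d$ is $(y_1,\dots,y_d)\mapsto(y_{\sigma(1)},\dots,y_{\sigma(d)})$, because the factor $\beta_j$ of the source meets $U_{\sigma^{-1}(j)}$ in the target. Differentiating, the product orientation of $\T_\beta$ becomes $b_1\wedge\cdots\wedge b_d\in\Lambda^dV$, with each $b_j$ now viewed as sitting in its correct summand $V_{\sigma^{-1}(j)}$.

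The lemma then follows from the definition
\[
m(\boldx)\cdot o(V)\;=\;(a_1\wedge\cdots\wedge a_d)\wedge(b_1\wedge\cdots\wedge b_d)
\]
via three sign-bearing algebraic moves: (i) rewrite $b_1\wedge\cdots\wedge b_d=\text{sign}(\sigma)\,b_{\sigma(1)}\wedge\cdots\wedge b_{\sigma(d)}$; (ii) interleave to group $a_i$ with $b_{\sigma(i)}$, a standard shuffle whose inversion count is $\binom{d}{2}$, contributing $(-1)^{d(d-1)/2}$; (iii) substitute $a_i\wedge b_{\sigma(i)}=m(x_i)\,[V_i]$. The two factors of $(-1)^{d(d-1)/2}$ (one from $o(V)$, one from the shuffle) cancel, leaving $m(\boldx)=\text{sign}(\sigma)\prod_{i=1}^d m(x_i)$. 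The only delicate point is the second paragraph: one must confirm that under the $\alpha$-ordered chart, the push-forward of the product orientation of $\T_\beta$ really is $b_1\wedge\cdots\wedge b_d$ (rather than some signed permutation thereof), so that the $\sigma$-dependence is collected entirely in step (i); once that is pinned down, the rest is pure bookkeeping in the exterior algebra of $V$.
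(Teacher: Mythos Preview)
Your proof is correct and follows essentially the same approach as the paper's, just phrased in exterior-algebra notation rather than Turaev's $\sign[x/y]$ convention. The paper's argument picks positive tangent vectors $a_i\in T_{x_i}\alpha_i$, $b_{\sigma(i)}\in T_{x_i}\beta_{\sigma(i)}$ and a positive basis $s_i$ of $T_{x_i}\Sigma$, then runs the identical chain of sign manipulations (product of local signs, the $(-1)^{d(d-1)/2}$ shuffle, and the permutation of the $b$'s) to reach $\prod_i m(x_i)=\sign(\sigma)\,m(\boldx)$. Your ``delicate point'' is not actually delicate: the quotient map $\Sigma^d\to\text{Sym}^d(\Sigma)$ is a local diffeomorphism near a point with distinct coordinates, and its derivative is the identity on each tangent summand, so the push-forward of $b_1\wedge\cdots\wedge b_d$ is exactly $b_1\wedge\cdots\wedge b_d$ with each $b_j$ landing in $V_{\sigma^{-1}(j)}$; the paper simply takes this as evident.
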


\begin{proof}
For $1 \le i \le d,$ let $a_i \in T_{x_i}\alpha_i$ and $b_{\sigma(i)} \in T_{x_i}\beta_{\sigma(i)}$ be positive tangent vectors. Furthermore, let $s_i$ be a positive basis of $T_{x_i}\S.$ Then $s = (-1)^{d(d-1)/2} \cdot s_1 \times \dots \times s_d$ is a positive basis of
$T_{\boldx}\text{Sym}^d(\S).$ Following the notation of \cite{Tu01}, we have
$$\prod_{i=1}^d m(x_i) = \prod_{i=1}^d \text{sign}[a_ib_{\sigma(i)}/s_i] = (-1)^{d(d-1)/2} \cdot \text{sign}[a_1b_{\sigma(1)}\dots a_db_{\sigma(d)}/s] =$$ $$ = \text{sign}[a_1 \dots a_d b_{\sigma(1)} \dots b_{\sigma(d)}/ s]=\text{sign}(\sigma) \text{sign}[a_1 \dots a_d b_1 \dots b_d / s]
= \text{sign}(\sigma) \cdot m(\boldx).$$
\end{proof}

\begin{defn}
\label{def:SFHsign}
A homology orientation \(\omega\) on \(H_*(M,R_-)\) determines an absolute \(\Z/2\) grading
%\footnote{[S] is it clear to the reader that we mean `absolute grading' or should we throughout clearly distinguish between `relative grading' and `absolute grading'? [J] I'd be in favor of inserting ``absolute" here, but I don't think we need to add it in every single place.}
on the sutured Floer chain complex. Under this grading, the sign assigned to a generator \(\boldx\) is \((-1)^{b_1(M,R_-)}m(\boldx).\) \end{defn}

\begin{remark}
If \(Y\) is a closed oriented three-manifold,  then \(SFH(Y(1)) \cong \widehat{HF}(Y)\). In this case, the manifold \(Y(1)\) admits a canonical homology orientation \(\omega\), defined as follows. If \(b_1,\ldots, b_m\) is any basis for \(H_1(Y;\R)\), let \(b_1^*,\ldots,b_m^*\) be the dual basis for \(H_2(M; \R)\), which satisfies \(b_i \cdot b_j^* = \delta_{ij}\). Then $\w$ is
given by the ordered basis $b_1,\dots,b_m,b_1^*,\dots,b_m^*.$ For this orientation, the \(\Z/2\) grading defined above {\em differs} from the canonical \(\Z/2\) grading on \(\widehat{HF}(Y)\) defined in Section 10.4 of \cite{OS04b} by a factor of \((-1)^{b_1(M)}\). Although this choice is less natural from the perspective of intersections in the symmetric product, it is better behaved with respect to surgery formulas and the surgery exact triangle. For example, consider the exact triangle for surgery on a knot \(K \subset S^3\):
$$
\widehat{HF}(S^3) \to \widehat{HF}(K_n) \to \widehat{HF}(K_{n+1}) \to  \widehat{HF}(S^3).
$$
For \(n\neq -1,0\), the first map in this sequence reverses the absolute \(\Z/2\) grading on \(\widehat{HF}\), and the other two maps preserve it. However, in the case where \(n=0\) and \(K\) is the unknot, the reader can easily check that the second map reverses grading and the other two preserve it. In contrast, if we use the orientation convention of Definition~\ref{def:SFHsign}, the maps in the triangle have the same grading regardless of \(n\).
\end{remark}

%==============================================

\subsection{Generators and \(\spinc\) structures}
\label{Subsec:Gens}

An important property of the sutured Floer chain complex is that it decomposes as a direct sum over
\(\spinc\) structures. Definition 4.5 of \cite{Ju06}, and also Remark~\ref{rem:spinc} of the present paper, explain how to  assign a \(\spinc\) structure \(\s(\boldx)\)
to each  \(\boldx \in \T_\a\cap \T_\b\) such that if the boundary \(\partial\boldx = \sum a_i \boldy_i ,\) where each \(a_i\) is non-zero, then \(\s(\boldx) = \s(\boldy_i)\) for all \(i\).
The exact mechanics of this assignment do not concern us at the moment, but we will need to know how to compute the difference between the \(\spinc\) structures assigned to two generators.

Given   \(\boldx,\boldy \in \T_\a\cap \T_\b\),  pick  a path \(\theta\) along the \(\alpha\)'s from \(\boldx\) to \(\boldy\). More precisely, \(\theta\) is a singular 1-chain supported on the \(\alpha\)'s with \(\partial \theta = \sum_{i=1}^d y_i - \sum_{i=1}^d x_i.\)
Similarly, choose a path \(\eta\) from \(\boldx\) to \(\boldy\) along the \(\beta\)'s. The difference
\(\theta - \eta\) represents an element of \(H_1(\Sigma)\). If \(\theta'\) is a different path from \(\boldx\) to \(\boldy\) along the \(\alpha\)'s, then the difference \(\theta - \theta'\) is a linear combination of the \(\alpha\)'s in \(H_1(\S)\). Similarly, if \(\eta'\) is another path from \(\boldx\) to \(\boldy,\) then \(\eta'-\eta\) is a linear combination of \(\beta\)'s. Thus \(\theta - \eta\) represents a well defined element of  ${H_1(\Sigma)}/{L}$, where  \(L\) is the subspace spanned by the \(\alpha\)'s and \(\beta\)'s. We write
$$
\boldx-\boldy = [\theta-\eta]\in {H_1(\Sigma)}/{L} \cong H_1(M).
$$

\begin{lemma} \label{spincdiff}
\cite[Lemma 4.7]{Ju06}
We have
$ \displaystyle
\s(\boldx) - \s(\boldy) = \boldx-\boldy$ in $  H_1(M). $
\end{lemma}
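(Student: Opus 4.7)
The plan is to compute $\mathfrak{s}(\boldx)-\mathfrak{s}(\boldy)$ as the obstruction to homotoping, through nowhere zero vector fields rel $\partial M$, a representative of $\mathfrak{s}(\boldx)$ to a representative of $\mathfrak{s}(\boldy)$, and then identify this obstruction class in $H^2(M,\partial M)\cong H_1(M)$ with the class $[\theta-\eta]$.

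First, I would recall the explicit construction of $\mathfrak{s}(\boldx)$ given in \cite{Ju06}. Choose a self-indexing Morse function $f\colon M\to\R$ adapted to the balanced diagram $(\Sigma,\bolda,\boldb)$: the index-$1$ critical points are in bijection with the $\alpha$ curves (their ascending disks meet $\Sigma$ in the $\alpha$'s), the index-$2$ critical points are in bijection with the $\beta$ curves, and $\Sigma=f^{-1}(3/2)$. Let $\nabla f$ denote a gradient-like vector field that restricts to $v_0$ on $\partial M$. For a generator $\boldx=(x_1,\dots,x_d)\in\T_\a\cap\T_\b$ with $x_i\in\alpha_i\cap\beta_{\sigma(i)}$, there is a unique collection of gradient flowlines connecting the $i$-th index-$1$ critical point to the $\sigma(i)$-th index-$2$ critical point through $x_i$. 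In a tubular neighborhood of each such flowline one modifies $\nabla f$ to obtain a nowhere zero vector field $v_{\boldx}$ on $M$ agreeing with $v_0$ on $\partial M$; by definition $\mathfrak{s}(\boldx)$ is the homology class of $v_{\boldx}$.

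Next, given a second generator $\boldy$, construct $v_{\boldy}$ by the same recipe. The two vector fields $v_{\boldx}$ and $v_{\boldy}$ agree outside a neighborhood $U$ of the union of the canceling arcs used for $\boldx$ and $\boldy$, so the first obstruction to a homotopy $v_{\boldx}\simeq v_{\boldy}$ rel $\partial M$ is a class in $H^2(M,\partial M)\cong H_1(M)$ supported in $U$. To identify this class, I would deform both canceling arcs to run: (i) from the index-$1$ critical points \emph{down} to $\Sigma$ along the $\alpha$-descending flowlines, then (ii) along $\Sigma$, and (iii) \emph{up} to the index-$2$ critical points along the $\beta$-ascending flowlines. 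The along-$\Sigma$ pieces for $v_{\boldx}$ concentrate at the $x_i$, and for $v_{\boldy}$ at the $y_i$. The vertical pieces at the critical points cancel (they are the same for $\boldx$ and $\boldy$), so the obstruction $1$-cycle becomes a closed loop in $\Sigma$ obtained by joining the $y_i$ to the $x_i$ along the $\alpha$'s and back along the $\beta$'s. A direct local model computation (analogous to the closed case in Ozsváth--Szabó, cf.\ the proof of Lemma~2.19 in \cite{OS04b}) shows that this obstruction loop is exactly $\theta-\eta$, with the correct sign.

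The main obstacle is the bookkeeping in this last step: keeping track of orientations so that the obstruction cocycle equals $[\theta-\eta]$ rather than $[\eta-\theta]$, and verifying that different choices of $\theta$ and $\eta$ yield the same class modulo the subspace $L\subset H_1(\Sigma)$ spanned by $\bolda\cup\boldb$. The indeterminacy in $L$ is automatic: any two choices of $\theta$ differ by a $1$-cycle supported on the $\alpha$'s (a combination of the $\alpha_i$ themselves), and similarly for $\eta$; these classes are precisely what gets killed when we pass from $H_1(\Sigma)$ to $H_1(\Sigma)/L\cong H_1(M)$ via the Mayer--Vietoris presentation coming from the handle decomposition of $M$ described after Proposition~\ref{prop:suturedheegaard}. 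Once these identifications are in place, the equality $\mathfrak{s}(\boldx)-\mathfrak{s}(\boldy)=[\theta-\eta]=\boldx-\boldy$ in $H_1(M)$ follows.
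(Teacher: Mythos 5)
The paper cites this lemma from \cite[Lemma~4.7]{Ju06} without reproving it, and your plan correctly reconstructs the obstruction-theoretic argument used there (in turn modeled on \cite[Lemma~2.19]{OS04a}, not \cite{OS04b} as you wrote): define $\s(\boldx)$ by modifying a Morse gradient along the flowlines through the $x_i$, note that $v_{\boldx}$ and $v_{\boldy}$ differ only near those flowlines so the relative obstruction class is supported there, and push the canceling arcs onto $\Sigma$ to see the obstruction as a $1$-cycle equal to $\theta-\eta$. The present paper actually recasts the $\spinc$-assignment in handle-theoretic language (Remark~\ref{rem:spinc}: Euler chains through handle centers joined to the $x_i$, and Lemma~\ref{lem:sz}); this is equivalent to your Morse-flowline picture, so the approaches coincide. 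The two items you flagged as remaining work — the sign convention making the answer $[\theta-\eta]$ rather than $[\eta-\theta]$, and the local-model verification that the obstruction $1$-cycle really concentrates on $\theta-\eta$ — are indeed where the real content lies, so the plan as written is a correct skeleton rather than a complete proof; carrying out the local computation around a single flowline pair (one from $\boldx$, one from $\boldy$, meeting the same critical points) would fill it in.
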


%We can now make a precise statement about the right-hand side of the equation appearing in Theorem  \ref{maintheorem}.

Let \((M,\g)\) be a balanced sutured manifold, \(\s \in \spinc(M,\gamma)\) a relative \(\spinc\) structure, and choose a homology orientation \(\omega\) for \((M,R_-(\gamma))\). If \((\S,\bolda,\boldb)\) is a balanced diagram for \((M,\g)\), then with the absolute grading of Definition~\ref{def:SFHsign}, we have
%\footnote{[S] Perhaps I have read to much Bourbaki-style mathematics, but defining $\chi(CF)$ doesn't make much sense.
%We can recall the definition of $\chi$ in general (also see Definition 2.3 which is not a definition but a fact) [J] I agree with %Stefan on this one. I've made changes accordingly, but Andras should modify as he likes}
\begin{equation*}
\chi(CF(\S,\bolda,\boldb,\s,\omega) ) = \sum_{\{\boldx \in \T_\a \cap  \T_\b  \,\colon\, \s(\boldx) = \s\}} (-1)^{b_1(M,R_-)} m(\boldx).
\end{equation*}

\begin{lemma}
If $(\S,\bolda,\boldb)$ and $(\S',\bolda',\boldb')$ both represent $(M,\g),$ then
$$\chi(CF(\S,\bolda,\boldb,\s,\omega) ) = \chi(CF(\S',\bolda',\boldb',\s,\omega) ).$$
\end{lemma}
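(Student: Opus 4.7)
The plan is to invoke the calculus of Heegaard moves for balanced diagrams, \cite[Proposition 2.15]{Ju06}: any two balanced diagrams for $(M,\g)$ are connected by a finite sequence of (i) isotopies of the $\bolda$ or $\boldb$ curves, (ii) handleslides among the $\bolda$'s or among the $\boldb$'s, and (iii) stabilizations in which $\S$ is replaced by $\S\#T^2$ with a new pair of curves $\a_{d+1},\b_{d+1}$ meeting transversely in a single point $p$. Writing
\[
\chi(CF(\S,\bolda,\boldb,\s,\w)) = (-1)^{b_1(M,R_-)}\sum_{\{\boldx:\,\s(\boldx)=\s\}} m(\boldx),
\]
it suffices to check that this signed count is preserved under each of the three moves.

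For an \emph{isotopy}, a generic one-parameter family modifies $\T_\a\cap\T_\b$ only by creating or destroying pairs $x_\pm\in\a_i\cap\b_j$ at a tangency. The generators $\boldx\cup\{x_+\}$ and $\boldx\cup\{x_-\}$ differ in only one coordinate, so by Lemma~\ref{spincdiff} they lie in the same $\spinc$ class (the difference path on $\a_i\cup\b_j$ is null-homologous in $\S$), while by Lemma~\ref{lem:m} they have opposite signs $m$; hence they cancel. For a \emph{stabilization}, the new generators are precisely the $\boldx\cup\{p\}$, the pair $(M,R_-)$ is unchanged, and one enlarges the handle decomposition by a canceling pair $(A_{d+1},B_{d+1})$ placed last at their respective levels. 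The induced orientation $o(\w)$ picks up the outer factor $[\a_{d+1}]\wedge[\b_{d+1}]$, and choosing the orientations of $\a_{d+1},\b_{d+1}$ so that $m(p)=+1$, Lemma~\ref{lem:m} yields $m(\boldx\cup\{p\})=m(\boldx)$. The Euler characteristic is preserved.

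For a \emph{handleslide} of $\a_1$ over $\a_2$ producing $\a_1'$, the handle decomposition of $(M,R_-\times I)$ is abstractly unchanged: the one-handle $A_1$ slides over $A_2$, inducing a unit-determinant change of basis of $C_1(M,R_-\times I;\R)$, so the orientation $\w'$ of $C_*$ is preserved. Since $[\a_1']=[\a_1]\pm[\a_2]$ in $H_1(\S;\R)$, the Pl\"ucker generator is unchanged:
\[
[\a_1']\wedge[\a_2]\wedge\cdots\wedge[\a_d]=[\a_1]\wedge[\a_2]\wedge\cdots\wedge[\a_d]\in\Lambda^d(A),
\]
and therefore the induced orientation of $\Lambda^d(A)\otimes\Lambda^d(B)$ is unchanged. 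The $\spinc$-preserving holomorphic-triangle chain map of \cite{Ju06} is a quasi-isomorphism between the two complexes; combined with the sign convention of Definition~\ref{def:SFHsign} just verified, it identifies the signed counts in each $\spinc$ class.

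The main obstacle is the handleslide step, since the $\a$-curves genuinely change as subsets of $\S$ and one must verify that the chain-level triangle identification respects the absolute $\Z/2$ grading. The crucial algebraic point is the invariance of the Pl\"ucker generator of $\Lambda^d(A)$ under the elementary row operation $[\a_1]\mapsto[\a_1]\pm[\a_2]$; this is what keeps the homology-orientation-induced sign convention consistent across the two diagrams, and it is the conceptual reason for packaging the data $\w$ into the orientation of $\Lambda^d(A)\otimes\Lambda^d(B)$ in Definition~\ref{def:torussign}.
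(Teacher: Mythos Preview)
Your treatment of isotopies and stabilizations is correct and matches the paper's reasoning. The divergence is in the handleslide step.

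The paper handles handleslides by invoking \cite{Pe07}: both isotopies and handleslides correspond to isotopies of $\T_\a$ and $\T_\b$ inside $\mathrm{Sym}^d(\S)$. Thus the \emph{same} birth/death argument you gave for curve isotopies applies verbatim to handleslides: when a pair $\boldx,\boldy$ appears or disappears, they are connected by a topological Whitney disk, hence $\s(\boldx)=\s(\boldy)$, and their intersection signs are opposite. No triangle maps are needed.

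Your route via holomorphic triangles is more elaborate and, as written, has a gap. You correctly observe that $[\a_1']\wedge[\a_2]\wedge\cdots\wedge[\a_d]=[\a_1]\wedge[\a_2]\wedge\cdots\wedge[\a_d]$, so the orientation of $\Lambda^d(A)\otimes\Lambda^d(B)$ induced by $\omega$ is the same on both sides. But this only says that the two chain complexes each carry an absolute $\Z/2$ grading determined by the same underlying data; it does \emph{not} show that the triangle map itself respects these absolute gradings. A quasi-isomorphism that preserves only the relative $\Z/2$ grading yields $\chi(CF)=\pm\chi(CF')$, and the Pl\"ucker argument does not by itself resolve the sign. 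To close the gap you would need an index computation showing the triangle map has even degree (e.g., by tracking a specific generator such as the top intersection point $\Theta$ of the auxiliary torus), which you have not supplied. The paper's use of Perutz sidesteps this entirely: once handleslides are literal isotopies of Lagrangian tori, the signed count in each $\spinc$ class is manifestly preserved through each birth/death, and no grading comparison between distinct chain complexes is required.
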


\begin{proof}
Recall that $(\S,\bolda,\boldb)$ and $(\S',\bolda',\boldb')$ can be connected by a sequence of isotopies, handleslides,
and stabilizations/destabilizations. By \cite{Pe07}, both isotopies and handleslides correspond to isotopies of $\T_{\a}$
and $\T_{\b}$ inside $\text{Sym}^d(\S).$ If a pair of intersection points $\boldx$ and $\boldy$ of $\T_{\a}$ and $\T_{\b}$ appear/disappear during such an isotopy, then $\boldx$ and $\boldy$ can be connected by a topological Whitney disk, and hence
$\s(\boldx) = \s(\boldy).$ Invariance under stabilization/destabilization follows immediately from Lemma~\ref{lem:m}.
\end{proof}

\begin{defn}
Let $(M,\g)$ be a balanced sutured manifold, $\s \in \spinc(M,\g),$ and $\w$ a homology orientation of $(M,R_-(\g)).$ Then define $$\chi(SFH(M,\g,\s,\w)) = \chi(CF(\S,\bolda,\boldb,\s,\w)),$$ where $(\S,\bolda,\boldb)$ is any balanced diagram representing
$(M,\g).$
\end{defn}

In practice, it is convenient to combine the Euler characteristics corresponding to different \(\spinc\) structures into a single generating function, which we view as an element of the group ring \(\Z[H_1(M)]\). For this, fix an affine isomorphism \(\iota:\spinc(M,\g) \to H_1(M),\) and  let
 \begin{align} \label{eqn:1}
 \chi(SFH(M,\g,\omega))  = \sum_{\s \in \text{Spin}^c(M,\g)} \chi(SFH(M,\g,\s,\w)) [\iota(\s)]  \\ \notag = (-1)^{b_1(M,R_-)} \sum_{\boldx \in \T_\a \cap  \T_\b } m(\boldx)[\iota(\s(\boldx))].
 \end{align}
Then \(\chi(SFH(M,\g,\omega))\) is well-defined up to multiplication by an element of $H_1(M),$ viewed as a unit in \(\Z[H_1(M)]\).

%
%
%
%Next we are going to define the Chern class of a $\spinc$ structure. Let $i : \partial M \to M$
%denote the embedding. If $v$ is the vector field constructed in Remark 4.3 then using
%the naturality of Chern classes we see that i.(c_1(v_)) = \delta, thus \delta ¸ Im(i.).
%Definition 4.4. For s ¸ \spinc(M, \gamma) defined by a vector field v on M let c_1(s) =
%c_1(v_) ¸ (i.).1(\delta) where v_ is the oriented two-plane field on M perpendicular to
%v. Note that a priori we only know that c1(v_) ¸ H2(M; Z), but since v|\partial M = v0
%we get that c1(v_) ¸ (i.).1(\delta)

%=====================================
\subsection{Duality}
%\footnote{The input data is an orientation of $(M,R_-(\gamma))$, not an orientation of $M$. So I think
%$SFH(-M,\gamma,\s)$ doesn't make sense. The point is that an orientation of $(M,R_-(\gamma))$ induces an orientation
%of $(M,R_+)$ via Poincar\'e duality.{\bf See last footnote.}}
Let $(M,\gamma)$ be a balanced sutured manifold, and denote by $(M,-\gamma)$  the same manifold, but with  the orientation of the suture $s(\g)$ reversed. The effect of this is to reverse the roles of  \(R_+(\g)\) and \(R_-(\gamma),\) more precisely, \(R_{\pm}(-\g) = R_{\mp}(\g)\).
The same  happens if we reverse the orientation of $M.$
In this subsection, we
show that the groups $SFH(M,\g)$ and $SFH(M,-\g)$ are isomorphic, and that they are `dual' %\footnote{I added the quotes, because they are not dual in the Bourbaki sense}
to $SFH(-M,\g)$ and $SFH(-M,-\g)$. This essentially follows the same way as for ordinary Heegaard Floer homology, though it has not appeared in print before in the case of sutured Floer homology.

If $\s\in \spinc(M,\gamma)$ is represented by a nowhere vanishing vector field $v$, then we also denote by $\s$ the homology class of $v$ on $(-M,-\g).$ Furthermore, $-v$ defines a $\spinc$ structure on both $(M,-\g)$ and $(-M,\g).$ In both cases, we denote the homology class of $-v$ by $-\s.$

\begin{proposition} \label{prop:duality}
Let $(M,\gamma)$ be a balanced sutured manifold, and choose a $\spinc$ structure $\s\in \spinc(M,\gamma).$
%\footnote{[S] together with a homology orientation this should lift to an isomorphism of absolutely graded groups, isn't it?}
Then  $$SFH(M,\g,\s) \cong SFH(M,-\g,-\s)$$ as relatively graded groups, and hence $$SFH(-M,-\g,\s) \cong SFH(-M,\g,-\s).$$ Moreover, $SFH(M,\g,\s)$ and  $SFH(-M,-\g,\s)$  are the homologies of dual chain complexes, so by the universal coefficient theorem
$$SFH(-M,-\g,\s) \cong \Hom(SFH(M,\g,\s),\Z) \oplus \Ext(SFH(M,\g,\s)[1],\Z).$$
%\footnote{By the above I am not sure whether the last statement makes sense. Even if it does, perhaps it's better to be more explicit and say that with the absolute $\Z/2$--gradings we can compute one from the other by an explicit formula (which would basically be UCT)}
\end{proposition}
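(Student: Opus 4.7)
The plan is to realize $(M,-\g),$ $(-M,-\g),$ and $(-M,\g)$ as sutured manifolds that arise from explicit modifications of a fixed balanced diagram $(\S,\bolda,\boldb)$ for $(M,\g)$, and then to compare the resulting chain complexes directly. Two key claims underlie the argument: first, $(-\S,\boldb,\bolda)$ is a balanced diagram for $(M,-\g);$ second, $(-\S,\bolda,\boldb)$ is a balanced diagram for $(-M,-\g).$ To see these, note that each of the operations ``reverse $\S$'' and ``swap $\bolda\leftrightarrow\boldb$'' reverses the $I$-direction in the handle reconstruction $M=\S\times I\cup(\text{2-handles})$: performing both preserves the oriented $3$-manifold $M,$ while performing just one gives $-M.$ In both situations, the identity $\partial(-\S)=-\partial\S$ flips the orientation of the sutures, so we land on $-\g.$ Balancedness transfers directly.

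For the first displayed isomorphism $SFH(M,\g,\s)\cong SFH(M,-\g,-\s),$ the generators $\T_\a\cap\T_\b$ of the two chain complexes agree. The orientation-reversing reparametrization $\zeta\mapsto-\overline{\zeta}$ of the Whitney disk domain fixes the two vertices $\pm i,$ interchanges the two boundary arcs, and converts $J$-holomorphicity into anti-$J$-holomorphicity. Composing with it therefore gives a bijection between holomorphic Whitney disks in $(\S,\bolda,\boldb)$ from $\boldx$ to $\boldy$ and holomorphic Whitney disks in $(-\S,\boldb,\bolda)$ from $\boldx$ to $\boldy,$ once the complex structure on the image is reversed to match $-\S.$ After the usual sign accounting for moduli space orientations, the differentials coincide. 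The $\spinc$-label $\s(\boldx)$ of each generator is replaced by $-\s(\boldx),$ since the combined effect of the two operations on the gradient flow used to construct the vector field $v(\boldx)$ representing $\s(\boldx)$ is negation (the diagram $(-\S,\boldb,\bolda)$ naturally calls for $3-f$ in place of the Morse function $f$), and $-v(\boldx)$ represents $-\s(\boldx)\in\spinc(M,-\g)$ by the convention introduced just above the proposition. This gives the first isomorphism, and the second follows by applying the first to $(-M,\g)$ in place of $(M,\g).$

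For the duality statement, use only the single operation of reversing $\S$: then $(-\S,\bolda,\boldb)$ represents $(-M,-\g).$ The generators again agree, but this time the appropriate reparametrization $\zeta\mapsto\overline{\zeta}$ of the Whitney disk domain preserves both boundary arcs while exchanging the vertices $\pm i.$ Composing a holomorphic disk in $(\S,\bolda,\boldb)$ with this map, and then reversing the complex structure, gives a holomorphic disk in $(-\S,\bolda,\boldb)$ from $\boldy$ to $\boldx$ rather than from $\boldx$ to $\boldy.$ Consequently the boundary map of $CF(-\S,\bolda,\boldb)$ is the transpose of that of $CF(\S,\bolda,\boldb),$ so the two complexes are $\Z$-dual. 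Under this single operation the vector field $v(\boldx)$ is unchanged, so by the stated convention it represents the same class $\s(\boldx)$ when viewed on $(-M,-\g).$ Thus $CF(-\S,\bolda,\boldb,\s)\cong CF(\S,\bolda,\boldb,\s)^*$ as chain complexes of free abelian groups, and the universal coefficient theorem applied to this dual complex (with the Ext summand shifted by one in the $\Z/2$-grading) yields the claimed $\Hom$-plus-$\Ext$ identity.

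The main subtlety is the careful identification of $\spinc$-structures under each modification of the diagram, which must be tracked through the vector field $v(\boldx)$ canonically assigned to a generator (along with its defining Morse function), together with the sign conventions relating $\spinc(M,\g),$ $\spinc(M,-\g),$ $\spinc(-M,\g),$ and $\spinc(-M,-\g)$ stated in the paragraph just above the proposition. The sign accounting for the differential and for compatibility with the absolute $\Z/2$-grading of Definition~\ref{def:SFHsign} is routine after \cite{OS04b}.
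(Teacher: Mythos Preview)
Your proof is correct and follows essentially the same route as the paper: identify $(-\S,\boldb,\bolda)$ and $(-\S,\bolda,\boldb)$ as balanced diagrams for $(M,-\g)$ and $(-M,-\g)$ respectively, use the reparametrizations $\zeta\mapsto-\overline{\zeta}$ and $\zeta\mapsto\overline{\zeta}$ of the Whitney disk to compare holomorphic representatives, and track the $\spinc$ label of each generator through the associated vector field. The paper phrases the identification of diagrams by following $R_\pm$ and the two compression bodies rather than via your ``reversing the $I$-direction'' heuristic, but the content is the same.
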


\begin{proof}
%First, let us fix our orientation conventions. If $(\S,\bolda,\boldb)$ is a sutured diagram defining $(M,\g),$ then the orientation of $M$ is given on $\S \times [-1,1]$ by taking the product orientation, where $[-1,1]$ is oriented from $-1$ to $1.$ This orientation naturally extends to the attached 2-handles. Then $s(\g) = \partial\S \times \{0\}$ is oriented as the boundary of $\S \times \{0\},$ outward normal first. With these choices, $R_{\pm}(\g)$ is obtained from $\S \times \{\pm 1\}$ by doing surgery along the feet of the 2-handles.

%Given $(M,\g),$ choose an admissible sutured diagram $(\S,\bolda,\boldb)$ defining it. Then $(-\S,\bolda,\boldb)$ defines $(-M,-\g),$ because the product orientation on $\S \times [-1,1]$ is reversed, furthermore now the suture is $-\partial \S \times \{0\}.$

%If we flip the roles of $\bolda$ and $\boldb$ we get the sutured diagram $(\S,\boldb,\bolda).$ Call the sutured manifold defined by it $(N,\nu).$ Then we can define a homeomorphism $h \colon N \to M$ which maps $\S \times [-1,1] \subset N$ to $\S \times [-1,1] \subset M$ using the formula $h(s,t) = (s,-t)$ and extends naturally to the 2-handles. This map $h$ is orientation reversing from $N$ to $M,$ but preserves the orientation of the sutures. Thus $(N,\nu) = (-M,\g).$

Choose an admissible balanced diagram $(\S,\bolda,\boldb)$ for $(M,\g).$ Recall our orientation conventions from Section \ref{sec:ori}. The surface $\S$ is oriented such that $s(\g) = \partial\S,$ and divides $M$ into two compression bodies. The $\a$ curves bound disks in the compression body containing $R_-(\g),$ the $\b$ curves in the compression body containing $R_+(\g).$

First, consider $(-M,-\g).$ Since $s(-\g) = -s(\g) = \partial (-\S),$ the Heegaard surface is now $-\S.$
As $R_-$ and $R_+$ are the same in $(M,\g)$ and $(-M,-\g),$ the $\a$ and $\b$ compression bodies also coincide.
So $(-\S,\bolda,\boldb)$ is a balanced diagram for $(-M,-\g).$

Now flip the orientation of $M.$ For $(-M,\g),$ the surface $\S$ serves as a Heegaard surface, since the orientation of the sutures are the same. However, changing the orientation of $M$ results in flipping $R_-$ and $R_+,$ and hence the $\a$ and $\b$ compression bodies are also reversed. So $(\S,\boldb,\bolda)$ is a balanced diagram for $(-M,\g).$

Finally, in $(M,-\g),$ the orientations of the sutures are reversed, and $R_-$ and $R_+$ are also flipped. Combining the observations of the previous two paragraphs, we see that $(-\S,\boldb,\bolda)$ is a balanced diagram for $(M,-\g).$

The chain complexes $CF(\S,\bolda,\boldb),$ $CF(-\S,\bolda,\boldb),$ $CF(\S,\boldb,\bolda),$ and $CF(-\S,\boldb,\bolda)$ all have the same generators, namely $\T_{\alpha} \cap \T_{\beta}.$ Let $\boldx, \boldy $ be generators of $CF(\S,\bolda,\boldb)$ that are connected by a rigid pseudo-holomorphic Whitney disc $u \colon \mathbb{D} \to \text{Sym}^d(\S).$ Then $z \mapsto u(\overline{z})$ is a rigid pseudo-holomorphic disc connecting $\boldy$ to $\boldx$ in $CF(-\S,\bolda,\boldb).$ Furthermore, $z \mapsto u(-z)$  is a  rigid pseudo-holomorphic disk connecting $\boldy$ to $\boldx$ in $CF(\S, \boldb,\bolda).$ Finally, $z \mapsto u(-\overline{z})$  is a  rigid pseudo-holomorphic disk connecting $\boldx$ to $\boldy$ in $CF(-\S,\boldb,\bolda).$ Thus
$$CF(\S,\bolda,\boldb) = CF(-\S,\boldb,\bolda)$$ and $$CF(-\S,\bolda,\boldb) = CF(\S,\boldb,\bolda),$$ while the chain complex $CF(-\S,\bolda,\boldb)$ is dual to $CF(\S,\bolda,\boldb).$

To get the refined statement involving the $\spinc$ structures, observe that if $\boldx$ is a generator of $CF(\S,\bolda,\boldb)$ and $\boldx'$ is the corresponding generator of $CF(\S,\boldb,\bolda),$ then $\s(\boldx') = -\s(\boldx).$ On the other hand, the $\spinc$ structure assigned to $\boldx$ in $CF(\S,\bolda,\boldb)$ and in $CF(-\S,\bolda,\boldb)$ can be represented by the same vector field on $M.$
So $\spinc$ structures of corresponding generators of $CF(\S,\bolda,\boldb)$ and $CF(-\S,\boldb, \bolda)$ can be represented by opposite vector fields on $M.$
%\footnote{[S] the old version read:\\
%Finally, in $CF(-\S,\boldb,\bolda)$ each generator is assigned the same $\spinc$ structure as in $CF(\S,\bolda, \boldb).$\\
%Please check whether what I wrote is now correct. [A] Basically correct.}
\end{proof}

%=====================================
\section{The definition of the torsion function}
\label{section:torsion}

In this section, we  first review the torsion of a based chain complex and the maximal abelian torsion of a pair of finite CW complexes. %a function for certain pairs of CW--complexes $(X,Y)$ which assigns an integer to an equivalence class of lifts of cells to the universal abelian cover of $X$.
Then we define the torsion invariant for weakly balanced sutured 3--manifolds. Our approach follows closely the ideas of Turaev exposed in \cite{Tu97,Tu98,Tu01,Tu02}, see also Benedetti and Petronio \cite{BP01} for a related approach.

%=====================================
\subsection{Torsion of based complexes} \label{subsection:torsion}
In this subsection, we quickly recall the definition of the torsion of a based complex.
We refer to Milnor's classic paper \cite{Mi66} and Turaev's books \cite{Tu01,Tu02} for details.
Note that we follow Turaev's convention; Milnor's definition gives the multiplicative inverse of the torsion that we consider.

Throughout this section, let $\F$ be a field.
Let $V$ be a vector space over $\F,$ and  let $x=(x_1,\dots,x_n)$ and $y=(y_1,\dots,y_n)$ be two ordered bases for $V$. Then we
can write $x_i=\sum_{j=1}^na_{ij}y_j,$ and we
define $ [x/y] =  \det(a_{ij})$.

Now let
\[ 0\to C_n \xrightarrow{\partial_{n-1}} C_{n-1}\xrightarrow{\partial_{n-2}} \dots \xrightarrow{\partial_1} C_{1} \xrightarrow{\partial_0 } C_0\to 0\]
be a complex of $\F$-vector spaces. We write $H_i =H_i(C) = \ker\, \partial_{i-1}/\im\,{\partial_i}.$ For each $i,$ we pick an ordered basis $c_i$ for $C_i$ and an ordered basis $h_i$ for $H_i.$

We write $B_i=\im\{\partial_i:C_{i+1}\to C_i\},$ and we pick an ordered  basis $b_i$ for $B_i$.
Finally, we pick an ordered set of vectors $b_i'$ in $C_{i+1}$ such that $\partial_i b_{i}'=b_i$ as ordered sets.
By convention, we define $b_{-1}'$ to be the empty set.
Note that for $i=0,\dots,n,$ the ordered set $b_ih_ib'_{i-1}$ defines an ordered basis for $C_i$.
We now define the \emph{torsion} of the based complex $C$ as
\[\tau =  \prod_{i=0}^n [b_ih_ib'_{i-1}/c_i]^{(-1)^{i+1}} \in \F^*.\]
An elementary argument shows that $\tau$ does not depend on the choice of $b_0,\dots,b_{n-1},$ and it does not depend on the choice of  lifts $b_0',\dots,b_{n-1}',$ see for example \cite[Section~1]{Tu01}. Put differently, this number only depends on the choice of the  complex and the choice of the ordered bases for $C_*$ and $H_*.$ We henceforth denote this invariant by
$\tau(C_*,c_*,h_*)\in \F^*.$ If $C_*$ is acyclic; i.e., if $H_*(C)=0,$ then we just write $\tau(C_*,c_*)\in \F^{*}.$

If $c_*'$ is an ordered basis obtained from $c_*$ by swapping two basis vectors, then
it is straightforward to see that
\[ \tau(C_*,c_*',h_*)=- \tau(C_*,c_*,h_*)\in \F^*.\]

Given a chain complex $C_*$ as above, consider the numbers
\[ \ba{rcl}
 \b_i(C)&=&\sum\limits_{j=0}^i (-1)^{i-j} \dim(H_j), \\
\g_i(C)&=&\sum\limits_{j=0}^i (-1)^{i-j} \dim(C_j), \\
N(C)&=&\sum\limits_{i=0}^n \b_i(C)\g_i(C).\ea
\]
Then we define
\[ \check{\tau}(C_*,c_*,h_*)=(-1)^{N(C)} \tau(C_*,c_*,h_*)\in \F^*.\]
%\footnote{[S] the term $(-1)^{N(C)}$ seems to be necessary to get invariance under cellular subdivision, see \cite[Lemma~18.4]{Tu01}. I fear that we were a little cavalier regarding signs in Proposition 3.10. It seems like Proposition 3.10 will only work with the $N(C)$ term.}
If $C_*$ is acyclic, then $\b_i(C)=0$ for all $i,$ and so $\check{\tau}(C_*,c_*) = \tau(C_*,c_*).$

%=====================================
\subsection{Lifts and Euler structures} \label{section:lifts}

Let $(X,Y)$ be a pair of finite dimensional CW complexes with $Y\subset X$ and $X$ connected. We write $H=H_1(X)$ and view $H$ as a multiplicative group.
Denote by  $\mathcal{C}$ the set of cells in $X \setminus Y.$ Let $\pi:\hat{X}\to X$ be the universal abelian cover of $X$ and write $\hat{Y}=\pi^{-1}(Y)$.

\begin{defn} \label{defn:lift}
A \emph{lift} $l$ from $(X,Y)$ to $(\hat{X},\hat{Y})$ is a choice for every $c \in \mathcal{C}$ of a cell $l(c)$ in $\hat{X}$ lying over $c.$
Note that if $l'$ is any other lift, then for every $c \in \mathcal{C}$ there is an element $g(c) \in H$ such that $l'(c) = g(c) \cdot l(c).$ We say that $l$ and $l'$ are equivalent if $$\prod_{c \in \mathcal{C}}  g(c)^{(-1)^{\text{dim}\,c}} \in H$$ is trivial. We denote the set of equivalence classes of lifts by $\lift(X,Y).$
\end{defn}

We now define an action of  $H$ on $\lift(X,Y).$ First, suppose that $X \neq Y.$
Let $h\in H$ and suppose that $\frakl\in \lift(X,Y)$ is represented by a lift $l.$ Fix an arbitrary cell $c_0 \in \mathcal{C}$ and suppose that $\dim\, c_0 = i.$ Then $h \cdot \frakl$ is represented by the lift $l'$ such that $l'(c_0) = h^{(-1)^i} \cdot l(c_0)$ and $l'(c) = l(c)$ for $c \in \mathcal{C} \setminus \{c_0\}.$ If $X = Y,$ then $|\lift(X,Y)| =1.$ Then the action of $H$ is trivial on $\lift(X,Y).$

The above definition is independent of the choice of $c_0.$ If $X \neq Y,$ then $H$ acts freely and transitively on $\lift(X,Y)$. In particular, given $\frakl_1,\frakl_2\in \lift(X,Y),$ we get a well-defined element $\frakl_1-\frakl_2\in H$.

\begin{defn} \label{defn:eul}
For each cell $c \in \mathcal{C},$ pick a point $p(c)$ in $c.$ An \emph{Euler chain} for $(X,Y)$ is a one-dimensional singular chain $\theta$ in $X$ with
\[ \partial \theta= \sum_{c \in \mathcal{C}} (-1)^{\text{dim}\,c}\,p(c).\]
Given two Euler chains $\theta,\eta,$ we define $\theta-\eta\in H$ to be the homology class of the 1-cycle $\theta-\eta$. Two Euler chains $\theta,\eta$  are equivalent if $\theta-\eta$ is trivial in $ H$. We call an equivalence class of Euler chains an \emph{Euler structure},  and denote the set of Euler structures by $\eul(X,Y)$.
\end{defn}

Note that $\eul(X,Y)\ne \emptyset$ if and only if $\chi(X,Y)=0$. Furthermore, if $\eul(X,Y)\ne \emptyset,$ then $H$ acts freely and transitively on $\eul(X,Y)$.

\begin{defn} \label{defn:E}
Suppose that $\chi(X,Y) = 0$ and $X\ne Y.$ Then we define a map $$E \colon \lift(X,Y) \to \eul(X,Y)$$ as follows. Pick a point $\hat{p} \in \hat{X}.$ Suppose that $\frakl \in \lift(X,Y),$ and choose a lift $l$ representing $\frakl.$
For every $c \in \mathcal{C},$ connect $\hat{p}$ and a point $\hat{p}(c) \in l(c)$ with an oriented path $\hat{\theta}(c)$ such that $\partial \hat{\theta}(c) = (-1)^{\text{dim}\,c}(\hat{p}(c)-\hat{p}).$ If
$\hat{\theta} = \sum_{c \in \mathcal{C}} \hat{\theta}(c),$ then $\theta = \pi(\hat{\theta})$ is an Euler chain since $\chi(X,Y) = 0.$ The Euler structure $\mathfrak{e}$ represented by $\theta$ only depends on $\frakl,$ so we define $E(\frakl) = \mathfrak{e}.$
\end{defn}

If $X \neq Y,$ then the map $E$ is an $H$-equivariant bijection. If $X = Y,$ then
$\eul(X,Y)$ is canonically isomorphic to $H,$ and the image of the unique element of $\lift(X,Y)$ under $E$ is $0 \in H.$

%=====================================
\subsection{Torsion of CW complexes} \label{subsection:CW}

We continue with the notation from Section~\ref{section:lifts}. In particular, let $(X,Y)$ be a pair of finite dimensional CW complexes with $Y\subset X$ and $X$ connected. We write $H=H_1(X)$ and view $H$ as a multiplicative group.
Let $\frakl\in \lift(X,Y)$ be a lift represented by $l.$
Furthermore, let $\varphi \colon \Z[H]\to \F$ be a ring homomorphism to a field $\F.$
Finally, let  $\w$  be a \emph{homology orientation}; i.e., an orientation of the vector space $H_*(X,Y;\R)=\oplus_{i\geq 0} H_i(X,Y;\R)$.
In this section, we recall the definition of the sign-refined Reidemeister--Turaev torsion  $\tau^\varphi(X,Y,\frakl,\w)\in \F$.
We refer to \cite{Tu01,Tu02} for details.

Consider the chain complex
$$C_*(X,Y;\F)=C_*(\hat{X},\hat{Y};\Z)\otimes_{\Z[H]}\F.$$
Here $H$ acts via deck transformations on $\hat{X},$ and hence on $C_*(\hat{X},\hat{Y};\Z)$, and $H$ acts on $\F$ via $\varphi$.
If this complex is not acyclic; i.e., if the  twisted homology groups $H_*(X,Y;\F)$ do not vanish, then we set $\tau^\varphi(X,Y,\frakl,\w)=0 \in \F$.
If the complex is acyclic,  then we can define the torsion $\tau^\varphi(X,Y,\frakl,\w)\in \F\sm \{0\}$ as follows.

We first pick an ordering of the cells of $X\sm Y$, and for each cell we pick an orientation.
We thus obtain an ordered  basis $c_*$ for $C_*(X,Y;\R)$.
The cells $\{\,l(c) \colon c\in \mathcal{C} \,\}$ also  define  an ordered basis of $C_*(\hat{X},\hat{Y};\Z)$ as a complex of free $\Z[H]$--modules.
This gives rise to an ordered basis $\hat{c}_*$ of $C_*(X,Y;\F)=C_*(\hat{X},\hat{Y};\Z)\otimes_{\Z[H]}\F$ in a natural way.

Finally, pick ordered bases $h_i$ for $H_i(X,Y;\R)$ with the property that
$h_0h_1\dots h_n$ is a positive basis for the oriented vector space $H_*(X,Y;\R)$.
We now consider
\[ \tau(C_*(X,Y;\F),\hat{c}_*) \cdot \sign(\check{\tau}(C_*(X,Y;\R),c_*,h_*)).\]
It follows easily from the definitions that this number is independent of the choices we made; i.e., it is independent of the choice of representative lift corresponding to $\frakl \in \lift(X,Y)$, the ordering of the cells of $X\sm Y$, their orientations and the choice of $h_*$. We refer to \cite[Section~18]{Tu01} for details.
Put differently, this number only depends on the CW pair $(X,Y)$, the lift $\frakl\in \lift(X,Y),$ the homology orientation $\w$ and the ring homomorphism $\v$.
We can thus define
\[ \tau^\varphi(X,Y,\frakl,\w):=\tau(C_*(X,Y;\F),\hat{c}_*) \cdot \sign(\check{\tau}(C_*(X,Y;\R),c_*,h_*))\in \F\sm \{0\}. \]

Now suppose that $\chi(X,Y) = 0.$ If $X \neq Y,$ then for $\mathfrak{e} \in \eul(X,Y)$ let  $$\tau^\varphi(X,Y,\mathfrak{e},\w) =
\tau^\varphi(X,Y,E^{-1}(\mathfrak{e}),\w).$$  If $X = Y,$ then recall that $\eul(X,Y)$ is canonically identified with $H.$ For $h \in H = \eul(X,Y),$ let $\tau^\varphi(X,Y,h,\w) = h$ if $\omega$ is the positive orientation of $H_*(X,Y;\R) = 0,$ and $\tau^\varphi(X,Y,h,\w) = -h$ otherwise.

%=====================================
\subsection{The maximal abelian torsion of a CW complex}\label{sec:MaxTorsion}
We continue with the notation from the previous sections. In particular, let $(X,Y)$ be a pair of finite CW complexes such that $X$ is connected. Furthermore, let $\frakl\in \lift(X,Y)$ and $\w$ a homology orientation.

Again, we write $H=H_1(X)$ and think of $H$ as a multiplicative group. We let $T=\tor(H)$ be the torsion subgroup.
Given a ring $R,$ we denote by $Q(R)$ the ring which is given by inverting all elements of $R$ that are not zero divisors. We write $Q(H)=Q(\Z[H]).$

A character $\chi:T\to \C^*$ extends to a ring homomorphism ${\chi}:\Q[T]\to \C$, its image is a cyclotomic field $\F_\chi$.
Two characters $\chi_1,\chi_2$ are called equivalent if $\F_{\chi_1}=\F_{\chi_2}$ and if ${\chi}_1$ is the composition of ${\chi}_2$ with a Galois automorphism of $\F_{\chi_1}$ over $\Q$. For any complete family of representatives $\chi_1,\dots,\chi_n$ of the set of equivalence classes of characters, the homomorphism
\[ ({\chi}_1,\dots,{\chi}_n):\Q[T]\to \bigoplus_{i=1}^n \F_{\chi_i}\]
is an isomorphism of rings.
% respecting  the canonical involutions.
We will henceforth identify $\Q[T]$ with $\bigoplus_{i=1}^n \F_{\chi_i}$.
Note that under this isomorphism $1\in \Q[T]$ corresponds to $(1,\dots,1)$.

Now let $F$ be the free abelian group $H/T,$ and pick a splitting $H=F\times T$.
%Let $K_i=\F_{\chi_i}[F]$.
Then we have the identifications
\[ \Q[H]=\bigoplus_{i=1}^n \F_{\chi_i}[F] \mbox{ and } Q(H)=\bigoplus_{i=1}^n Q(\F_{\chi_i}[F]).\]
We denote by $\varphi_i$ the ring homomorphism $$\Z[H]\to\bigoplus_{i=1}^n  \F_{\chi_i}[F]\to  \F_{\chi_i}[F]\to Q( \F_{\chi_i}[F]).$$
Following Turaev, we now let
\[ \tau(X,Y,\frakl,\w) =\sum_{i=1}^n \tau^{\varphi_i}(X,Y,\frakl,\w)\in \bigoplus_{i=1}^n Q( \F_{\chi_i}[F])=Q(H).\]
Note that $\tau(X,Y,\frakl,\w)\in Q(H)$ is independent of the choices we made, cf. \cite[Section~K]{Tu02}.
Also note that
\be \label{equ:htau} \tau(X,Y,h\cdot \frakl ,\pm \w)=\pm h\cdot \tau(X,Y,\frakl,\w).\ee
In the following, we write $\tau(X,Y,\frakl)$ for the set of torsions corresponding to all possible homology orientations. Also, if $\chi(X,Y) = 0$
and $X\ne Y$, then for $\mathfrak{e} \in \eul(X,Y)$ we define
$$\tau(X,Y,\mathfrak{e},\w) = \tau(X,Y,E^{-1}(\mathfrak{e}),\w).$$
If $X=Y$ and $\mathfrak{e}\in \eul(X,Y)$ corresponds to $h\in H$ under the canonical isomorphism $\eul(X,Y)=H$, then
we define $\tau(X,Y,\mathfrak{e},\w)=\pm h$, depending on whether $\w$ is the positive or negative orientation of the zero space.
Finally, if $\chi(X,Y)\ne 0$, then we set
$\tau(X,Y,\mathfrak{e},\w) = 0$.

In the coming sections, we will often make use of the following two lemmas.

\begin{lemma} \label{lem:simple}
Let $(X,Y)$ be a pair of finite CW--complexes, and let $\w$ be a homology orientation for $(X,Y).$
Assume that $(X',Y')$ is a CW--pair obtained from $(X,Y)$ by a simple homotopy $s.$
Since $s_* \colon H_*(X,Y;\R) \to H_*(X',Y';\R)$ is an isomorphism, the orientation $\w$ of $(X,Y)$ induces an orientation $\w' = s_*(\w)$ of $(X',Y').$
Then there exists an $H$--equivariant bijection
$b_s \colon \eul(X,Y)\to \eul(X',Y')$ such that for every $\mathfrak{e} \in \eul(X,Y)$ $$s_*\left(\tau(X,Y,\mathfrak{e},\w)\right) = \tau(X',Y',b_s(\mathfrak{e}),\w').$$
\end{lemma}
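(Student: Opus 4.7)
The plan is to reduce the claim to a single elementary expansion. By Whitehead's theorem, every simple homotopy equivalence of CW--pairs is a finite composition of elementary expansions and collapses, and both the bijection and the torsion identity compose naturally: $b_{s\circ t}=b_s\circ b_t$, and the torsion equation composes once we use $(s\circ t)_*=s_*\circ t_*$. Hence it suffices to treat the case where $s\colon (X,Y)\hookrightarrow (X',Y')$ is an expansion with $Y'=Y$ attaching a pair of cells $e^i,e^{i+1}$ such that $e^i$ is a free face of $e^{i+1}$; the collapse case then follows by inversion. Since $\chi$ is a simple homotopy invariant, we may further assume $\chi(X,Y)=\chi(X',Y')=0$, as otherwise both sides of the claimed identity are vacuously empty.

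Next I build $b_s$ first at the level of lifts. Given a representative $l$ of $\frakl\in \lift(X,Y)$, pick any lift $\tilde{e}^i\subset \hat{X}'$ of $e^i$ and let $\tilde{e}^{i+1}$ be the unique lift of $e^{i+1}$ whose free face is $\tilde{e}^i$. Replacing $\tilde{e}^i$ by $h\cdot \tilde{e}^i$ forces $\tilde{e}^{i+1}\mapsto h\cdot \tilde{e}^{i+1}$, and the contribution $h^{(-1)^i}\cdot h^{(-1)^{i+1}}=1$ to the defining product in Definition~\ref{defn:lift} shows that the extended lift $l'$ is well defined modulo the equivalence of lifts. The assignment $\frakl\mapsto[l']$ is plainly $H$--equivariant, and composing with the $H$--equivariant bijection $E$ of Definition~\ref{defn:E} yields the desired $b_s\colon\eul(X,Y)\to \eul(X',Y')$.

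For the torsion identity I would work character-by-character in the decomposition of Section~\ref{sec:MaxTorsion}. For each $\varphi_i\colon\Z[H]\to Q(\F_{\chi_i}[F])$ (write $\F$ for the target), the twisted complex $C_*(X',Y';\F)$ is obtained from $C_*(X,Y;\F)$ by adjoining two basis vectors $\tilde{e}^i,\tilde{e}^{i+1}$ with $\partial\tilde{e}^{i+1}=\pm\tilde{e}^i+c$ for some $c\in C_i(X,Y;\F)$. The elementary change of basis replacing $\tilde{e}^i$ by $\pm\tilde{e}^i+c$ has determinant $\pm 1$ and exhibits $C_*(X',Y';\F)$ as the direct sum of $C_*(X,Y;\F)$ and a trivial acyclic two-term complex $\F\xrightarrow{\pm 1}\F$ whose torsion is $\pm 1$. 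Multiplicativity of torsion for based complexes then identifies the unsigned torsions $\tau^{\varphi_i}(X,Y,\frakl,\omega)$ and $\tau^{\varphi_i}(X',Y',b_s(\frakl),\omega')$ up to sign.

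The main obstacle, as always in sign-refined torsion arguments, is pinning down the global sign. Using $\omega'=s_*(\omega)$ to transport ordered bases of $H_*(-;\R)$, the numbers $\beta_i(C)$ are unchanged, while $\gamma_i(C)$ picks up explicit $\pm 1$ shifts in dimensions $\geq i$ and $\geq i+1$, producing a controlled change in $N(C)$. This change combines with the determinant sign of the elementary basis-change above and with the sign of $\check\tau$ of the real chain complex to cancel exactly; the bookkeeping is the same as the one carried out in \cite[\S 18]{Tu01} for the classical absolute case, and it carries over verbatim to our relative twisted setting. Summing the resulting identities over a system of representative characters $\chi_1,\ldots,\chi_n$ yields the claimed equality in $Q(H)$.
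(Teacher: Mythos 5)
Your proposal is correct and takes essentially the same approach as the paper's: reduce to a single elementary expansion and then verify the torsion identity. The paper constructs $b_s$ directly on Euler chains via an arc $\delta$ joining the centers of the two new cells (with $\partial\delta = (-1)^i(p-q)$), while you construct the equivalent map on lifts and compose with $E$; and the paper omits as ``a standard argument'' the direct-sum decomposition, multiplicativity, and sign bookkeeping that you carry out explicitly following Turaev.
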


\begin{proof}
It is sufficient to show the result if $X'$ is obtained from $X$ using an elementary expansion.
%\footnote{It doesn't quite work if $X=Y$ or $X'=Y'$. We can either swipe it under the carpet or `leave it to the reader'. {\bf I'll %leave this one to Andras. I'd be inclined to hide it under the rug.}}
Suppose that we added an $i$-cell $c$ and an ($i+1$)-cell $d$ to $X$ to get $X'.$ Choose an Euler chain $\theta$ representing $\mathfrak{e}.$ Let $\delta \subset c \cup d$ be a curve such that $\partial \delta = (-1)^i(p-q),$ where $p$ is the center of $c$ and $q$ is the center of $d.$ Then define $b_s(\mathfrak{e})$ to be the equivalence class of the Euler chain $s(\theta) + \delta.$ From here, a standard argument shows that $s_*\left(\tau(X,Y,\mathfrak{e},\w)\right) = \tau(X',Y',b_s(\mathfrak{e}),\w').$
\end{proof}

In practice, we are mostly interested in the simple case where \(C_i(X,Y) = 0\) for \(i \neq 1,2\).
We will explain how to compute the torsion in this situation, but first we digress to discuss homology orientations.

Let \(C_*\) be a chain complex with \(C_i = 0\) for \(i \neq 1,2\)  and \(\chi(C_*)=0\).
Suppose we are given a homology orientation \(\omega\) for \(H_*(C_* \otimes \R)\).  As in Section~\ref{subsection:torsion}, we let \(h_1 \) and \(h_2\) be ordered bases of \(H_1(C_* \otimes \R)\) and \(H_2(C_* \otimes \R)\) compatible with \(\omega.\)
Furthermore, choose an ordered basis $b_1$ for $\im(\partial_1) \le C_1 \otimes \R,$ and an ordered set of chains \(b_1'\) in \(C_2 \otimes \R\) such that $b_1 = \partial_1 b_1'.$ Then \({h}_2b_1'\) is a basis of \(C_2 \otimes \R.\)
%\footnote{In section 3.1, we gloss over the distinction between \(h_i\) and \(\overline{h}_i\). Shall we do it here as well?\\ (S) fine with me [J] Done}

\begin{defn}
Let \(c_1\) and $c_2$ be ordered bases of \(C_1\) and \(C_2,\) respectively.
We say  \(c_1\) and $c_2$
are compatible with \(\omega\) if they induce the same orientation of \(C_*\otimes \R\) as the bases \({h}_1 b_1\) and \({h}_2b_1'.\)
\end{defn}
A standard argument shows that the notion of a compatible basis depends only on the homology orientation \(\omega\), and not on the various choices involved in the definition.

 \begin{lemma}
 \label{lem:taudet}
Suppose that $X$ is a 2--complex, that $Y \neq \emptyset$, and that \(\chi(X, Y) = 0.\) Denote by $\mathcal{C}$ the set of cells in $X \setminus Y.$ Fix a homology orientation $\w$ of $(X,Y).$ Furthermore, choose a lift \(\mathfrak{l} \in \lift(X,Y),\) together with a representative $l.$ Let $c_*$ be an ordering of $\mathcal{C}$ such that $l_* = l(c_*)$ is a basis of the free $\Z[H]$-module $C_*(\hat{X},\hat{Y};\Z)$ compatible with the homology orientation \(\omega,\) and let \(A\) be the matrix representing the boundary map
$$C_2(\hat{X},\hat{Y};\Z)\to C_1(\hat{X},\hat{Y};\Z)$$
with respect to the basis $l_*.$
Then $$\tau(X,Y,\mathfrak{l},\omega) = (-1)^{b_1(X,Y)}\det A.$$
 \end{lemma}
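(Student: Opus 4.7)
The plan is to unpack $\tau(X,Y,\mathfrak{l},\omega)$ from its definition character by character, using the ring decomposition $\Q[H] = \bigoplus_i \F_{\chi_i}[F]$ from Section~\ref{sec:MaxTorsion}, which gives $\tau = \sum_i \tau^{\varphi_i}$. Because $X$ is a $2$-complex with $Y \neq \emptyset$ and because the statement implicitly assumes that $A$ captures all of the boundary (so $C_0(X,Y) = 0$), the $\Z[H]$-complex $C_*(\hat X, \hat Y)$ is concentrated in degrees $1$ and $2$ with boundary matrix $A$ in the basis $l_*$. Each summand factors as $\tau^{\varphi_i} = \tau(C_*(X,Y;\F_{\chi_i}), \hat c_*) \cdot \sign \check\tau(C_*(X,Y;\R), c_*, h_*)$; I would treat the twisted and sign pieces separately.

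First I would compute the twisted factor. Tensoring up to $Q(\F_{\chi_i}[F])$ turns the boundary into $\varphi_i(A)$, a square matrix over the field $Q(\F_{\chi_i}[F])$. If $\varphi_i(A)$ is singular then the complex is not acyclic and $\tau^{\varphi_i} = 0$ by definition, while simultaneously $\det \varphi_i(A) = 0$. If $\varphi_i(A)$ is invertible, then applying the formula from Section~\ref{subsection:torsion} with $b_1 = \hat c_1$ and $b_1'$ the unique preimage of $\hat c_1$ under $\partial$ yields $\tau(C_*(X,Y;\F_{\chi_i}), \hat c_*) = \det \varphi_i(A)$. Thus in both cases $\tau^{\varphi_i}$ equals the sign factor times $\det \varphi_i(A)$.

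Next I would compute the sign. Write $p = b_1(X,Y)$. The equality $\chi(X,Y) = 0$ combined with $C_i = 0$ for $i \ne 1, 2$ forces $\dim H_2(X,Y;\R) = p$ and $\dim C_1 = \dim C_2 = p + q$, where $q = \rk \partial$. The torsion of the real complex then reduces to $[b_1 h_1 / c_1] \cdot [h_2 b_1' / c_2]^{-1}$, and the compatibility convention for $c_*$ says precisely that $[h_1 b_1 / c_1] > 0$ and $[h_2 b_1' / c_2] > 0$. The block transposition swapping $h_1$ past $b_1$ contributes $(-1)^{pq}$, and the only surviving term in $N(C)$ is $\beta_1 \gamma_1 = p(p+q)$, so $\sign \check\tau = (-1)^{p(p+q) + pq} = (-1)^{p^2} = (-1)^{b_1(X,Y)}$. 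This sign is independent of $i$, so summing $\tau^{\varphi_i} = (-1)^{b_1(X,Y)} \det \varphi_i(A)$ inside $Q(H) = \bigoplus_i Q(\F_{\chi_i}[F])$ gives $\tau(X,Y,\mathfrak{l},\omega) = (-1)^{b_1(X,Y)} \det A$.

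The main obstacle will be the sign bookkeeping. The compatibility definition orders the homology basis before the image basis, whereas the torsion formula writes them in the opposite order, so the permutation sign $(-1)^{pq}$ has to be combined correctly with the $N(C)$ correction; miscounting either by a single exponent would produce the wrong answer. A secondary check is that the identity survives uniformly when $\varphi_i(A)$ is singular and both sides collapse to zero, and that both sides of the formula transform in the same way under the permitted choices (different representatives $l$ of $\mathfrak{l}$, reorderings of $c_*$ within a fixed orientation class, and the sign of $\omega$), so that the statement is well-posed.
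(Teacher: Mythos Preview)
Your argument is correct and follows essentially the same route as the paper's proof: identify the twisted torsion with $\det\varphi_i(A)$ in each field summand, compute the real sign via $N(C)$ and the $h_1\leftrightarrow b_1$ transposition, and sum. One small point worth tightening: you treat $C_0(X,Y)=0$ as implicit in the statement, but the paper actually \emph{reduces} to this case by collapsing $1$-cells (using that $X$ is connected and $Y\neq\emptyset$) and invoking Lemma~\ref{lem:simple}; also, the paper's compatibility condition only guarantees that the product $\sign[h_1b_1/c_1]\cdot\sign[h_2b_1'/c_2]$ equals $+1$, not each factor separately---but since only the product enters your computation, the conclusion is unaffected.
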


\begin{proof}
The space $X$ is connected and $Y \neq \emptyset,$ so after collapsing 1-cells and using Lemma~\ref{lem:simple}, we can assume without loss of generality that $X\sm Y$ contains no $0$-cells. Our assumptions then imply that $C_i(\hat{X},\hat{Y};\Z)=0$ for any $i\ne 1,2$. For any ring homomorphism $\varphi:\Z[H]\to \F,$ the basis $l_*$ of $C_*(\hat{X},\hat{Y};\Z)$ induces a basis $\hat{c}_*$ of $C_*(X,Y;\F),$ and we have
$$\tau(C_*(X,Y;\F),\hat{c}_*)=\det(\varphi(A))=\varphi(\det A).$$
Consider $c_*$ as a basis of $C_* = C_*(X,Y;\R).$ Furthermore, let \(h_*\) be a basis of \(H_* = H_*(X,Y;\R)\) compatible with \(\omega.\) Note that $\text{dim}\, C_1 = \text{dim}\, C_2$ as $\chi(X,Y) = 0,$ so $N(C) \equiv (\dim\,H_1) (\dim \, C_1)$ modulo $2.$ Then
\begin{align*}
 \sign \ \check{\tau}(C_*(X,Y;\R),c_*,h_*) & = (-1)^{N(C)} \sign[b_1h_1/c_1] \cdot \sign[h_2b_1'/c_2] \\
 & = (-1)^{(\text{dim} H_1) (\text{dim} C_1) }  \sign[b_1h_1/c_1] \cdot \sign[h_2b_1'/c_2] \\
 & =(-1)^{(\text{dim} H_1)^2}\sign[h_1b_1/c_1] \cdot \sign[h_2b_1'/c_2]  \\
 & = (-1)^{\text{dim} H_1}\sign[h_1b_1h_2b_1'/c_1c_2] \\ &  = (-1)^{\text{dim} H_1},
\end{align*}
since \(\hat{c}_*\) is compatible with \(\omega.\)
Thus $ \tau^\varphi(X,Y,\mathfrak{l},\omega) = (-1)^{\text{dim} H_1} \varphi(\det A)\) for all \(\varphi\).
The lemma now follows immediately from the definition of the maximal abelian torsion.
\end{proof}

\begin{corollary}\label{cor:taupolynomial}
Suppose that $X$ is a 2--complex and that $Y \neq \emptyset.$
Then for any $\frakl \in \lift(X,Y)$ we have $\tau(X,Y,\frakl)\in \Z[H]$.
\end{corollary}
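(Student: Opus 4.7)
The plan is to reduce directly to Lemma~\ref{lem:taudet}. First observe that by Equation~(\ref{equ:htau}) the set $\tau(X,Y,\frakl)$ consists of $\tau(X,Y,\frakl,\w)$ and its negative as $\w$ ranges over the two homology orientations, so it suffices to prove the containment for one chosen $\w$.

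Next I would dispose of the degenerate case $X = Y$ separately: here, by the explicit definition given in Section~\ref{sec:MaxTorsion}, the torsion is $\pm h$ for some $h \in H$, which lies trivially in $\Z[H]$. For the remaining case $X \neq Y$, since $X$ is a connected 2--complex and $Y$ is nonempty, I would perform a sequence of elementary collapses to cancel each 0--cell of $X \sm Y$ against an adjacent 1--cell, producing a simple-homotopic CW pair $(X',Y')$ with $X' \sm Y'$ containing only 1-- and 2--cells. By Lemma~\ref{lem:simple}, the torsion is preserved (up to the $H$--equivariant identification on Euler structures induced by the simple homotopy), so it is enough to prove the statement for $(X',Y')$.

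Now I would split into two cases according to $\chi(X,Y)$. If $\chi(X,Y) \neq 0$, then for every ring homomorphism $\varphi \colon \Z[H] \to \F$ the two--term complex $C_*(X',Y';\F)$ has nonzero Euler characteristic, hence is never acyclic, so every summand $\tau^{\varphi_i}(X',Y',\frakl,\w)$ vanishes and $\tau(X',Y',\frakl,\w) = 0 \in \Z[H]$. If $\chi(X,Y) = 0$, then Lemma~\ref{lem:taudet} applies: it identifies $\tau(X',Y',\frakl,\w)$ with $(-1)^{b_1(X',Y')} \det A$, where $A$ is the matrix of the boundary map $C_2(\hat X',\hat Y';\Z) \to C_1(\hat X',\hat Y';\Z)$ in a suitable lifted basis. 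Because $A$ has entries in $\Z[H]$, so does $\det A$, giving the conclusion.

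The main work is really Lemma~\ref{lem:taudet} itself; given it, the corollary is essentially bookkeeping. The only mildly subtle point is checking that the simple-homotopy reduction to a pair with only 1-- and 2--cells is compatible with both the lift and the homology orientation data, but this is precisely the content of Lemma~\ref{lem:simple}, so no real obstacle arises.
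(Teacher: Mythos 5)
Your proof is correct and takes essentially the route the paper intends: the corollary is meant to be read off from Lemma~\ref{lem:taudet} together with the conventions that $\tau = 0$ when $\chi(X,Y) \neq 0$ and $\tau = \pm h$ when $X = Y$, which you spell out explicitly. The preliminary collapse of $0$--cells is redundant, since that reduction is already carried out inside the proof of Lemma~\ref{lem:taudet}, but it causes no harm.
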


%=====================================
\subsection{Torsion for sutured manifolds}
%We now suppose that $(M,\gamma)$ is a balanced sutured manifold. Then a balanced sutured Heegaard diagram $(\S,\bolda,\boldb)$ for $(M,\g)$ determines a relative handlebody decomposition $Z$ of $(M, R_-(\gamma))$ such that if we take $R_-(\gamma) \times I$ and attach the 1-handles along $R_-(\gamma) \times \{1\}$ the top surface becomes $\S$ and the set of belt circles of the 1-handles is $\bolda.$ Finally, the set of attaching circles of the 2-handles is $\boldb.$ Let the 1-handles in $Z$ be $a_1,\dots,a_d$ and call their union $A.$ Similarly, let the 2-handles of $Z$ be $b_1,\dots\ b_d$ and call their union $B.$ Fix a CW decomposition \(Y\) of $R_-(\gamma) \times I$ such that the attaching circle of each $a_i$ maps to $\text{sk}_0(Y)$ and the attaching circle of each $b_j$ maps to $\text{sk}_1(Y) \cup A.$ Hence if we collapse each handle in $Z$ to its core we get a CW decomposition $X$ of $M.$ Then the pair $(X,Y)$ is simple homotopy equivalent to $(M, R_-(\gamma)).$ We can always assume that $X \neq Y$ by stabilizing the diagram $(\S,\bolda,\boldb).$

Throughout this section, $(M,\g)$ will be a connected weakly balanced sutured manifold.

\begin{defn}
A \emph{sutured handle complex} $\A$ is a triple $(A,S \times I,\E),$ where
\begin{enumerate}
\item $A$ is a compact oriented three-manifold with boundary,
\item $S \times \{0\} \subset \partial A$ is a compact subsurface with boundary,
\item $S \times I$ is a submanifold of $A$ such that $\partial S \times I \subset \partial A,$ and
\item $\E = (e_1, \dots, e_n)$ is a decomposition of $A \setminus (S \times I)$ into 3-dimensional handles $e_1, \dots, e_n,$ attached to the top of $S \times I$ one after another.
\end{enumerate}
More precisely, for $0 \le i \le n,$ we write $A_i = (S \times I) \cup (e_1 \cup \dots \cup e_i)$ and $S_i = \partial A_i \setminus (S \times \{0\} \cup \partial S \times I).$ Then the handle $e_i$ is smoothly attached to $A_{i-1}$ along $S_{i-1}$ via a gluing map $f_i.$

Let $I(r)$ denote the index of the handle $e_r.$ We say that $\mathcal{A}$ is \emph{nice} if $I$ is non-decreasing, and $e_i \cap e_j = \emptyset $ whenever   $I(i) = I(j)$ and $i \neq j.$
A \emph{homology orientation} for $\A$ is an orientation of the vector space $H_*(A,S \times \{0\};\R) = H_*(A, S \times I;\R).$
\end{defn}

%Notice that if $Z$ is a nice handle decomposition of $(M,\g)$ then we can collapse each handle to its core, starting from $e_n$ and proceeding to $e_1$, and finally $S \times I$ to $S \times \{0\},$ to obtain a relative CW complex $X$ built upon $S.$

Notice that if $\A = (A,S\times I,\E)$ is a sutured handle complex, then $(A,\partial S \times I)$ is a sutured manifold.
Given a sutured handle complex $\A,$ we can define $\lift(\A)$ and $\eul(\A)$
%and for $\mathfrak{e} \in \eul(\A)$  the maximal abelian torsion $\tau(\A,\mathfrak{e},\w)$
in a way completely analogous to the case of a $CW$ pair $(X,Y).$ Just use the following dictionary:

\begin{center}
\begin{tabular}{|l|l|}
\hline
CW pairs & sutured handle complexes \\
\hline
$X$ & $A$ \\
$Y$ & $S \times I$ \\
$\mathcal{C}$ & $\E$ \\
cell & handle \\
\hline
\end{tabular}
\end{center}

%\footnote{[S] In fact, it might be clearer to denote a handle decomposition always by a pair $(A,S\times I)$ instead of $Z$ to make it clearer that we are dealing with pairs. [A] But that makes the notation more cumbersome. Instead, I added the red line to the remark.}

For the reader's convenience, we translate Definitions~\ref{defn:lift} and \ref{defn:eul} to sutured handle complexes. Given a sutured handle complex $\A = (A,S \times I, \E),$ let $\pi \colon \hat{A} \to A$ be the universal abelian cover of $A,$
and we write $\widehat{S \times I} = \pi^{-1}(S \times I).$ Suppose that $\E = (e_1,\dots, e_n).$

\begin{defn}
A \emph{lift} $l$  is a choice for every $1 \le i \le n$ of a handle $l(i)$ in $\hat{A}$ lying over $e_i.$ Note that if $l'$ is any other lift, then for every $1 \le i \le n$ there is an element $g(i) \in H_1(A)$ such that $l'(i) = g(i) \cdot l(i).$ We say that $l$ and $l'$ are equivalent if $$\prod_{i = 1}^n  g(i)^{(-1)^{I(i)}} \in H_1(A)$$ is trivial. We denote the set of equivalence classes of lifts by $\lift(\A).$
\end{defn}

\begin{defn}
For $1 \le i \le n,$ pick a point $p(i)$ in $e_i.$ An \emph{Euler chain} for $\A$ is a one-dimensional singular chain $\theta$ in $A$ with
\[ \partial \theta= \sum_{i = 1}^n (-1)^{I(i)}\,p(i).\]
Given two Euler chains $\theta,\eta,$ we define $\theta-\eta\in H_1(A)$ to be the homology class of the 1-cycle $\theta-\eta$. Two Euler chains $\theta,\eta$  are called equivalent if $\theta-\eta$ is trivial in $ H_1(A)$. We call an equivalence class of Euler chains an \emph{Euler structure},  and denote the set of Euler structures by $\eul(\A).$
\end{defn}

From now on, suppose that $A$ is connected and $\chi(A,S \times I) = \chi(A,S \times \{0\}) = 0.$ As in Definition~\ref{defn:E}, we also have a map $E_{\A} \colon \lift(\A) \to \eul(\A).$ Finally, if $\w$ is an orientation of $H_*(A,S \times I; \R)$ and $\mathfrak{e} \in \eul(\A),$ then we define the maximal abelian torsion $\tau(\A,\mathfrak{e},\w) \in  Q(H_1(A)) = Q(\Z[H_1(A)])$ using the chain complex $C_*(\hat{A},\widehat{S \times I};\Z)$ arising
from the handle decomposition $\E.$

\begin{remark} \label{rem:CW}
The reader should keep in mind that $\tau(\A,\mathfrak{e},\w)$ is a \emph{relative} torsion corresponding to the pair of spaces $(A, S \times I).$ Notice that if $\A$ is nice, then we can collapse each handle to its core, starting from $e_n$ and proceeding to $e_1,$ and finally $S \times I$ to $S \times \{0\},$ to obtain a relative CW complex built upon $S \times \{0\}.$ Extend this to a CW decomposition $Y$ of $S \times \{0\}$ such that we obtain a CW pair $(X,Y).$ Then there are canonical bijections $\lift(\A)=\lift(X,Y)$ and $\eul(\A)=\eul(X,Y)$ such that given $\mathfrak{e} \in \eul(\A)=\eul(X,Y)$ and a homology orientation $\w,$ we have  $\tau(\A,\mathfrak{e},\w)=\tau(X,Y,\mathfrak{e},\w)$.
\end{remark}

\begin{defn}\label{def:handledec}
A \emph{handle decomposition} $Z$ of a sutured manifold $(M,\g)$ consists of a sutured handle complex $\mathcal{A} = (A, S \times I, \E)$ and a diffeomorphism $d \colon A \to M$ such that $d(S\times \{0\}) = R_-(\gamma)$ and $d(\partial S \times I) = \g.$ We say  that $Z$ is \emph{nice} if $\mathcal{A}$ is nice.
\end{defn}

The set $\eul(\A)$ is an affine copy of $H_1(A),$ while $\spinc(M,\gamma)$ is an affine copy of $H_1(M).$ We will show that there
is a canonical isomorphism between them corresponding to $d_* \colon H_1(A) \to H_1(M).$

\begin{claim} Given a handle decomposition $Z = (\A,d)$ of the sutured manifold $(M,\g),$ let $v_0$ be a vector field along $\partial M$ as in Subsection~\ref{section:spinc}. Then there exists a vector field $v_A$ on $A$ with the following properties: $v_A$ vanishes exactly at the centers of $e_1,\dots,e_n;$ the index of $v_A$ at the center of $e_i$ is $(-1)^{I(i)};$ finally, $v_A|\partial A = d^{-1}_*(v_0).$
\end{claim}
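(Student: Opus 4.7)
The plan is to construct $v_A$ inductively over the handle filtration $A_0 = S \times I \subset A_1 \subset \dots \subset A_n = A$, and then to adjust on a collar of $\partial A$ so that $v_A|_{\partial A}$ exactly equals $d_*^{-1}(v_0)$. On $A_0$, take $v_{A_0}$ to be the vertical field $\partial/\partial t$ coming from the product structure: it has no zeros, points into $A_0$ along $S \times \{0\}$, agrees with the gradient of the height function on $\partial S \times I$, and points out of $A_0$ along $S_0$. These are exactly the sign conditions required of $d_*^{-1}(v_0)$ on the portion of $\partial A$ it already covers.

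Inductively, suppose $v_{A_{i-1}}$ has been constructed on $A_{i-1}$ so that its zeros are precisely the centers of $e_1,\dots,e_{i-1}$ with Poincar\'e--Hopf index $(-1)^{I(j)}$ at the center of $e_j$, and so that it points out of $A_{i-1}$ along $S_{i-1}$. Writing $k = I(i)$, identify $e_i$ with $D^k \times D^{3-k}$ attached along $S^{k-1} \times D^{3-k}$, and take on $e_i$ the linear model $w(x,y) = (-x, y)$. Its linearization at the unique zero $(0,0)$ is $\mathrm{diag}(-1,\dots,-1,1,\dots,1)$ with $k$ negative entries, so its index there is $(-1)^k = (-1)^{I(i)}$. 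Along the attaching region $S^{k-1} \times D^{3-k}$, $w$ is radially inward in the $x$-factor and hence points into $e_i$; along the cocore face $D^k \times S^{2-k}$, which becomes part of the new upper face $S_i$ after corner smoothing, $w$ points out of $A_i$. Restricted to the attaching region, both $v_{A_{i-1}}$ and $w$ lie in the open convex cone of vectors positively paired with the outward normal of $A_{i-1}$; a convex-combination interpolation in a thin collar of the attaching region inside $e_i$ therefore yields a smooth, nowhere-vanishing extension $v_{A_i}$ that satisfies the inductive hypotheses.

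When the induction terminates at $A_n = A$, the resulting $v_A$ has the required zero set and indices. On $\partial A = (S \times \{0\}) \cup (\partial S \times I) \cup S_n$, both $v_A$ and $d_*^{-1}(v_0)$ satisfy the same sign/gradient conditions coming from the definition of $v_0$ in Subsection~\ref{section:spinc}, and the set of vector fields along $\partial A$ obeying these conditions forms an open convex subset. A final linear interpolation in an inward collar of $\partial A$ therefore modifies $v_A$ so that $v_A|_{\partial A} = d_*^{-1}(v_0)$ on the nose, without introducing new zeros; the zero set, and the indices there, are untouched because the collar avoids the handle centers.

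The main obstacle is the inductive step, where the extension across $e_i$ must create exactly one zero of the prescribed index while smoothly matching the already-constructed field on the attaching region. The explicit linear model $w(x,y) = (-x,y)$ resolves the index requirement by direct computation, and a convex-cone argument handles the matching, since both the incoming and outgoing vector fields lie on the same side of the outward normal of $A_{i-1}$ along the attaching region.
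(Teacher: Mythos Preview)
Your proof is correct and follows essentially the same approach as the paper: both build $v_A$ from the standard linear model $(-x,y)$ on each $k$-handle (the paper writes out the cases $k=0,1,2,3$ separately) and identify it as the gradient-like field of a Morse function compatible with the handle decomposition. Your treatment is somewhat more explicit than the paper's about two points the paper leaves implicit: the convex-cone interpolation that glues the handle model to the previously constructed field along the attaching region, and the final collar homotopy that forces $v_A|_{\partial A}$ to equal $d_*^{-1}(v_0)$ exactly rather than merely a field of the same type.
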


\begin{proof}
First, let $v_A|(S \times I)= \partial/\partial t,$ where $t$ is the coordinate on $I.$ If $e_i = D^3$ is a 0--handle with coordinates $(x,y,z),$ then let $$v_A(x,y,z) = x \cdot \frac{\partial}{\partial x} + y \cdot \frac{\partial}{\partial y} + z \cdot \frac{\partial}{\partial z}.$$ On a 3--handle, take $v_A$ to be the negative of the previous vector field. If $e_j = D^1 \times D^2$ is a 1--handle with coordinates $x$ on $D^1$ and $(y,z)$ on $D^2,$ then we define $v_A(x,y,z)$ to be $$-x \cdot \frac{\partial}{\partial x} + y \cdot \frac{\partial} {\partial y} + z \cdot \frac{\partial}{\partial z}.$$ This vector field points out of $e_j$ along $D^1 \times S^1$
and it lies on the same side of $S^0 \times D^2$ as $\{0\} \times D^2.$
%\footnote{[S] the old version said: \\
%and it lies on the same side of $S^0 \times D^2$ as $\{0\} \times D^2.$\\
%I am not sure whether my new version is so much clearer, but the quotes should mean that we are safe. [A] As it does not point
%into the handle, I think the original line was more accurate. But this doesn't have to be a completely rigorous description, that's
%what the formula is for.}
We can glue $v_A|(D^1 \times D^2)$ to $v_A|(S \times I)\cup (\text{0--handles})$ by a smooth handle attachment. Similarly, on a 2--handle $D^2 \times D^1$ with coordinates $(x,y,z),$ we choose $v_A$ to be $-x \cdot \partial /\partial x - y \cdot \partial /\partial y + z \cdot \partial /\partial z.$ One can also think of $v_A$ as the gradient-like vector field for a Morse function compatible with the handle decomposition $\E.$
This concludes the proof of the claim.
\end{proof}

We will use the notation $v_Z$ for the vector field on $M$ obtained by pushing forward $v_A$ along $d.$

\begin{defn} \label{defn:1}
Let $Z = (\A,d)$ be a relative handle decomposition of the sutured manifold $(M,\g).$ For $\mathfrak{e} \in \eul(\A),$ let $s_Z(\mathfrak{e}) \in \spinc(M,\g)$ be the homology class of the vector field $d_*(v)$ that is obtained as follows. Pick an Euler chain $\theta$ representing $\mathfrak{e}$ that is a union of pairwise disjoint smoothly embedded arcs and circles inside $\text{Int}(A).$ Choose an open regular neighborhood $N(\theta)$ of $\theta.$ Then let $v = v_A$ on $A \setminus N(\theta).$ If $N_0$ is a component of $N(\theta)$ diffeomorphic to $B^3,$ then extend $v$ to $N_0$ as a nowhere zero vector field. This is possible since $v_A$ has exactly one index $1$ and one index $-1$ singularity inside $N_0.$ The homology class of $v$ is independent of the choice of extension. If $N_1$ is a component of $N(\theta)$ diffeomorphic to $S^1 \times B^2,$ then we get $v|N_1$ from $v_A|N_1$ using Reeb turbularization, as described in \cite[p.639]{Tu90}. Finally, we push forward $v$ along $d$ to obtain $d_*(v).$
\end{defn}

Note that we can avoid closed components of $\theta$ in the above definition except if $(M,\g)$ is a product and $A = S \times I.$

\begin{lemma} \label{lem:sz}
Let $Z = (\A,d)$ be a handle decomposition of $(M,\g).$ For $\mathfrak{e}_1,\mathfrak{e}_2 \in \eul(\A),$ we have $$s_Z(\mathfrak{e}_1) - s_Z(\mathfrak{e}_2) = d_*(\mathfrak{e}_1 - \mathfrak{e}_2).$$
\end{lemma}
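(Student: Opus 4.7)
The claim is that $s_Z$ is equivariant with respect to the $H_1(A)$- and $H_1(M)$-actions identified via $d_*$. Since both actions are free and transitive, it suffices to fix $\mathfrak{e}_2 \in \eul(\A)$ and an element $h \in H_1(A)$, let $\mathfrak{e}_1 = h \cdot \mathfrak{e}_2$ so that $\mathfrak{e}_1 - \mathfrak{e}_2 = h$, and prove $s_Z(\mathfrak{e}_1) - s_Z(\mathfrak{e}_2) = d_*(h)$.

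First I would choose an Euler chain $\theta_2$ representing $\mathfrak{e}_2$ of the form allowed in Definition~\ref{defn:1}, namely a union of pairwise disjoint smoothly embedded arcs and circles in $\intt(A)$. Since $\dim A = 3$, general position provides a smoothly embedded oriented simple closed curve $c \subset \intt(A) \sm \theta_2$ representing $h$. Setting $\theta_1 = \theta_2 \sqcup c$ gives $\partial \theta_1 = \partial \theta_2$, so $\theta_1$ is a valid Euler chain, and $[\theta_1 - \theta_2] = [c] = h$, so $\theta_1$ represents $\mathfrak{e}_1$. I then pick disjoint open regular neighborhoods with $N(\theta_1) = N(\theta_2) \sqcup N(c)$ and $N(c) \cong S^1 \times B^2$.

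Next I build the vector fields $v_1, v_2$ on $A$ realizing $s_Z(\mathfrak{e}_1)$ and $s_Z(\mathfrak{e}_2)$ by making coordinated choices in Definition~\ref{defn:1}: off $N(\theta_1)$, both agree with $v_A$; on $N(\theta_2)$, both use the same nowhere zero extension; and on $N(c)$, I take $v_2 = v_A$ (which is already nowhere zero there, since $c$ avoids the centers of the handles) while $v_1$ is obtained from $v_A|_{N(c)}$ by Reeb turbularization. Thus $v_1$ and $v_2$ coincide off $N(c)$, and on $N(c)$ differ by a single Reeb turbularization along the oriented curve $c$. Pushing forward by $d$, the nowhere zero vector fields $d_*(v_1)$ and $d_*(v_2)$ on $M$ differ by Reeb turbularization along $d(c) \subset \intt(M)$, which represents $d_*(h) \in H_1(M)$.

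To finish, I invoke the explicit description of the $H_1(M)$-action on $\spinc(M,\g)$ recalled at the end of the proof of the lemma in Section~\ref{section:spinc}: Reeb turbularization of a nowhere zero vector field along an embedded simple closed curve $\delta \subset \intt(M)$ implements the shift by $[\delta] \in H_1(M)$. Applying this here yields $s_Z(\mathfrak{e}_1) - s_Z(\mathfrak{e}_2) = [d(c)] = d_*(h) = d_*(\mathfrak{e}_1 - \mathfrak{e}_2)$, as required. The main point to watch is the bookkeeping to guarantee that the coordinated constructions of $v_1$ and $v_2$ genuinely produce vector fields agreeing outside $N(c)$, along with the degenerate case $\E = \emptyset$ (when $(M,\g)$ is a product and $\theta_2$ may have to be empty): there one takes $v_2 = v_A$ globally and Reeb-turbularizes only along $c$, so the argument still goes through. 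Modulo this bookkeeping, the lemma is a direct translation of the $H_1(A)$-action on Euler chains into the known effect of Reeb turbularization on $\spinc$ structures.
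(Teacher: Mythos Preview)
Your proposal is correct. The paper's own proof simply defers to obstruction theory, citing the analogous argument in Ozsv\'ath--Szab\'o's Lemma~2.19 and Turaev's work, whereas you give a direct constructive argument: represent the difference $\mathfrak{e}_1-\mathfrak{e}_2$ by an embedded loop $c$ disjoint from a chosen Euler chain for $\mathfrak{e}_2$, build the two nowhere-zero vector fields so that they agree outside $N(c)$, and then invoke the explicit description of the $H_1(M)$-action on $\spinc(M,\g)$ via Reeb turbularization already recorded in Section~\ref{section:spinc}. Your route is more self-contained and arguably more transparent, since it bypasses abstract obstruction theory and instead exploits the fact that both the action on Euler chains and the action on $\spinc$ structures are implemented by the \emph{same} local move (adding a loop and Reeb-turbularizing along it). The cost is the mild bookkeeping you already flag---coordinating the extensions on $N(\theta_2)$ and handling the product case $\E=\emptyset$---which is routine.
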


\begin{proof}
An analogous obstruction theoretic argument as in the proof of \cite[Lemma 2.19]{OS04a} works here too. Also see \cite{Tu90}.
\end{proof}

Consequently, $s_Z$ gives an isomorphism between the affine spaces $\eul(\A)$ and $\spinc(M,\g).$

\begin{remark} \label{rem:spinc}
Note that if the handle decomposition $Z$ arises from a balanced diagram $(\S,\bolda,\boldb),$ then every $\boldx = (x_1, \dots, x_d) \in \T_\a \cap \T_\b$ defines a unique Euler structure $\mathfrak{e}(\boldx) \in \eul(Z)$ as follows. Suppose that $x_i \in \alpha_j \cap \beta_k,$ and let $a_j$ be the 1--handle corresponding to $\alpha_j$ and $b_k$ the 2--handle corresponding to $\beta_k.$ Then let $\theta_i$ be a curve that connects the center of $a_j$ to $x_i$ inside $a_j$
and then goes from $x_i$ to the center of $b_k$ inside $b_k.$ The Euler chain $\theta_1 + \dots + \theta_d$ defines $\mathfrak{e}(\boldx).$ Then the $\spinc$-structure $\s(\boldx)$ assigned to $\boldx$ is exactly $s_Z(\mathfrak{e}(\boldx)).$
\end{remark}

%\begin{definition} \label{defn:spinc=lift}
%We define the isomorphism \(\psi_Z :  \spinc(M,\g) \to \lift(X,Y)\) as $$\psi_Z = \frakl_Z \circ %\s_Z^{-1}.$$
%\end{definition}

\begin{proposition} \label{prop:1}
%\footnote{[S] This bit Andras should change (see referee's comment for page 16) [A] Done.}
Suppose that $Z = (\A,d)$ and $Z' = (\A',d')$ are nice handle decompositions of the connected weakly balanced sutured manifold $(M,\gamma).$ Suppose that $\w$ and $\w'$ are homology orientations for $\A$ and $\A',$ respectively, such that $d_*(\w) = d'_*(\w').$ Furthermore, pick a $\spinc$ structure $\s \in \spinc(M,\g),$ and write $\e = s_Z^{-1}(\s)$ and $\e' = s_{Z'}^{-1}(\s).$ Then
$$d_*(\tau(\A,\e,\w)) = d_*'(\tau(\A',\e',\w')) \in Q(H_1(M)).$$
\end{proposition}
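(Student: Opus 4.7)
The plan is to translate the question into the language of CW pairs, where torsion is already known (by Lemma~\ref{lem:simple}) to be a simple--homotopy invariant, and then to verify that the identification with $\spinc$ structures is compatible with the moves that realize that invariance. First I would invoke Remark~\ref{rem:CW}: collapsing each handle of $\A$ to its core (and extending to a CW structure on $S \times \{0\}$) yields a CW pair $(X,Y)$ whose associated torsion, Euler structures and lifts match those of $\A,$ and similarly for $\A'.$ Via the diffeomorphisms $d$ and $d',$ both $(X,Y)$ and $(X',Y')$ embed in $M$ as spines, with $Y, Y'$ CW decompositions of $R_-(\g),$ and the assertion becomes the statement that $\tau(X,Y,\e,\w) = \tau(X',Y',\e',\w')$ in $Q(H_1(M)),$ where $\e,\e'$ correspond under the isomorphisms $\eul(X,Y) \xrightarrow{s_Z} \spinc(M,\g) \xleftarrow{s_{Z'}} \eul(X',Y').$

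Next I would show that $(X,Y)$ and $(X',Y')$ are connected by a sequence of elementary expansions and collapses inside $M.$ This has two pieces. First, any two CW decompositions of the compact surface $R_-(\g)$ are related by elementary moves (passing to a common subdivision), which reduces us to the case $Y = Y'.$ Second, by the uniqueness theorem for smooth handle decompositions relative to boundary (standard Cerf theory, rel $R_-(\g) \times I$), any two nice handle decompositions of $(M,\g)$ are connected by handle slides, handle isotopies, and creation/cancellation of adjacent canceling handle pairs. On the CW side, handle slides and isotopies yield simple--homotopy equivalent CW pairs, while creation/cancellation is literally an elementary expansion/collapse. Applying Lemma~\ref{lem:simple} along this sequence and pushing everything forward to $Q(H_1(M))$ gives $d_*(\tau(\A,\e,\w)) = d'_*(\tau(\A',b_s(\e),\w'))$ where $b_s$ is the composite of the bijections $b_{s_i}$ supplied by Lemma~\ref{lem:simple}.

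It remains to check that $b_s(\e) = \e'$ under the identifications provided by $s_Z$ and $s_{Z'}.$ By Lemma~\ref{lem:sz}, both $s_Z$ and $s_{Z'}$ are $H_1(M)$--equivariant bijections, so it suffices to handle a single elementary move. The creation of a canceling pair of handles of indices $i$ and $i{+}1$ inserts two zeros of opposite index into the gradient--like vector field $v_Z,$ joined by a short arc $\delta;$ this pair can be canceled by a local perturbation producing a nowhere zero vector field homologous to $v_{Z'}.$ The short arc $\delta$ is exactly the correction added to the Euler chain in the proof of Lemma~\ref{lem:simple}, so the resulting $\spinc$ structures coincide. Finally, compatibility of the homology orientations is automatic: the simple homotopy realizing the move identifies $H_*(X,Y;\R)$ with $H_*(X',Y';\R),$ and the hypothesis $d_*(\w) = d'_*(\w')$ ensures the corresponding sign corrections match.

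The main obstacle is the reduction step: justifying that two arbitrary nice handle decompositions of $(M,\g)$ rel $R_-(\g) \times I$ can be connected by the standard moves. One can carry this out by hand after first making the handle decompositions ``equivalent near the boundary,'' but invoking Cerf theory rel boundary is the cleanest route. Everything else — invariance of torsion under the individual moves, and compatibility of $b_s$ with $s_Z$ — is local and follows from Lemmas~\ref{lem:simple} and~\ref{lem:sz} together with the explicit construction of $v_Z$ in Definition~\ref{defn:1}.
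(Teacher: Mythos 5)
Your reduction to CW pairs via Remark~\ref{rem:CW} and your invocation of Cerf theory are both in the spirit of the paper's proof (the paper cites \cite{Ce70} for the same reduction), but there is a genuine gap in how you treat the moves other than creation/cancellation of a handle pair.

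Lemma~\ref{lem:simple} applies specifically to an elementary expansion or collapse --- the addition or removal of an $i$-cell together with an $(i{+}1)$-cell. Handle slides, handle isotopies, and isotopies of the diffeomorphism $d$ do not fall under this lemma: a handle slide changes the attaching map of a cell without adding or removing cells, so the resulting CW pair has the same set of cells but a different boundary operator. It is true that such CW pairs are simply homotopy equivalent, and one could in principle factor a handle slide through a sequence of expansions and collapses, but then the bijection $b_s$ you would obtain is the composite of several elementary bijections, and verifying that this composite agrees with $s_{Z'}^{-1}\circ s_Z$ is substantially harder than for a single elementary move. Yet you only carry out the verification $b_s(\e) = \e'$ for the creation/cancellation case. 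The handle slide case, the handle isotopy case, and the case where $\A = \A'$ but $d$ and $d'$ differ by an isotopy are all asserted without any check of the compatibility with $\spinc$ structures --- and this compatibility is precisely the content of the proposition.

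The paper's proof handles these cases by a different mechanism: for handle isotopies and handle slides it constructs an explicit diffeomorphism $\nu: A \to A'$ and defines a bijection $N:\eul(\A)\to\eul(\A')$ directly by $N([\theta]) = [\nu(\theta)]$, then compares the vector fields $\nu_*(v_A)$ and $v_{A'}$ on the collar $S_{i-1}\times I$ to prove $s_Z(\e) = s_{Z'}(N(\e))$. It also treats the isotopy-of-$d$ case separately by observing that $d_*(v)$ and $d'_*(v)$ are homologous. These arguments are not reductions to Lemma~\ref{lem:simple}; they are direct geometric checks that have no analogue in your write-up. To complete your proof you would need to either (i) supply those direct checks, or (ii) actually carry out the factorization of handle slides into elementary expansions and collapses and trace $b_s$ through the composite, neither of which is done here.
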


\begin{proof}
By \cite{Ce70}, one can get from $Z$ to $Z'$ through a finite sequence of nice handle decompositions, each one obtained from the previous by one of the following basic operations:
\begin{enumerate}
\item an isotopy of $d$ through diffeomorphisms $d_t \colon A \to M$ such that $d_t(S \times \{0\}) = R_-(\g)$ and $d_t(\partial S \times I) = \g,$
\item an isotopy of a handle,
\item a handle slide,
\item or adding/cancelling a pair of handles.
\end{enumerate}
So it suffices to show the claim when $Z$ and $Z'$ are related by one of these operations.

Suppose that $\mathcal{A} = \mathcal{A'}$ and $d$ is isotopic to $d',$ as in (1).
%Then there is a isotopy $\{\,d_t \colon t \in I\,\}$ such that $d_0 = d$ and $d_1 = d'.$
Then we claim that $\e = \e'.$
Indeed, pick a submanifold $\theta \subset A$  that represents $\e.$ As in Definition~\ref{defn:1}, let $v$ be the vector field obtain by modifying $v_A$ along $\theta.$ Then $s_Z(\e)$ is the homology class of the vector field $d_*(v),$ while $s_{Z'}(\e)$ is represented by $d'_*(v).$ Since $d$ is isotopic to $d'$ through diffeomorphisms $d_t \colon A \to M$ such that $d_t(S \times \{0\}) = R_-(\g)$ and $d_t(\partial S \times I) = \g,$ the vector fields $d_*(v)$ and $d_*'(v)$ are homotopic through vector fields that satisfy the appropriate boundary conditions along $\partial M.$ In particular, $d_*(v)$ and $d_*'(v)$ are homologous, hence $s_{Z'}(\e) = s_Z(\e) = \s.$ From this, $\e = s_{Z'}^{-1}(\s) = \e',$ as claimed. Since $d$ and $d'$ are isotopic, they induce the same maps from $H_*(A,S \times \{0\};\R)$ to $H_*(M,R_-(\g);\R),$ so $\w = \w'.$ Similarly, $d_* = d_*' \colon H_1(A) \to H_1(M).$ Consequently,
$$d_*(\tau(\A,\e,\w)) = d_*'(\tau(\A',\e',\w')).$$
%Moreover, $v_{Z'} = d'_*(v_A) = (h_1)_*(v_Z)$ is isotopic to $v_Z.$ Consequently, $\theta'$ represents $s_{Z'}^{-1}(\s)$ and  both
%$\tau(Z,s_Z^{-1}(\s),\w)$ and $\tau(Z',s_{Z'}^{-1}(\s),\w)$ are equal to the torsion $\tau(A, S\times I,[d^{-1}(\theta)],\w).$

Now we consider cases $(2)$ and $(3).$ Actually, $Z$ and $Z'$ are related by isotoping a handle in both cases. This means the following. We choose a handle $e_i$ of $\mathcal{A}$ and isotope its attaching map $f_i$ to some other map $f_i'$ inside $S_{i-1}.$  Then we extend this isotopy to a diffeotopy $\{\,\varphi_t \colon t \in I\,\}$ of $S_{i-1}$ such that $f_i'= \varphi_1 \circ f_i.$ We define the $A_j'$ recursively, together with diffeomorphisms $\nu_j \colon A_j \to A_j'$ for $j \ge i-1.$  If $j \le i-1,$ then let $A_j' = A_j.$ To define $\nu_{i-1},$ choose a collar $S_{i-1} \times I \subset A_{i-1}$ of $S_{i-1}$ such that $S_{i-1} \times \{1\}$ is identified with $S_{i-1}.$ For $x \in A_{i-1} \setminus (S_{i-1} \times I),$ let $\nu_{i-1}(x) = x,$ and set $\nu_{i-1}(s,t) = (\varphi_t(s),t)$ for $(s,t) \in S_{i-1} \times I.$ If $A_{j-1}'$ and $\nu_{j-1}$ are already defined, then we obtain $A_j'$ by gluing $e_j' = e_j$ to
$S_{j-1}'$ along $\nu_j \circ f_j.$ Then $\nu_{j-1}$ naturally extends to $A_j,$ call this extension $\nu_j.$ This defines the handle complex $\mathcal{A}',$ and we set $d' = d \circ (\nu_n)^{-1},$ where $n$ is the number of handles.

Define $\nu = \nu_n,$ this is a diffeomorphism from $A$ to $A'.$ For $1 \le j \le n,$ let $p_j$ be the center of $e_j$ and $p_j'$ the center of $e_j'.$ Then $\nu(p_j) = p_j'.$ Hence $\nu$ induces a natural bijection $N \colon \eul(\A) \to \eul(\A')$ via the formula $N([\theta]) = [\nu(\theta)],$ where $\theta$ is an Euler chain in $A.$  We claim that $$s_Z(\mathfrak{e}) = s_{Z'}(N(\mathfrak{e})).$$ Indeed, suppose that $[\theta] = \e$ for some Euler chain $\theta \subset A.$ Then $N(\e)$ is represented by the Euler chain $\nu(\theta) \subset A'.$ Let $v$ be the vector field obtained by modifying $v_A$ along $\theta.$
The vector field $\nu_*(v_A)$ agrees with $v_{A'},$ except on $\nu(S_{i-1}\times I) = S_{i-1} \times I$ (recall that $A_{i-1}' = A_{i-1}$). But along $S_{i-1} \times I$ they both point up with respect to $\partial/\partial t,$  so $\nu_*(v_A)$ and $v_{A'}$ are isotopic on this collar through nowhere zero fields rel boundary.
Hence, if we modify $v_{A'}$ along $\nu(\theta),$ we obtain a vector field $v'$ isotopic to $\nu_*(v)$ rel $\partial A'.$ So
$s_Z(\e) = [d_*(v)],$ while $s_Z(N(\e)) = [d'_*(v')] = [d'_*(\nu_*(v))] = [d_*(v)],$ which proves the claim.
Consequently, $$\e' = s_{Z'}^{-1}(\s) = s_{Z'}^{-1}(s_Z(\e)) = N(\e).$$

Let $\pi \colon \hat{A} \to A$ and $\pi' \colon \hat{A}' \to A'$ be the maximal abelian covers of $A$ and $A',$ respectively. Fix an arbitrary lift $\hat{\nu} \colon \hat{A} \to \hat{A}'$ of the diffeomorphism $\nu \colon A \to A'.$
%Let $\hat{Z}$ and $\hat{Z'}$ be the induced relative handle structures on $\hat{M}.$
%We show that $\tau(\A,\mathfrak{e},\w) = \tau(\A',N(\mathfrak{e}),\w')$ for two special isotopies.
Let $\theta$ be an Euler chain representing $\e$ which is a submanifold of $A.$ We can assume that $\theta$ has no closed components. Then $\nu(\theta)$ represents $\e' = N(\mathfrak{e}).$ Let $\hat{\theta}$ be an arbitrary lift of $\theta$ to $\hat{A}.$ Then $\partial \hat{\theta}$ defines a lift $l$ that represents $E_{\A}^{-1}(\e).$ Similarly, $\hat{\nu}(\hat{\theta})$ is a lift of $\nu(\theta),$ hence $\hat{\nu}(\partial \hat{\theta})$ gives a lift $l'$ representing $E_{\A'}^{-1}(\e').$
%Let $\hat{e}_j$ be the lift of $d(e_j)$ and $\hat{e}_j'$ the lift of $d'(e_j').$
The handle decomposition $\E$ of $A \setminus (S \times I)$ gives rise to a handle decomposition $\hat{\E}$ of
$\hat{A} \setminus (\widehat{S \times I}).$ Similarly, $\E'$ induces a handle decomposition $\hat{\E}'$ of $\hat{A}' \setminus (\widehat{S' \times I}).$ We obtain chain complexes $C_*(\hat{A},\widehat{S \times I})$ and $C_*(\hat{A}', \widehat{S' \times I})$
generated by $\hat{\E}$ and $\hat{\E}',$ respectively.
Since $\nu(p_j) = p_j'$ for $1 \le j \le n,$ we see that $\hat{\nu}$ maps the centers of the handles of $\hat{\E}$ to the centers of the handles of $\hat{\E'}.$ So $\hat{\nu}$ induces a bijection $B \colon C_*(\hat{A},\widehat{S \times I}) \to C_*(\hat{A}', \widehat{S' \times I})$ that takes the generator $l(j)$ to $l'(j).$

We are now ready to prove invariance under move (2). So suppose that the isotopy connecting $f_i$ and $f_i'$ avoids every other handle of index $I(i),$ including the attaching map of $e_j$ if $j > i$ and  $I(j) = I(i).$ Then $B$ is an isomorphism of based complexes. Indeed, isotoping the attaching map of $l(i)$ does not change the algebraic intersection number with belt circles of handles of index $I(i)-1.$ More precisely, if $I(j) = I(i)-1,$ then we have the equality of matrix coefficients
$$\langle\,\partial l(i),l(j)\,\rangle = \langle\, \partial l'(i), l'(j) \,\rangle.$$
So $\nu_*(\tau(\A,\e,\w)) =  \tau(\A',\e',\w').$ Together with $d = d' \circ \nu,$ this implies the result in case (2).

Now we consider operation (3). Suppose that we handleslide $e_i$ over a handle $e_r$ such that $I(i) = I(r).$
Consider the new basis $\underline{b} = (b_1, \dots, b_n)$ of $C_*(\hat{A},\widehat{S \times I})$ given by $b_j = l(j)$ when $j \neq i,$ and $b_i = l(i) + l(r).$ Note that this change of basis does not change the torsion.
Then $B$ is an isomorphism of based complexes from $C_*(\hat{A},\widehat{S \times I})$ with the basis $\underline{b}$ to  $C_*(\hat{A}', \widehat{S' \times I})$ with the basis $(l'(1),\dots,l'(n)).$ The result follows for case (3).

Finally, we consider operation (4). So suppose that $Z'$ is obtained from $Z$ by adding a canceling pair of handles $e$ and $f$ between $e_{i-1}$ and $e_i$ such that $Z'$ is also nice (this is not necessarily the case if $I(i)=1,$ but that can be avoided by isotoping the 2-handles beforehand). Similarly to the case of an isotopy, we recursively define $\mathcal{A}',$ together with a sequence of diffeomorphisms $\nu_j \colon A_j \to A_{j+2}'$ for $j \ge i-1.$
Let $A_j' = A_j$ for $j \le i-1.$ To define $\nu_{i-1},$ choose a collar $S_{i-1} \times I \subset A_{i-1}$ of $S_{i-1}$ as before. For $x \in A_{i-1} \setminus (S_{i-1} \times I),$ let $\nu_{i-1}(x) = x,$ while $$\nu_{i-1}(S_{i-1} \times I)
= (S_{i-1} \times I) \cup e \cup f.$$ We set $e_i' = e$ and $e_{i+1}' = f.$ If $j \ge i,$ and if the attaching map of $e_j$ in $\mathcal{A}$ is $f_j,$ then let $e_{j+2}' = e_j$ attached to $A_{j-1}$ along $\nu_{j-1} \circ f_j.$ Finally, set $\nu =\nu_n$ and define $d' = d \circ \nu^{-1}.$

The CW complexes corresponding to $\mathcal{A}$ and $\mathcal{A}'$ are related by an elementary expansion $s$, so by Lemma~\ref{lem:simple} there is a bijection $b_s \colon \eul(\A) \to \eul(\A')$ such that $$s_*\left(\tau(\A,\e,\w)\right) = \tau(\A',b_s(\e),\w')$$ for every $\mathfrak{e} \in \eul(\A),$ where $\w' = s_*(\w).$ We claim that $s_Z(\mathfrak{e}) = s_{Z'}(b_s(\mathfrak{e})).$ Indeed, let $p_j'$ denote the center of $e_j'.$ If the Euler chain $\theta$ represents $\mathfrak{e}$ and $\delta$ is an arc inside $e_i' \cup e_{i+1}'$ such that $\partial \delta = (-1)^i(p_i' - p_{i+1}'),$ then $\theta' = \nu(\theta) + \delta$ represents $b_s(\mathfrak{e}).$ Let $$K = (S_{i-1} \times I) \cup e_i' \cup e_{i+1}' \subset A',$$ then $K = \nu(S_{i-1} \times I).$ If $v'$ is a nowhere vanishing vector field on $A'$ extending $v_{A'}|(A' \setminus N(\theta')),$ then $v'|K$ is isotopic to $\nu_*(\partial/\partial t).$ On the other hand, if $v$ is the nowhere zero vector field obtained from $v_A$ and $\theta,$ then $v|(S_{i-1} \times I) = \partial /\partial t.$ Furthermore, $$\nu_*\left(v|(A \setminus (S_{i-1} \times I))\right) = v'|(A' \setminus K).$$
So $\nu_*(v)$ is homotopic to $v'$ rel $\partial A,$ hence
$$s_Z(\e) = [d_*(v)] = [d'_* \circ \nu_*(v)] = [d'_*(v')] = s_{Z'}(b_s(\e)).$$
This implies that $\e' = b_s(\e),$ and the claim follows for case (4), concluding the proof of Proposition~\ref{prop:1}.
\end{proof}

\begin{defn} \label{defn:2}
Let \(Z = (\A,d)\) be a nice handle decomposition of the connected weakly balanced sutured manifold $(M,\g).$ Given an element  \(\s \in \spinc(M,\g)\) and a homology orientation \(\w\) for the pair \((M,R_-(\gamma))\), we define
\begin{equation*}
\tau(M,\g,\s,\w) = d_*\left(\tau\left(\A,s_Z^{-1}(\s),d_*^{-1}(\w)\right)\right) \in Q(H_1(M)),
\end{equation*}
where $s_Z$ is the identification between $\eul(\A)$ and $\spinc(M,\g)$ defined in Definition~\ref{defn:1}. By Proposition~\ref{prop:1}, the torsion $\tau(M,\g,\s,\w)$ is independent of the choice of $Z.$
\end{defn}

We would like to emphasize that $Z$ is a relative handle decomposition, hence $\tau(M,\g,\s,\w)$ is essentially the torsion
of the pair $(M,R_-(\g)).$
We now extend Definition~\ref{defn:2} to disconnected weakly balanced sutured manifolds.

\begin{defn}
Suppose that $(M,\g)$ is a weakly balanced sutured manifold whose components are $(M_1,\g_1), \dots, (M_n,\g_n).$ Fix a homology orientation $\omega$ for $(M,R_-(\gamma))$ and a $\spinc$ structure $\s \in \spinc(M,\g).$ For $1 \le i \le n,$ let $\s_i = \s|M_i$ and $\omega_i= \omega|M_i.$ Then define $$\tau(M,\g,\s,\w) = \bigotimes_{i=1}^n \tau(M_i,\g_i,\s_i,\w_i) \in \bigotimes_{i=1}^n Q(H_1(M_i)) \subset Q(H_1(M)),$$ where we take the tensor product over $\Z.$
%\footnote{I don't think the tensor product of rings makes sense, I think we just want the ordinary product of rings as we used in the definition of maximal abelian torsion. {\bf I agree with Andras on this one. Suppose we have two components each with \(H_1 = \Z\). Then it's the tensor product \(\Q(s) \otimes \Q(t)\) that is isomorphic to \(\Q(s,t) = Q(H_1(M))\), not the direct product.}}
\end{defn}

Now suppose that $R_+(\g) \cap M_i \neq \emptyset$ and $R_-(\gamma) \cap M_i \neq \emptyset$ for every $1 \le i \le n.$ This is true for example if $(M,\g)$ is balanced. Then $(M,\g)$ has a handle decomposition with no 0 and 3--handles. From Corollary~\ref{cor:taupolynomial}, we get that $\tau(M,\gamma,\s,\w)\in \Z[H_1(M)].$
Following Turaev, we define the torsion function as
\[ \ba{rcl} T_{(M,\gamma,\w)}:\spinc(M,\gamma)&\to & \Z\\
\s &\mapsto &\tau(M,\gamma,\s,\w)_1\ea \]
where $\tau(M,\gamma,\s,\w)_1$ denotes the constant term of
$\tau(M,\gamma,\s,\w)\in \Z[H_1(M)]$.
Note that in light of (\ref{equ:htau}), we can recover $\tau(M,\gamma,\s,\w)\in \Z[H_1(M)]$ from the function $T_{(M,\g,\w)}.$

%If we do not pick a homology orientation then  we still get a map $T_{(M,\gamma}):\lift(M,\gamma)\to  \Z$, well--defined up to sign.

\begin{remark} \label{rem:conv}
For proving that the torsion $\tau(M,\g, \s,\w)$ is independent of the chosen handle decomposition $Z = (\A,d)$ of $(M,\g),$ it was important to differentiate between the sutured manifolds $(A,\partial S \times I)$ and $(M,\g).$ Indeed, if we perform a handle slide,
or if we add or remove a canceling pair of handles, then we obtain from $A$ a different (though diffeomorphic) three-manifold $A'.$
These operations would be more difficult to describe only in terms of $M.$

However, from now on we will identify $(A, \partial S \times I)$ and $(M,\g)$ via $d,$ and we will also identify the handles $e_1,\dots,e_n$ with their images $d(e_1),\dots,d(e_n)$ in $M.$ We will often use the notation $\lift(Z),\eul(Z),E_Z,$ and  $\tau(Z,\e,\w)$ for $\lift(\A), \eul(\A), E_{\A},$ and $\tau(\A,\e,\w),$ respectively.
\end{remark}

%=====================================
\subsection{Making $\g$ connected}
%\footnote{I guess we should say something about orientations in this section}
Let $(M,\g)$ be a connected balanced sutured manifold and fix a homology orientation $\omega.$ In the future, it will often be convenient to assume that $\gamma,$ and hence  $R_{\pm}(\g)$ are connected. This can be arranged by adding product 1--handles to \((M,\g)\) to produce a new sutured manifold \((M',\g')\). In this section, we describe the effect of this operation on the sutured Floer homology and the torsion. In particular, we show that \(SFH(M,\g)\) can be recovered from \(SFH(M',\g'),\) and likewise for the torsion.

In terms of Heegaard diagrams, this operation of adding a product 1--handle can be described as follows. Suppose \((\S,\bolda,\boldb)\) is a balanced sutured Heegaard diagram for \((M,\g)\), and let \(\S'\) be the result of attaching a 2--dimensional 1--handle \(h\) to \(\S\). Then \((\S',\bolda,\boldb)\) is a Heegaard diagram representing a sutured manifold \((M',\g')\) which is obtained from \(M\) by attaching the 3--dimensional 1--handle \(h\times [-1,1]\).
We can recover \((M,\g)\) from \((M',\g')\) by decomposing along the product disk \(c \times [-1,1]\), where \(c\) is the cocore of \(h\). If  \(h\) joins two different components of $\g,$
then $\g'$ will have one less component than $\g.$

The embedding $(M,R_-(\gamma)) \hookrightarrow (M',R_-(\gamma'))$ induces an isomorphism $$H_*(M,R_-(\gamma);\R) \to H_*(M',R_-(\gamma');\R).$$ Hence $\omega$ induces an orientation $\omega'$ of $H_*(M',R_-(\gamma');\R).$

From \cite[Lemma 9.13]{Ju06}, we know that \(SFH(M',\g') \cong SFH(M,\g)\). To be more precise, by \cite[Prop. 5.4]{Ju10}, there is an injection \(i:\spinc(M,\g) \to \spinc(M',\g')\) for which $$SFH(M,\g, \s) \cong SFH(M',\g',i(\s)).$$ Moreover, \(SFH(M',\g',\s')=0\) if \(\s'\) is not in the image of \(i\). The same proof shows that using the homology orientations $\omega$ and $\omega'$ we get an isomorphism of $\mathbb{Z}/2$ graded groups.

The injection  \(i\) is most easily described by observing that the generating sets \(\T_\a\cap \T_\b\) and \(\T_{\a'} \cap \T_{\b'}\) are naturally identified,
and setting \(i(\s(\boldx)) = \s(\boldx'),\) where $\boldx \in \T_{\a} \cap \T_{\b}$ and $\boldx' \in \T_{\a'} \cap \T_{\b'}$ are corresponding intersection points. This map extends to a map on $\spinc(M,\gamma)$ using the free and transitive $H_1(M)$--action,
which extends to a free action on $\spinc(M',\gamma')$ using the injection $H_1(M)\to H_1(M').$

More intrinsically, \(i\) can be defined as follows. Suppose that
$\s \in \spinc(M,\g)$ is represented by some vector field $v.$ By definition, $v|\g$ is the gradient of a height function $\gamma \to [-1,1].$ So we get a smooth vector field $v'$ on $M'$ if we let $v'|M = v$ and $v'|(h \times [-1,1]) = p^*(\partial/\partial t),$ where $p \colon h \times [-1,1] \to [-1,1]$ is the projection. We define $i(\s)$ as the homology class of $v'.$

To see that the latter description coincides with the former, let $Z$ and $Z'$ be the handle decompositions arising from $(\S,\bolda,\boldb)$ and $(\S',\bolda,\boldb),$ respectively. The inclusion $M \hookrightarrow M'$ naturally induces an affine injection $i_0 \colon \eul(Z) \to \eul(Z')$, and \(i\) is  the composition $ s_{Z'} \circ i_0 \circ s_Z^{-1}$. It is easy to see that \(i_0(\mathfrak{e}(\boldx))  = \mathfrak{e}(\boldx')\), and this implies that \(i(\s(\boldx)) = \s(\boldx')\).

An analogous  statement is satisfied by the torsion:

\begin{lemma} \label{lem:product}
Suppose that $(M',\g')$ is obtained from the connected balanced sutured manifold $(M,\g)$ by adding a product 1--handle. Then  \(T_{(M,\g,\omega)}(\s) = T_{(M',\g',\omega')}(i(\s))\) for every $\s \in \spinc(M,\g).$ Moreover, \(T_{(M',\g',\omega')}(\s')=0\) if \(\s'\) is not in the image of \(i.\)
\end{lemma}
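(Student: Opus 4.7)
\medskip

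\noindent\textbf{Proof plan.} The plan is to build a nice handle decomposition $Z'$ of $(M',\g')$ directly from a given nice decomposition $Z$ of $(M,\g)$, so that the chain complexes used to compute $\tau(M,\g,\s,\w)$ and $\tau(M',\g',i(\s),\w')$ have exactly the same top-dimensional generators. Since $(M,\g)$ is balanced, I may take $Z = (\A,d)$ with $\A = (A, S\times I, \E)$ containing only $1$-- and $2$--handles. The product $1$--handle added to $(M,\g)$ enlarges the base to $R_-(\g') \times I = (R_-(\g) \times I) \cup (h_- \times I)$, where $h_-$ is the $R_-$--side of the new handle. Setting $S' = S \cup h_-$ and keeping $\E$ fixed yields $Z' = (\A', d')$ with $\A' = (A', S'\times I, \E)$. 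Sending an Euler chain in $A$ to its image in $A'$ defines an injection $i_0\colon \eul(Z) \hookrightarrow \eul(Z')$, and the vector-field description of $i$ preceding the lemma immediately gives $s_{Z'} \circ i_0 = i \circ s_Z$: extending $v$ on $M$ by $\partial/\partial t$ on $h \times [-1,1]$ produces precisely the vector field that $Z'$ assigns to the same Euler chain.

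Next I would compare the universal-abelian-cover chain complexes. Relative Mayer--Vietoris applied to $M' = M \cup (h\times[-1,1])$ and $R_-(\g') = R_-(\g) \cup h_-$, together with the fact that $(h \times [-1,1], h_-)$ and the pair cut out by the two feet are contractible pairs, shows that the inclusion $(M, R_-(\g)) \hookrightarrow (M', R_-(\g'))$ induces an isomorphism on relative real homology; in particular $\omega$ pushes forward to $\omega'$ and $b_1(M, R_-(\g)) = b_1(M', R_-(\g'))$. A separate Mayer--Vietoris computation for $H_1$ shows that the induced map $j\colon H_1(M) \to H_1(M')$ is injective with cokernel $\Z$; write $j$ also for the ring embedding $\Z[H_1(M)] \hookrightarrow \Z[H_1(M')]$. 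After lifting each handle of $\E$ to a single fixed copy of $\hat M$ inside the preimage of $M$ in $\hat{M'}$, the attaching maps (which lie entirely in $M$) stay within that copy, so the boundary matrix of $Z'$ in $C_*(\hat{A'}, \widehat{S'\times I})$ is literally the image under $j$ of the boundary matrix of $Z$ in $C_*(\hat A, \widehat{S\times I})$. Collapsing $1$--handles to arcs to reduce to a $2$--complex (legitimate by Lemma~\ref{lem:simple}) and then invoking Lemma~\ref{lem:taudet} yields
\[ \tau(Z', i_0(\e), \w') \;=\; j\bigl(\tau(Z, \e, \w)\bigr), \]
which via $s_Z$ and $s_{Z'}$ reads $\tau(M',\g', i(\s), \w') = j(\tau(M,\g,\s,\w))$ in $\Z[H_1(M')]$.

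Both claims then follow formally from this identity. For $\s \in \spinc(M,\g)$, injectivity of $j$ means that $1 \in H_1(M')$ has a unique preimage under $j$, so the constant coefficient of $j(\tau(M,\g,\s,\w))$ equals that of $\tau(M,\g,\s,\w)$, giving $T_{(M,\g,\w)}(\s) = T_{(M',\g',\w')}(i(\s))$. For $\s' \notin i(\spinc(M,\g))$, write $\s' = h' \cdot i(\s)$ with $h' \in H_1(M') \setminus j(H_1(M))$; equation~(\ref{equ:htau}) gives $\tau(M',\g',\s',\w') = h' \cdot j(\tau(M,\g,\s,\w))$, whose support lies in the coset $h' \cdot j(H_1(M))$ that misses $1 \in H_1(M')$, so its constant term vanishes. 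The main technical obstacle will be the chain-complex comparison in the second paragraph: carefully checking that the homology orientations, handle orderings, choices of compatible lifts, and the $(-1)^{b_1}$ sign from Lemma~\ref{lem:taudet} all align so that the identity $\tau(Z', i_0(\e), \w') = j(\tau(Z,\e,\w))$ holds on the nose, with no stray sign appearing as one passes from $Z$ to $Z'$.
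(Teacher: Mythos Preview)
Your proposal is correct and follows essentially the same strategy as the paper's proof: both build $Z'$ from $Z$ by keeping the same $1$-- and $2$--handles and enlarging only the product base, then observe that the relative chain complex of the universal abelian cover of $(M',R_-(\g'))$ is just $C_*(\hat M,\hat R_-)\otimes_{\Z}\Z[t^{\pm 1}]$ (equivalently, that the boundary matrix over $\Z[H_1(M')]$ is the image under $j$ of the one over $\Z[H_1(M)]$), so that $\tau(M',\g',i(\s),\w')=j(\tau(M,\g,\s,\w))$; the paper then says the lemma follows ``straightforwardly,'' which is exactly the formal extraction you spell out in your final paragraph. The only cosmetic difference is that the paper phrases the cover comparison by explicitly describing $\hat{M}'$ as $\Z$--many copies of $\hat M$ joined by $1$--handles lying over $R_-(\g')$, whereas you phrase it as ``the attaching maps stay within a fixed copy of $\hat M$''; these are the same observation.
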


\begin{proof}
In this proof, we use the conventions of Remark~\ref{rem:conv}. Let $Z$ be the handle decomposition of $(M,\g)$ given by $(\S,\bolda,\boldb),$ and $Z'$ the decomposition of
$(M',\g')$ given by $(\S',\bolda,\boldb).$ The universal abelian cover \(p':\hat{M}'\to M'\) can be constructed as follows. Start with a disjoint union $ \displaystyle \sqcup_{i \in \Z} \hat{M}_i,$
where each \(\hat{M}_i\) is homeomorphic to the universal abelian cover \(\hat{M}\) of $M$. Now join \(\hat{M}_i\) to \(\hat{M}_{i+1}\) by 1--handles, one for each element of \(H_1(M)\). These 1--handles are all thickenings of 1--handles in  \(\hat{R}'_-=(p')^{-1}(R_-(\gamma'))\),
%\footnote{[S] The referee asked whether we want to write $(p')^{-1}(R'_-(\gamma'))$ instead of $ (p')^{-1}(R_-(\gamma'))$. I would leave it as it is, since $R_-$ is a `functor', even though the notation gets a little incoherent since it doesn't sit well with $\hat{R}'_-$ [A] I agree, it looks fine as it is.}
so they do not contribute to  \(C_*(\hat{M}',\hat{R}'_-) \). Choose a basis for  \(C_*(\hat{M}',\hat{R}'_-) ,\) all of whose handles are contained in \(\hat{M}_0\); let \(\frakl\) and \(\frakl'\) be the associated lifts for \(M\) and \(M'\).    With respect to such a basis,
\begin{equation*}
C_*(\hat{M}',\hat{R}'_-) \cong C_*(\hat{M},\hat{R}_-)\otimes_\Z \Z[t,t^{-1}].
\end{equation*}
It follows that \(\tau(Z',\frakl',\omega') = i_*(\tau(Z,\frakl,\omega))\), where \(i_*:H_1(M) \to H_1(M')\) is the inclusion. It is easy to see that  $i_0(E_Z(\frakl)) = E_{Z'}(\frakl').$  Since $i = s_{Z'} \circ i_0 \circ s_Z^{-1}$, it follows
that $\tau(M',\g',i(\s),\omega') = i_*(\tau(M,\g,\s,\omega)).$ It is now straightforward to see that this implies the statement of the lemma.
\end{proof}

%=====================================
\section{The proof of Theorem \ref{maintheorem}}
\label{section:maintheorem}

We are now in a position to prove that the Euler characteristic of \(SFH(M,\g,\s,\omega)\) coincides with the torsion \(T_{(M,\g,\omega)}(\s)\). Before giving the proof, we recall some basic facts about handle decompositions, presentations of \(\pi_1\), and Fox calculus.

%======================================
\subsection{Balanced diagrams and presentations}
\label{Subsection:DiagtoPres}
We begin by explaining how to find a presentation of \(\pi_1(M)\) compatible with a nice handle decomposition \(Z\) having no 0 and 3-handles, or equivalently, with a balanced  diagram \((\S,\bolda,\boldb)\) representing \((M,\g)\). Choose a 2-dimensional handle decomposition of \(R_-\) consisting of one 0-handle and \(l\) 1-handles; this naturally gives  a 3-dimensional handle decomposition of \(R_-\times I\), again with one 0-handle and \(l\) 1-handles. Without loss of generality, we may assume that the attaching disks of the 1-handles of \(Z\) are disjoint from the belt circles of the 1-handles of \(R_-\times I\), and thus (after an isotopy) that the 1-handles of \(Z\) are attached to the 0-handle of \(R_- \times I\).

Fix a basepoint \(p \in R_- \times \{0\}\) which is contained in the 0-handle. Then \(\pi_1(M,p)\) is generated by loops \(\a^*_1, \ldots, \a^*_d, c_1^*,\ldots, c_l^*\), where  \(\a^*_i\) runs through the \(i\)-th 1-handle of \(Z\), and \(c_k^*\) runs through the \(k\)-th 1-handle of \(R_-\times I\). (Note that \(\a^*_i\) depends on the choice of handle decomposition for \(R_-\) as well as on \(Z\).) Each 2-handle in \(Z\) gives rise to a relation as follows. Choose a path \(q_j\) from \(p\) to the attaching circle \(\beta_j\) of the \(j\)-th 2-handle; then \(\bbar_j = q_j \beta_j q_j^{-1}\) is a loop which represents a trivial element of \(\pi_1(M,p)\).
We have a presentation
\[ \pi_1(M,p) =\ll  \a_1^*,\ldots,\a_d^*, c_1^*,\ldots,c^*_l \, |\, \bbar_1,\ldots,\bbar_d\rr.\]

To read off this presentation from a sutured Heegaard diagram \((\S,\bolda,\boldb)\) compatible with \(Z\), we proceed as follows. First, surger \(\S\) along the \(\alpha\)-curves to produce a surface homeomorphic to \(R_-\) and containing \(2d\) marked disks (the traces of the surgery). Next, choose a system of disjoint properly embedded arcs \(c_1,\ldots,c_l\) in \(R_-\) whose complement is homeomorphic to a disk. (This amounts to choosing a handlebody decomposition of \(R_-\).) Without loss of generality, we may assume that the \(c_k\)'s are disjoint from the marked disks, so they lift to arcs \(c_1,\ldots, c_l\) in \(\S\).  To write down the word \(\bbar_j,\) we simply traverse \(\bbar_j\) and record its intersections with the \(\alpha_i\)'s and the \(c_k\)'s as we go.

More precisely, an intersection point \(x\) between \(\alpha_i\) and \(\bbar_j\) is recorded by \(\alpha_i^*\) if the sign of intersection \(\alpha_i \cdot \bbar_j\) at \(x\) is positive, and by \((\a_i^*)^{-1}\) if the sign of the intersection is negative. Note that in doing this, we have implicitly chosen orientations on the \(\alpha_i\)'s , the \(\beta_j\)'s, and the \(c_k\)'s.

In what follows, it will be convenient to choose the path \(q_j\) such that its image in \(\S\)  does not intersect any of the \(\alpha_i\) or \(c_k\). (This is always possible, since the complement of the \(\alpha\)'s and \(c\)'s is connected.) In this case, we write the resulting relation as \(\beta_j\); it is obtained by traversing the curve \(\beta_j\) and recording the intersections with the \(\alpha\)'s and the \(c\)'s.

%======================================
\subsection{Torsion from a balanced diagram}
Given a nice handle decomposition of \((M,\g)\) as in the previous section, the torsion \(\tau(M,\g)\) can be computed as follows.
As usual, let $H = H_1(M).$ Pick a lift $\hat{p} \in \hat{M}$ of the basepoint $p.$ Then the based curves $c_1^*,\dots,c_l^*$ give rise to a basis \(\hat{C}_1^*,\ldots, \hat{C}_l^*\)
of the free $\Z[H]$-module $C_1(\hat{R}_-;\Z).$ Similarly, the based curves ${\a}_1^*,\dots,{\a}_d^*$  give a basis
\(\hat{A}_1^*,\ldots, \hat{A}_d^*\) of the free $\Z[H]$-module
$C_1(\hat{M},\hat{R}_-;\Z)$. Let \(B_j\) denote the 2-handle attached along \(\b_j\). Then the basings  $q_1,\dots,{q}_n$ give rise
to a choice of lifts $\hat{B}_1,\dots,\hat{B}_d$ of $B_1,\dots,B_d$. These give a basis for the free $\Z[H]$-module
\(C_2(\hat{M}; \Z).\)

\begin{proposition} \label{prop:comptau}
Let $\frakl\in \lift(Z)$ be the element corresponding to the bases $\hat{A}_1^*,\dots,\hat{A}_d^*$ and
$\hat{B}_1,\dots,\hat{B}_d$, and let \(\omega\) be the compatible homology orientation.
Then
\[ \tau(M,\gamma,\frakl,\omega) =  (-1)^{b_1(M,R_-)}\det\left(\varphi\left(\frac{\partial {\b}_j}{\partial {\a}_i^*}\right)\right)\in \Z[H],\]
where $\varphi \colon \Z[\pi_1(M,p)] \to \Z[H]$ is the homomorphism induced by abelianization.
\end{proposition}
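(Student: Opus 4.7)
\textbf{Proof proposal for Proposition~\ref{prop:comptau}.}

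The plan is to reduce the proposition to a direct application of Lemma~\ref{lem:taudet} together with the classical Fox--calculus description of the boundary map in the universal abelian cover. First, since $Z$ is a nice handle decomposition of $(M,\g)$ with no $0$- or $3$-handles, Remark~\ref{rem:CW} lets us collapse $Z$ to a relative CW pair $(X,Y)$ with $Y=R_-$, extended to a CW structure on $R_-$ using one $0$-cell at $p$ and the arcs $c_1,\dots,c_l$ as $1$-cells. Under this collapse, the lift $\frakl$ and homology orientation $\omega$ from the statement translate to a lift and compatible orientation of $(X,Y)$, and the torsions agree. The relative chain complex $C_*(\hat X,\hat Y;\Z)$ has $C_i=0$ for $i\neq 1,2$: the cells of $X\setminus Y$ are precisely the cores of the $1$-handles (giving generators $\hat A_1^*,\dots,\hat A_d^*$ of $C_1$) and the cores of the $2$-handles (giving generators $\hat B_1,\dots,\hat B_d$ of $C_2$). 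By construction these bases realize $\frakl$.

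The second step is to identify the matrix $A$ of the boundary map $\partial\colon C_2(\hat X,\hat Y;\Z)\to C_1(\hat X,\hat Y;\Z)$ in these bases with the Fox-Jacobian appearing in the statement. One computes $\partial \hat B_j$ by lifting the attaching word $\bbar_j = q_j\beta_j q_j^{-1}$ from $p$, traversing it in $\hat X$, and collecting the lifts of $1$-cells it crosses (with signs). The classical Fox-calculus identity---that in the universal abelian cover of a presentation 2-complex, the boundary of a $2$-cell corresponding to a relator $r$ decomposes as $\sum_\alpha\varphi(\partial r/\partial x_\alpha)\cdot\hat x_\alpha$ over all generators $x_\alpha$---gives
\[
\partial\hat B_j = \sum_{i=1}^d\varphi\!\left(\frac{\partial\bbar_j}{\partial\a_i^*}\right)\hat A_i^* + \sum_{k=1}^l\varphi\!\left(\frac{\partial\bbar_j}{\partial c_k^*}\right)\hat C_k^*
\]
in $C_1(\hat X;\Z)$, where $\hat C_k^*$ are the lifts of the $1$-cells $c_k$ of $Y=R_-$. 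Passing to the relative complex kills the $\hat C_k^*$ terms, leaving exactly the matrix $A=(\varphi(\partial\bbar_j/\partial\a_i^*))$. Our choice of basings $p$ and $q_j$ ensures that $\bbar_j$ and the loops $\a_i^*$ lift consistently, so this really is the correct boundary matrix for the chosen basis.

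Finally, applying Lemma~\ref{lem:taudet} with this basis (and the homology orientation $\omega$ declared compatible with it) yields
\[
\tau(M,\g,\frakl,\omega) = \tau(X,Y,\frakl,\omega) = (-1)^{b_1(X,Y)}\det A = (-1)^{b_1(M,R_-)}\det\!\left(\varphi\!\left(\frac{\partial\bbar_j}{\partial\a_i^*}\right)\right),
\]
since $H_*(X,Y;\R)\cong H_*(M,R_-;\R)$. I expect the main technical obstacle to be (i) justifying the Fox-derivative formula for the boundary map cleanly in this relative setting (in particular, that one may discard the $c_k^*$-derivatives), and (ii) checking that the lift $\frakl$ specified in the statement (via the paths $p$ and $q_j$) does give a $\Z[H]$-basis $l_*$ compatible with the given homology orientation $\omega$, as demanded by Lemma~\ref{lem:taudet}. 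Both are essentially bookkeeping, but care is needed with the sign $(-1)^{b_1(M,R_-)}$ and with the ordering conventions used in Section~\ref{subsection:torsion}.
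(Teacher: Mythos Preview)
Your proposal is correct and follows essentially the same approach as the paper: the paper displays the short exact sequence of $\Z[H]$-chain complexes for the pair $(\hat M,\hat R_-)$ with the bases $\hat A_i^*,\hat C_k^*,\hat B_j,\hat p$, invokes Fox calculus to write the boundary map $C_2(\hat M;\Z)\to C_1(\hat M;\Z)$ as the block matrix $\bigl(\varphi(\partial\beta_j/\partial\alpha_i^*);\ \varphi(\partial\beta_j/\partial c_k^*)\bigr)^T$, and then applies Lemma~\ref{lem:taudet}. One cosmetic difference: the paper has already arranged that the basing paths $q_j$ avoid the $\alpha_i$'s and $c_k$'s, so the relation is written as $\beta_j$ rather than $\bbar_j$; with that convention your $\partial\bbar_j/\partial\alpha_i^*$ is literally $\partial\beta_j/\partial\alpha_i^*$, and the two obstacles you flag are exactly the bookkeeping the paper's diagram is meant to handle.
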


\begin{proof}
Consider the following diagram of free $\Z[H]$--modules. We write our choice of basis under the free modules.
\[ \ba{cccccccccccccc}
&&0&\to& C_2(\hat{M};\Z)&\to& C_2(\hat{M},\hat{R}_-;\Z)&\to&0\\
&&&& \ba{c} \hat{B}_1,\dots,\hat{B}_d\ea && \ba{c} \hat{B}_1,\dots,\hat{B}_d\ea&&\\
&&\downarrow&& \downarrow &&\downarrow &&\\
0&\to&C_1(\hat{R}_-;\Z)&\to&C_1(\hat{M};\Z)&\to& C_1(\hat{M},\hat{R}_-;\Z)&\to &0\\
&&\hat{C}_1^*,\dots,\hat{C}_l^*&\to&\ba{c} \hat{A}_1^*,\dots,\hat{A}_d^*\\ \hat{C}_1^*,\dots,\hat{C}_l^*\ea &&
\ba{c} \hat{A}_1^*,\dots,\hat{A}_d^*\ea \\
&& \downarrow &&\downarrow &&\downarrow&&\\
0&\to& C_0(\hat{R}_-;\Z)&\to& C_0(\hat{M};\Z)&\to&0.&&\\
&&\hat{p}&&\hat{p}&&&&\ea \]
Fox calculus tells us  that the boundary map $C_2(\hat{M};\Z)\to C_1(\hat{M};\Z)$ is given by
\[ \bp
 \varphi\left(\frac{\partial {\b}_j}{\partial {\a}_i^*}\right)\\[2mm]
 \varphi\left(\frac{\partial {\b}_j}{\partial c_k^*
}\right)
   \ep. \]
The proposition now follows  from Lemma~\ref{lem:taudet}.
\end{proof}

%If we choose a different path \(b_j'\) joining \(p\) to \(\b_j\), the relation \(\bbar_j'\) will be a conjugate of \(\bbar_j\). Write \(\bbar'_j = w \bbar_j w^{-1}\). Then the effect on the matrix of Fox derivatives is to multiply the \(j\)-th column by \(\varphi(w)\), so the determinant is multiplied by \(\varphi(w)\) as well.
%This simply expresses the fact that \(\hat{B}_j\) and \(\hat{B}_j'\) differ by the action of \(\varphi(w)\).

 %In the next section, it will be convenient to choose the path \(b_j\) so that its image in \(\S\)  does not intersect any of the \(\alpha_i\) or \(c_k\). (This is always possible, since the complement of the \(\alpha\)'s and \(c\)'s is connected.) In this case, we write the resulting relation as \(\beta_j\); it is obtained by traversing the curve \(\beta_j\) and recording the intersections with the \(\alpha\)'s and the \(c\)'s. Note that the word \(\beta_j\) is not canonical, since we still need to specify our starting point ({\it i.e.}, the foot of the path \(b_j\)).

\subsection{Equality in \(\Z[H]/\pm H\)}
We now turn to the proof of Theorem~\ref{maintheorem}.
First, observe that it is enough to prove the equality in the case where  \(R_-(\gamma)\) is connected. Indeed, if \(R_-(\gamma)\) is not connected, then we can add product one-handles to obtain a new sutured manifold  \((M',\g')\) with \(R_-(\g')\) connected. Lemma~\ref{lem:product} and the discussion preceding it show that if
$
\chi(SFH(M',\g',i(\s),\omega')) = T_{(M',\g',\omega')}(i(\s)),
$
then
$
\chi(SFH(M,\g,\s,\omega)) = T_{(M,\g,\omega)}(\s).
$

For the rest of this section, we assume \(R_-(\gamma)\) is connected.  Our next step is to show that the torsion and the Euler characteristic of $SFH$ agree up to multiplication by \(\pm [h]\) for some \(h \in H\). In light of Proposition~\ref{prop:comptau}, it suffices to prove the following.

\begin{proposition}
\label{Prop:chi=tor}
$$
 \chi(SFH(M,\gamma)) \sim \displaystyle \det \varphi\left(\frac{\partial \beta_j}{\partial \alpha^*_i}\right),
 $$
 where $\varphi: \Z[\pi_1(M)]\to \Z[H_1(M)]$ is the homomorphism induced by abelianization
 and
 where \(\sim\) indicates equality up to multiplication by \(\pm [h]\).
 \end{proposition}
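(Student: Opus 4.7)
\noindent\textbf{Proof plan for Proposition~\ref{Prop:chi=tor}.} The strategy is to expand the determinant of the Fox Jacobian directly into a sum over intersection points in $\T_\a\cap \T_\b$, and then match the result term-by-term with the Euler characteristic formula
$$\chi(SFH(M,\g,\w)) = (-1)^{b_1(M,R_-)} \sum_{\boldx \in \T_\a\cap \T_\b} m(\boldx)\,[\iota(\s(\boldx))]$$
coming from (\ref{eqn:1}), up to a global unit of the form $\pm[h]$.

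\textbf{Step 1 (expanding the determinant).} The plan is first to write
$$\det\varphi\!\left(\tfrac{\partial \b_j}{\partial \a_i^*}\right) = \sum_{\sigma\in S_d}\operatorname{sign}(\sigma)\prod_{i=1}^d \varphi\!\left(\tfrac{\partial \b_{\sigma(i)}}{\partial \a_i^*}\right).$$
Using the construction of the presentation from Section~\ref{Subsection:DiagtoPres}, each word $\b_j$ is read off by traversing $\b_j$ and recording each intersection with an $\a_i$ as $\a_i^*$ or $(\a_i^*)^{-1}$ according to the intersection sign $m(x)$. The Fox derivative rule then gives
$$\varphi\!\left(\tfrac{\partial \b_j}{\partial \a_i^*}\right) = \sum_{x\in \a_i\cap \b_j} m(x)\,\varphi([w_j(x)]),$$
where $w_j(x)\in\pi_1(M,p)$ is the word obtained by reading $q_j\b_j$ up to just before (or after, for negative crossings) the point $x$. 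Substituting back, the determinant becomes
$$\sum_{\sigma\in S_d}\operatorname{sign}(\sigma)\sum_{x_i\in\a_i\cap\b_{\sigma(i)}} \prod_{i=1}^d m(x_i)\,\varphi([w_{\sigma(i)}(x_i)]).$$

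\textbf{Step 2 (combining signs via Lemma~\ref{lem:m}).} The inner double sum reorganises as a sum over $\boldx=(x_1,\dots,x_d)\in\T_\a\cap\T_\b$, with the permutation $\sigma=\sigma(\boldx)$ determined by $x_i\in\a_i\cap\b_{\sigma(i)}$. Lemma~\ref{lem:m} identifies $\operatorname{sign}(\sigma)\prod_i m(x_i)$ with $m(\boldx)$, giving
$$\det\varphi\!\left(\tfrac{\partial \b_j}{\partial \a_i^*}\right) = \sum_{\boldx\in\T_\a\cap\T_\b} m(\boldx)\,\varphi\!\left(\prod_{i=1}^d [w_{\sigma(\boldx)(i)}(x_i)]\right).$$
This already has the correct sign pattern to match $\chi(SFH)$, up to the global factor $(-1)^{b_1(M,R_-)}$, which simply contributes a sign and is absorbed into the $\pm$ in $\sim$.

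\textbf{Step 3 (matching the group element with the Spin$^c$ structure).} The main task is to show that the function $\boldx\mapsto \prod_i [w_{\sigma(i)}(x_i)]\in H$ agrees with $\boldx\mapsto\iota(\s(\boldx))$ up to an overall translation in $H$. For any two generators $\boldx,\boldy$, Lemma~\ref{spincdiff} gives $\s(\boldx)-\s(\boldy)=[\theta-\eta]\in H_1(\Sigma)/L\cong H_1(M)$, where $\theta$ (resp.\ $\eta$) is a $1$-chain along the $\a$-curves (resp.\ $\b$-curves) from $\boldx$ to $\boldy$. On the Fox side, the ratio
$$\prod_{i=1}^d [w_{\sigma(\boldx)(i)}(x_i)] \cdot \prod_{i=1}^d [w_{\sigma(\boldy)(i)}(y_i)]^{-1}\in H$$
is represented by travelling along segments of the $\b$-curves between the corresponding crossing points (since the prefixes $w_j$ live on the $\b_j$'s); the basepoints $p$ and the connecting paths $q_j$ cancel out because each $\b_j$ appears in exactly one of $w_{\sigma(\boldx)(i)}$ and $w_{\sigma(\boldy)(i)}$. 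This cycle is homologous to $\theta-\eta$ in $H_1(M)$, since $\theta-\eta$ and the $\b$-segment path differ by pieces along $\a$-curves (which are null in $H_1(M)/L$ since the $\a_i$'s are killed by the $1$-handle relations), and by the $c_k^*$ portions which cancel between $\boldx$ and $\boldy$. Hence $\prod_i[w_{\sigma(\boldx)(i)}(x_i)]$ and $\iota(\s(\boldx))$ differ by a single constant element of $H$, independent of $\boldx$.

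\textbf{Main obstacle.} The genuine work is in Step~3: carefully verifying that the Fox-calculus prefix path, read with the chosen basings $q_j$ and handle-decomposition of $R_-$, differs from Juh\'asz's $\a$-$\b$ path representative of $\s(\boldx)-\s(\boldy)$ only by terms lying in the subgroup $L\subset H_1(\Sigma)$ spanned by $\bolda$ and $\boldb$, so that the two agree in $H_1(\Sigma)/L\cong H_1(M)$. Once this is established, combining Steps 1--3 gives exactly $\chi(SFH(M,\g))\sim\det\varphi(\partial\b_j/\partial\a_i^*)$ in $\Z[H]/\pm H$, as required.
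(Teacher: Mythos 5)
Your Steps 1 and 2 coincide exactly with the paper's proof: equation~(\ref{eq:FoxD}) and the determinant expansion give the bijection between generators and monomials, and Lemma~\ref{lem:m} matches the signs. You have also correctly identified Step~3 — showing that the function $\boldx\mapsto A(\boldx)$ agrees with $\boldx\mapsto\iota(\s(\boldx))$ up to translation — as the crux; the paper isolates this as a separate lemma ($A(\boldx)-A(\boldy)=\boldx-\boldy$), which is exactly what you are after.

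However, your sketch of Step~3 is imprecise in a way worth flagging. You write that the ratio $\prod_i[w_{\sigma(\boldx)(i)}(x_i)]\cdot\prod_i[w_{\sigma(\boldy)(i)}(y_i)]^{-1}$ is ``represented by travelling along segments of the $\b$-curves.'' That is not literally true: each $w_j(x)$ is a word in the generators $\a_i^*$ and $c_k^*$ (read off by traversing $\b_j$ and recording crossings), so $A(x)=\varphi([w_j(x)])$ is a sum of $\a^*$- and $c^*$-classes, not a $\b$-segment. The passage from these Fox-prefix group elements to the $1$-chain $\theta-\eta\in H_1(\Sigma)/L$ is precisely where the genuine work lies, and the ``$\a$'s die in $L$'' and ``$c^*$'s cancel'' heuristics do not close it by themselves. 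The paper deals with this by reinterpreting $A(x)$ upstairs: $A(x)$ is the deck transformation $h$ with $\hat{x}\in (h\cdot\hat{\a}_i)\cap\hat{\b}_j$ in the universal abelian cover, and then decomposes a closed component $\delta$ of $\theta-\eta$, lifts it, and telescopes the translate parameters $h_i$ along alternating $\a$- and $\b$-segments via the key identity $h_{i+1}-h_i = A(x_{i+1})-A(y_i)$. If you replace your Step~3 sketch with this lifting/telescoping argument, your plan becomes exactly the paper's proof.
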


 \begin{proof}
We argue along the lines of Chapter 3 in \cite{Ra03}.
First, observe that
there are natural bijections
\begin{equation*}
\{\text{elements of \(\alpha_i \cap \beta_j\)}\}  \leftrightarrow \{ \text{appearances of \( \alpha^*_i\) in \(\beta_j\)}\}  \leftrightarrow \left\{\text{monomials in \(\varphi\left(\frac{\partial \beta_j}{\partial \alpha^*_i}\right)\)}\right\},
\end{equation*}
where  the free derivative \(\partial \b_j / \partial \a_i^*\) has been expanded {\it without canceling any terms.}
Equivalently,
\begin{equation}
\label{eq:FoxD}
\varphi\left(\frac{\partial \beta_j}{\partial \alpha^*_i}\right) = \sum _{x \in \alpha_i \cap \b_j} m(x)[A(x)],
\end{equation}
%\footnote{I think the double use of `E' is confusing and misleading (we already use $E$ for a map. {\bf Good point; I've changed \(E\) to \(A\)}}
where each \(m(x) = \pm 1\) and  each \(A(x)\) is an element of \(H_1(M)\). The sign \(m(x)\) is given by the exponent of the corresponding appearance of \(\alpha_i^*\) in \(\beta_j\), or equivalently, by the sign of intersection \(\alpha_i \cdot \beta_j\) at \(x\).

Recall that the chain complex computing \(SFH\) is generated by  \(d\)-tuples of intersection points \(\boldx = \{x_1,\ldots, x_d\}\), where \(x_i \in \alpha_i \cap \beta_{\sigma(i)}\) for some permutation \(\sigma \in S_d\). On the other hand, the determinant of a \(d\times d\) matrix \((B_{ij})\) can be expanded as
\begin{equation*}
\det (B_{ij}) = \sum_ {\sigma \in S_d} \sign(\sigma) B_{1\sigma(1)}\cdots B_{d\sigma(d)}.
\end{equation*}
Thus we get a bijection
\begin{equation*}
 \T_\a\cap \T_\b  \longleftrightarrow \left\{\text{monomials in \(\det \varphi\left(\frac{\partial \beta_j}{\partial \alpha^*_i}\right)\)}\right\}.
\end{equation*}
Again, all terms in the determinant are to be expanded {\it without cancelation.}

Together with Lemma~\ref{lem:m}, these imply that %\footnote{At some point we have to explain why $m(\boldx)=\prod \sign(\s) m(x_i)$.}
\begin{equation*}
\tau(M,\g) \sim \det \varphi\left(\frac{\partial \beta_j}{\partial \alpha^*_i}\right) = \sum _{\boldx \in \T_\alpha \cap \T_\beta}{m(\boldx)} [A(\boldx)].
\end{equation*}
Here $A(\boldx) = \sum_{i=k}^d A(x_k)$
and  \(m(\boldx)\) is the sign of the intersection \(\T_\alpha \cdot \T_\beta\) at \(\boldx\).  The  orientations on \(\T_\alpha\) and \(\T_\beta\) are induced by the orderings \(\langle \alpha_1, \ldots, \alpha_d \rangle\) and \( \langle \beta_1, \ldots, \beta_d \rangle.\) On the other hand, we know from Equation~(\ref{eqn:1}) at the end of Section~\ref{Subsec:Gens} that
\begin{equation*}
\chi(SFH(M,\g))  \sim  \sum_{\boldx \in \T_\a \cap  \T_\b } m(\boldx) [\iota(\s(\boldx))],
\end{equation*}
where \(\iota: \spinc(M,\g) \to H_1(M)\) is an affine isomorphism.
Comparing this with Lemma~\ref{spincdiff}, we see that the two expressions will agree up to  multiplication by \(\pm[h]\) if we can show the following.
\begin{lemma}
$A(\boldx) - A(\boldy) = \boldx - \boldy$.
\end{lemma}

\begin{proof}
%\footnote{[S] Jake should look at this (the pg 23 comment)}
The coefficients \(A(x)\) appearing in Equation~(\ref{eq:FoxD}) can be interpreted geometrically as follows.
In the universal abelian cover \(\hat{M}\),  we have fixed lifts \(\hat{\a}_i\) of \(\a_i\) and \(\hat{\b}_j\) of \(\b_j\) coming from the basings. An intersection point \(x \in \a_i \cap \b_j\) lifts to  a unique
\(\hat{x} \in \hat{\b}_j\), and this point \(\hat{x}\) is contained in
\( (A(x) \cdot \hat{\a}_i )\cap \hat{\b}_j\).

More generally, suppose that \(\tilde{\b}_j\) is an arbitrary
lift of \(\b_j\) and that  \(x \in \a_i\cap \b_j\) and \(x' \in
\a_{i'} \cap \b_j\). Let  $\tilde{x}$ and  $\tilde{x}'$ be the lifts of
\(x\) and \(x'\) which lie on \(\tilde{\b}_j\). If we choose \(h\)
and \(h'\) so that
\(\tilde{x} \in  (h \cdot \hat{\a}_i )\cap
\tilde{\b}_j\) and \(\tilde{x}' \in  (h' \cdot \hat{\a}_{i'})\cap
\tilde{\b}_j\), then  it is easy to see that
\begin{equation*}
h-h' = A(x)-A(x').
\end{equation*}

%Suppose \(\boldx = \{x_1,\ldots, x_d\}\) and \(\boldy = \{y_1,\ldots, y_d\}\), and that the ordering of the \(x\)'s and \(y\)'s is arranged such that  \(x_j,y_j \in \b_j\). Then there are \(i\) and \(i'\) such that
%\(\hat{x}_j \in  (A(x_j) \cdot \hat{\a}_i) \cap \hat{\b}_j\) and
%\(\hat{y}_j \in  (A(y_j) \cdot \hat{\a}_{i'}) \cap \hat{\b}_j\). If \(\tilde{\b}_j\) is an arbitrary lift of
%\(\b_j\), and
% \(\tilde{x}_j, \tilde{y}_j \) are the lifts of \(x_j, y_j\) which lie on it, then we can write
%\(\tilde{x}_j \in  (\tilde{A}(x_j) \cdot \hat{\a}_i )\cap \tilde{\b}_j\) and
%\(\tilde{y}_j \in  (\tilde{A}(y_j) \cdot \hat{\a}_{i'}) \cap \tilde{\b}_j\). Then we have
%$$
%\tilde{A}(x_j)-\tilde{A}(y_j) = A(x_j)-A(y_j).
%$$

 Now suppose we are given generators
\(\boldx = \{x_1,\ldots, x_d\}\) and \(\boldy = \{y_1,\ldots, y_d\}\)
in \(\T_\a\cap \T_\b\). To compute the difference \(\boldx-\boldy\), we
choose a 1--cycle \(\theta\) which runs from \(\boldx\) to \(\boldy\)
along the \(\alpha\) curves  and a 1--cycle  \(\eta\) which runs from
\(\boldx \) to \( \boldy \) along the \(\beta\) curves. Let \(\delta\)
be one component of the closed
1--cycle \(\theta-\eta\). After relabeling the \(\a\)'s, \(\b\)'s,
\(x\)'s and \(y\)'s, we may assume that \(x_1 \in \delta\), and that
if we traverse \(\delta\) starting at \(x_1\), we successively
encounter \(\a_1,y_1,\b_1,x_2,\a_2,\ldots,\a_r, y_r\) and \(\b_r \) before
returning to \(x_1\).

Let \(\tilde{\delta}\) be the lift of \(\delta\) to \(\hat{M}\) which
starts at \(\hat{x}_{1}\).
 As we traverse \(\tilde{\delta}\), we successively encounter
\(\tilde{\a}_1,\tilde{y}_1,\tilde{\b}_1,\tilde{x}_2,\tilde{\a}_2,
\ldots,\tilde{\a}_r, \tilde{y}_r\) before arriving at the endpoint
\(\tilde{x}_{r+1} = [\delta] \cdot \hat{x}_1\). Here \(\tilde{\a}_i\) denotes some lift
of \(\a_i\), {\it etc.} Let us write \(\tilde{\a}_i = h_i \cdot \hat{\a}_i\).
To compute the
difference \(h_{i+1}-h_{i}\), we observe that \(\tilde{y}_i\) (which
lies on  \(\tilde{\a}_i\)) and \( \tilde{x}_{i+1}\) (which lies on
\(\tilde{\a}_{i+1}\)) both lie on \(\tilde{\b_{i}}\). It follows that
$$
h_{i+1}-h_i = A(x_{i+1})-A(y_i).
$$

We now compute
\begin{align*}
[\delta] & = h_{r+1}-h_1 \\
& = \sum_{i=1}^r \left(h_{i+1}-h_i \right)\\
& = \sum_{i=1}^r \left({A}(x_{i+1})-{A}(y_i)\right) \\
& = \sum_{i=1}^r \left({A}(x_{i})-{A}(y_i)\right),
\end{align*}
since \(\tilde{x}_{r+1}\) is a lift of \(x_1\).
A similar relation holds for each component of \(\theta - \eta\). Adding them all together, we find that
\begin{equation*}
\boldx-\boldy = [\theta-\eta]=\sum_{i=1}^n \left(A(x_i)-A(y_i)\right) = A(\boldx)-A(\boldy).
\end{equation*}
\end{proof}

\noindent This completes the proof of Proposition~\ref{Prop:chi=tor}.
\end{proof}

\subsection{\(\spinc\) structures}
Our next step is to show that \(\chi(SFH(M,\g,\s)) = \pm T_{(M,\g)}(\s)\) for every $\s \in \spinc(M,\g).$
To see this,  fix a single generator \(\boldx = \{x_1,\ldots, x_d\} \in \T_\a \cap \T_\b\) and consider the associated lift \(\bbx =E_Z^{-1}(\mathfrak{e}(\boldx))\in \lift(Z).\)
%$$
%F = \varphi \left(\frac{\partial \beta_j} {\partial \alpha_i^*}\right)
%$$
%be the matrix of Fox derivatives.
 We will show that the term in
$$
\det \varphi \left(\frac{\partial \beta_j} {\partial \alpha_i^*}\right)
$$
corresponding to \(\boldx\) contributes to \(T_{(M,\g)}(\s(\boldx))\), or equivalently, that it contributes \(\pm 1\)
%\footnote{[S] perhaps we should say a few towards why that's enough. We already know they are equal up to $\pm h$, and I guess the argument now would be to look at extremal non-zero classes. [J] I'd prefer to leave this as is.}
 to the torsion \(\tau(M,\g,\bbx)\). It will then follow from Proposition~\ref{Prop:chi=tor} that the same relation holds for every generator \(\boldy \in \T_\a \cap \T_\b\).

Without loss of generality, we may assume
\(x_i \in \alpha_i \cap \beta_i\).
Consider the matrix
$$ F_{ij}=   \varphi \left(\frac{\partial \beta_j} {\partial \alpha_i^*}\right)$$ of Fox derivatives. Fix lifts \(\tilde{A}_i^*\) of the 1-handles to the universal abelian cover. Then after an appropriate normalization (multiplying each column in the matrix by a unit in \(\Z[H_1(M)]\)), the column vector
\begin{equation*}
\left(\varphi \left(\frac{\partial \beta_j}{\partial \alpha_1^*} \right), \ldots, \varphi \left(\frac{\partial \beta_j} {\partial \alpha_d^*}\right)\right)^T
\end{equation*}
expresses the boundary of a lift \(\tilde{B}_j\) of the two-handle \(B_j\) in terms of the lifts
\(\tilde{A}_i^*\).
It is well-defined up to multiplication by a unit in \(\Z[H_1(M)]\), corresponding to changing the lift \(\tilde{B}_j\). Let us choose the \(\tilde{B}_j\) such that the monomial in \(F_{ii}\) which corresponds to \(x_i\)
%\footnote{Do we want $i=j$? {\bf Yes. Fixed.}Otherwise what's $i$, what's $j$?\\ The sudden use of $E(x_i)$ again is also confusing.}
% \(\displaystyle \varphi\left( \frac{\partial \beta_i} {\partial \alpha_i^*} \right)\)
  is \(\pm 1\).

The lifts \(\tilde{A}^*_i\) and \(\tilde{B}_j\) determine an element
 \(\mathfrak{l} \in \lift(Z).\) We claim that
 \(\mathfrak{l} = \mathfrak{l}(\boldx)\). To see this, we construct
 an Euler chain for \(\mathfrak{l}\). We fix a point \(\hat{p} \in
 \hat{M}\), and choose paths \(\theta_i^1\) from \(\hat{p}\) to the
 center of \(\tilde{A}^*_i\). Next, we should
 choose paths \(\theta_j^2\) from \(\hat{p}\) to the centers of the
 2-cells \(\tilde{B}_j\).  Now the basis \(\tilde{B}_j\) was chosen
  so that the lift \(\tilde{x}_i\) of \(x_i\)
which lies in \(\tilde{A}^*_i\) is also contained in
\(\tilde{B}_j\). Thus
there is a path \(\theta_i\) from the center of
 \(\tilde{A}^*_i\) to the
center of \(\tilde{B}_i\) passing through \(\tilde{x}_i\)
which is contained in \(\tilde{A}^*_i \cup
\tilde{B}_i\). We let \(\theta^2_i\) be the join of \(\theta_i\) with
\(\theta_i^1\). Then the Euler chain associated to \(\mathfrak{l}\) is
\(\sum (\theta^2_i-\theta^1_i) = \sum \theta_i\). By
Remark~\ref{rem:spinc}, this is exactly the Euler chain which determines
the lift \(\mathfrak{l}(\boldx)\).
%\footnote{[S] perhaps Jake can look at this (the first pg 24 comment) [J] The expanded version is a bit long, but I hope not too awful}
  %\footnote{I am not sure how clear it is for the reader, but
  %spelling it out might be awful.}
Applying Proposition~\ref{prop:comptau}, we see that
\begin{equation*}
\tau(M,\gamma,\bbx) = \pm \det \varphi \left(\frac{\partial \beta_j} {\partial \alpha_i^*}\right),
\end{equation*}
and the monomial \(A(\boldx)\) contributes to \(\tau(M,\gamma,\bbx)\) with coefficient \(\pm1\). In other words, it is assigned to the \(\spinc\) structure \(\s(\boldx)\).

\subsection{Homology orientations}
%\footnote{[S] perhaps Jake can look at this (the second pg 24 comment)}
To complete the proof of Theorem~\ref{maintheorem}, it remains to  check that if we fix a homology orientation \(\w\) for \(H_*(M,R_-(\gamma);\R),\)
then we have  \(\chi(SFH(M,\g,\s,\w)) =  T_{(M,\g,\w)}(\s)\). By Proposition~\ref{prop:comptau}, we know that the torsion is given by
\[ \tau(M,\gamma,\frakl,\omega) =  (-1)^{b_1(M,R_-)}\det\left(\varphi\left(\frac{\partial {\b}_j}{\partial {\a}_i^*}\right)\right).\]
On the other hand, by Definition~\ref{def:torussign}, \(\omega\) determines an orientation on \(\Lambda^d(A) \otimes \Lambda^d(B)\). By Lemma~\ref{lem:m}, with respect to this orientation, the local intersection sign \(\T_\a \cap \T_\b\) at a generator \(\boldx\) is
$$\text{sign}(\sigma) \cdot \prod_{i=1}^dm(x_i).$$
This is    the sign of the term in the determinant corresponding to \(\boldx\). Finally, we recall (Definition~\ref{def:SFHsign}) that the \(\Z/2\) grading induced by \(\omega\) on the sutured Floer homology was defined to be \((-1)^{b_1(M,R_-)}\) times the intersection sign in the symmetric product.
 %\( \displaystyle  \det \varphi \left(\frac{\partial \beta_j} {\partial \alpha_i^*}\right) \).
%This completes the proof of Theorem~\ref{maintheorem}.
\qed

  %===========================================================
\section{The torsion function via Fox calculus} \label{section:comp}

In this section, we explain how to compute $T_{(M,\gamma)}$ using Fox calculus.
We assume throughout that  $(M,\gamma)$ is balanced and that the subsurfaces  $R_\pm(\gamma)$ are connected.
In light of Lemma~\ref{lem:product}, this restriction is a very mild one. For brevity, we write  \(R_\pm\) for  \(R_\pm(\g)\) and \(H\) for \(H_1(M)\).

Suppose we are given a basepoint \(p\in R_-\) and a presentation $$ \pi_1(M,p) \cong \langle a_1, \dots, a_m \ts | \ts b_1, \ldots, b_n \rangle.$$ We say that the presentation is {\it geometrically balanced} if \(m-n = g(\partial M)\). For example, any presentation coming from a handle decomposition of \(M\) is geometrically balanced.

We now consider the map \(\iota_*: \pi_1(R_-,p) \to \pi_1(M,p)\) induced by the embedding $\iota \colon R_- \to M.$
Since \(R_-\) is connected and has nonempty boundary, \(\pi_1(R_-,p)\) is free. Choose  elements \(d_1,\ldots, d_l\) which freely generate \(\pi_1(R_-,p)\), and let \(e_k = \iota_*(d_k)\) be their images in \(\pi_1(M,p)\) for $1 \le k \le l.$  Expressing each \(e_k\) as a word in the \(a_i\), we have the Fox derivatives
\(\frac{\partial e_k}{\partial a_i} \in \Z[\pi_1(M)]\). Denote by \(\varphi:\Z[\pi_1(M)] \to \Z[H_1(M)]\) the homomorphism induced by abelianization.
Then we can form the matrix
\[ A= \left( \varphi \left( \frac{\partial b_j}{\partial a_i} \right) \
 \varphi\left( \frac{\partial e_k}{\partial a_i} \right)
   \right). \]
%over $\Z[H]$.

If  our presentation of \(\pi_1(M,p)\) and the sutured manifold \((M,\g)\) are both balanced, then
$1-l = \chi(R_-)  = 1 - g(\partial M) = 1- m+n,$  so
\(m=n+l\), and \(A\) is a square matrix. The main result of this section is the following proposition.

\begin{proposition}
\label{prop:AMatrix}
Let $(M,\gamma)$ be a balanced sutured manifold such that  $M$ is irreducible.
If   $A$ is a square matrix, then
\[   \tau(M,\g)=\det(A) \]
up to multiplication by an element in $\pm H$.
\end{proposition}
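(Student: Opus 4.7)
The plan is to reduce to the handle-theoretic computation of Proposition~\ref{prop:comptau} via Tietze invariance of the determinant $\det A$. Since $M$ is irreducible with nonempty boundary, I would first choose a nice handle decomposition $Z$ of $(M,\gamma)$ with no $0$- or $3$-handles. By the recipe of Section~\ref{Subsection:DiagtoPres}, this produces a particular geometrically balanced presentation $\pi_1(M,p)=\langle \alpha_1^*,\dots,\alpha_d^*,c_1^*,\dots,c_l^*\mid \beta_1,\dots,\beta_d\rangle$ in which the chosen free generators $d_k$ of $\pi_1(R_-,p)$ map to $e_k=c_k^*$. For this privileged presentation the matrix in the statement takes the block form
\[
A_Z=\begin{pmatrix} \varphi(\partial \beta_j/\partial \alpha_i^*) & 0 \\[2pt] \varphi(\partial \beta_j/\partial c_k^*) & I_l\end{pmatrix},
\]
so $\det A_Z=\det\varphi(\partial \beta_j/\partial \alpha_i^*)$, which equals $\pm\tau(M,\gamma)$ modulo $\pm H$ by Proposition~\ref{prop:comptau}.

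Next I would show that the matrix $A$ built from any other geometrically balanced presentation satisfies $\det A\equiv \det A_Z \pmod{\pm H}$. Any two finite presentations of $\pi_1(M,p)$ are related by a finite sequence of Tietze transformations, and the words $e_k$, representing fixed elements of $\pi_1(M)$, may be rewritten accordingly. It then suffices to check that $\det A$ changes by at most a unit of $\pm H$ under each elementary move. A direct Fox-calculus computation establishes: (i) inverting or conjugating a relation $b_j$ multiplies the corresponding column of $A$ by a unit of $\pm H$; (ii) replacing $b_j$ by $b_jb_{j'}^{\pm1}$ is a column operation; (iii) adjoining a new generator $a_{m+1}$ together with a relation $r=a_{m+1}w^{-1}$ enlarges $A$ to a matrix of block form $\begin{pmatrix} A & \ast \\ 0 & 1\end{pmatrix}$, preserving the determinant; (iv) rewriting an $e_k$ modulo the relations modifies its column by a $\Z[H]$-linear combination of the $b_j$-columns, hence does not affect $\det A$; and (v) rewriting a $b_j$ or $e_k$ to use a newly-introduced generator is realizable by column operations involving the column of the new relation, so leaves $\det A$ unchanged. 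Combining these points yields $\det A\equiv \pm\tau(M,\gamma) \pmod{\pm H}$.

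Conceptually, the same content admits a geometric reformulation that could serve as an alternative route. One builds a relative $2$-complex $(X,Y)$ where $Y$ is a wedge of $l$ circles labelled $d_k$, and $X$ is obtained from $Y$ by attaching $m$ $1$-cells $a_i$ together with $n+l$ $2$-cells $B_j, E_k$ glued along the words $b_j$ and $d_ke_k^{-1}$ respectively. A short Fox-calculus computation shows that the relative cellular boundary $\partial\colon C_2(\hat X,\hat Y)\to C_1(\hat X,\hat Y)$ is represented, up to signs, by the matrix $A$, so Lemma~\ref{lem:taudet} gives $\tau(X,Y,\mathfrak{l},\omega)=\pm\det A$. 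One then identifies $\tau(X,Y)$ with $\tau(M,R_-)$ up to $\pm H$ by exhibiting a simple homotopy equivalence between $(X,Y)$ and a spine of $(M,R_-)$ arising from a handle decomposition and appealing to Lemma~\ref{lem:simple}. The main obstacle in either approach is the bookkeeping when Tietze moves are coupled with rewritings of the words $e_k$, and ensuring that one remains in the geometrically balanced (i.e.\ square-matrix) regime while interpolating between arbitrary geometrically balanced presentations.
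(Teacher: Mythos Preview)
Your second sketch is the route the paper actually takes, but you have underestimated the cost of the sentence ``exhibiting a simple homotopy equivalence between $(X,Y)$ and a spine of $(M,R_-)$.'' Two arbitrary geometrically balanced presentations of $\pi_1(M)$ give rise to two relative $2$-complexes $(X',Y)$ and $(X,Y)$ which are \emph{a priori} not even homotopy equivalent: the presentation complex $X'$ need not be aspherical. The paper first uses irreducibility together with $b_1(M)>0$ to see that $M$ (hence the handle-derived $X$) is aspherical, then builds a $\pi_1$-isomorphism $f\colon X'\to X$ fixing $Y$, and establishes that $f$ is a homotopy equivalence via a Leray--Serre argument and the Howie--Schneebeli theorem (which requires local indicability of $\pi_1(M)$). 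Only then does Waldhausen's theorem, that $\mathrm{Wh}(\pi_1(M))=0$ for Haken $M$, upgrade $f$ to a simple homotopy equivalence. None of these ingredients appear in your proposal, and without them the identification $\tau(X,Y)=\tau(M,R_-)$ is unjustified.

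Your first approach runs into the same wall from a different direction. The moves (i)--(iii), (v) you list are deficiency-preserving, but they do \emph{not} generate all Tietze equivalences: the missing move is adding or deleting a redundant relation, which makes the matrix non-square. Whether two balanced presentations can be connected by deficiency-preserving moves alone is an Andrews--Curtis--type question, and the obstruction to doing so (after stabilization) is precisely the Whitehead torsion of the induced map between presentation complexes. Thus your ``main obstacle'' is not bookkeeping but a genuine $K$-theoretic obstruction, and killing it again requires $\mathrm{Wh}(\pi_1(M))=0$. A minor additional point: the proposition allows an arbitrary free basis $d_1,\dots,d_l$ of $\pi_1(R_-)$, and reducing to the handle-derived basis $c_k^*$ requires Nielsen's theorem on automorphisms of free groups (which the paper invokes explicitly).
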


\begin{remark}
The above proposition was already proved by Goda and Sakasai in \cite[Proposition~4.6]{GS08} when $(M,\g)$ is a homology cylinder. Recall that a sutured manifold $(M,\g)$ is called a homology cylinder (or homology product)
if the map $H_*(R_-)\to H_*(M)$ is an isomorphism,  and a rational homology cylinder (or rational homology product) if the map
$H_*(R_-;\Q) \to H_*(M;\Q)$ is an isomorphism.
%Note that the condition $b_1(M)>0$ is not overly restrictive. Indeed, if $b_1(M)=0$, then all boundary components are spherical and the sutures are null--homologous, in particular
%$H_1(M)\to H_1(M,R_-)$ is injective, and the torsion $\tau(M,\gamma)$ is therefore completely  determined by  Proposition \ref{Prop:eval}.
\end{remark}

\begin{proof}
If $M = D^3,$ then the proposition is obviously true. Otherwise, since $M$ is irreducible, no component of $\partial M$ can be homeomorphic to $S^2,$ thus $b_1(M) > 0.$

As in Section~\ref{Subsection:DiagtoPres}, pick a nice handle decomposition $Z$ for $(M,\gamma),$ and another handle decomposition for $R_-.$ Again, denote by $c_1^*,\dots,c_l^*$ the corresponding curves in $R_-$ based at a point $p.$
Let $Y$ be the 1--complex given by the 0--cell $p$ and the 1--cells $c_1^*,\dots,c_l^*.$
Let $X$ be the 2--complex given by collapsing the handles of $Z$ to cells.
Note that $X$ is homotopy equivalent to $M$ and that $Y$ is a subcomplex of $X$. Furthermore, $\tau(X,Y)=\tau(M,\gamma),$ cf.  Remark~\ref{rem:CW}.

Now let $\Pi=\langle a_1, \dots, a_{m} \ts | \ts b_1, \ldots, b_{n} \rangle$ be a geometrically balanced presentation of $ \pi_1(M,p),$
and let $d_1,\dots,d_l$ be any basis for the free group $\pi_1(R_-)$.
%Let $C_1^*,\ldots, C_l^*$ be based curves on $R_-$ as in Section \ref{Subsection:DiagtoPres}.
A classical theorem of Nielsen in \cite{Ni24} on automorphisms of free groups implies that any  two bases   \(\langle e_1,\ldots, e_l \rangle \) and \(\langle f_1,\ldots, f_l \rangle \) of a free group $\pi_1(R_-)$ are related by a sequence of the following moves. We either 1) replace \(e_k\) by \((e_k)^{-1}\) or  2) replace \(e_k\) by \(e_k e_{k'}\) for some \(k\neq k'\).  Move 1) simply multiplies the \(n+k\)-th column of \(A\) by \(-\varphi(e_k^{-1})\), while move 2) adds \(\varphi(e_k)\) times the \(n+k'\)-th column to the \(n+k\)-th column. In both cases, the determinant of $A$ changes only by multiplication by an element of \(\pm [H]\). Thus
 we may assume that $d_k=[c_k^*]$ for $k=1,\dots,l$.

Let  $D$ be the 2--complex with one 0--cell $p$ and 1--cells $A_1,\dots,A_m$ corresponding to  $a_1,\dots,a_m,$
finally 2--cells $B_1,\dots,B_n$ glued along $b_1,\dots,b_n,$ respectively.
We denote by $X'$ the 2--complex obtained from $D$ by adding extra 1--cells $C_1^*,\dots,C_l^*$ corresponding to $c_1^*,\dots,c_l^*,$ and by adding 2--cells $W_1,\dots,W_l$ such that $\partial W_i$ is glued along $e_*([c_i^*])\cdot (C_i^*)^{-1}.$
Note that $X'$ is a 2--complex that is simple homotopy equivalent to $D$.
We can and will view $Y$ as a subcomplex of $X'$.

We have an obvious isomorphism $\psi:\pi_1(X')\to \pi_1(X)$ which induces the identity on $\pi_1(Y),$ viewed as a subgroup of both $\pi_1(X')$ and $\pi_1(X).$ Since $M$ is irreducible and $\pi_1(M)$ is infinite,
$M$, and hence $X$, are aspherical. It follows that there exists a map $f:X'\to X$ with $f_*=\psi$ and such that $f|_Y=\id_Y.$

\begin{claim}
The map $f:X'\to X$ induces an isomorphism $H_2(X')\to H_2(X)$.
\end{claim}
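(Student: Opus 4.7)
The plan is to reduce the isomorphism on $H_2$ to (i) a surjectivity statement coming from the asphericity of $X$, and (ii) a rank comparison via Euler characteristic. The main input is that the hypotheses force $X$ to be a $K(\pi,1)$ for $\pi = \pi_1(M)$.

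First I would verify asphericity. Since $M$ is irreducible and $b_1(M)>0$ (as already observed), the group $\pi_1(M)$ is infinite; by the classical fact that a compact irreducible $3$--manifold with infinite fundamental group is aspherical, $M$ is a $K(\pi_1(M),1)$. Because $X \simeq M$ by construction, $X$ is aspherical as well.

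Next, surjectivity of $f_*\colon H_2(X')\to H_2(X)$. Since $X$ is aspherical and $f$ is a $\pi_1$--isomorphism, $f$ is homotopic to a classifying map $X' \to K(\pi,1)\simeq X$. The Hopf exact sequence
\[
\pi_2(X') \longrightarrow H_2(X';\Z) \longrightarrow H_2(\pi;\Z) \longrightarrow 0
\]
identifies $f_*$, under the canonical isomorphism $H_2(X)\cong H_2(\pi)$, with the surjection on the right, so $f_*$ is surjective.

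Finally, a rank count upgrades surjectivity to an isomorphism. Both $H_2(X')$ and $H_2(X)$ are free abelian, being second homology groups of $2$--dimensional CW complexes (they embed in the free abelian chain groups $C_2$). Counting cells yields $\chi(X)=1-l$ (the contributions from the $1$-- and $2$--handles of $Z$ cancel) and $\chi(X')=1-m+n$, and these agree by the displayed identity $1-l = \chi(R_-) = 1-g(\partial M) = 1-m+n$ already established. Since $f_*$ is an isomorphism on $H_0$ (both spaces connected) and on $H_1$ (abelianization of $\psi$), equality of Euler characteristics forces $b_2(X')=b_2(X)$. A surjection between free abelian groups of the same finite rank has rank--zero, hence torsion, hence trivial kernel, so $f_*\colon H_2(X')\to H_2(X)$ is an isomorphism.

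The only substantive obstacle is the surjectivity step, which rests entirely on the asphericity of $M$; were the Hurewicz image in $H_2(X')$ allowed to be nontrivial, one could not conclude from $\pi_1$--equivalence that $f_*$ hits all of $H_2(X)$. Once surjectivity is in hand the rest is formal: the cellular Euler characteristic bookkeeping and the triviality of the torsion subgroup of a free abelian group.
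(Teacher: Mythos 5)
Your proof is correct. The overall structure matches the paper's: show $f_*$ is surjective on $H_2$, then upgrade to an isomorphism by comparing Euler characteristics and using that $H_2$ of a $2$--complex is free abelian. Where you differ is in how surjectivity is obtained. The paper runs the Leray--Serre spectral sequence for the homotopy fiber $F$ of $f\colon X'\to X$, using $\pi_2(X)=0$ to deduce $\pi_1(F)=0$, hence $H_1(F)=0$, hence $E^2_{p,1}=0$, and extracts the short exact sequence $0\to H_0(X;H_2(F))\to H_2(X')\to H_2(X)\to 0$. You instead observe that $X\simeq M$ is a $K(\pi,1)$ and invoke Hopf's five--term exact sequence $\pi_2(X')\to H_2(X')\to H_2(\pi)\to 0$, identifying $f_*$ with the edge map $H_2(X')\to H_2(\pi)\cong H_2(X)$. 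The two are essentially the same fibration in disguise --- the homotopy fiber of the classifying map to $K(\pi,1)$ is the universal cover of $X'$ --- but your packaging is more direct and avoids re-deriving the relevant piece of the spectral sequence from scratch. Your Euler characteristic bookkeeping ($\chi(X)=1-l$, $\chi(X')=1-m+n$, and $l=g(\partial M)=m-n$ from the geometrically balanced hypothesis) and the free-abelian-groups argument agree with the paper's.
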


Let $F$ be the homotopy fiber corresponding to the map $f:X'\to X$. Note that $\pi_1(F)=0$, since $f_*:\pi_1(X')\to \pi_1(X)$ is an isomorphism
and  $\pi_2(X)=0$. It follows that $H_1(F)=0$. Now consider the Leray-Serre spectral sequence corresponding to the fibration $F\to X'\to X$:
We have a spectral sequence with $E^2_{p,q}=H_p(X;H_q(F))$ where the degree of the $n$-th boundary map $d_n$
equals $(n,1-n)$ which converges to $H_*(X').$ Note that $E^2_{p,q}=0$ for $p>2$ since $X$ is a 2--complex, and we also have $E^2_{p,1}=0$ since $H_1(F)=0$. It follows that there is a short exact sequence
\[ 0\to H_0(X;H_2(F))\to H_2(X')\to H_2(X)\to 0,\]
it is well-known that the right hand map is the canonical map induced by the projection.
In particular, the map $H_2(X')\to H_2(X)$ is surjective. Since $X$ and $X'$ are 2--complexes, both $H_2(X)$ and $H_2(X')$ are free abelian groups.
We also have $H_i(X')\cong H_i(X)$ for $i=0,1,$ and
\[ \chi(X')=1-(m+l)+n+l=1-m+n=\chi(R_-)=\chi(M)=\chi(X).\] Hence the map $H_2(X')\to H_2(X)$ is an epimorphism between torsion free abelian groups of the same rank, so it is an isomorphism.

\begin{claim}
The map $f:X'\to X$ induces a homotopy equivalence of pairs $(X',Y)\to (X,Y)$.
\end{claim}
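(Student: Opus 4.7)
The plan is to deduce $f\colon(X',Y)\to(X,Y)$ is a homotopy equivalence of pairs in two stages: first establish that the underlying map $f\colon X'\to X$ is a (non-relative) homotopy equivalence, then upgrade using $f|_Y=\mathrm{id}_Y$.

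For the non-relative equivalence, I would show that both $X$ and $X'$ are $K(\pi,1)$'s for $\pi=\pi_1(M)$, and then invoke Whitehead's theorem together with the $\pi_1$-isomorphism supplied by $\psi$. Since $M$ is irreducible with $b_1(M)>0$, it has infinite fundamental group and, by the Sphere Theorem, contractible universal cover, so $X\simeq M$ is aspherical. Because $\partial M\neq\emptyset$, the manifold $M$ collapses onto a $2$-dimensional spine, hence $\mathrm{cd}(\pi)\leq 2$. Wall's classification of finite $2$-complexes over groups of cohomological dimension at most two then asserts: every finite $2$-complex $K$ with $\pi_1(K)=\pi$ is homotopy equivalent to $K_0\vee\bigl(\bigvee^k S^2\bigr)$, where $K_0$ is a $2$-dimensional $K(\pi,1)$ and $k=\chi(K)-\chi(\pi)$. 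Applied to $X'$, the identity $\chi(X')=\chi(X)=\chi(\pi)$ (established in the preceding claim) forces $k=0$, so $X'$ is itself aspherical. Hence $f$ is a map of $K(\pi,1)$'s inducing an isomorphism on $\pi_1$, and therefore a homotopy equivalence.

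For the relative upgrade, I would use that $f|_Y=\mathrm{id}_Y$ and that $Y$ sits in both $X'$ and $X$ as a subcomplex, so these inclusions are cofibrations. The standard gluing lemma for cofibrations (e.g.\ Hatcher, Proposition~0.19) then implies that $f$, being the identity on the common subspace $Y$ and a homotopy equivalence on total spaces, is a homotopy equivalence of pairs.

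The main obstacle will be securing asphericity of $X'$. Without a tool like Wall's theorem, the isomorphism $f_*\colon H_2(X')\to H_2(X)$ alone would not force $\pi_2(X')=0$; the whole content of the preceding claim---the $H_2$-isomorphism together with the matching of Euler characteristics---is precisely calibrated to feed that classification theorem.
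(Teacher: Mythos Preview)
Your plan is sound in outline---reduce to showing $X'$ is aspherical, then upgrade to pairs via the cofibration argument---but the crucial step has a genuine gap. The ``Wall classification'' you invoke, namely that every finite $2$-complex $K$ with $\pi_1(K)=\pi$ and $\mathrm{cd}(\pi)\le 2$ is homotopy equivalent to a $2$-dimensional $K(\pi,1)$ wedged with $2$-spheres, is not a theorem in this generality. What Schanuel-type arguments give is only the \emph{stable} statement: $K$ and a model $K_0$ become homotopy equivalent after wedging each with enough copies of $S^2$. Unstably, asking whether a finite $2$-complex with $\pi_1(K)=\pi$ and $\chi(K)=\chi(\pi)$ must be aspherical amounts to asking whether the stably free $\Z[\pi]$-module $\pi_2(K)$ of rank $0$ must vanish, and there is no general reason for this. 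Your own remark that ``the $H_2$-isomorphism alone would not force $\pi_2(X')=0$'' is exactly right, and the Euler-characteristic input you feed into the claimed Wall theorem is equivalent information, so it cannot do more.

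The paper closes this gap by exploiting a special feature of $\pi=\pi_1(M)$: since $M$ is irreducible with infinite fundamental group, $\pi$ is locally indicable by a theorem of Howie. One forms the mapping cylinder $U$ of $f$ and notes $H_2(X';\Z[\pi])\cong H_3(U,X';\Z[\pi])$; the previous claim gives $H_3(U,X';\Z)=0$, so the boundary map $C_3(U,X';\Z[\pi])\to C_2(U,X';\Z[\pi])$ between free $\Z[\pi]$-modules is injective after tensoring with $\Z$. A theorem of Howie--Schneebeli then says that for locally indicable $\pi$ this forces injectivity over $\Z[\pi]$ itself, whence $\pi_2(X')=H_2(X';\Z[\pi])=0$. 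Your cofibration argument for the upgrade to a homotopy equivalence of pairs is correct and is the implicit final step.
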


First note that $f_*:\pi_1(X')\to \pi_1(X)$ is an isomorphism. We write $\pi=\pi_1(X)$ and  identify $\pi_1(X')$ with $\pi$ via $f_*$.
Recall that $\pi_i(X)=0$ for $i>1$ since $X$ is aspherical.
By the Hurewicz theorem  this implies that  $H_i(X;\Z[\pi])=0$ for $i>1 $, so it suffices  to show that $H_i(X';\Z[\pi])=0$ for all $i>1$.

Let $U$ be the mapping cylinder of $f:X'\to X$. Then $U$ is a 3--complex which is homotopy equivalent to $X$ and contains $X'$. We can identify $\pi$ with $\pi_1(U).$
From the long exact sequence of the pair $(U,X')$ we now obtain that $H_2(X';\Z[\pi])\cong H_3(U,X';\Z[\pi]).$ So it remains to show that $H_3(U,X';\Z[\pi])=0$.

By the previous claim, the map $f_*:H_2(X')\to H_2(U)=H_2(X)$ is an isomorphism. It follows that $H_3(U,X')=0.$ In particular, the map of free $\Z[\pi]$--modules $C_3(U,X';\Z[\pi])\to C_2(U,X';\Z[\pi])$ becomes injective after tensoring over $\Z[\pi]$ with $\Z.$
On the other hand, by \cite[Corollary~6.2]{Ho82} the group $\pi$ is locally indicable.
%\footnote{here we need that $b_1(M)>0$. {\it Do these theorems have some sort of name?}}
According to \cite[Theorem~1]{HS83}, this implies  that $C_3(U,X';\Z[\pi])\to C_2(U,X';\Z[\pi])$ is injective, and in particular $H_3(U,X';\Z[\pi])$ is trivial. This concludes the proof of the claim.
\vskip0.05in
By hypothesis, \(M\) is irreducible and \(b_1(M)>0\), so  $M$ is Haken. By  a theorem of Waldhausen \cite[Theorem~5]{Wa78}, the Whitehead group of $\pi=\pi_1(M)$ vanishes. Thus the map
$f:(X',Y)\to (X,Y)$ is a simple homotopy equivalence, and  the maximal abelian torsions of $(X',Y)$ and $(X,Y)$ agree.
 We have now shown that
\[ \tau(M,\gamma)=\tau(X,Y)=\tau(X',Y).\]
The following claim therefore concludes the proof of the proposition.

\begin{claim}
We have
$\tau(X',Y)=\det(A)$.
\end{claim}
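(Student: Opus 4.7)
The plan is to unpack the relative cellular chain complex of the pair $(X',Y)$ in the universal abelian cover, identify the boundary matrix with $A$ via Fox calculus, and then invoke Lemma~\ref{lem:taudet}.

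First I would observe that the only $0$-cell of $X'$ is the basepoint $p$, and $p\in Y$; similarly the $1$-cells $C_1^*,\dots,C_l^*$ lie in $Y$, while all other $1$- and $2$-cells lie in $X'\setminus Y$. Thus the relative cellular chain complex of the universal abelian cover takes the form
\[
0\to C_2(\hat X',\hat Y;\Z) \xrightarrow{\partial} C_1(\hat X',\hat Y;\Z)\to 0,
\]
where $C_1(\hat X',\hat Y;\Z)$ is free over $\Z[H]$ on lifts of $A_1,\dots,A_m$ and $C_2(\hat X',\hat Y;\Z)$ is free on lifts of $B_1,\dots,B_n,W_1,\dots,W_l$. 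Because the presentation and sutured manifold are both balanced we have $m=n+l$, so $\partial$ is represented by a square $\Z[H]$-matrix once lifts are chosen.

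Next I would compute $\partial$ using Fox calculus. Fix lifts $\tilde A_i$, $\tilde B_j$, $\tilde W_k$, and $\tilde C_k^*$ of the cells. For each $B_j$, whose attaching word $b_j$ involves only the $a_i$'s, the standard Fox-calculus formula gives
\[
\partial \tilde B_j = \sum_{i=1}^m \varphi\!\left(\tfrac{\partial b_j}{\partial a_i}\right)\tilde A_i,
\]
which is already a relative chain. For each $W_k$, with attaching word $e_k\cdot (C_k^*)^{-1}$, the same formula yields
\[
\partial \tilde W_k = \sum_{i=1}^m \varphi\!\left(\tfrac{\partial e_k}{\partial a_i}\right)\tilde A_i + \sum_{k'=1}^l \varphi\!\left(\tfrac{\partial(e_k(C_k^*)^{-1})}{\partial c_{k'}^*}\right)\tilde C_{k'}^*;
\]
but the second sum lies in $C_1(\hat Y;\Z)$, hence vanishes in the relative complex. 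Therefore the matrix of $\partial$ in our chosen bases is precisely $A$ (up to convention of rows vs.~columns).

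Finally, I would apply Lemma~\ref{lem:taudet}. Since $\chi(X',Y)=\chi(M,R_-)=0$, and lifts can be chosen compatibly with the prescribed homology orientation, the lemma gives $\tau(X',Y,\mathfrak{l},\omega)=(-1)^{b_1(X',Y)}\det A$. As $\tau(M,\g)$ is only well defined up to multiplication by elements of $\pm H$, this yields $\tau(X',Y)=\det A$ in $\Z[H]/\pm H$, completing the claim and hence the proposition. The one place where care is needed is in verifying that the $C_k^*$-contributions to $\partial \tilde W_k$ are killed in the relative complex, which is the point of having introduced $Y$ as a subcomplex in the first place; beyond that, the argument is a bookkeeping exercise in the formalism set up in Section~\ref{subsection:CW}.
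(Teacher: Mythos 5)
Your proposal is correct and follows essentially the same route as the paper: Fox calculus computes the boundary map, the $\hat{C}^*_{k'}$-contributions from $\partial\hat{W}_k$ vanish upon passing to the relative complex since those cells lie in $Y$, and Lemma~\ref{lem:taudet} converts the resulting square boundary matrix into the torsion. The paper organizes this via a larger commutative diagram of short exact sequences of chain complexes, but the content is identical, and your handling of the sign ambiguity via the $\pm H$-equivalence matches the statement of Proposition~\ref{prop:AMatrix}.
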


As before, we pick lifts of the cells in $(X',Y)$ to the universal abelian cover. These lifts are, as usual, decorated by a `hat'.
Consider the following diagram of free $\Z[H]$--modules.
 %We write our choice of basis (well--defined up to multiplication by an element in $\pm H$) under the free modules.
\[ \ba{cccccccccccccc}
&&0&\to& C_2(X';\Z[H])&\to& C_2(X',Y;\Z[H])&\to&0\\
&&&& \ba{c} \hat{B}_1,\dots,\hat{B}_n\ea && \ba{c} \hat{B}_1,\dots,\hat{B}_n\ea&&\\
&&&& \ba{c} \hat{W}_1,\dots,\hat{W}_l\ea && \ba{c} \hat{W}_1,\dots,\hat{W}_l\ea&&\\
&&\downarrow&& \downarrow &&\downarrow &&\\
0&\to&C_1(Y;\Z[H])&\to&C_1(X';\Z[H])&\to& C_1(X',Y;\Z[H])&\to &0\\
&&\hat{C}_1^*,\dots,\hat{C}_l^*&\to&\ba{c} \hat{A}_1^*,\dots,\hat{A}_m^*\\ \hat{C}_1^*,\dots,\hat{C}_l^*\ea &&
\ba{c} \hat{A}_1^*,\dots,\hat{A}_m^*\ea \\
&& \downarrow &&\downarrow &&\downarrow&&\\
0&\to& C_0(Y;\Z[H])&\to& C_0(X';\Z[H])&\to&0.&&\\
&&\hat{p}&&\hat{p}&&&&\ea \]
%Note that
%\[ \frac{\partial (\i(c_i^*)(c_i^*)^{-1})}{\partial a_i}=\frac{\partial (\i(c_i^*))}{\partial a_i}\]
%since the word corresponding to $c_i^*$  does not contain any $a_i$.
Fox calculus tells us  that the boundary map $C_2(X';\Z[H])\to C_1(X';\Z[H])$ is given by
\[ \bp
 \varphi\left(\frac{\partial {b_j}}{\partial {a}_i}\right)&\varphi\left(\frac{\partial {e_*([c_j^*])}}{\partial {a}_i}\right) \\[2mm]
 0& * \ep.
 %\varphi\left(\frac{\partial (\i(c_i^*)(c_i^*)^{-1})}{\partial c_i^*}\right)   \ep.
 \]
Thus \(A\) is the matrix of the boundary map \(C_2(X',Y;\Z[H]) \to C_1(X',Y; \Z[H])\), and the claim follows.
\end{proof}

%=====================================
\section{Algebraic properties of the torsion}
\label{section:algebra}

In this section, we collect some algebraic properties of the torsion function and their consequences. We begin by describing  some known examples which appear as special cases of the torsion for sutured manifolds. We then turn our attention to the ``evaluation homomorphism'' \(H_1(M) \to H_1(M,R_-(\gamma))\) and prove
  Proposition~\ref{Prop:eval} from the introduction. Finally,  we discuss sutured L-spaces and give the proof of Corollary~\ref{Cor:Lspace}.

%==============================
\subsection{Special Cases of the Torsion} In this section, we summarize some useful special cases of the torsion. These are all ``decategorifications" of known facts about sutured Floer homology, although in many cases they admit more elementary proofs as well.

\begin{lemma}
Let $(M,\gamma)$ be a product sutured manifold. Denote by $\s_0$ the canonical vertical $\spinc$--structure of $(M,\gamma)$ and take $\omega$ to be the positive orientation of $H_*(M,R_-(\gamma);\R) = 0.$
Then
\[  T_{(M,\gamma,\omega)}(\s)=\left\{ \ba{rl} 1, &\mbox{ if }\s=\s_0, \\ 0, &\mbox{ if }\s\ne \s_0.\ea \right.\]
\end{lemma}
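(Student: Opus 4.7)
The plan is to compute $\tau(M,\g,\s,\w)$ directly from a trivial handle decomposition and then read off its constant term. Since $M = R \times [-1,1]$, the triple $\A = (A, S \times I, \emptyset)$ with $A = S \times I$ and no handles, paired with the identity diffeomorphism $d \colon A \to M$, is a nice handle decomposition of $(M,\g)$: the niceness conditions of Definition~\ref{def:handledec} are vacuous in the absence of handles. With this choice, the relative chain complex $C_*(\hat{A}, \widehat{S \times I}; \Z)$ vanishes, so in the CW translation of Remark~\ref{rem:CW} one lands in the degenerate case $X = Y$. By the definition given at the end of Section~\ref{sec:MaxTorsion}, $\eul(Z)$ is canonically identified with $H_1(A) = H_1(M)$, and for $\e \in \eul(Z)$ corresponding to $h \in H_1(M)$ one has
\[\tau(Z,\e,\w) = h \in \Z[H_1(M)],\]
the sign being $+$ because $\w$ is the positive orientation of the zero-dimensional vector space $H_*(M,R_-(\g);\R)$.

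It remains to identify which element of $\eul(Z)$ corresponds to $\s_0$ under the bijection $s_Z$. The vector field $v_A$ prescribed in the proof of the claim preceding Definition~\ref{defn:1} reduces on $A = S \times I$ to $\partial/\partial t$, and the trivial Euler structure admits the empty Euler chain as a representative. Hence Definition~\ref{defn:1} gives $v = v_A$, so $s_Z(0) = [d_*(\partial/\partial t)] = \s_0$. Combining this with the $H_1(M)$-equivariance of $s_Z$ furnished by Lemma~\ref{lem:sz}, one concludes
\[\tau(M,\g,\s,\w) = h \in \Z[H_1(M)],\]
where $h$ is the unique element of $H_1(M)$ with $s_Z^{-1}(\s) = h$. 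The constant term of this monomial equals $1$ precisely when $h$ is the identity of $H_1(M)$, i.e., when $\s = \s_0$, and vanishes otherwise, which is the claim. The only mildly subtle point is correctly invoking the degenerate $X = Y$ case of the maximal abelian torsion, but this is handled explicitly by the definitions already in place; no further computation is required.
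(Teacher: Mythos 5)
Your proof is correct and follows essentially the same approach as the paper's: take the trivial handle decomposition with $A = S \times I$ and no handles, use the canonical identification $\eul(Z) \cong H_1(M)$ and the degenerate $X=Y$ definition of the torsion to get $\tau(Z,h,\w) = h$, and note that the empty Euler chain (giving $v_A = \partial/\partial t$) corresponds to $\s_0$. The only difference is that you spell out the intermediate steps more explicitly than the paper does.
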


\noindent
This decategorifies the
 fact that \(SFH(M,\g,\s)\) is isomorphic to \( \Z\) if \(\s=\s_0\) and is trivial otherwise \cite{Ju06}.

\begin{proof}
Let $Z$ be the handle decomposition of $(M,\g)$ with underlying sutured handle complex $(A, S\times I)$ such that $S = R_-(\gamma)$
and $A = S \times I.$ Then $s_Z^{-1}(\s_0) \in \eul(Z)$ can be represented by the 1-cycle $\theta = \emptyset.$ Since there are no handles, we have a canonical identification between $\eul(Z)$ and $H_1(M),$ and by the definitions of Subsection~\ref{subsection:CW}, in this case $\tau(Z,h,\omega) = h$ for every $h \in \eul(Z) \cong H_1(M).$
The result follows.
\end{proof}

\begin{lemma}
Let $Y$ be a closed 3--manifold and let $Y(1)$ be the balanced sutured manifold defined in Example~\ref{ex:1}.
Then for every $\s\in \spinc(Y(1))$ we have
\[  T_{Y(1)}(\s)=\left\{ \ba{rl} 1 &\mbox{ if }b_1(Y)=0, \\ 0 &\mbox{ if }b_1(Y)>0.\ea \right.\]
\end{lemma}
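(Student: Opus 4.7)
The plan is to deduce this lemma directly from Proposition \ref{Prop:eval}, which expresses $p_*(\tau(M,\g))$ in terms of the group element $I_{H_1(M,R_-(\gamma))}$. The key point will be that for $Y(1)$ the relative homology $H_1(M,R_-(\g))$ is canonically the same as $H_1(M) = H_1(Y)$, so Proposition \ref{Prop:eval} computes $\tau$ itself (up to the usual $\pm H$ ambiguity), not just its image.

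First I would unpack the geometry of $Y(1) = (M,\g)$. Here $M = Y \setminus \mbox{Int}(B^3)$ and $s(\g)$ is a single oriented simple closed curve on $\partial B^3 \cong S^2$; consequently $\g$ is an annulus and $R_-(\g)$ is one of the two disks into which $s(\g)$ separates $\partial B^3$. In particular $R_-(\g)$ is contractible. The long exact sequence of the pair $(M,R_-(\g))$ together with the obvious facts $H_0(R_-(\g))\to H_0(M)$ is an isomorphism and $H_1(R_-(\g))=0$ shows that $p_*\colon H_1(M)\to H_1(M,R_-(\g))$ is an isomorphism; moreover $H_1(M)\cong H_1(Y)$ since removing an open ball does not change $H_1$ of a closed $3$-manifold.

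Next I would apply Proposition \ref{Prop:eval} to obtain $p_*(\tau(M,\g)) = \pm I_{H_1(M,R_-(\g))}$. Since $p_*$ extends to a ring isomorphism $\Z[H_1(M)] \to \Z[H_1(M,R_-(\g))]$, this gives the identity
\[ \tau(M,\g) \;=\; \pm I_{H_1(Y)} \qquad \text{in } \Z[H_1(M)], \]
well defined up to multiplication by an element of $\pm H_1(M)$, which is exactly the indeterminacy of $\tau$.

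Finally I would read off $T_{Y(1)}(\s)$ from this formula via the defining equation $\tau(M,\g) = \sum_{\s} T_{Y(1)}(\s)\,\iota(\s)$ for the chosen affine isomorphism $\iota\colon \spinc(M,\g) \to H_1(M)$. If $b_1(Y)=0$ then $H_1(Y)$ is finite and $I_{H_1(Y)} = \sum_{g\in H_1(Y)} g$, so every coefficient equals $\pm 1$; hence $T_{Y(1)}(\s) = 1$ for every $\s$ (up to the global sign, which is fixed after a choice of homology orientation $\omega$). If $b_1(Y)>0$ then $H_1(Y)$ is infinite and $I_{H_1(Y)} = 0$ by definition, so $\tau(M,\g) = 0$ and $T_{Y(1)}(\s) = 0$ for every $\s$. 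There is essentially no obstacle here beyond the bookkeeping: the lemma is a clean corollary of Proposition \ref{Prop:eval} in the special case where $R_-(\g)$ is contractible.
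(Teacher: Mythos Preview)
Your proof is correct and follows exactly the same approach as the paper: both use that $R_-(\gamma)=D^2$ forces $p_*\colon H_1(M)\to H_1(M,R_-(\gamma))$ to be an isomorphism, and then invoke Proposition~\ref{Prop:eval}. The paper's proof is simply the two-sentence summary of what you wrote out in detail.
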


\begin{proof}
Since $R_-(\gamma) = D^2,$ the map $p_* \colon H_1(M) \to H_1(M,R_-(\gamma))$ is an isomorphism. So the result follows immediately from Proposition~\ref{Prop:eval}.
\end{proof}
\noindent This also  follows from the isomorphism \(SFH(Y(1)) \cong \widehat{HF}(Y)\) from \cite{Ju06}, together with the corresponding calculation of  \(\chi(\widehat{HF}(Y,\s))\) in \cite{OS04a}.

Let $L\subset S^3$ be an ordered oriented $k$--component link. Let $S^3(L)$ be the corresponding balanced sutured manifold as defined in Example~\ref{ex:2}. Then $H = H_1(S^3\setminus N(L))$ is the
free abelian multiplicative group generated by $t_1,\dots,t_k$, where \(t_k\) is represented by the meridian of the \(k\)-th component of $L.$

\begin{lemma} \label{lem:alex}
If $L \subset S^3$ is a $k$-component link, then given $\s \in \spinc(S^3(L)),$ we have
\[ \tau(S^3(L))= \sum_{h\in H} T_{S^3(L)}(h + \s) \cdot h \sim \left\{ \ba{ll} \Delta_L(t_1) &\mbox{ if $k = 1$}\\
 \Delta_L(t_1,\ldots,t_k) \cdot \prod_{i=1}^k(t_i-1) &\mbox{ if $k\geq 2$.}\ea \right.\]
%\footnote{I changed the lemma (which was incorrect for knots) and the proof}
\end{lemma}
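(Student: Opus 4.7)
The strategy is to apply Proposition~\ref{prop:AMatrix} after first arranging that $R_\pm(\gamma)$ are connected, using Lemma~\ref{lem:product} as the reduction step. When $k\geq 2$, the surfaces $R_\pm(\gamma)$ in $S^3(L)$ each consist of $k$ meridional annuli (one on each boundary torus of $M = S^3\setminus N(L)$), so I would attach $k-1$ product 1-handles in a ``star'' configuration whose $i$-th handle joins the $R_-$ annulus on component~$1$ of $L$ to that on component~$i+1$. The resulting balanced sutured manifold $(M',\gamma')$ has $R_\pm(\gamma')$ connected, $g(\partial M')=k$, and $\pi_1(M')\cong\pi_1(S^3\setminus N(L)) * F_{k-1}$, with the new free generators $c_1,\ldots,c_{k-1}$ being loops through the cores of the added handles. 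By Lemma~\ref{lem:product} the torsion is unchanged (up to the induced $H_1$--map) under this modification, so it suffices to compute $\tau(M',\gamma')$.

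Starting from a Wirtinger presentation $\pi_1(S^3\setminus N(L)) = \langle a_1,\ldots,a_n \mid b_1,\ldots,b_{n-1}\rangle$ and adjoining $c_1,\ldots,c_{k-1}$ yields a geometrically balanced presentation of $\pi_1(M')$ with $m=n+k-1$ generators and $n-1$ relations, so $m-n=k=g(\partial M')$. A free basis $f_1,\ldots,f_k$ of $\pi_1(R_-(\gamma'))$ can be chosen with $f_1=a_{j_1}$ a meridian on component~$1$, and, for $i\geq 2$, $f_i=c_{i-1}\, a_{j_i}\, c_{i-1}^{-1}$ a meridian of component~$i$ conjugated back to the basepoint through the $(i-1)$st 1-handle; this identification uses that the path across the $i$-th added band in $R_-(\gamma')$ is homotopic in $M'$ to the handle-core loop $c_{i-1}$.

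Ordering rows by $(a_1,\ldots,a_n,c_1,\ldots,c_{k-1})$ and columns by $(b_1,\ldots,b_{n-1},f_1,\ldots,f_k)$, the matrix $A$ of Proposition~\ref{prop:AMatrix} has block form
\[
A=\begin{pmatrix} P & Q \\ 0 & R \end{pmatrix},
\]
where $P=(\varphi(\partial b_j/\partial a_i))$ is the $n\times(n-1)$ Wirtinger Alexander matrix and $R$ is a $(k-1)\times k$ matrix whose first column vanishes (since $f_1$ contains no $c_p$) and whose remaining $k-1$ columns each have a single nonzero entry, namely $\varphi(\partial f_i/\partial c_{i-1})=1-t_i$ in the $c_{i-1}$-row; the latter follows from a direct Fox-calculus computation on $c_{i-1}a_{j_i}c_{i-1}^{-1}$ and the fact that $\varphi(a_{j_i})=t_i$. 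Laplace expansion along the last $k-1$ rows and the last $k-1$ columns yields
\[
\det A \;=\; \pm \prod_{i=2}^k (1-t_i)\cdot \det A_{j_1},
\]
where $A_{j_1}$ is the $(n-1)\times(n-1)$ minor of $P$ obtained by deleting the row indexed by $a_{j_1}$.

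To finish, I would invoke the classical formula (due to Fox, refined by Torres and Turaev) that $(n-1)\times(n-1)$ minors of a Wirtinger Alexander matrix equal $\pm u\cdot \Delta_K(t)$ for $k=1$ and $\pm u\cdot\Delta_L(t_1,\ldots,t_k)\cdot(1-t_{\ell(i)})$ for $k\geq 2$, where $u\in\pm H$ is a unit and $\ell(i)$ denotes the component containing the deleted arc. For $k=1$ no handles are added; the column $Q=(\delta_{i,j_1})^T$ forces $\det A = \pm\det A_{j_1}\sim\Delta_K(t)$. For $k\geq 2$, applying the formula with $\ell(j_1)=1$ and combining with the display above gives
\[
\det A\;\sim\;\Delta_L(t_1,\ldots,t_k)\cdot\prod_{i=1}^k(1-t_i)\;\sim\;\Delta_L(t_1,\ldots,t_k)\cdot\prod_{i=1}^k(t_i-1),
\]
which is the desired equality up to a unit in $\pm H$. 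The principal obstacle is the geometric bookkeeping in the reduction step: one must verify that the free basis $\{f_i\}$ really sits inside $\pi_1(M')$ as the conjugated meridians claimed, so that the Fox derivatives assume the diagonal structure which makes the Laplace expansion transparent; this in turn rests on the precise geometry of how each product 1-handle meets $R_-(\gamma)$ and contributes a new band to $R_-(\gamma')$.
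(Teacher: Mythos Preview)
Your approach is essentially correct and genuinely different from the paper's. The paper avoids Fox calculus entirely: it applies the multiplicativity of Reidemeister torsion to the short exact sequence $0\to C_*(R_-;Q)\to C_*(M;Q)\to C_*(M,R_-;Q)\to 0$ over $Q=\Q(t_1,\dots,t_k)$, so that $\tau(M,R_-;Q)=\tau(M;Q)\cdot\tau(R_-;Q)^{-1}$. Since each meridional annulus $R_i$ has $\tau(R_i;Q)=(t_i-1)^{-1}$ and Turaev's formula gives $\tau(S^3\setminus N(L);Q)=\Delta_L/(t_1-1)$ for $k=1$ and $\Delta_L$ for $k\ge 2$, the result drops out in two lines. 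Your route via Proposition~\ref{prop:AMatrix} is more hands-on: it makes the link to the Wirtinger Alexander matrix completely explicit and reproduces the classical minor formula $D_{j_1}\sim (t_1-1)\Delta_L$ inside the sutured framework, which is instructive in its own right.

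There is one genuine gap, however. Proposition~\ref{prop:AMatrix} requires $M'$ to be irreducible, and while attaching product $1$--handles preserves irreducibility, the hypothesis fails when $L$ is a split link. Your argument therefore does not cover that case. The paper's multiplicativity argument handles it uniformly (for split links $\tau(S^3\setminus N(L);Q)=\Delta_L=0$, so both sides vanish), but if you want to stay within your framework you must treat split links separately---for instance by observing directly that $C_*(M,R_-;Q)$ is not acyclic when $M$ contains an essential sphere, so $\tau(S^3(L))=0$ as well. Aside from this, your geometric bookkeeping is sound: the star of product $1$--handles does give $\pi_1(M')\cong\pi_1(M)\ast F_{k-1}$, the claimed free basis of $\pi_1(R_-(\gamma'))$ is correct, and the block structure of $A$ and the Laplace expansion go through exactly as you describe.
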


\begin{proof}
Let $Q = \Q(t_1,\dots,t_k).$ Note that we have a short exact sequence of chain complexes
\[ 0\to C_*(R_-(\gamma);Q)\to C_*(S^3\sm N(L);Q)
\to C_*(S^3\sm N(L),R_-(\gamma);Q)\to 0.\]
From the multiplicativity of torsion (cf. \cite[Theorem~1.5]{Tu01}) it follows that
\[ \tau(S^3\sm N(L),R_-(\gamma);Q) = \tau(S^3\sm N(L);Q)\cdot \tau(R_-(\gamma);Q)^{-1}.\]
For $i=1,\dots,k,$ we denote by $R_i$ the component of $R_-(\gamma)$ corresponding to the $i$-th component of the link $L.$
It follows easily from the definition of the torsion that $\tau(R_i;Q)=(t_i-1)^{-1}$.
We therefore deduce that
$$\tau(S^3\sm N(L),R_-(\gamma);Q) = \tau(S^3\sm N(L);Q)\cdot \tau(R_-(\gamma);Q)^{-1}=$$
$$=\tau(S^3\sm N(L);Q)\cdot \prod\limits_{i=1}^k \tau(R_i;Q)^{-1}=\tau(S^3\sm N(L);Q)\cdot \prod _{i=1}^k (t_i-1).$$
The lemma now follows from the following relation between the torsion and the Alexander polynomial
(cf. \cite[Theorem~11.8]{Tu01}):
\[ \tau(S^3\sm N(L); Q)=\left\{ \ba{ll} \Delta_L(t_1)(t_1-1)^{-1}, &\mbox{ if $k=1$}\\
 \Delta_L(t_1,\ldots,t_k), &\mbox{ if $k\geq 2$.}\ea \right.\]
%\(SFH(S^3(L))\) is equal to the link Floer homology \(\widehat{HFL}(L)\), so this follows from \cite[Theorem 1.3]{OS05}
%By \cite[Theorem~4.7]{Tu01}  the polynomial
%$\sum_{h\in H} T_{S^3(L)}(h) \cdot h$ equals the symmetrized Alexander polynomial corresponding to the
%module $H_1(S^3\sm N(L),R_-(\gamma);\Z[H])$.
%The components of $R_-(\gamma)$ are regular neighborhoods of meridians of the link, in particular the preimages of the components of $R_-(\gamma)$ in the $H$--cover of $S^3 \sm N(L)$ are contractible.
%It follows that $H_1(S^3\sm N(L),R_-(\gamma);\Z[H])=H_1(S^3 \sm N(L);\Z[H])$.
\end{proof}

The following lemma can be viewed as the decategorification of  \cite[Theorem~1.5]{Ju08}.

\begin{lemma}\label{lem:topcoeff}
Let $K\subset S^3$ be a knot and let $R$ be a genus minimizing Seifert surface for $K.$
Then $$ \sum_{\s \in \text{Spin}^c(S^3(R))} T_{Y(R)}(\s)$$ is equal to the coefficient of  $t^{g(R)}$ in the symmetrized Alexander polynomial of $K.$
\end{lemma}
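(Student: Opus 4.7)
The plan is to combine Theorem~\ref{maintheorem} with the results of Juh\'asz \cite{Ju08} and of Ozsv\'ath--Szab\'o \cite{OS04c}. First, by Theorem~\ref{maintheorem}, after fixing a homology orientation $\w$ of $(S^3(R), R_-(\g))$ we have $T_{Y(R)}(\s) = \chi(SFH(S^3(R), \s, \w))$ for every $\s \in \spinc(S^3(R))$, so summing over all relative $\spinc$-structures yields
\[ \sum_{\s \in \spinc(S^3(R))} T_{Y(R)}(\s) \;=\; \chi(SFH(S^3(R),\w)). \]
This reduces the lemma to computing the total Euler characteristic of $SFH(S^3(R))$.

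Next, I would invoke \cite[Theorem~1.5]{Ju08}, which asserts that when $R$ is a genus-minimizing Seifert surface of a knot $K$, the sutured Floer homology of the surface complement is identified with the top Alexander summand of knot Floer homology:
\[ SFH(S^3(R)) \;\cong\; \widehat{HFK}(K,g(R)). \]
Taking Euler characteristics and invoking the Ozsv\'ath--Szab\'o categorification of the Alexander polynomial \cite{OS04c}, namely $\sum_i \chi(\widehat{HFK}(K,i))\, t^i = \Delta_K(t)$ for the symmetrized $\Delta_K$, immediately identifies $\chi(SFH(S^3(R)))$ with the coefficient of $t^{g(R)}$ in $\Delta_K(t)$.

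The only delicate point, and thus the main potential obstacle, is matching signs: both $T_{Y(R)}$ and the absolute $\Z/2$-grading on $SFH$ depend on the choice of homology orientation $\w$, and \cite[Theorem~1.5]{Ju08} is stated as an isomorphism of relatively graded groups. One must therefore check that there is a compatible choice of $\w$ for which the two Euler characteristics agree including sign with the coefficient of $t^{g(R)}$. Since each quantity is determined only up to an overall sign, this amounts to verifying a single normalization rather than confronting any genuine difficulty, so the argument is essentially a clean chain of three invocations.
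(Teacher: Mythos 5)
Your argument is correct in outline, but it takes a genuinely different route from the paper. The paper does not pass through Theorem~\ref{maintheorem} or sutured Floer homology at all: it computes both sides purely torsion-theoretically. On the Alexander polynomial side, it uses the Mayer--Vietoris presentation $H_1(R)\otimes\zt \xrightarrow{A_--tA_+} H_1(S^3\sm N(R))\otimes\zt \to H_1(S^3\sm N(K);\zt)\to 0$ with dual bases to write the symmetrized Alexander polynomial as $\det(A_--tA_+)t^{-g(R)}$, so that the coefficient of $t^{g(R)}$ is $\det(A_+) = |H_1(S^3(R),R_-(\g))|$. On the torsion side, it observes that $\sum_\s T_{Y(R)}(\s)$ is the augmentation of $\tau(S^3(R))$, which by Proposition~\ref{Prop:eval} (via $p_*$) and Lemma~\ref{lem:alex} equals $\pm|H_1(S^3(R),R_-(\g))|$ as well. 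The upshot is that the paper's proof is an independent, self-contained verification, which is why the authors can honestly describe the lemma as a \emph{decategorification} of~\cite[Theorem~1.5]{Ju08}; your proof instead treats the lemma as a \emph{consequence} of that theorem together with Theorem~\ref{maintheorem} and the Ozsv\'ath--Szab\'o identification of $\chi(\widehat{HFK})$ with $\Delta_K$. Your route is shorter but imports heavier analytic machinery; the paper's route is elementary and also neatly illustrates how Proposition~\ref{Prop:eval} generalizes $\Delta_K(1)=\pm 1$. You are right that the one delicate point in your version is the sign normalization, since both~\cite[Theorem~1.5]{Ju08} and the statement of the lemma are genuinely only determined up to an overall sign by the homology orientation; the paper's proof sidesteps this because the $\pm$ already appears explicitly in Proposition~\ref{Prop:eval}.
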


\begin{proof}
Recall that there exists a  Mayer--Vietoris sequence
\[ H_1(R)\otimes \zt \xrightarrow{A_--tA_+} H_1(S^3\sm N(R))\otimes \zt \to H_1(S^3\sm N(K);\zt)\to 0.\]
Here we endow $H_1(R)$ with any basis and $H_1(S^3\sm N(R))$ with the dual basis.
Then the symmetrized Alexander polynomial of $K$ equals $$p(t)=\det(A_--tA_+)t^{-g(R)}.$$

The coefficient of \(t^{g(R)}\) in $p(t)$ is $$\det(A_+)=|H_1(S^3\sm N(R),R_-(\gamma))|=|H_1(S^3(R),R_-(\gamma))|,$$
where we write $|G|=0$ if $G$ is infinite. With this convention, $\eps(I_G)=|G|$
for any group $G$, where $\eps:\Z[G]\to \Z$ is the augmentation map.
The lemma now follows immediately from Lemma~$\ref{lem:alex}$ and Proposition~\ref{Prop:eval}.
%\footnote{{\bf Minor changes to wording}}
\end{proof}

\subsection{Evaluation}
In this section, we study the behavior of the torsion under the natural map \(p:H_1(M) \to H_1(M,R_-(\gamma))\).
We prove the following statement, which is clearly equivalent to  Proposition~\ref{Prop:eval} of the introduction.
\begin{proposition}
If \((M,\g)\) is a balanced sutured manifold, then
$$p_*(\tau(M,\g)) = \pm I_{H_1(M,R_-(\gamma))},$$ where
  \(I_G \in \Z[G]\) is the sum of all elements in \(G\) if \(G\) is finite, and is \(0\) otherwise.
\end{proposition}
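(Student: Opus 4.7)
The plan is to express $\tau(M,\g)$ via the Fox-calculus formula of Proposition~\ref{prop:comptau} and then analyze the image $p_*(\tau(M,\g))\in \Z[G]$, where $G := H_1(M,R_-(\g))$, through character theory on the associated abelian cover.

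First I would reduce to the case where $R_-(\g)$ is connected. Adding a product $1$-handle attaches a single $1$-cell to both $M$ and $R_-(\g)$, and these cancel in the relative cellular complex, so $H_1(M,R_-(\g))$ is preserved, and the identity to be proved transfers across the reduction via Lemma~\ref{lem:product}. Then Proposition~\ref{prop:comptau} gives $\tau(M,\g)=\pm\det A$ in $\Z[H_1(M)]$, where $A=\bigl(\varphi(\partial \beta_j/\partial \alpha_i^*)\bigr)$, so $p_*(\tau(M,\g))=\pm\det(p_*(A))$. The key observation is that $p_*(A)$ is precisely the boundary matrix $C_2(M,R_-;\Z[G])\to C_1(M,R_-;\Z[G])$ of the relative chain complex with coefficient system induced by $\pi_1(M)\to H_1(M)\to G$.

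Next I would exploit the structure of this local system. Because $\iota_*(H_1(R_-))$ equals $\ker(H_1(M)\to G)$, the system restricts trivially to $R_-$, so $H_*(R_-;\Z[G])=H_*(R_-)\otimes\Z[G]$; in particular $H_0(R_-;\Z[G])=\Z[G]$. Surjectivity of $\pi_1(M)\to G$ gives $H_0(M;\Z[G])=\Z$. For any nontrivial character $\chi:G\to\C^*$, base-changing to $\C_\chi$ yields $H_0(M;\C_\chi)=0$, and the long exact sequence of the pair produces a surjection from $H_1(M,R_-;\C_\chi)$ onto $\C$, so this group is nonzero. Since it is the cokernel of $\chi(p_*(A))$, the $d\times d$ complex matrix $\chi(p_*(A))$ is singular, hence $\chi(\det p_*(A))=0$ for every nontrivial character.

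The final step combines this vanishing with the value under the augmentation, using the embedding $\Z[G]\hookrightarrow\Q[G]$. If $G$ is finite, then $\Q[G]=\prod_\chi\Q(\chi)$ and every element is recovered from its character values. The augmentation sends $\det p_*(A)$ to $\det\eps(p_*(A))=\pm|G|$, since $\eps(p_*(A))$ is the integral boundary matrix with cokernel $H_1(M,R_-;\Z)=G$; under nontrivial characters $\det p_*(A)$ vanishes. The unique element of $\Z[G]$ with these character values is exactly $\pm I_G=\pm|G|\cdot e_0$, where $e_0=I_G/|G|$ is the idempotent for the trivial character. If $G$ is infinite, write $G=F\times T$ with $\mathrm{rank}(F)\geq 1$; then $\Q[G]\cong\prod_\chi\Q(\chi)[F]$ with each factor a Laurent-polynomial domain, and characters of $G$ nontrivial on $F$ are Zariski dense in each factor's character torus, so $\det p_*(A)$ vanishes identically in each factor and hence in $\Z[G]$. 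The main obstacle is the infinite case: one must leverage Zariski density of characters together with the product decomposition of $\Q[G]$ into domains to force vanishing in $\Z[G]$ itself rather than in a mere quotient; the finite case then reduces to the observation that the character values $(\pm|G|,0,\ldots,0)$ single out $\pm I_G$.
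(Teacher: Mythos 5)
Your proof is correct and follows essentially the same route as the paper: reduce to the case where $R_-(\g)$ is connected, pass to the cover corresponding to $\ker\bigl(\pi_1(M)\to H_1(M,R_-(\g))\bigr)$, use the long exact sequence of the pair to show that the first relative homology twisted by any nontrivial character is nonzero (forcing the twisted torsion, i.e.\ the character value of $\det p_*(A)$, to vanish), and compute the trivial--character/augmentation value as $\pm|H_1(M,R_-(\g))|$. The paper packages the same argument inside Turaev's maximal abelian torsion formalism via the decomposition $\Q[T]\cong\bigoplus_i\F_{\chi_i}$ and the fields $Q(\F_{\chi_i}[F])$, whereas you argue directly with complex character values and the Fox-calculus determinant, but the underlying content and all the key steps coincide.
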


\noindent Equivalently, if  $K = PD[\text{ker} \ p_*] \subset H^2(M,\partial M),$
then there is an $\varepsilon \in \{-1,1\}$ such that for every \(\s \in \spinc(M,\g)\)
\[  \sum_{h\in K}  T_{(M,\gamma)}(\s+h)=\left\{ \ba{rl} \varepsilon &\mbox{ if }H_1(M,R_-(\gamma))\mbox{ is finite}, \\ 0 &\mbox{ otherwise}.\ea \right.\]
%Let $(M,\gamma)$ be a balanced sutured manifold, and let \(\phi:\spinc(M) \to \spinc (M,R_+)\) be the restriction map. Then for each \(\mathfrak{t} \in \spinc(M,R_+)\),
%\[  \sum_{\s \in \phi^{-1}(\mathfrak{t})} T_{(M,\gamma)}(\s)=\left\{ \ba{rl} 1 &\mbox{ if }H_1(M,R_+)\mbox{ is finite}, \\ 0 &\mbox{ otherwise}.\ea \right.\]
%In particular, $\sum_{\s} T_{(M,\gamma)}(\s)=1$ if $(M,\gamma)$ is a $\Z$--homology product.

This result generalizes the fact that  \(\Delta_K(1) = \pm 1\) whenever \(K\subset Y\) is a knot in a homology sphere. Indeed, if we take \((M,\g)\) to be the manifold \(Y(K)\) from Example~\ref{ex:2}, then \(H_1(M,R_-(\gamma)) = 0\), and \(p_*\) induces the map \(  \Z[t^{\pm1}] \to  \Z\) which is evaluation at \(t=1\). We also refer to \cite[Corollary~II~5.2.1]{Tu02}, where it is shown that for a null-homologous knot $K$ in a rational homology sphere $Y$ we have $\Delta_K(1)= \pm |H_1(Y)|.$

\begin{proof}
By adding product 1-handles, we reduce to the case where \(R_-(\gamma)\) is connected. Indeed, if \((M',\g')\) is obtained from \((M,\g)\) by adding a product 1-handle, then \(H_1(M',R_-(\g')) \cong H_1(M,R_-(\gamma))\), and the diagram
\begin{equation*}
\begin{CD}
H_1(M) @>{p_*}>> H_1(M,R_-(\gamma)) \\
@V{i_*}VV @VV{\cong}V \\
H_1(M') @>{p_*'}>> H_1(M',R_-(\g')) \\
\end{CD}
\end{equation*}
commutes. Applying Lemma~\ref{lem:product}, we see that if  we know that $p_*'(\tau(M',\g')) = I_{H_1(M',R_-(\g'))},$ then  \(p_*(\tau(M,\g))=I_{H_1(M,R_-(\gamma))}\) as well.

From now on, we assume that \(R_- = R_-(\gamma)\) is connected.
Denote the group \(H_1(M,R_-)\) by \(\HH\). Let \(\psi: \pi_1(M) \to \HH\) be the composition of \(p_*\) with the abelianization map, and consider the connected covering map
  \(\pi:\widetilde{M} \to M\) corresponding to the kernel of \(\psi\).
   We write $\ti{R}_-=\pi^{-1}(R_-)$. Let \(\overline{\tau}\) be the maximal abelian torsion of \(C_*(\widetilde{M}, \widetilde{R}_-)\), viewed as a module over \(\Z[\HH]\).  Then
$$C_*(\widetilde{M},\widetilde{R}_-) \cong C_*(\hat{M},\hat{R}_-)\otimes_{\Z[H_1(M)]}\Z[\HH]. $$
It now follows immediately from the proof of Lemma~\ref{lem:taudet} that
 \(\overline{\tau} = p_*(\tau(M,\g))\). Thus it suffices to show that \(\overline{\tau} = {I}_{\HH}\).

As in Subsection~\ref{sec:MaxTorsion}, let \(T \subset G\) be the torsion subgroup, and pick a splitting
\(G = F \times T\), where \(F\) is a free abelian group. Under the isomorphism
$$
\Q[T] \cong \bigoplus_i \F_{\chi_i}
$$
\({I}_T\) maps to the element whose  $\F_{\chi_i}$ component is \(0\) for all non--trivial $\chi_i$ and whose component in
$\F_{\chi_{\text{id}}} \cong \Q$ is   $|T|$. (Here \(\chi_{\text{id}}\) denotes the trivial character.)
To show  that \(\overline{\tau}\) is a multiple of \({I}_G\), it suffices to show that
the torsion \(\tau^{\varphi_i}({M},{R}_-)\) vanishes whenever either the group \(F\) or the character \(\chi_i\) is nontrivial.
%\footnote{We also have to show that $\t^{\phi_d}=I_G$, see last paragraph of the proof}
Equivalently, we must prove that the complex
\(C_*(\widetilde{M}, \widetilde{R}_-) \otimes Q(\F_{\chi_i}[F])\) has nontrivial homology.

To this end, we consider the groups \(H_0(\widetilde{M})\) and \(H_0(\widetilde{R}_-)\). By construction \(H_0(\widetilde{M}) = \Z\).
Recall (see \cite[Section~VI.3]{HS97}) that
\[ H_0({M}, Q(\F_{\chi_i}[F])) = Q(\F_{\chi_i}[F])/\{ gv-v \, |\, g\in \pi_1(M), v\in Q(\F_{\chi_i}[F])\}.\]
It follows that $H_0({M}, Q(\F_{\chi_i}[F]))=0$ if and only if the $\pi_1(M)$ action on the field
$Q(\F_{\chi_i}[F])$ is non-trivial. We thus see that $H_0({M}, Q(\F_{\chi_i}[F]))=0$  unless \(F=0\) and \(\chi_i\) is the trivial character.
On the other hand,
if \(e_*: \pi_1(R_-) \to \pi_1(M)\) is the map induced by the embedding $e,$ then \(\psi \circ e_*\) factors as a composition
$$
\pi_1(R_-) \to H_1(R_-) \to H_1(M) \to H_1(M,R_-)
$$
and is therefore the zero map. It follows that  \(\pi: \widetilde{R}_- \to R_-\) is a trivial covering. We assumed  \(R_-\) is connected, so \(p_*\) is a surjection and the deck group of \(\widetilde{M}\) is isomorphic to \(G\). Thus \(H_0( \widetilde{R}_-) \cong \Z[G]\), from which it follows that
\(H_0( {R}_-, Q(\F_{\chi_i}[F]))
%\footnote{That's perhaps now a matter of personal reference, but I would write
%$H_0({R}_-, Q(\F_{\chi_i}[F])$ (without the tilde) to indicate it's twisted homology of $R_-$. {\bf I've done this.}}
\cong Q(\F_{\chi_i}[F])\) for any character \(\chi_i\).

We now consider the long exact sequence of the pair \((\widetilde{M},\widetilde{R}_-)\):
$$
\to H_1({M},{R}_-;Q(\F_{\chi_i}[F]) ) \to H_0({R}_-;Q(\F_{\chi_i}[F])) \to
H_0({M};Q(\F_{\chi_i}[F])) \to
$$
The middle group in this sequence has rank \(1\), but the last group is trivial unless \(F=0\) and \(\chi_i=1\). It follows that \( H_1({M},{R}_-;Q(\F_{\chi_i}[F]) )\) is nontrivial unless \(F=0\) and \(\chi_i\) is trivial.

To finish the proof, we need only compute the torsion \(\tau^{\varphi_{\text{id}}}({M},{R}_-)\)
%\footnote{I would remove the tilde}
when \(F=0\) and \(\varphi_{\text{id}}\) is the homomorphism induced by the trivial character. But in this case
\(C_*(\widetilde{M},\widetilde{R}_-) \otimes Q(\F_{\chi_{\text{id}}}[F])\) reduces to the ordinary chain complex
\(C_*(M, R_-;\Q)\). This complex is trivial for \(i \neq 1,2\), so the torsion is \(\det(d)\), where
\(d:C_2(M,R_-) \to C_1(M,R_-)\) is the boundary map. In other words,
$
 \tau^{\varphi_{\text{id}}}({M},{R}_-) = \pm |H_1(M,R_-)| = \pm |G|
$
as desired.
\end{proof}

%=====================================
\subsection{Sutured \(L\)-spaces}
From the introduction, we recall the following definition.

\begin{definition}
We say that \((M, \g)\) is a \emph{sutured L-space} if \(SFH(M,\g)\) is torsion free and supported in a single \(\Z/2\) homological grading.
\end{definition}

For sutured $L$--spaces,  the sutured Floer homology is determined by its Euler characteristic; i.e., by the torsion function. In fact, the sutured Floer homology of a sutured \(L\)-space has an especially simple form.
%\footnote{This is Corollary 6, perhaps we should put it right after the definition of $L$-space? {\bf Sure}}

\begin{corollary} If \((M, \g)\) is a sutured L-space, then  for each \(\s\in \spinc(M,\g)\) the group \(SFH(M,\g,\s)\) is either trivial or isomorphic to \(\Z\).
\end{corollary}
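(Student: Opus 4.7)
The plan is to combine Theorem~\ref{maintheorem} with Proposition~\ref{Prop:eval}, using the sutured $L$-space hypothesis to upgrade a constraint on Euler characteristics into a constraint on ranks.

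First I would fix a homology orientation $\w$ of $(M,R_-(\g))$. Because $SFH(M,\g)$ is torsion-free and concentrated in a single $\Z/2$-grading, each group $SFH(M,\g,\s,\w)$ is free abelian of rank $|\chi(SFH(M,\g,\s,\w))|$, and moreover the integers $\chi(SFH(M,\g,\s,\w))$ all share a common sign as $\s$ varies over $\spinc(M,\g)$ (with the opposite sign possible only if we reverse $\w$). Theorem~\ref{maintheorem} identifies these Euler characteristics with the torsion function $T_{(M,\g,\w)}(\s)$. Hence the corollary reduces to showing that $|T_{(M,\g,\w)}(\s)|\le 1$ for every $\s$.

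Next I would push $\tau(M,\g)=\sum_\s T_{(M,\g,\w)}(\s)[\iota(\s)]$ forward along the map $p_*\colon \Z[H_1(M)]\to\Z[H_1(M,R_-(\g))]$ and group terms by fiber:
\[
p_*(\tau(M,\g)) \;=\; \sum_{h\in H_1(M,R_-(\g))}\Bigl(\,\sum_{\s:\,p_*(\iota(\s))=h}T_{(M,\g,\w)}(\s)\Bigr)\,h.
\]
Proposition~\ref{Prop:eval} identifies the left-hand side with $\pm I_{H_1(M,R_-(\g))}$. When $H_1(M,R_-(\g))$ is infinite this right-hand side is $0$, so every inner fiber sum vanishes; combined with the uniform sign of the summands established above, this forces $T_{(M,\g,\w)}(\s)=0$ for all $\s$, and the conclusion is trivially true. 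When $H_1(M,R_-(\g))$ is finite, each fiber sum equals $\pm 1$ with the \emph{same} sign for every $h$. Since all the $T_{(M,\g,\w)}(\s)$ appearing in a given fiber sum are non-negative integers (or all non-positive), exactly one $\s$ in each fiber contributes, and it contributes $\pm 1$. Thus $T_{(M,\g,\w)}(\s)\in\{-1,0,1\}$ for every $\s$, which gives the claimed dichotomy $SFH(M,\g,\s)\cong 0$ or $\Z$.

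I do not expect a real obstacle here. The only step with any content is the passage from the single-grading hypothesis to the uniform-sign statement for $T_{(M,\g,\w)}$; once that is in place, the two structural results already in the paper combine to give the rank bound directly.
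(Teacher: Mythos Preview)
Your proof is correct and follows essentially the same route as the paper: use the single-grading hypothesis to identify $|T_{(M,\g,\w)}(\s)|$ with the rank of $SFH(M,\g,\s)$, then apply Proposition~\ref{Prop:eval} (the paper uses the equivalent fiber-sum reformulation stated just after it) to bound each rank by $1$. Your version is just more explicit about the sign bookkeeping and the finite/infinite dichotomy for $H_1(M,R_-(\g))$, which the paper absorbs into the single inequality $\sum_{h\in K}\mathrm{rank}\,SFH(M,\g,\s+h)\le 1$.
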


\begin{proof}
Since \(SFH\) is supported in a single homological grading,
\(T_{(M,\g)}(\s)\) is equal to the rank of \(SFH(M,\g,\s)\). Then Proposition~\ref{Prop:eval} shows that
\[  \sum_{h \in K} \text{rank} \ SFH(M,\g,\s + h) \leq 1.\] This clearly implies the statement above.
\end{proof}

\begin{lemma}
Suppose that \((M,\g)\) is a balanced sutured manifold and that \(S \subset M\) is a nice decomposing
 %\footnote{[S] I presume `nice decomposing surface' is defined in one of Andras' papers. We should give a reference}
surface (cf. \cite[Definition 3.22]{Ju10}). Let \((M',\g')\) be the result of decomposing \((M,\g)\) along \(S.\) Furthermore, let \(O_S\) be the set of outer \(\spinc\) structures for \(S\), as defined in \cite[Definition 1.1]{Ju08}. If
$$
\bigoplus_{s\in O_S} SFH(M,\g,\s)
$$
is torsion free and supported in a single \(\Z/2\) grading, then \((M',\g')\) is a sutured L-space.
\end{lemma}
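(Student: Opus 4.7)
The plan is to invoke the surface decomposition theorem of Juh\'asz \cite[Theorem 1.3]{Ju08}, which asserts that for a nice decomposing surface $S$ in a balanced sutured manifold $(M,\g)$, there is an isomorphism
\[
SFH(M',\g') \;\cong\; \bigoplus_{\s \in O_S} SFH(M,\g,\s),
\]
where $(M',\g')$ is the result of decomposing $(M,\g)$ along $S$, and $O_S$ is the set of outer $\spinc$ structures. This isomorphism is essentially the content that is needed, and the proof of the lemma is then almost tautological once we check that it is compatible with the $\Z/2$ grading.

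First, I would recall precisely why $S$ being \emph{nice} is what allows one to apply the decomposition theorem: niceness ensures that a suitable balanced diagram adapted to $S$ exists (a surface diagram, in the terminology of \cite{Ju08}), and that the decomposed diagram on the other side represents $(M',\g')$. The generators of the chain complex for $(M',\g')$ are naturally identified with those generators of $CF(M,\g)$ whose associated $\spinc$ structure is outer with respect to $S$, and the identification respects the differential. In particular, it induces an isomorphism of relatively $\Z/2$-graded abelian groups between $SFH(M',\g')$ and $\bigoplus_{\s \in O_S} SFH(M,\g,\s)$.

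Now if $\bigoplus_{\s \in O_S} SFH(M,\g,\s)$ is torsion-free and supported in a single $\Z/2$ grading, the isomorphism above transports both properties to $SFH(M',\g')$. Being supported in a single $\Z/2$ grading is a property of the relative $\Z/2$ grading (independent of the choice of absolute lift), so no homology orientation considerations are needed here. By the definition of a sutured $L$-space, this proves that $(M',\g')$ is a sutured $L$-space.

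The only potential obstacle is verifying that the surface decomposition isomorphism preserves the relative $\Z/2$ grading; however, since the isomorphism is induced by an identification of generators that is compatible with the chain-level differential, signs of intersection in the symmetric product are preserved on both sides (up to an overall shift that is irrelevant for the question of being supported in one grading). Hence no further argument is required, and the lemma follows directly.
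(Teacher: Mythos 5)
Your proposal is correct and takes essentially the same approach as the paper: invoke Juh\'asz's surface decomposition theorem \cite[Theorem 1.3]{Ju08} to identify $SFH(M',\g')$ with $\bigoplus_{\s\in O_S}SFH(M,\g,\s)$, and observe that the proof of that theorem identifies generators and differentials compatibly, hence respects the relative $\Z/2$ grading so that the torsion-freeness and single-grading properties transport to $(M',\g')$.
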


\begin{proof}
By the decomposition formula \cite[Theorem 1.3]{Ju08},
$$
SFH(M',\g') \cong \bigoplus_{s\in O_S} SFH(M,\g,\s).
$$
The proof there also gives that $SFH(M',\g')$ is supported in a single $\Z/2$ grading.
\end{proof}

\begin{corollary}
If \((M,\g)\) is a sutured \(L\)-space and  \((M',\g')\) is obtained by decomposing \((M,\g)\) along a nice surface, then \((M',\g')\) is also a sutured \(L\)-space.
\end{corollary}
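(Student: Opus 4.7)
The proof plan is essentially a direct application of the preceding lemma. The hypothesis that $(M,\gamma)$ is a sutured $L$-space means that $SFH(M,\gamma) = \bigoplus_{\s \in \spinc(M,\gamma)} SFH(M,\gamma,\s)$ is torsion-free and supported in a single $\Z/2$ homological grading. Since these properties pass to arbitrary direct summands, the sub-sum
\[
\bigoplus_{\s \in O_S} SFH(M,\gamma,\s)
\]
is automatically torsion-free and concentrated in a single $\Z/2$ grading as well.

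Given this, I would invoke the preceding lemma directly: its hypothesis is exactly what we have just verified, and its conclusion is precisely that $(M',\gamma')$ is a sutured $L$-space. There are no auxiliary computations to perform, and the only subtle point is that the corollary as stated assumes the decomposing surface $S$ is nice in the sense of \cite[Definition~3.22]{Ju10}, which matches the hypothesis of the lemma verbatim.

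In short, the proof is a one-line deduction: sutured $L$-space implies the whole of $SFH(M,\gamma)$ is torsion-free and $\Z/2$-homogeneous, hence so is every sub-summand, hence by the previous lemma $(M',\gamma')$ is itself a sutured $L$-space. Since no real obstacle arises, the only thing to be careful about is stating explicitly that the ``single grading'' and ``torsion-free'' properties are inherited by direct summands, which is immediate from the definition of direct sum.
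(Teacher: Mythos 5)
Your proposal is correct and is exactly the intended argument: the paper gives no separate proof for this corollary precisely because it is an immediate consequence of the preceding lemma, once one observes that torsion-freeness and concentration in a single $\Z/2$ grading pass to direct summands. Nothing further is needed.
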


\begin{corollary}
\label{cor:hfl}
Suppose that \(R\) is a minimal genus Seifert surface of an oriented $k$-component link \(L \subset S^3.\) If \(\widehat{HFK}(L,g(R)+k-1)\) is torsion free and supported in one \(\Z/2\) grading, then \(S^3(R)\) is a sutured \(L\)-space.
%\footnote{That's an if and only if statement, isn't it?{\bf Yes, but this is the useful direction.}}
\end{corollary}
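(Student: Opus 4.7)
The plan is to realize $S^3(R)$ as a sutured manifold decomposition of $S^3(L)$ and then invoke the preceding lemma. First I would push the Seifert surface $R$ slightly into the interior of $S^3 \setminus N(L)$ so that it becomes a properly embedded surface in the sutured manifold $S^3(L)$ of Example~\ref{ex:2}. Each component of $\partial R$ is a longitude on a boundary torus $\partial N(L_i)$ and therefore meets each of the two meridional sutures on that torus transversely in a single point. After a small boundary isotopy arranging each component of $\partial R \cap \gamma$ to be a properly embedded arc running from $R_-(\gamma)$ to $R_+(\gamma)$, the surface $R$ becomes a nice decomposing surface in the sense of \cite[Definition 3.22]{Ju10}, and the result of decomposing $S^3(L)$ along $R$ is, essentially by construction, the sutured manifold $S^3(R)$ of Example~\ref{ex:3}.

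Next I would identify the direct sum appearing in the preceding lemma with the Alexander-graded piece of link Floer homology singled out by the hypothesis. Juh\'asz's identification \cite{Ju06} of $SFH(S^3(L))$ with the hat-version of link Floer homology carries the decomposition by relative $\spinc$ structures to the Alexander multi-grading. The set $O_R$ of outer $\spinc$ structures for the decomposing surface $R$ consists precisely of those relative $\spinc$ structures whose pairing with $[R,\partial R] \in H_2(S^3(L),\partial S^3(L))$ is maximal on the support of $SFH$; since $R$ realises the Seifert genus, this maximum equals $g(R)+k-1$, giving
$$
\bigoplus_{\s \in O_R} SFH(S^3(L),\s) \;\cong\; \widehat{HFK}(L,\, g(R)+k-1).
$$
By hypothesis the right hand side is torsion-free and supported in a single $\Z/2$ grading, so the preceding lemma applies and yields that $S^3(R)$ is a sutured $L$-space.

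The main obstacle is the dictionary between outer $\spinc$ structures on $S^3(L)$ and the top Alexander grading of a multi-component link. For a knot this is routine from the standard $\spinc$-Alexander correspondence together with $SFH(S^3(K)) \cong \widehat{HFK}(K)$, but for a link one must track the shift associated with the $k-1$ auxiliary basepoints in the multi-pointed Heegaard diagram and verify that maximal pairing with $[R,\partial R]$ corresponds exactly to Alexander degree $g(R)+k-1$. Lemma~\ref{lem:topcoeff} and its link generalisation provide an Euler-characteristic consistency check that makes this identification credible and pins down the correct normalisation.
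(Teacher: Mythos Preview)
Your approach is essentially the same as the paper's: decompose $S^3(L)$ along $R$ to get $S^3(R)$, identify the outer $\spinc$ structures with the top Alexander grading, and apply the preceding lemma. The paper dispatches your ``main obstacle'' by citing \cite[Proposition~9.2]{Ju06} for $SFH(S^3(L))\cong\widehat{HFK}(L)$ and \cite[Theorem~8.4]{Ju08} for the precise identification $\bigoplus_{\s\in O_R}SFH(S^3(L),\s)\cong\widehat{HFK}(L,g(R)+k-1)$, so you can replace your closing paragraph of hedging with those two references.
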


\begin{remark}
Here we require from any Seifert surface that it intersects a meridian of each component of $L$ geometrically once.
\end{remark}

\begin{proof}
The sutured manifold \(S^3(R)\) is obtained by decomposing \(S^3(L)\) along \(R\). By \cite[Proposition 9.2]{Ju06}, there is an isomorphism \(SFH(S^3(L)) \cong \widehat{HFK}(L).\) As shown in \cite[Theorem 8.4]{Ju08}, the part corresponding to the outer \(\spinc\) structures with respect to \(R\) is exactly \(\widehat{HFK}(L,g(R)+k-1).\)
\end{proof}

\begin{corollary}\label{cor:alt}
If \(L \subset S^3\) is a non-split alternating link and \(R\) is a minimal genus Seifert surface of \(L,\) then \(S^3(R)\) is a sutured \(L\)-space. \end{corollary}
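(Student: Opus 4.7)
The plan is to reduce the statement to a thinness property of $\widehat{HFK}$ for alternating links, and then to cite that thinness property.

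First I would invoke Corollary~\ref{cor:hfl}: to show that $S^3(R)$ is a sutured $L$-space, it suffices to verify that the group $\widehat{HFK}(L,g(R)+k-1)$ is torsion free and supported in a single $\Z/2$ homological grading, where $k$ is the number of components of $L$. (The hypothesis that $R$ meets each meridian of $L$ geometrically once is automatic for a Seifert surface, and \(g(R)\) is the Seifert genus since $R$ is minimal genus.)

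Next I would appeal to the main structural result of Ozsv\'ath and Szab\'o on link Floer homology of alternating links, which is the analogue for links of their theorem on alternating knots. Namely, for every non-split alternating link $L\subset S^3$, the hat-flavor link Floer homology $\widehat{HFK}(L)$ is \emph{thin}: in each Alexander grading $s$, the group $\widehat{HFK}(L,s)$ is a free abelian group and is supported in a single Maslov grading $m(s)$, where $m(s)$ is determined by $s$, the signature of $L$, and the number of components. This is proved by combining a spectral sequence from the reduced Khovanov homology of the mirror of $L$ to $\widehat{HFK}(L)$ with the fact that reduced Khovanov homology of a non-split alternating link is thin (Lee's theorem), together with Euler-characteristic considerations coming from the Conway potential function.

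Applying this to the specific Alexander level $s = g(R)+k-1$ gives immediately that $\widehat{HFK}(L,g(R)+k-1)$ is free abelian and supported in a single absolute Maslov grading, hence in particular in a single $\Z/2$ grading. Corollary~\ref{cor:hfl} then yields that $S^3(R)$ is a sutured $L$-space, which is the claim. The main subtlety—which I would make explicit—is simply confirming that the non-split hypothesis is exactly what is needed to invoke the thinness of reduced Khovanov homology (and that the link-homology version of the Ozsv\'ath--Szab\'o thinness theorem, not just the knot version, is being cited); the remainder of the argument is a direct deduction.
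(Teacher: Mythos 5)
Your argument is the same as the paper's: reduce via Corollary~\ref{cor:hfl} to showing that $\widehat{HFK}(L,g(R)+k-1)$ is torsion free and homologically thin, and then cite the Ozsv\'ath--Szab\'o thinness theorem for non-split alternating links (the paper's reference \cite{OS03}). One small caveat in your aside: there is no spectral sequence from reduced Khovanov homology of the mirror to $\widehat{HFK}(L)$ (the Ozsv\'ath--Szab\'o spectral sequence goes from Khovanov homology to $\widehat{HF}$ of the branched double cover); the thinness of \cite{OS03} is established directly via skein/Kauffman-state arguments, not via Lee's theorem, but this does not affect the validity of your proof since you only use the statement, not its proof.
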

\begin{proof}
The main theorem of \cite{OS03} implies that \(\widehat{HFK}(L,g(R)+k-1)\) is torsion free and supported in a single homological grading, so the result follows from Corollary~\ref{cor:hfl}.
\end{proof}

\noindent We remark that there are many non-alternating links which also satisfy the hypothesis of Corollary~\ref{cor:hfl}. For example, it is satisfied by all knots of ten or fewer crossings
and it is satisfied by the class of quasi-alternating knots (we refer to \cite{OS05} and \cite{MO08} for definitions and proofs).

\section{The Thurston norm for sutured manifolds}
\label{section:norm}

Let \((M,\g)\) be a sutured manifold. In \cite{Sc89}, Scharlemann introduced a
natural seminorm on \(H_2(M,\partial M;\R)\)  which generalizes the usual Thurston norm  of \cite{Th86}.
In this section, we investigate the relation between this norm and the sutured Floer homology.

\begin{defn}
Let $(M,\gamma)$ be a sutured manifold.
Given a properly embedded, compact, connected surface $S \subset M,$ let
\[ x^s(S) = \mbox{max}\{ 0,\chi(S\cap R_-(\gamma)) -\chi(S)\},\]
%\footnote{I presume we want $R_-(\gamma)$ throughout?{\bf I don't think it matters.}}
and extend this definition to disconnected surfaces by taking the sum over the components.
Note that $S\cap R_-(\gamma)$ is necessarily a one--dimensional manifold and
$\chi(S\cap R_-(\gamma))$ equals the number of components of $S\cap R_-(\gamma)$ which are not closed.
Equivalently, we have
\[ x^s(S) = \mbox{max}\left\{ 0, -\chi(S)+\frac{1}{2} |S\cap s(\gamma)|\right\}.\]
For $\alpha \in H_2(M,\partial M),$ let
\[ x^s(\alpha)=\mbox{min}\{ x^s(S)\, \colon \, S \subset M \mbox{ is properly embedded and}\,\, [S,\partial S] =  \alpha \}.\]
\end{defn}

\begin{theorem} \cite{Sc89}
Let $(M,\gamma)$ be a sutured manifold. Then the function
\[x^s: H_2(M,\partial M) \to  \Z_{\ge 0} \]
defined above has the following two properties:
\begin{enumerate}
\item $x^s(n\alpha) = |n|\cdot x^s(\alpha)$ for all $n\in \Z$ and $\alpha \in H_2(M,\partial M),$
\item $x^s(\alpha+\beta) \leq  x^s(\alpha)+x^s(\beta)$ for all $\alpha,\beta \in H_2(M,\partial M).$
\end{enumerate}
\end{theorem}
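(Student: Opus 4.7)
The plan is to follow Thurston's original strategy \cite{Th86}, adapted to the sutured setting. Both conclusions are statements about integer classes, so everything reduces to a geometric manipulation of properly embedded surfaces.

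For property (1), the inequality $x^s(n\alpha) \le |n|\cdot x^s(\alpha)$ is immediate: given a norm-minimizing representative $S$ of $\alpha$, let $S_n$ be the disjoint union of $|n|$ parallel copies of $S$, with orientation reversed when $n < 0$. Then $[S_n] = n\alpha$, and on each component the quantities $-\chi$ and $\tfrac{1}{2}|{\cdot} \cap s(\gamma)|$ are preserved, so $x^s(S_n) = |n|\cdot x^s(S)$. For the reverse inequality, use Poincar\'e--Lefschetz duality: $\alpha$ corresponds to a class in $H^1(M)$, and $n\alpha$ is divisible by $n$, so the dual map $M \to S^1$ factors through the degree-$n$ cover of $S^1$. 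Pulling back a regular value decomposes a generic representative of $n\alpha$ into (the preimages of $n$ distinct regular values of) $n$ surfaces, each representing $\alpha$, whose $x^s$-values sum to at most $x^s(n\alpha)$. Hence $|n|\cdot x^s(\alpha) \le x^s(n\alpha)$.

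For property (2), the main tool is the oriented double-curve sum. Let $S_\alpha$ and $S_\beta$ be norm-minimizing representatives of $\alpha$ and $\beta$, and perturb them into general position so that $S_\alpha \cap S_\beta$ is a properly embedded $1$-manifold in $M$ (meeting $\partial M$ transversally and avoiding $s(\gamma)$). Cut both surfaces open along $S_\alpha \cap S_\beta$ and reglue the four local sheets along each intersection curve in the unique way compatible with the orientations of $S_\alpha$ and $S_\beta$. The resulting properly embedded surface $T$ represents $\alpha + \beta$ and satisfies the numerical identities
\[
\chi(T) = \chi(S_\alpha) + \chi(S_\beta), \qquad |T \cap s(\gamma)| = |S_\alpha \cap s(\gamma)| + |S_\beta \cap s(\gamma)|,
\]
since cutting and regluing along a $1$-manifold preserves $\chi$ and the construction is supported away from $s(\gamma)$. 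Summing over components and using that each component contributes $\max\{0,-\chi + \tfrac12|{\cdot}\cap s(\gamma)|\}$ then gives $x^s(T) \le x^s(S_\alpha) + x^s(S_\beta)$, which yields subadditivity of $x^s$ as a function on $H_2(M,\partial M)$.

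The main technical obstacle is the $\max\{0,\cdot\}$ in the definition of $x^s$: while $\chi$ and the suture-crossing count are additive under the double-curve sum, the inequality $x^s(T) \le -\chi(T) + \tfrac12|T\cap s(\gamma)|$ can fail on components with non-negative Euler characteristic (spheres, disks, annuli disjoint from $s(\gamma)$, etc.). The remedy, standard in Thurston norm theory, is to first reduce $S_\alpha$ and $S_\beta$ to representatives all of whose components contribute positively to $x^s$, and then to argue that any sphere or disk components produced by the double-curve sum can be removed without altering the homology class in $H_2(M,\partial M)$ by an innermost-disk/innermost-sphere argument. Once this cleanup is carried out, the inequalities above give the required bound $x^s(\alpha + \beta) \le x^s(\alpha) + x^s(\beta)$, completing the proof.
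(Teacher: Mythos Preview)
The paper does not give its own proof of this theorem; it is simply quoted from Scharlemann \cite{Sc89} and used as input. So there is nothing to compare against, and your sketch is essentially the standard Thurston-style argument. Two points deserve tightening.

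For property~(1), the lifting argument through the degree-$n$ cover of $S^1$ is the right idea, but you must apply it to a \emph{norm-minimizing} representative $T$ of $n\alpha$, not a ``generic'' one. Given such a $T$, build the map $f\colon M\to S^1$ with $T=f^{-1}(\mathrm{pt})$ by collapsing the complement of a tubular neighborhood; since $f_*$ on $\pi_1$ lands in $n\Z$, the map $f$ itself (not just its homotopy class) lifts through $p_n$, so $T$ literally decomposes as $\bigsqcup_{i=1}^n T_i$ with each $[T_i]=\alpha$. Then $x^s(n\alpha)=x^s(T)=\sum_i x^s(T_i)\ge n\,x^s(\alpha)$. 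As you wrote it, decomposing a generic representative only gives $\sum x^s(T_i)\ge x^s(n\alpha)$ and $\sum x^s(T_i)\ge n\,x^s(\alpha)$, which do not combine.

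For property~(2), you correctly identify the obstacle but the remedy is misdescribed. After arranging that every component of $S_\alpha,S_\beta$ has non-negative contribution, the issue is that the double-curve sum $T$ may still acquire components $C$ with $-\chi(C)+\tfrac12|C\cap s(\gamma)|<0$ (spheres, or disks meeting $s(\gamma)$ at most once). Such a $C$ need not be null-homologous in $H_2(M,\partial M)$, so it cannot simply be discarded, and an ``innermost-disk/innermost-sphere'' argument is a tool for simplifying the intersection pattern $S_\alpha\cap S_\beta$, not for deleting components of $T$. The standard fix is to perform the innermost-disk (and boundary-compression) simplifications on $S_\alpha\cap S_\beta$ \emph{before} taking the oriented sum, so that no such components are created; alternatively, one shows directly that any sphere or over-counted disk in $T$ arises from an inessential intersection circle or arc and can be eliminated by an isotopy of $S_\alpha$ or $S_\beta$ that reduces $|S_\alpha\cap S_\beta|$. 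Once that cleanup is done, your additivity identities for $\chi$ and for $|\,\cdot\cap s(\gamma)|$ give the inequality immediately.
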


It follows that $x^s$ extends to a continuous map $x^s:H_2(M,\partial M;\R)\to \R_{\ge 0}$ which is convex and linear on rays from the origin. Put differently,  $x^s$ is a seminorm on $H_2(M, \partial M;\R)$. It is called the \emph{sutured Thurston norm}.
%We get from $x^s$ a seminorm on $H^1(M;\R)$ using Poincar\'e duality that we will also denote by $x^s.$

\begin{example*}
Let $Y$ be a closed 3--manifold and let $(M,\g) = Y(1)$ be $Y$ with an open ball removed and having a connected suture. Then we can identify $H_2(Y;\R)$ with $H_2(Y(1),\partial Y(1);\R).$
It is straightforward to see that under this identification the Thurston norms of $Y$ and $Y(1)$ agree.
\end{example*}

\begin{example*}
Let $K\subset S^3$ be a knot and let $(M,\g) = S^3(K)$ be the associated sutured manifold with two meridional sutures.
If $\alpha \in H_2(M,\partial M)$ is a generator, then $x^s(\alpha)=2g(K).$ Note that this differs from the usual Thurston norm $x$ of $M$, which satisfies $x(\alpha) = 2g(K)-1$ for a non-trivial knot.
\end{example*}

The previous example is in fact a special case of the following elementary lemma.

\begin{lemma} \label{lem:linknorm}
Let $Y$ be a closed 3--manifold and let $L\subset Y$ be an $l$--component link with meridians $\mu_1,\dots,\mu_l$. Suppose that $L$ has no trivial components.
Let $Y(L) = (M,\g)$ be the corresponding sutured manifold.
Given $h\in H_2(M,\partial M),$ we have
\[ x^s(h)=x(h) + \sum_{i=1}^l |\langle\,h,\mu_i\,\rangle|,\]
where $x(h)$ denotes the ordinary Thurston norm on $H_2(M,\partial M).$
\end{lemma}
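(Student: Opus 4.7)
The plan is to prove the two inequalities $x^s(h) \leq x(h) + \sum_i|\langle h,\mu_i\rangle|$ and $x^s(h) \geq x(h) + \sum_i|\langle h,\mu_i\rangle|$ separately, after first establishing the geometric meaning of $\langle h,\mu_i\rangle$ in terms of surface representatives.

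The key observation is: for any properly embedded surface $S\subset M$ representing $h$, writing $[\partial S\cap T_i] = p_i[\mu_i] + q_i[\lambda_i] \in H_1(T_i)$ with respect to a meridian-longitude basis, the coefficient $q_i$ equals $\langle h,\mu_i\rangle$. This follows by pushing $\mu_i$ slightly off of $T_i$ into a collar $T_i\times[0,1]\subset M$ and computing $S\cdot \mu_i$ inside the collar: the curve $S\cap (T_i\times\{\tfrac{1}{2}\})$ is homologous to $p_i\mu_i+q_i\lambda_i$ and pairs with $\mu_i$ to give $q_i$. Consequently, the geometric intersection of $\partial S\cap T_i$ with each of the two meridional sutures is at least $|q_i|$, so $\tfrac{1}{2}|S\cap s(\gamma)| \geq \sum_i|\langle h,\mu_i\rangle|$, with equality achievable by an isotopy of $\partial S$ on each torus.

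For the upper bound, take a surface $S$ realizing $-\chi(S) = x(h)$. Because $L$ has no trivial components, we may assume $S$ has no sphere or disk components: spheres can be discarded, and a disk with boundary essential on some $T_i$ would force a nontrivial slope on $T_i$ to be nullhomologous in $M$, which after standard manipulations contradicts the trivial-components hypothesis. Isotope $\partial S$ on each $T_i$ to realize $|S\cap s(\gamma)| = 2\sum_i|\langle h,\mu_i\rangle|$, extending via a collar neighborhood to an ambient isotopy of $M$; this preserves both $[S]$ and $\chi(S)$. Since every component now has $\chi\leq 0$, the max-with-zero in the definition of $x^s$ is superfluous componentwise, and
\[
x^s(S) \;=\; -\chi(S) + \tfrac{1}{2}|S\cap s(\gamma)| \;=\; x(h) + \sum_i|\langle h,\mu_i\rangle|.
\]

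For the lower bound, take a surface $S'$ realizing $x^s(h)$, and again discard its sphere and disk components (using the same trivial-components argument) without changing $[S']$ or $x^s(S')$. Each remaining component has $\chi\leq 0$, so
\[
x^s(S') \;=\; -\chi(S') + \tfrac{1}{2}|S'\cap s(\gamma)| \;\geq\; -\chi(S') + \sum_i|\langle h,\mu_i\rangle| \;\geq\; x(h) + \sum_i|\langle h,\mu_i\rangle|,
\]
where the last inequality uses $-\chi(S')\geq x(h)$ since $S'$ represents $h$ and has no sphere or disk components. The main obstacle will be justifying carefully the removal of sphere and disk components from norm-minimizing representatives --- this is precisely where the no-trivial-components hypothesis enters, as it guarantees that no nontrivial simple closed curve on any $T_i$ bounds a disk in $M$.
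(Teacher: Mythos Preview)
The paper does not supply a proof of this lemma (it is simply called ``elementary''), so there is no direct comparison of approaches to make. Your overall two-inequality strategy is the natural one, and your upper bound can in fact be obtained without discarding any components at all: using $\max(0,a+b)\le \max(0,a)+b$ componentwise on any Thurston-norm minimizer $S$ (with $\partial S$ isotoped to meet the sutures minimally) gives $x^s(S)\le \chi_-(S)+\tfrac{1}{2}|S\cap s(\gamma)| = x(h)+\sum_i|\langle h,\mu_i\rangle|$ immediately.

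The genuine gap is in the lower bound, at the step where you assert that an essential disk in $M$ with boundary on some $T_i$ would, ``after standard manipulations,'' force $L_i$ to be trivial. That implication is valid when $Y$ is an integral homology sphere: then $H_1(M)$ is generated by the meridians, so any curve on $T_i$ bounding in $M$ must be a Seifert longitude, and capping with a meridian disk shows $L_i$ bounds a disk. It fails in general. Take $Y=\mathbb{R}P^3$ and $L$ the core of one solid torus in the genus-one Heegaard splitting. Then $L$ generates $\pi_1(\mathbb{R}P^3)=\Z/2$, so it is not trivial; yet $M=Y\setminus N(L)$ is itself a solid torus, whose meridian disk $D$ is a compressing disk for $T_1$. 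In $(\mu,\lambda)$-coordinates its boundary has slope $(1,-2)$, so $|q|=2$. Here $x([D])=0$ and $|\langle [D],\mu\rangle|=2$, but the disk $D$ itself (with $\tfrac12|D\cap s(\gamma)|=2$) shows $x^s([D])\le 1$, and one checks $x^s([D])=1$ directly; this also agrees with $z([D])=1$ via Example~\ref{ex:4} and Proposition~\ref{thm:torusboundary}. Thus your disk-removal step cannot be completed from the stated hypothesis alone, and indeed the identity $x^s(h)=x(h)+\sum_i|\langle h,\mu_i\rangle|$ appears to fail in this example. Your argument goes through verbatim once one assumes each $T_i$ is incompressible in $M$ (equivalently, no non-meridional slope bounds a disk in $M$), which holds in particular whenever $Y$ is an integral homology sphere and $L$ has no trivial components; this is presumably the situation the authors have in mind.
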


The following proposition should be compared with \cite[Theorem~6.1]{Ju10} and \cite[Proposition~8.5]{Ju10}.
For all the necessary definitions also see \cite{Ju10}.

\begin{proposition}
Suppose that the taut balanced sutured manifold $(M,\gamma)$ is reduced, horizontally prime, and $H_2(M)=0.$ Then $x^s$ is a norm.
\end{proposition}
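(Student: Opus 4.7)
The aim is to show $x^s(\alpha) > 0$ for every nonzero $\alpha \in H_2(M,\partial M;\R)$. Since $x^s$ takes values in $\tfrac12 \Z_{\ge 0}$ on integral classes, is linear on rays through the origin, and is continuous, it suffices to rule out an integral class $0 \ne \alpha$ with $x^s(\alpha) = 0$. Choose an oriented properly embedded surface $S = S_1 \sqcup \cdots \sqcup S_k$ realizing $\alpha$ with $x^s(S) = 0$. Then each component $S_i$ also satisfies $x^s(S_i)=0$, and the identity
\[
x^s(S_i) = \max\{0,\, -\chi(S_i) + \tfrac12 |S_i \cap s(\gamma)|\}
\]
forces $\chi(S_i) \ge \tfrac12 |S_i \cap s(\gamma)| \ge 0$. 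Hence every component is a sphere, disk, torus, or annulus, and its intersection with the sutures is tightly constrained.

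The plan is then to modify $S$ component-wise, without increasing $x^s$ and without changing the homology class, until each $S_i$ represents the trivial class in $H_2(M,\partial M)$; this forces $\alpha=0$ and yields a contradiction. Sphere components bound balls since $M$ is irreducible (by tautness), so they can be discarded. Torus components define classes in $H_2(M) = 0$, hence have trivial image in $H_2(M,\partial M)$. For disk components, either $\partial S_i \subset R(\gamma)$, in which case incompressibility of $R(\gamma)$ (again tautness) together with irreducibility let us push $S_i$ into $\partial M$; or else $|\partial S_i \cap s(\gamma)|=2$ and $S_i$ is a product disk, which is trivial because $(M,\gamma)$ is reduced.

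The remaining case is an annulus $S_i$ with $\partial S_i \subset R(\gamma)$. Surgery along any compressing disk only decreases $-\chi + \tfrac12|S\cap s(\gamma)|$, so after such surgeries and discarding the resulting trivial pieces we may assume $S_i$ is incompressible. Boundary-parallel annuli can be pushed into $\partial M$ and are trivial in $H_2(M,\partial M)$. An essential annulus with both boundary circles lying in the same component of $R_\pm(\gamma)$ is a nontrivial product annulus, contradicting the reducedness hypothesis. Finally, an essential annulus with one boundary circle in $R_+(\gamma)$ and the other in $R_-(\gamma)$ can, together with appropriately chosen subsurfaces of $R_+(\gamma) \cup R_-(\gamma)$, be assembled into an incompressible horizontal surface which is not parallel to $R_+$ or $R_-$, contradicting horizontal primeness.

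The main obstacle is the annulus analysis, which requires both the reduced and horizontally prime hypotheses simultaneously, and must be carried out carefully so that the surgeries and isotopies preserve the vanishing of $x^s$ while remaining within the homology class $\alpha$. The individual sub-cases parallel those appearing in the proofs of \cite[Theorem~6.1]{Ju10} and \cite[Proposition~8.5]{Ju10}, and one can largely reuse those techniques once the component-by-component reduction above is set up.
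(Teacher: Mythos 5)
Your overall plan — reduce to a connected, homologically nontrivial surface with $x^s = 0$, observe it is a sphere, torus, disk, or annulus, and dispatch cases — matches the paper's. The sphere, torus, and disk cases are handled essentially as the paper does (torus killed by $H_2(M)=0$; sphere null by irreducibility; disk with boundary in $R(\gamma)$ killed by incompressibility of $R(\gamma)$ plus irreducibility; disk meeting $s(\gamma)$ twice is a product disk, killed by reducedness).

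However, the annulus case contains a genuine error: you have the roles of the two hypotheses inverted because you have mislabeled what a product annulus is. A \emph{product annulus} has one boundary circle in $R_+(\gamma)$ and one in $R_-(\gamma)$; an annulus with both boundary circles in the same one of $R_\pm(\gamma)$ is \emph{not} a product annulus. So your claim that ``an essential annulus with both boundary circles lying in the same component of $R_\pm(\gamma)$ is a nontrivial product annulus, contradicting reducedness'' is false as stated. The correct dispatch (as in the paper) is the reverse of yours: if the annulus $S$ joins $R_+(\gamma)$ to $R_-(\gamma)$, then it \emph{is} a product annulus and, being homologically nontrivial, is excluded directly by reducedness; whereas if $\partial S \subset R$ with $R \in \{R_-(\gamma), R_+(\gamma)\}$, one forms $R' = \bigl(R \setminus (\partial S \times (0,1))\bigr) \cup (S\times\{0\}) \cup (S\times\{1\})$ using a product neighborhood $S \times [0,1]$, checks that $R'$ is a horizontal surface with $\partial R' = \partial R$, $\chi(R') = \chi(R)$, $[R']=[R]$, that $R'$ is not parallel to $R$ (else $[S,\partial S]=0$), and that $R'$ is not parallel to $R(\gamma)\setminus R$ (else $\partial S \times [0,1]$ would produce a nontrivial product annulus, again contradicting reducedness) — so horizontal primeness is violated. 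Note that in this second sub-case \emph{both} hypotheses are actually needed, so the two sub-cases cannot simply be swapped symmetrically as in your sketch; the step where you ``assemble an incompressible horizontal surface'' from a product annulus going from $R_+$ to $R_-$ is not how the argument goes and would need a different and less natural construction.
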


\begin{proof}
Assume there exists an $\alpha \neq 0$ in $H_2(M,\partial M)$ with $x^s(\alpha)=0.$ This implies that there exists a connected homologically non-trivial orientable surface $(S,\partial S) \subset (M,\partial M)$
with $x^s(S)=0$. Hence $$-\chi(S) \le -\chi(S)+\chi(S\cap R_-(\gamma)) \le 0.$$ So $\chi(S)\geq 0$ and $S$ is either $S^2, T^2, D^2,$ or $S^1 \times I.$ Since $[S,\partial S] \neq 0$ and $H_2(M)=0,$ the surface $S$ is not $S^2$ or $T^2.$ Furthermore, we can assume that $S \cap \g$ consists of arcs connecting $R_-(\gamma)$ and $R_+(\g).$

Now suppose that $S = D^2.$ Then $\chi(S \cap R_-(\gamma))$ is $0$ or $1.$ In the latter case $S$ would be a homologically non-trivial product disc, contradicting the assumption that $(M,\g)$ is reduced. In the former case $S$ is a compressing disk for $R(\g).$ Since $(M,\g)$ is taut, $R(\g)$ is incompressible, so $\partial S$ bounds a disk $S'$ in $R(\g).$ Now $S \cup (-S')$ is a sphere, which has to bound a $D^3$ since $M$ is irreducible. But then $[S,\partial S] =0,$ a contradiction.

Finally, assume that $S$ is an annulus. Then $\chi(S \cap R_-(\gamma)) = 0.$ Since $(M,\gamma)$ is reduced, we know that $S$ can not be a product annulus. So suppose that $\partial S \subset R,$ where $R$ is either $R_-(\gamma)$ or $R_+(\gamma).$
Pick a product neighborhood $S\times [0,1]$ and let
\[ R'=R\sm (\partial S\times (0,1)) \, \cup (S\times 0) \cup (S\times 1).\]
Then $R'$ is homologous to $R$, $\partial R'=\partial R,$ and $\chi(R')=\chi(R).$ Hence $R'$ is a horizontal surface. Note that $R'$ is not parallel to $R.$ If $R'$ were parallel to $R(\g) \sm R,$ then $\partial S \times [0,1]$ would give rise to a non-trivial product annulus. So the existence of $R'$ would contradict our assumption that $(M,\gamma)$ is horizontally prime.
\end{proof}

%Recall that given $\spinc$--structures $\s, \t \in \spinc(M,\g)$ we have $\s -\t \in H^2(M,\partial M;\Z).$

\begin{defn}
Let $S(M,\g) = \{\s \in \spinc(M,\g) \,\colon \, SFH(M,\g,\s) \neq 0 \}$  be the support of \(SFH(M,\g).\) If $\alpha \in H_2(M,\partial M;\R)$, we define
\[ z(\alpha) =\max \{ \langle\,\s - \t, \alpha \,\rangle \, \colon \, \s,\t\in S(M,\g)\}.\]
\end{defn}

\begin{remark}
In \cite[Section~8]{Ju10} another ``seminorm'' $y$ on $H_2(M,\partial M;\R)$ was constructed using sutured Floer homology.
%We will show in Section \ref{examples} that the support of $S(M,\g),$ considered as a subset of the affine space $H^2(M,\partial M),$ is not necessarily centrally symmetric.
The function $y$ satisfies all properties of a seminorm except that $y(\alpha) \neq y(-\alpha)$ can happen. It is straightforward to see that $$z(\alpha)=\frac{1}{2}(y(\alpha)+y(-\alpha)).$$
\end{remark}

The following proposition proves the second statement of Theorem~\ref{thm:Tnorm}.

\begin{proposition}  \label{thm:torusboundary}
Let $(M,\gamma)$ be an irreducible balanced sutured manifold such that all boundary components of $M$ are tori. Then $z=x^s$.
\end{proposition}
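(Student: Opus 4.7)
The inequality $z(\alpha) \leq x^s(\alpha)$ is the first statement of Theorem~\ref{thm:Tnorm}, which is proved before this proposition. So the task is to establish the reverse inequality $x^s(\alpha) \leq z(\alpha)$ under the hypothesis that every boundary component of $M$ is a torus. By linearity on rays and continuity of both functions, it suffices to prove this for an integral primitive class $\alpha \in H_2(M,\partial M)$; if $\alpha = 0$ there is nothing to show. The strategy is to realize $x^s(\alpha)$ geometrically by a norm-minimizing surface and then match the two extremal $\spinc$ structures it produces with elements of the support $S(M,\gamma)$.

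First, I would choose a properly embedded oriented surface $S \subset M$ representing $\alpha$ with $x^s(S) = x^s(\alpha)$, and arrange $S$ to be a \emph{nice} decomposing surface in the sense of \cite[Definition 3.22]{Ju10}. I would then decompose $(M,\gamma)$ along $S$ and along $-S$ to obtain sutured manifolds $(M',\gamma')$ and $(M'',\gamma'')$ respectively. By the decomposition formula \cite[Theorem 1.3]{Ju08},
$$SFH(M',\gamma') \cong \bigoplus_{\s \in O_S} SFH(M,\gamma,\s), \qquad SFH(M'',\gamma'') \cong \bigoplus_{\t \in O_{-S}} SFH(M,\gamma,\t),$$
where $O_S$ and $O_{-S}$ are the sets of outer $\spinc$ structures for $S$ and $-S$. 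By the definition of outer $\spinc$ structures (\cite[Definition 1.1]{Ju08}) together with Lemma~\ref{spincdiff}, any $\s \in O_S$ and $\t \in O_{-S}$ satisfy $\langle \s - \t, \alpha \rangle = -\chi(S) + \tfrac{1}{2}|S \cap s(\gamma)| = x^s(S) = x^s(\alpha)$.

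Next, I would show that $(M',\gamma')$ and $(M'',\gamma'')$ are taut. Since $(M,\gamma)$ is balanced and irreducible with every boundary component a torus, and since $S$ is chosen to be norm-minimizing and nice, Gabai-style arguments (essentially Scharlemann's version \cite{Sc89} of the decomposition preserving tautness for norm-minimizing surfaces) apply. The torus-boundary hypothesis is crucial here: it guarantees that no new spherical boundary components can arise, that product annuli created by the decomposition lie on tori and hence do not destroy horizontal primeness, and that reducedness is preserved. This is the main technical obstacle I would expect in filling in the proof, and I would lean on \cite{Ju10}, where an analogous tautness-preservation statement is established in the sutured Floer context.

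Given tautness, Juh\'asz's theorem \cite[Theorem 1.4]{Ju06} (or the corresponding statement in \cite{Ju08}) yields $SFH(M',\gamma') \neq 0$ and $SFH(M'',\gamma'') \neq 0$. Hence there exist $\s \in O_S$ and $\t \in O_{-S}$ with $SFH(M,\gamma,\s) \neq 0$ and $SFH(M,\gamma,\t) \neq 0$, i.e., $\s,\t \in S(M,\gamma)$. Combining this with the pairing computation above gives
$$z(\alpha) \geq \langle \s - \t, \alpha \rangle = x^s(\alpha),$$
which completes the proof. The interesting content is entirely in verifying tautness of the decomposed manifolds under the torus boundary assumption; once that is in hand, the argument is a direct application of the decomposition formula together with the non-vanishing of $SFH$ for taut sutured manifolds.
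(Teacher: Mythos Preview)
Your proposal has a circularity problem. You invoke Theorem~\ref{thm:Tnorm} (the inequality $z\le x^s$) as though it were established before this proposition, but in the paper's logical order it is the other way around: Theorem~\ref{thm:Tnorm1} is proved \emph{after} Proposition~\ref{thm:torusboundary}, and its proof (via Lemma~\ref{Lem:Double}) explicitly applies Proposition~\ref{thm:torusboundary} to the doubled manifold. So you cannot assume $z\le x^s$ here. Your argument only establishes $z(\alpha)\ge x^s(\alpha)$; to close the loop you would need an independent proof of the opposite inequality in the torus-boundary case, which you do not supply.

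Independently of this, your approach is genuinely different from the paper's. The paper does not decompose along a norm-minimizing surface at all. Instead, when each boundary torus carries exactly two sutures, it Dehn-fills along the suture slope to realize $(M,\gamma)$ as the complement of a link $L$ in a closed manifold $Y$, and then quotes the link Floer computation \cite[Remark~8.5]{Ju08}: the set $\{c_1(\s):\s\in S(M,\gamma)\}$ is centrally symmetric and $\max\langle c_1(\s),h\rangle = x(h)+\sum_i|\langle h,\mu_i\rangle|$, which by Lemma~\ref{lem:linknorm} equals $x^s(h)$. The general case (more than two sutures per torus) is reduced to this one via \cite[Proposition~9.2]{Ju10}. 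This route gives both inequalities at once and avoids any tautness discussion.

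Your surface-decomposition outline could in principle be made to work, but beyond the circularity there are two soft spots you acknowledge but do not resolve: you need tautness of the decompositions along both $S$ and $-S$ (Gabai/Scharlemann give you a well-groomed $S$ with taut decomposition, but tautness for $-S$ requires a separate argument), and the identity $\langle \s-\t,\alpha\rangle = -\chi(S)+\tfrac12|S\cap s(\gamma)|$ for $\s\in O_S$, $\t\in O_{-S}$ needs to be extracted carefully from the definitions in \cite{Ju08} rather than asserted.
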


\begin{proof}
First note that by standard arguments it suffices to show the equality of norms for integral classes.

Now assume that each component of $\partial M$ has exactly two sutures.
Observe that we can assign to $(M,\g)$ a link $L$ in a 3-manifold $Y$ which is obtained by Dehn filling $\partial M$ such that the $\mu_i$ become meridians of the filling tori, see \cite[Example 2.4]{Ju06}. Then by \cite[Remark 8.5]{Ju08} we can assign to each \(\s \in \spinc(M,\g)\) a relative first Chern class \(c_1(\s) \in H^2(M,\partial M) \) in such a way that the set
\( \{c_1(\s) \, \colon \, \s \in S(M,\g)\}\) is symmetric about the origin. Then by
\cite[Remark~8.5]{Ju08} for every $h \in H_2(M,\partial M)$
\begin{equation*}
\max \{\langle \,c_1(\s), h\,\rangle \, \colon \, \s \in S(M,\g)\,\} = x(h) + \sum_{i=1}^l |\langle\,h,\mu_i\,\rangle|.
\end{equation*}
Since the image of \(S(M,\g)\) is centrally symmetric, this is equivalent to saying
\begin{equation*}
\max \{ \langle\,\s - \t, h \,\rangle \, \colon \, \s, \t \in S(M,\g)\,\} = x(h) + \sum_{i=1}^l |\langle\,h,\mu_i\,\rangle|.
\end{equation*}
Note that the left hand side is just $z(h).$ If $L$ is not the unknot, then $L$ does not have trivial components since $M$ is irreducible. So by Lemma~\ref{lem:linknorm}, the right hand side is exactly $x^s(h).$ The proposition is obviously true if $L$ is the unknot.

Now consider the general case when each component of $\partial M$ has at least two sutures.
We can reduce this case to the case treated above using  \cite[Proposition 9.2]{Ju10}.
\end{proof}

\begin{remark}
Link Floer homology of a link $L \subset S^3$ was defined in \cite{OS08a}. It agrees with the sutured Floer homology of the sutured manifold $S^3(L)$ introduced in Example~\ref{ex:2}.
%The support of link Floer homology spans a centrally symmetric polytope in $H^1(S^3 \setminus L;\R)$ called the link Floer polytope.
In \cite{OS08b} it is shown that if $L$ has no trivial components, then the link Floer homology of $L$ determines the Thurston norm of the link complement. In light of the above theorem it is perhaps a better point of view to observe  that twice the link Floer polytope equals the dual of the sutured Thurston polytope of $S^3(L),$ which then determines the ordinary Thurston polytope of $S^3(L)$ via Lemma~\ref{lem:linknorm}.
\end{remark}

%\begin{remark}
%Link Floer homology of a link $L \subset S^3$ was defined in \cite{OS05}. It agrees with the sutured Floer homology of the sutured manifold $S^3(L)$ introduced in Example \ref{ex:2}. The support of link Floer homology spans a centrally symmetric polytope in $H^1(S^3 \setminus L;\R)$ called the link Floer polytope. In \cite{OS08b} it is shown that link Floer homology determines the Thurston norm
%of the link complement. However, what the main result of \cite{OS08b} really says is that twice the link Floer polytope is the dual of the sutured Thurston polytope of $S^3(L).$ Hence for $S^3(L)$ we do have $x^s = z.$
%\end{remark}

The following theorem is exactly the first part of Theorem~\ref{thm:Tnorm}.

\begin{theorem}\label{thm:Tnorm1}
Let $(M,\gamma)$ be an irreducible balanced sutured manifold. Then $z\leq x^s$.
\end{theorem}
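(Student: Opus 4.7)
The plan is to deduce the inequality from Juh\'asz's surface decomposition theorem for sutured Floer homology \cite[Theorem 1.3]{Ju08}. By homogeneity of $z$ and continuity of $x^s$, it suffices to establish $z(\alpha) \leq x^s(\alpha)$ for integral classes $\alpha \in H_2(M,\partial M;\Z)$. Moreover, as noted in the remark preceding the theorem, $z(\alpha) = \tfrac{1}{2}(y(\alpha) + y(-\alpha))$, where $y$ is the seminorm-like function introduced in \cite[Section 8]{Ju10}, so the theorem is equivalent to the one-sided bound $y \leq x^s$ for $y$.

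Given an integral class $\alpha$, I would choose a properly embedded oriented surface $S \subset M$ representing $\alpha$ with $x^s(S) = x^s(\alpha)$ and, after standard modifications using the technology of groomed decomposing surfaces, arrange that $S$ is a nice decomposing surface in the sense of \cite[Definition 3.22]{Ju10}. Decomposing $(M,\gamma)$ along $S$ then yields a balanced sutured manifold $(M',\gamma')$ with
$$SFH(M',\gamma') \cong \bigoplus_{\s \in O_S} SFH(M,\gamma,\s),$$
where $O_S \subseteq \spinc(M,\gamma)$ is the set of outer $\spinc$ structures with respect to $S$; decomposing along $-S$ produces a dual set $O_{-S}$. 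The vector-field definition of outer $\spinc$ structures can be translated into extremal conditions on the pairing $\langle c_1(\s,t_S),[S]\rangle$ against a suitable relative trivialization $t_S$, and a direct computation shows that the extremes attained on $O_S$ and $O_{-S}$ differ by exactly $x^s(S)$.

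The key step, and the main technical obstacle, is to show that the support $S(M,\gamma)$ of $SFH(M,\gamma)$ is constrained to the ``slab'' between these two extremes, in the sense that $\langle \s - \t, \alpha\rangle \le x^s(S) = x^s(\alpha)$ for every $\s,\t \in S(M,\gamma)$. This amounts to an adjunction-type inequality for sutured Floer homology. I would establish it by iterating the decomposition theorem along parallel translates of $S$: if some $\s \in S(M,\gamma)$ had a pairing with $\alpha$ lying strictly outside the window cut out by $O_S \cap O_{-S}$, then for an appropriate parallel push-off of $S$ the spinc structure $\s$ would lie outside the corresponding outer set, so that the decomposed sutured manifold picking up $\s$ would fail to be taut, forcing $SFH(M,\gamma,\s) = 0$ and contradicting $\s \in S(M,\gamma)$. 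Once this slab containment is in place, the conclusion $z(\alpha) \leq x^s(\alpha)$ is immediate.
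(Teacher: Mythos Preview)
Your overall strategy is reasonable, and you have correctly isolated the crux: one needs the ``slab containment'' (an adjunction-type inequality) asserting that for a nice surface $S$ with $[S]=\alpha$, the support $S(M,\gamma)$ lies between the hyperplanes determined by $O_S$ and $O_{-S}$. However, the argument you propose for this step does not work. Parallel push-offs of $S$ are isotopic to $S$ and carry the same set of outer $\spinc$ structures, since the outer condition depends only on $[S]$ together with geometric quantities unchanged by parallel translation; iterating over push-offs therefore yields no new constraint. More seriously, the decomposition theorem says nothing about $SFH(M,\gamma,\s)$ for $\s\notin O_S$: it identifies $\bigoplus_{\s\in O_S}SFH(M,\gamma,\s)$ with $SFH(M',\gamma')$, but there is no ``decomposed sutured manifold picking up $\s$'' for non-outer $\s$ whose failure to be taut would force vanishing. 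The slab containment is thus genuinely unproved in your sketch.

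The paper sidesteps this by doubling $(M,\gamma)$ along $R(\gamma)$ to produce a sutured manifold $(X,\gamma_X)$ with only torus boundary components. For such manifolds the equality $z=x^s$ is already available (Proposition~\ref{thm:torusboundary}, ultimately resting on the link Floer adjunction of \cite{OS08b} via \cite{Ju08}), and together with the Cantwell--Conlon identity $x(D_*\alpha)=2x^s(\alpha)$ this expresses $2x^s(\alpha)$ as a maximum over all of $S(X,\gamma_X)$ (Lemma~\ref{Lem:Double}). Decomposing $(X,\gamma_X)$ along the doubled $R(\gamma)$ recovers $(M,\gamma)\sqcup(-M,\gamma)$, and the maximum over the outer $\spinc$ structures $O_R\cap S(X,\gamma_X)$ for that decomposition is shown to equal $2z(\alpha)$ (Lemma~\ref{Lem:Face}, using the duality of Proposition~\ref{prop:duality}). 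The inequality $z\le x^s$ is then the trivial observation that a maximum over the subset $O_R\cap S(X,\gamma_X)$ is at most the maximum over $S(X,\gamma_X)$. In effect, the paper imports the adjunction inequality from the torus-boundary setting rather than proving it intrinsically for $(M,\gamma)$.
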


We will later see in Proposition~\ref{prop:2} that the inequality of Theorem~\ref{thm:Tnorm1} is strict in general.

In order to prove Theorem~\ref{thm:Tnorm1},  we consider the double of the sutured manifold
\((M,\g)\) along \(R(\g)\). More precisely, the double \(DM\) of \((M,\g)\)  is obtained from the disjoint union of $M$ and $-M$ by identifying the two copies of  $R(\g)$ via the identity map. The boundary of \(DM\) is a union of tori; each torus is the double of a component of \(\g\).
In the context of sutured manifolds this operation was first used by Gabai  \cite{Ga83}.

A theorem of Cantwell and Conlon relates the sutured Thurston norm on \((M,\g)\) to the Thurston norm of the double. To be precise, suppose \((M,\g)\) is a sutured manifold, and let \(X=DM\) be the double of \(M\) along \(R(\g)\). There is a natural ``doubling map'' \(D_*:H_2(M,\partial M;\R) \to H_2(X,\partial X;\R).\)
Note that the doubling map takes the homology class represented by a surface $(S,\partial S)\subset (M,\partial M)$ to the homology class represented by its double. In particular,
we immediately see that $x(D_*(\alpha)) \leq 2 x^s(\alpha)$. The following theorem shows that in fact equality holds.

\begin{theorem} \cite[Theorem 2.3]{CC06} \label{thm:CC}
We have
$x(D_*(\alpha)) = 2 x^s(\alpha).$
\end{theorem}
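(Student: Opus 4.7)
The plan is to establish the two inequalities separately.

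For the easy direction $x(D_*\alpha) \leq 2 x^s(\alpha)$, take a properly embedded surface $(S,\partial S) \subset (M, \partial M)$ representing $\alpha$, transverse to $R(\gamma)$ and to $s(\gamma)$, with $x^s(S) = x^s(\alpha)$ and with no trivial circle components in $S \cap R(\gamma)$ (removed by standard surgery). Its double $DS \subset DM$ represents $D_*\alpha$, and inclusion-exclusion yields
\[
\chi(DS) = 2\chi(S) - \chi(S \cap R(\gamma)).
\]
Since $\partial S$ alternates between $R_-(\gamma)$ and $R_+(\gamma)$ as it crosses $s(\gamma)$, the arcs of $S \cap R_-(\gamma)$ and $S \cap R_+(\gamma)$ are in bijection, so $\chi(S \cap R(\gamma)) = 2\chi(S \cap R_-(\gamma))$. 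Therefore
\[
x(D_*\alpha) \leq -\chi(DS) = 2\bigl(-\chi(S) + \chi(S \cap R_-(\gamma))\bigr) = 2 x^s(S) = 2 x^s(\alpha).
\]

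For the reverse inequality $x(D_*\alpha) \geq 2 x^s(\alpha)$, the idea is to produce a Thurston-norm minimizer $T \subset DM$ for $D_*\alpha$ that is set-theoretically invariant under the canonical involution $\sigma \colon DM \to DM$ swapping the two halves. Granted such a $T$, set $S := T \cap M$; then $S$ is a properly embedded surface in $M$ representing $\alpha$, satisfying $T = S \cup \sigma(S)$ and $S \cap \sigma(S) = S \cap R(\gamma)$. Running the Euler characteristic identity in reverse gives
\[
x^s(\alpha) \leq x^s(S) \leq \tfrac{1}{2}\bigl(-\chi(T)\bigr) = \tfrac{1}{2} x(T) = \tfrac{1}{2} x(D_*\alpha).
\]

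The main obstacle is producing such a $\sigma$-invariant minimizer. Since $\sigma$ is orientation-reversing on $DM$ and $\sigma_*(D_*\alpha) = -D_*\alpha$, a minimizer $T_0$ of $D_*\alpha$ gives another minimizer by reversing the orientation of $\sigma(T_0)$. One then performs an oriented cut-and-paste along the transverse intersection $T_0 \cap \sigma(T_0)$ and selects a piece that is equivariant as a subset and still represents $D_*\alpha$. The delicate point is to carry this out without creating sphere or disk components that alter the homology class and without increasing $-\chi$; this requires Gabai-style norm-minimization bookkeeping with a component-by-component analysis, and constitutes the technical heart of the proof.
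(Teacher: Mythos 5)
The paper does not prove this statement; it cites it from Cantwell and Conlon \cite{CC06}, and it only establishes the easy inequality $x(D_*(\alpha)) \leq 2x^s(\alpha)$ in the discussion immediately preceding the theorem. Your easy direction coincides with the paper's. Your hard direction, however, contains a genuine gap that you yourself flag: the production of a $\sigma$-invariant Thurston-norm minimizer for $D_*\alpha$ is not an afterthought but the whole content of the theorem, and your sketch of how to obtain one is not yet a proof and, as stated, has a class mismatch. If $T_0$ minimizes $\chi_-$ in the class $D_*\alpha$ and $-\sigma(T_0)$ also represents $D_*\alpha$, then an oriented double-curve sum of $T_0$ with $-\sigma(T_0)$ represents $2D_*\alpha$, not $D_*\alpha$; the phrase ``selects a piece that is equivariant as a subset and still represents $D_*\alpha$'' is precisely what one must justify, and it is not clear any such ``piece'' exists. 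The known arguments achieve equivariance by running a sutured-manifold hierarchy or by minimal-surface techniques in an equivariant metric, not by naive cut-and-paste.

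There is also a secondary issue in the Euler-characteristic bookkeeping at the end. From $T = S \cup \sigma(S)$ and $\chi(T) = 2\chi(S) - \chi(S \cap R(\gamma))$ one obtains
\[
-\tfrac{1}{2}\chi(T) = -\chi(S) + \chi(S\cap R_-(\gamma)) = \sum_i \bigl(-\chi(S_i) + \chi(S_i \cap R_-(\gamma))\bigr),
\]
while $x^s(S) = \sum_i \max\bigl(0,\, -\chi(S_i) + \chi(S_i \cap R_-(\gamma))\bigr)$. These agree only if every component $S_i$ has $-\chi(S_i) + \chi(S_i\cap R_-(\gamma)) \geq 0$; should some component of $T\cap M$ be a sphere or a disk meeting $s(\gamma)$ in at most two points, one gets $x^s(S) > -\tfrac{1}{2}\chi(T)$, which is the wrong direction for your chain of inequalities. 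Likewise $-\chi(T)=x(T)$ needs $T$ free of positive-Euler-characteristic components. So even granting an equivariant minimizer, the argument requires a further step discarding or absorbing such pieces equivariantly. In short: the strategy (double, unfold, compare $\chi$) is the natural one and the easy direction is correct, but the reverse inequality is not established and the proposed construction of the equivariant minimizer would not, as written, land in the right homology class.
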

\noindent Here \(x\) denotes the usual Thurston norm on \(H_2(X,\partial X;\R)\) and \(x^s\) is the sutured Thurston norm on \(H_2(M,\partial M;\R).\)

\begin{defn}
We make $X=DM$ into a sutured manifold $(X,\g_X)$ in the following canonical way. Let the components of $\g$ be $\g_1,\dots,\g_l.$ For each component $\g_i$ of $\g$ choose two parallel, oppositely oriented arcs $m_i$ and $m_i'$ that connect $R_+(\g)$ and $R_-(\gamma).$ Then on the torus $D\gamma_i \subset \partial X$ the sutures are $\mu_i = m_i \cup (-m_i)$ and $\mu_i' = m_i' \cup (-m_i').$ These sutures are well defined up to isotopy. Let $\g_X$ be a regular neighborhood of $\bigcup_{i=1}^l (\mu_i \cup \mu_i')$ inside $\partial X.$
\end{defn}

\begin{lemma}
\label{Lem:Double}
Let $(M,\g)$ be an irreducible balanced sutured manifold and let \((X,\g_X)\) be its double. Then for all  \(\a \in H_2(M,\partial M;\R)\), we have \emph{
$$
2x^s(\a) = \max \{ \langle\,\s - \t, D_*(\alpha) \,\rangle \, \colon \, \s,\t\in S(X,\g_X)\}.
$$}
\end{lemma}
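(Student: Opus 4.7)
The plan is to reduce the statement to known results about sutured manifolds with torus boundary. First, observe that $(X,\gamma_X)$ is a balanced sutured manifold whose boundary is a disjoint union of tori: each component $\gamma_i$ of $\gamma$ doubles to a torus $D\gamma_i \subset \partial X$, on which sit the two parallel sutures $\mu_i$ and $\mu_i'$. Assuming $(X,\gamma_X)$ is irreducible (see the final remarks below), Proposition~\ref{thm:torusboundary} applied to $(X,\gamma_X)$ yields
$$\max\{\langle \s-\t,D_*\alpha\rangle : \s,\t\in S(X,\gamma_X)\} = z_{(X,\gamma_X)}(D_*\alpha) = x^s_X(D_*\alpha).$$

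Next, $(X,\gamma_X)$ fits the framework of Lemma~\ref{lem:linknorm}: each torus $D\gamma_i$ carries two parallel meridional sutures, so $(X,\gamma_X)$ is of the form $Y_X(L_X)$ as in Example~\ref{ex:2}, where $Y_X$ is the result of filling each torus so that $\mu_i$ bounds, and $L_X$ consists of the cores of the filling solid tori. Lemma~\ref{lem:linknorm} then gives
$$x^s_X(D_*\alpha) = x(D_*\alpha) + \sum_{i=1}^l |\langle D_*\alpha,\mu_i\rangle|,$$
where $x$ denotes the ordinary Thurston norm on $H_2(X,\partial X;\R)$.

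The crucial step is to show each intersection term $\langle D_*\alpha,\mu_i\rangle$ vanishes. Realize $\alpha$ by a properly embedded surface $S\subset M$, so that the doubled surface $DS=S\cup S'$ represents $D_*\alpha$, and $\partial DS\cap D\gamma_i$ is the double of the arc system $\partial S\cap \gamma_i$. Each arc in $\partial S\cap\gamma_i$ runs from $R_-(\gamma)$ to $R_+(\gamma)$, and its double is a simple closed curve on the torus $D\gamma_i$ homologous to $\pm [\mu_i]$, since by construction $\mu_i=m_i\cup(-m_i)$ is itself such a doubled vertical arc. Thus $[\partial DS \cap D\gamma_i]$ is a multiple of $[\mu_i]$ in $H_1(D\gamma_i)$, and since two parallel classes on a torus have zero algebraic intersection, $\langle D_*\alpha,\mu_i\rangle=0$ for every $i$.

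Combining the previous steps, $x^s_X(D_*\alpha)=x(D_*\alpha)$, and by the Cantwell--Conlon formula (Theorem~\ref{thm:CC}) $x(D_*\alpha)=2x^s(\alpha)$, which gives the asserted equality. The main obstacle is the bookkeeping around irreducibility of $(X,\gamma_X)$ so that Proposition~\ref{thm:torusboundary} and Lemma~\ref{lem:linknorm} are legitimately applicable; this should reduce to arranging $R(\gamma)$ to be incompressible (a standard modification using norm-minimizing representatives of $\alpha$), after which any essential sphere in $X$ would have to come from an essential sphere in $M$, contradicting irreducibility of $M$.
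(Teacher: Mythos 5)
Your proposal is correct and follows essentially the same route as the paper's proof: apply Proposition~\ref{thm:torusboundary} to the double $(X,\gamma_X)$, show that $\langle D_*\alpha,\mu_i\rangle = 0$ for each $i$, and conclude via the Cantwell--Conlon equality $x(D_*\alpha)=2x^s(\alpha)$. Your homological argument for the vanishing and the paper's choice of $m_i,m_i'$ parallel to $\partial S\cap\gamma_i$ are two ways of packaging the same step, namely isotoping $S$ so that every arc of $\partial S\cap\gamma_i$ crosses $s(\gamma_i)$ exactly once (without that normalization, arcs of $\partial S\cap\gamma_i$ with both endpoints on the same side of $s(\gamma_i)$ would double to curves with nonzero intersection with $\mu_i$, so the normalization should be made explicit in your version as well). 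One small correction to your closing aside: irreducibility of $X$ does hinge on incompressibility of $R(\gamma)$, but this is not something one can ``arrange using norm-minimizing representatives of $\alpha$'' since $R(\gamma)$ is fixed data of the sutured manifold; the relevant fact is that when $(M,\gamma)$ is taut, $R(\gamma)$ is automatically incompressible and the double is irreducible, and tautness is exactly the situation in which the lemma is invoked in the proof of Theorem~\ref{thm:Tnorm1} --- a hypothesis the paper itself leaves implicit here.
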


\begin{proof}
First note that by standard arguments it suffices to show the equality when $\a$ is an integral class.

Since $X$ is irreducible and has only toroidal boundary components, by Proposition~\ref{thm:torusboundary} we have
\begin{equation*}
\max \{ \langle\,\s - \t, h \,\rangle \, \colon \, \s, \t \in S(X,\g_X)\,\} =x^s(h)
\end{equation*}
for any $h\in H_2(X,\partial X).$

We claim that if $h =D_*(\alpha)$ for some $\alpha \in H_2(M,\partial M)$, then $\langle\, h, \mu_i \,\rangle = 0$ for every $1 \le i \le l.$ To see this, choose a surface \(S\) representing \(\alpha\). We may assume that \(S \cap \g_i\) consists of a collection of parallel arcs. If we take \(m_i\) and \(m_i'\) parallel to these arcs, then \(\partial S \cap \mu_i \) and \(\partial S \cap \mu_i'\)  are empty. Consequently, $x^s(h) = x(h).$
Combining this with the fact from Theorem \ref{thm:CC} that $x(h) = 2x^s(\alpha),$ we obtain the statement of the lemma.
\end{proof}

The surface
$R(\g)$ defines an oriented  surface $R \subset X$. Note that \(R\) has the orientation coming from \(R (\g)\), \emph{not} the induced orientation coming from \(\partial M\). In particular, the homology class represented by \(R \) is twice the class of \( R_-(\gamma)\). It is easy to see that \(R\) is a nice decomposing surface for $(X,\g_X)$ in the sense of \cite[Definition 3.22]{Ju10}. Let \(O_R \subset \spinc(X,\gamma_X)\) be the set of outer \(\spinc\) structures for \(R\).
%Let $\rho = [R,\partial R] \in H_2(X,\partial X).$

\begin{lemma}
\label{Lem:Face}
Let $(M,\g)$ be a taut balanced sutured manifold.  Then for all  \(\a \in H_2(M,\partial M;\R)\), we have \emph{
\begin{equation*} \label{eqn:z}
2z(\alpha) = \max \left\{\, \left\langle\, \s - \t, D_*(\alpha) \,\right\rangle \, \colon \, \s,\t \in O_R \cap S(X,\g_X)\,\right\}.
\end{equation*}}
\end{lemma}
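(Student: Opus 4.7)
The plan is to apply the sutured decomposition formula to $(X,\g_X)$ along the surface $R$, use the duality of Proposition~\ref{prop:duality} to identify the supports on the two halves, and then split the pairing $\langle \s-\t, D_*(\alpha)\rangle$ into a sum of pairings on each half of the double. Throughout, denote the two copies of $M$ in $X$ by $M_+$ (carrying the orientation of $M$) and $M_-$ (carrying the reversed orientation), so $M_+ \cong M$ and $M_- \cong -M$ as oriented submanifolds of $X$.

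First I would verify that decomposing $(X,\g_X)$ along the nice surface $R$ returns precisely $(M,\g) \sqcup (-M,-\g)$, with sutures arising from the splitting of each $\mu_i$ and $\mu_i'$. The decomposition formula recalled in Section~\ref{section:algebra} then gives a bijection between $O_R$ and $\spinc(M,\g) \times \spinc(-M,-\g)$ via restriction, together with an isomorphism
\[
SFH(X,\g_X,\s) \;\cong\; SFH(M,\g,\s_1) \otimes SFH(-M,-\g,\s_2)
\]
whenever $\s \in O_R$ corresponds to $(\s_1,\s_2)$. Hence $\s \in O_R \cap S(X,\g_X)$ iff $\s_1 \in S(M,\g)$ and $\s_2 \in S(-M,-\g)$. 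By Proposition~\ref{prop:duality}, $S(-M,-\g) = S(M,\g)$ under the canonical identification $\spinc(M,\g) = \spinc(-M,-\g)$, so $O_R \cap S(X,\g_X)$ is in natural bijection with $S(M,\g) \times S(M,\g)$.

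The main step is to prove the pairing additivity: for $\s,\t \in O_R$ corresponding to $(\s_1,\s_2)$ and $(\t_1,\t_2)$,
\[
\langle \s - \t,\, D_*(\alpha) \rangle \;=\; \langle \s_1 - \t_1,\,\alpha\rangle \;+\; \epsilon\,\langle \s_2 - \t_2,\,\alpha\rangle
\]
for some sign $\epsilon \in \{\pm 1\}$ determined by orientation conventions on the double. I would realize $\s - \t \in H^2(X,\partial X)\cong H_1(X)$ by a $1$-cycle in $X$ built from $1$-cycles in $M_+$ and $M_-$ representing $\s_1-\t_1$ and $\s_2-\t_2$; the condition that both $\s$ and $\t$ are outer with respect to $R$ guarantees that their restrictions to a collar of $R$ agree, so that such a decomposition of representatives exists. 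The intersection number with a representative $[S_+]+[\overline{S}_-]$ of $D_*(\alpha)$ then decomposes cleanly into contributions from $M_+$ and $M_-$, yielding the displayed formula.

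Finally, since the maxima over the two factors are independent and $z(-\alpha)=z(\alpha)$, the factor $\epsilon$ is harmless: the maximum of $\langle \s_1-\t_1,\alpha\rangle + \epsilon\langle \s_2-\t_2,\alpha\rangle$ over $(\s_1,\s_2,\t_1,\t_2)\in S(M,\g)^4$ separates as $z(\alpha)+z(\alpha)=2z(\alpha)$. The main obstacle is the pairing-splitting step, which requires careful bookkeeping of orientations on the doubled surface and a chain-level description of the restriction of an outer $\spinc$ structure to each half; once that is in hand, the rest of the proof is a formal consequence of the decomposition formula, duality, and the symmetry of $z$.
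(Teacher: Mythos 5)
Your approach matches the paper's: both decompose $(X,\g_X)$ along $R$, use the decomposition theorem to identify $O_R\cap S(X,\g_X)$ with a product of the supports on the two halves, invoke the orientation-reversal duality of Proposition~\ref{prop:duality} to rewrite the second factor in terms of $S(M,\g)$, split the pairing $\left\langle \s-\t, D_*(\alpha)\right\rangle$ into contributions from each half, and separate the maxima. Two small differences. First, the paper records the decomposition as $(M,\g)\sqcup(-M,\g)$ rather than your $(M,\g)\sqcup(-M,-\g)$ and accordingly uses $S(-M,\g)=-S(M,\g)$ where you would use $S(-M,-\g)=S(M,\g)$; either bookkeeping leads to the same maximum because $z$ is symmetric, but you should double-check which sutured structure the decomposition actually produces on the reversed copy of $M$. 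Second, and more substantively, where you sketch a chain-level argument for the pairing split and leave the sign $\epsilon$ to be absorbed by the symmetry of $z$, the paper simply quotes the affine property $f_R(\s)-f_R(\t)=i_*(\s-\t)$ of the gluing map from \cite[Prop.~5.4]{Ju10}, which immediately gives the split with a definite plus sign; this does in one line what you correctly flag as the main obstacle, and it is worth knowing that the needed identity is already in the literature rather than something to be verified by hand.
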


\begin{proof}
First note that by standard arguments it suffices to show the equality when $\a$ is an integral class.
%\footnote{I added real classes to the lemma and the above sentence, same in previous lemma}

Now note that if we decompose $(X,\g_X)$ along $R,$ we get $(M',\g') = (M,\g) \sqcup (-M,\g).$ The set \(\spinc(M',\g')\) is naturally identified with \(\spinc(M,\g) \times \spinc(-M,\g).\) By \cite[Prop. 5.4]{Ju10}, there is a gluing map \(f_R: \spinc(M',\g') \to O_R \) such that \(f_R(\s) - f_R(\t) = i_*(\s-\t)\) for every $\s,\t \in \spinc(M',\g'),$ where \(i:M' \to X\) is the inclusion. (Here we view \(\s-\t\) as an element of \(H_1(M'),\) or equivalently, use \(PD \circ i_* \circ PD\) in place of $i_*.$) The decomposition theorem \cite[Proposition 5.4]{Ju10} implies that
$$f_R(S(M',\g'))  = O_R \cap S(X,\g_X).$$
Clearly,
$$ S(M',\g') =  S(M,\g) \times  S(-M,\g).
$$
Recall that there is a bijection \(\spinc(M,\g) \to \spinc(-M,\g)\) which sends a nonvanishing vector field \(v\) to \(-v\). By Proposition~\ref{prop:duality}, we have \(S(-M,\g) = - S(M,\g)\). Thus each element of \(O_R \cap S(X,\g_X)\) can be written as \(\s = f_R(\s_1,-\s_2)\), where \(\s_1,\s_2 \in S(M,\g)\). So for $\s,\t \in O_R \cap S(X,\g_X),$ we have
 \begin{align*}
 \langle \s - \t, D_*(\a) \rangle & = \langle f_R(\s_1,-\s_2) - f_R(\t_1, - \t_2), D_*(\a) \rangle \\
 & = \langle  i_*(\s_1-\t_1,\t_2-\s_2), D_*(\a) \rangle \\
 & =  \langle  \s_1-\t_1, \a \rangle +  \langle  \s_2-\t_2, \a \rangle.
 \end{align*}
In particular, we see that
 \begin{equation*}
 \max \left\{\, \left\langle\, \s - \t, D_*(\alpha) \,\right\rangle \, \colon \, \s,\t \in O_R \cap S(X,\g_X)\,\right\}
= 2 \cdot \max \{ \langle\,\s - \t, \alpha \,\rangle \, \colon \, \s,\t\in S(M,\g)\}.
 \end{equation*}
The right hand side is by definition $2z(\a).$
\end{proof}

We are now finally ready to complete the proof of Theorem~\ref{thm:Tnorm1}.

\begin{proof}[Proof of Theorem~\ref{thm:Tnorm1}]
If \((M,\g)\) is taut, this is an immediate consequence of Lemmas~\ref{Lem:Double} and \ref{Lem:Face}.  Now suppose that \((M,\g)\) is not taut. Since $M$ is irreducible, \cite[Proposition 9.18]{Ju06} implies that \(SFH(M,\g) = 0.\) So \(z =0,\) and the inequality is obviously true.
\end{proof}

Since the set of \(\spinc\) structures appearing in
Lemma~\ref{Lem:Face} is a proper subset of the set in
Lemma~\ref{Lem:Double}, it seems plausible that there should be sutured
manifolds for which  \(x^s\) is strictly larger than \(z\). We
explain how to find such a manifold.

The form of the support \(S(X,\g_X)\) is constrained
by the fact that \((X,\g_X)\) is a double. There is a natural
``reflection'' \(r:X \to X\) which exchanges the two copies of \(M\)
in the \(X\). The action of \(r_*\) decomposes \(H_1(X;\R)\)
into a direct sum of \(\pm 1\) eigenspaces \(A_{\pm}\).
For simplicity, let us suppose
that \((M,\g)\) is a rational homology product.

\begin{lemma}
If \((M,\g)\) is a rational homology product, then \(A_-\) is one
dimensional, and \(A_+  \cong H_1(R_-)\). Moreover,
\(A_-\) is the annihilator of \(\im (D_*)\) under the intersection
 pairing $H_1(X;\R)\times H_2(X,\partial X;\R)\to \R$, and
\(A_+\) is the annihilator of \([R]\).
\end{lemma}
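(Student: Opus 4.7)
The overall plan is to determine the dimensions of $A_\pm$ via Mayer--Vietoris, and then use the orientation-reversing character of $r$ to identify $A_\pm$ as annihilators under the intersection pairing.

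By Lemma~\ref{lem:product} I may reduce to the case where $R_-(\g)$ and $R_+(\g)$ are both connected. The Mayer--Vietoris sequence for $X = M \cup_{R(\g)}(-M)$ with $\R$-coefficients,
\[
H_1(R(\g);\R) \xrightarrow{\phi} H_1(M;\R) \oplus H_1(-M;\R) \xrightarrow{\psi} H_1(X;\R) \xrightarrow{\partial} H_0(R(\g);\R) \xrightarrow{\delta} H_0(M;\R) \oplus H_0(-M;\R),
\]
together with the fact that $H_*(R_\pm;\Q) \to H_*(M;\Q)$ is an isomorphism (the hypothesis for $R_-$, and Lefschetz duality for $R_+$), shows that $\phi$ surjects onto the $(-1)$-eigenspace of the swap involution (which represents the action of $r_*$ on the middle term). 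Hence $\mathrm{coker}\,\phi$ is naturally identified with the $(+1)$-eigenspace, which is isomorphic to $H_1(M;\R)$; similarly $\ker\delta$ is one-dimensional. The inclusion $\mathrm{coker}\,\phi \hookrightarrow H_1(X;\R)$ therefore lies in $A_+$, while a generator of $\ker\delta$ lifts via $\partial$ to a loop that runs from $R_-$ through $M$ to $R_+$ and back through $-M$, which $r$ reverses, placing its lift in $A_-$. Comparing dimensions, $A_+ \cong H_1(R_-;\R)$ and $\dim A_- = 1$.

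Since $r$ reverses the orientation of $X$, the intersection pairing satisfies $\langle r_*\alpha, r_*\beta\rangle = -\langle\alpha,\beta\rangle$, and so it vanishes on like eigenspaces. Writing $B_\pm \subseteq H_2(X,\partial X;\R)$ for the $r_*$-eigenspaces, we have $A_\pm \perp B_\pm$; non-degeneracy of the Poincar\'e--Lefschetz pairing then forces $A_+ \times B_- \to \R$ and $A_- \times B_+ \to \R$ to be perfect, so $\dim B_- = \dim A_+ = \dim H_1(R_-)$ and $\dim B_+ = 1$. The class $[R]$ is $r$-invariant (since $r$ fixes $R$ pointwise with its given orientation), hence $[R] \in B_+$; moreover $[R] = 2[R_-]$ in $H_2(X,\partial X;\R)$ (the equality $[R_-] = [R_+]$ follows because $[\partial M] = 0$ in $H_2(X)$), and $\partial[R_-] \in H_1(\partial X;\R)$ is the nontrivial suture class on the toroidal components of $\partial X$, so $[R] \ne 0$ and hence spans $B_+$. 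This gives $[R]^\perp = B_+^\perp = A_+$ by the perfect pairing.

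For the remaining claim, the doubled class $D_*[S]$ is, by construction, the sum of $S$ and its reflection with orientations chosen so that their $R(\g)$-boundaries cancel; this is manifestly $r$-anti-invariant, so $\im(D_*) \subseteq B_-$. Injectivity of $D_*$ follows by post-composing with the map $H_2(X,\partial X;\R) \to H_2(X,\partial X \cup R(\g);\R) \cong H_2(M,\partial M;\R) \oplus H_2(-M,\partial M;\R)$ (the target splits because collapsing $R(\g)$ disconnects $X$ into $M$ and $-M$) and observing that projection to the first summand gives a left inverse of $D_*$. A dimension count $\dim\im(D_*) = \dim H_2(M,\partial M;\R) = \dim H_1(R_-;\R) = \dim B_-$, using the rational homology product and Lefschetz duality, gives $\im(D_*) = B_-$, so $\im(D_*)^\perp = A_-$. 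The main obstacle, I expect, will be orientation and sign bookkeeping: in particular, showing that the doubled cycle is $r$-anti-invariant (rather than invariant, as a naive ``swap'' argument would suggest) relies on orienting the two halves of the double with opposite conventions, and tracking how Poincar\'e--Lefschetz duality interchanges the $\pm$ eigenspaces of $r_*$ requires care with the orientation reversal.
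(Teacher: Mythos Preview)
Your proof is correct and follows essentially the same strategy as the paper's: decompose $H_1(X;\R)$ and $H_2(X,\partial X;\R)$ into $\pm 1$ eigenspaces of $r_*$, and use that $r$ is orientation-reversing so like eigenspaces annihilate each other under the intersection pairing. The paper's argument is terser and more geometric: it observes $\im\bigl(i_*:H_1(R_-)\to H_1(X)\bigr)\subset A_+$ directly (since $r$ fixes $R_-$ pointwise), constructs the doubled arc $\alpha$ explicitly and uses $\alpha\cdot[R]=2$ to see both that $[\alpha]\neq 0$ and that $[R]\neq 0$, and simply asserts that $\im(D_*)$ is the $(-1)$-eigenspace in $H_2(X,\partial X;\R)$. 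Your Mayer--Vietoris treatment, your check that $r_*[D(S)]=-[D(S)]$ via the local model, and your verification that $D_*$ is injective (through the splitting $H_2(X,\partial X\cup R(\g))\cong H_2(M,\partial M)\oplus H_2(-M,\partial M)$) fill in details the paper leaves to the reader; this buys you a more self-contained argument at the cost of some length.

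One remark: your opening reduction to $R_\pm$ connected via Lemma~\ref{lem:product} is both unnecessary and not obviously valid here. It is unnecessary because the rational-homology-product hypothesis already gives $H_0(R_-;\Q)\cong H_0(M;\Q)$, so $R_-$ is connected, and then your own Lefschetz-duality argument shows $R_+$ is as well. It is not obviously valid because Lemma~\ref{lem:product} describes how the \emph{torsion} transforms under adding a product $1$-handle to $(M,\g)$; it says nothing about how the double $X$, the reflection $r$, or the eigenspace decomposition transform, so invoking it as a reduction for this lemma would require separate justification. Simply drop that sentence and note connectedness directly from the hypothesis.
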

\begin{proof}
The fact that \((M,\g)\) is a rational homology product implies
 that the map \(i_*:H_1(R_-;\R) \to H_1(X;\R)\)
is injective and \(b_1(X) = 1+b_1(R_-)\). The action of \(r\) fixes
\(R_-\) pointwise, so \(\im(i_*) \subset A_+\). To prove the first
claim, it is enough to construct a nonzero element of \(A_-.\)
 Choose an arc in \(M\) joining \(R_-(\gamma)\) to \(R_+(\g)\), and let
\(\alpha \subset X\) be its double, Then \(r\) acts by reflection on
\(\alpha\), so \(\alpha \in A_-\). Finally, \(\alpha \cdot [R] =2\),
so \([\alpha] \neq 0\).

To prove the second claim, we observe that
the subspace spanned by \([R]\) is the \(+1\) eigenspace for the
action of \(r_*\) on \(H_2(X,\partial X;\R)\), while \(\im(D_*)\) is the
\(-1\) eigenspace. Since \(r_*\) acts by multiplication by \(-1\)
on \(H_3(X,\partial X;\R)\), the \(\pm 1\) eigenspace in \(H_1(X;\R)\) pairs
trivially with the \(\pm 1\) eigenspace in \(H_2(X,\partial X;\R).\)
\end{proof}

Since the intersection pairing is perfect, \(A_+ = \im(i_*)\)
can be naturally identified with the dual of
\(\im(D_*)\). Let \(B_{x^s}^* \subset H^2(M,\partial M;\R) \cong H_1(M;\R)\) be the
dual unit norm ball of \(x^s.\) As in \cite{Ju10}, let
$$P = P(X,\g_X) = \text{conv}\, \{\, c_1(\s) \colon \s \in S(X,\gamma_X) \,\} \subset H^2(X,\partial X;\R)$$
be the sutured Floer homology polytope of \((X,\g_X),\) where $c_1(\s)$ is the relative real Chern class appearing in \cite[Remark 8.5]{Ju08}. Then Lemma~\ref{Lem:Double} implies that
$$2B^*_{x^s} = \pi(P),$$
where \( \pi: H_1(X;\R) \to A_+\) is the projection with kernel $A_-,$  and we have made use of the identification
\(A_+ \cong H_1(R_-;\R) \cong H_1(M,\R) \) coming from the fact that \((M, \g)\) is a rational homology product.
%\footnote{[S] What's $C$? Is it $H_1(M;\R)$? [J] sorry, old notation. Also, strictly speaking we have to look at the convex hull of $P$ and $O_R$. [J] \(P\) is the Thurston ball, so its convex by definition (and thus so is \(O_R\).}
Similarly, let $$P_{\pm R} = \text{conv}\, \{\, c_1(\s) \colon \s \in O_{\pm R} \cap S(X,\gamma_X)  \,\} \subset H^2(X,\partial X;\R).$$ By \cite[Proposition 4.13]{Ju08}, $P_R$ and $P_{-R}$ are the two extremal faces of the polytope $P$ in the $A_-$ direction. Then Lemma~\ref{Lem:Face} says that $$2B^*_{z} = \pi(P_R).$$

The polytope \(P\) is invariant under the action of \(r_*\). Indeed, $r$ is orientation reversing and $r(s(\g_X)) = -s(\g_X),$ so $r$ is an diffeomorphism from $(X,\g_X)$ to $(-X,-\g_X).$ By Proposition \ref{prop:duality}, the sutured Floer homology polytope of $(-X,-\g_X)$ is also $P.$ This implies that $r_*(P) = P.$

It follows that \(r_*\) exchanges the  faces
\(P_{-R}\) and \(P_R\) of \(P\), so \(P_{-R}\) can be identified with
\(P_{R}\) by translating it in the direction given by
\(A_-\). Equivalently, \(\pi(P_R) = \pi(P_{-R}).\) From this, we conclude
that \(B_z^*=B_{x^s}^*\) if and only if \(P\) is isomorphic to \(P_R
\times I\) for some interval \(I.\)

\begin{remark}
It is clear from the definition that \(B_z^*\) is symmetric about the origin. The corresponding symmetry of \(P_R\) can be realized by composing \(r_*\) with the map \(P \to P\) which sends \(x\) to \(-x\) (note that $X$ has toroidal boundary,
hence $P = -P$).
\end{remark}

\begin{figure}[ht]
\begin{center}
\includegraphics{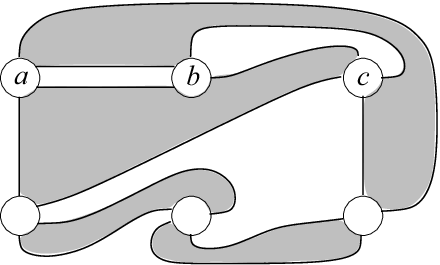}
\caption{Boundary of Cantwell and Conlon's example, showing
  sutures and compressing disks. The shaded region is \(R_-\).}
\label{fig:cc1}
\end{center}
\end{figure}

The Thurston norm of doubled manifolds was investigated by Cantwell
and Conlon in \cite{CC06}. They give an example
of a homology product such that the dual Thurston ball of the  doubled  manifold   is not  a product
\(P_R \times I\). (This is Example 2 in Section 5 of \cite{CC06}.) Figure~\ref{fig:cc1} shows the sutured manifold
\((M,\g)\) which is doubled to give their example. The
underlying manifold \(M\) is a genus three handlebody. The surface
\(\partial M\) is shown in the figure; it is obtained by identifying
the three circles labeled \(a,b,c\) with their counterparts in the
lower half of the diagram by reflecting across a horizontal line. The
circles \(a,b,c\) bound compressing disks in \(M\).
There are four
sutures, and  the subsurfaces \(R_{\pm}(\g)\) are
four-punctured spheres.

\begin{proposition} \label{prop:2}
For the sutured manifold $(M,\g)$ described above, \(B_{x^s}\) is a
proper subset of \(B_z\).
\end{proposition}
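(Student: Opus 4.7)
The plan is to invoke the equivalence proved just before the proposition: for a rational homology product $(M,\g)$, one has $B_{x^s}^* = B_z^*$ if and only if the sutured Floer polytope $P = P(X,\g_X)$ of the double decomposes as a cylinder $P_R \times I$ in the direction of the one-dimensional antisymmetric eigenspace $A_- \subset H_1(X;\R)$. Thus the proposition reduces to showing that $P$ is not such a cylinder for the example of Figure~\ref{fig:cc1}; once this is established, the resulting inequality $B_z^* \subsetneq B_{x^s}^*$ together with Theorem~\ref{thm:Tnorm1} (which yields the inclusion $B_{x^s}\subseteq B_z$) gives the desired $B_{x^s}\subsetneq B_z$.

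First I would verify that the $(M,\g)$ of Figure~\ref{fig:cc1} is a rational homology product. Since $M$ is a genus three handlebody and $R_\pm(\g)$ are four-punctured spheres, a direct Mayer--Vietoris or cellular computation confirms that $R_-(\g) \hookrightarrow M$ is a rational homology equivalence, so the hypothesis of the equivalence above is satisfied.

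Next, since $X = DM$ has purely toroidal boundary, Proposition~\ref{thm:torusboundary} identifies $z$ with $x^s$ on $(X,\g_X)$, and Lemma~\ref{lem:linknorm} expresses $x^s_{(X,\g_X)}$ as $x_X + \sum_i |\langle\,\cdot\,,\mu_i\rangle|$, where the $\mu_i$ are the meridional sutures on the torus components of $\partial X$. Dualizing, the SFH polytope $P$ is, up to an overall factor of two, the Minkowski sum of the classical dual Thurston ball of $X$ with short line segments along the Poincar\'e duals of the $[\mu_i]$.

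The main step, and the main obstacle, is then to appeal to Cantwell and Conlon's Example~5.2 in \cite{CC06}, which shows that the classical dual Thurston ball of $X$ is \emph{not} a cylinder over a face in the $A_-$ direction, and to verify that the Minkowski correction by the meridian segments does not accidentally restore this product structure. This reduces to a short $r_*$-eigenspace computation: one must check that the meridional classes $[\mu_i]$ lie transversely enough to $A_-$ (equivalently, that their Poincar\'e duals interact cleanly with the $A_+\oplus A_-$ splitting) that the failure of product structure in $B^*_x(X)$ persists after taking the Minkowski sum. Once this check is made, the equivalence from the first paragraph forces $P\not\cong P_R\times I$, and the proposition follows.
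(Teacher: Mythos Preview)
Your approach is genuinely different from the paper's and, as written, has a real gap at step~5.

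The paper does not work in the double at all for this proof. It stays entirely inside $(M,\g)$: it quotes Cantwell--Conlon's computation of the dual \emph{sutured} Thurston ball $B_{x^s}^*$ of $(M,\g)$ (not the classical Thurston ball of $X$), then computes $\tau(M,\g)$ directly via Proposition~\ref{prop:AMatrix} by writing down three generators of $\pi_1(R_-)$, reading off the words $a$, $ba^{-1}bc^{-1}$, $ba^{-1}cab^{-1}$ in $\pi_1(M)$, and taking the $3\times 3$ determinant of Fox derivatives. This gives $\tau(M,\g)\sim ac+bc+ba-abc$, so the torsion polytope $T$ has four vertices. The full sutured Floer polytope $Q$ is then pinned down by applying Theorem~\ref{thm:Tnorm1} to the compressing disks $A,B,C$ and a fourth disk $D$ with $[D]=[A]+[B]+[C]$, each having $x^s=1$; this forces $Q=T$. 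Finally $B_z^*$ is the difference body of $Q$, whose vertex set is exactly that of $B_{x^s}^*$ minus the pair $\pm(e_a-e_b-e_c)$. That missing pair of vertices is the entire content of the proposition.

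Your route---show $P(X,\g_X)$ is not a cylinder over $P_R$ by writing $P$ as a Minkowski sum of $B_x^*(X)$ with meridian segments and invoking Cantwell--Conlon for the non-cylinder structure of $B_x^*(X)$---could in principle be made to work, but step~5 is not the afterthought you make it sound. You would need to establish that the meridian classes $[\mu_i]\in H_1(X;\R)$ all lie in $A_-$ (each $\mu_i$ is the double of an arc from $R_-$ to $R_+$, and since $(M,\g)$ is a rational homology product with $R_\pm$ connected one can check $H_1(M,R;\Q)\cong\Q$, so all such doubled arcs are rationally homologous; but this argument must actually be given), and then prove the elementary but unrecorded lemma that Minkowski-summing a convex body with a segment in a direction $v$ preserves the property ``not a cylinder in direction $v$''. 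Until those two pieces are supplied, the argument is incomplete; and even once supplied, it is a considerably more indirect path than the paper's short Fox-calculus computation.
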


\begin{proof}
%\footnote{[S] see referee's comments for pg 38. I leave this to Jake or Andras.}
The Thurston ball of \((M,\g)\) was determined by Cantwell and Conlon
in Section 5.1 of \cite{CC06}. For convenience, we describe their
result in terms of the dual Thurston ball \(B_{x^s}^*\). Let \(A,B,\)
and \(C\) be the compressing disks in \(M\) bounded by the
corresponding circles, and oriented so that the induced orientation
on the boundary corresponds with the standard orientation on the upper
circle in each pair. Let \(a,b,c\) be the geometrically dual basis of
\(\pi_1(M)\). (In other words, \(a\) intersects \(A\) once positively and
misses \(B\) and \(C\), {\it etc.}) Finally, let \(e_a,e_b,e_c\) be
the corresponding basis of \(H_1(M)\).
%\footnote{[S] This should be $H_1(M)$? [J] Yes. Thanks!}
 Then by \cite[Section~5.1]{CC06} the vertices of the dual
Thurston ball are
$$ \pm e_a,\  \pm e_b,\ \pm e_c,\ \pm (e_a-e_b),\ \pm(e_a-e_c),\
\pm(e_b-e_c),\ \  \text{and} \ \pm (e_a-e_b-e_c).$$

\begin{figure}[ht]
\begin{center}
\includegraphics{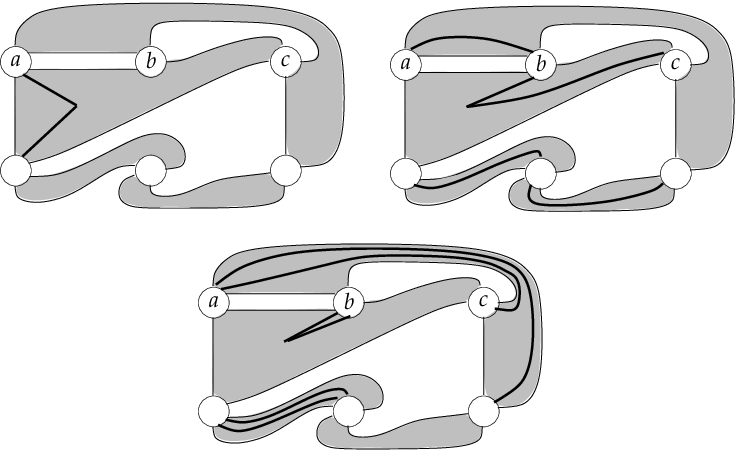}
\caption{A basis for \(\pi_1(R_-)\).}
\label{fig:cc2}
\end{center}
\end{figure}

Next, we determine the sutured Floer polytope of \((M,\g)\). To do so, we
first compute \(\tau(M,\g)\). The three loops shown in
Figure~\ref{fig:cc2} form a basis for \(\pi_1(R_-)\). The
corresponding words in \(\pi_1(M)\) are \(a,ba^{-1}bc^{-1}\), and
\(ba^{-1}cab^{-1}\), so by Proposition~\ref{prop:AMatrix}
\begin{equation*}
\tau(M,\g) \sim \det \begin{bmatrix}
1 & -ba^{-1} & -ba^{-1}+ba^{-1}c \\
0 & 1+ ba^{-1} & 1-c \\
0 & -b^2a^{-1}c^{-1} & ba^{-1}
\end{bmatrix}.
\end{equation*}
(To save space in writing the matrix, we have applied the
abelianization map, but omitted the \(\phi\)'s, so \(a\) is to be read as
\(\phi(a)\), {\it etc.}) We find that
$$ \tau(M,\g) \sim ac + bc  + ba - abc, $$
so up to a global translation the vertices of the torsion polytope are
 \(-e_a,-e_b,-e_c\) and \(0\).

Let \(Q\) and \(T\) denote the sutured Floer polytope
and torsion polytope of \((M,\g)\). We claim that \(Q=T\).
 To see this, consider the disk \(A \subset M\). \(\partial A\) intersects the sutures four times, so \(x^s([A])=1\). Applying Theorem~\ref{thm:Tnorm1}, we see that
  the projection of \(Q\) to the \(e_a\) axis takes at most two values.
 Comparing with \(T\), we see that these values must be
\(-1\) and \(0\). A similar argument applied to \(B\) and \(C\) shows that the \(e_b\) and \(e_c\) coordinates of a point in \(Q\) must be \(0\) or \(-1\).

Following Cantwell and Conlon, we observe
that there is another embedded disk \(D \subset M\) whose boundary
intersects the sutures in four points. The boundary of this disk
separates the three upper circles in Figure~\ref{fig:cc1} from the
lower circles. The homology class of \(D\) is \([A]+[B]+[C]\), so by
adjunction, the linear functional \(e_a^*+e_b^*+e_c^*\) can take at
most two values on \(Q\). Comparing with \(T\), we see that these
values must be \(-1\) and \(0\). So \(T\) and \(Q\)
are the same.

Finally, we recall that the polytope \(B_z^*\) is the convex hull of the set of
points of the form \(x-y\), for \(x,y \in Q\). The vertices of
\(B_z^*\) are
$$ \pm e_a,\  \pm e_b,\ \pm e_c,\ \pm (e_a-e_b),\ \pm(e_a-e_c),\
\text{and} \ \pm(e_b-e_c).$$
As promised, \(B_z^*\) is a proper subset of
\(B_{x^s}^*\); it does not contain the vertex \(e_a-e_b-e_c\).
\end{proof}

In the above example \(b_1(X) = 4\), so
the polytope $P$ is four-dimensional. The lattice polytope \(P \cap H_1(X;\Z)\) is composed
of three ``layers'' distinguished by the value of their intersection
number with \([R]\). Two of these layers are the outer faces \(P_{R}\)
and \(P_{-R},\) both of which are isomorphic to \(2B_z^*\).
The middle layer is larger --- it is isomorphic to $2B_{x^s}^*.$
The fact that there are (at most) three layers in $P$ is a direct
consequence of the adjunction inequality applied to \(R_- \subset
X\).

\begin{remark} Observe that if \(\partial M\) had genus less than three, we would necessarily have
 \(B_z^*=B_{x^s}^*\). Indeed, the corresponding lattice polytope would have at most two layers by adjunction. These must be  \(P_{R}\) and \(P_{-R}\), so   \(P = P_R \times I\).
%\footnote{[S] Hmmm. There's something I am not getting/confused about:\\ Is it clear that $O_R=O_{-R}$? Is it not possible, that one is the `mirror image' of the other? [J] The point is that \(O_R\) is symmetric under mirror image. I've added a remark (7.15) to this effect. \\ I don't see how $P$ becomes a product. \\ If $P$ is (or was) indeed a product, then would that not prove that Proposition 7.7 holds whenever the complexity of $R_-(\gamma)$ is at most one (i.e. annuli and `half' of a genus three surface)?\\   {\mkc [J] This is indeed true. Should we make it more explicit?} Either way, perhaps we can combine the remark with Proposition 7.7 since I think that the point of view of this remark is very helpful.}
\end{remark}

Let $(M,\g)$ be a balanced sutured manifold such that $H_2(M)=0.$ In \cite[Proposition 8.10]{Ju08} the second author showed that if $(M,\g) \rightsquigarrow^S (M',\g')$ is a decomposition along a ``nice" product annulus $S,$ then $SFH(M,\g) \cong SFH(M',\g').$ Theorem~\ref{thm:Tnorm1} permits us to extend this result to irreducible balanced sutured manifolds with arbitrary second homology.

\begin{proposition}
Suppose that $(M,\g)$ is an irreducible balanced sutured manifold. Let $S \subset (M,\g)$ be a product annulus such that at least one component of $\partial S$ is non-zero in $H_1(R(\g)),$ or both components of $\partial S$ are boundary-coherent
%\footnote{[S] What does `boundary-coherent' mean?}
in $R(\g)$ (see \cite[Definition 1.2]{Ju08}). If $S$ gives a surface decomposition $(M,\g) \rightsquigarrow^S (M',\g'),$ then $$ SFH(M,\g) \cong SFH(M',\g').$$
\end{proposition}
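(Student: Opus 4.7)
The plan is to apply the surface decomposition formula of Juh\'asz, \cite[Theorem 1.3]{Ju08}, which gives
\[ SFH(M',\g') \;\cong\; \bigoplus_{\s \in O_S} SFH(M,\g,\s), \]
reducing the statement to showing that $SFH(M,\g,\s) = 0$ for every $\s \notin O_S$. If $(M,\g)$ is not taut, then since $M$ is irreducible, $SFH(M,\g)=0$ by \cite[Proposition 9.18]{Ju06}, and since decomposition along a product annulus preserves the relevant tautness conditions, also $SFH(M',\g')=0$, so I may assume $(M,\g)$ is taut.

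Next I would use Theorem \ref{thm:Tnorm1} to obtain a strong norm constraint on the support $S(M,\g)$. Since $S$ is a product annulus, $S\cap\g=\emptyset$ and $\chi(S)=0$, so
\[ x^s([S,\partial S]) \;\le\; -\chi(S) + \tfrac12|S\cap s(\g)| \;=\; 0. \]
Theorem \ref{thm:Tnorm1} then forces $z([S,\partial S])=0$, meaning the entire support $S(M,\g)\subset\spinc(M,\g)$ of $SFH(M,\g)$ lies in a single affine hyperplane cut out by the functional $\langle\,\cdot\,,[S,\partial S]\rangle$.

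The heart of the argument is to match this hyperplane with $O_S$. For a product annulus equipped with one of its two co-orientations, the outer sets $O_S$ and $O_{-S}$ are each confined to an affine hyperplane of $\spinc(M,\g)$ defined by an extremal value of the pairing with $[S,\partial S]$; a priori these two extremes could coincide (as happens for a trivial product annulus), but the boundary condition in the statement---that a component of $\partial S$ is non-zero in $H_1(R(\g))$, or that both components are boundary-coherent---is precisely what ensures that $PD[S,\partial S]\in H_1(M)$ shifts $O_S$ off of $O_{-S}$, so that they sit in distinct parallel hyperplanes. Combining this with the previous paragraph, the support $S(M,\g)$ lies entirely in $O_S$ or entirely in $O_{-S}$. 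In the former case I am done. In the latter case, I decompose along $S$ with the reversed co-orientation and use Proposition \ref{prop:duality} to identify the resulting sutured Floer homology with $SFH(M',\g')$, again reaching the desired conclusion.

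The principal obstacle is precisely this middle step: verifying that the topological hypothesis on $\partial S$ separates $O_S$ from $O_{-S}$ in $\spinc(M,\g)$. This is where \cite[Proposition 8.10]{Ju08} used $H_2(M)=0$ to make the separation automatic; in the present generality, the replacement is to isolate the direction in $\spinc(M,\g)$ corresponding to $[S,\partial S]$ and check, via the boundary-coherence or homological non-triviality of $\partial S$, that the outerness condition discriminates nontrivially in this direction. Once that geometric fact is in hand, the norm bound supplied by Theorem \ref{thm:Tnorm1} does the rest.
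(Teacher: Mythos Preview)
Your overall strategy matches the paper's: apply the decomposition formula and use Theorem~\ref{thm:Tnorm1} with $x^s([S])=0$ to confine $S(M,\g)$ to a single level set of $\langle\,\cdot\,,[S]\rangle$. The gap is in your ``middle step,'' and the obstacle is not the one you identify.

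Knowing $z([S])=0$ tells you that $S(M,\g)$ lies in \emph{some} affine hyperplane perpendicular to $[S]$, but gives no information about \emph{which} one. Your proposed dichotomy---that the support must lie in $O_S$ or in $O_{-S}$---is unjustified: even granting that these are distinct, nothing prevents $S(M,\g)$ from sitting in a third parallel hyperplane. Your fallback via Proposition~\ref{prop:duality} does not repair this, since that proposition concerns reversing the orientation of the ambient manifold, not decomposing along $-S$. Moreover, the boundary hypothesis is not there to separate $O_S$ from $O_{-S}$; the paper uses it only to orient $S$ so that $\partial S$ is boundary-coherent in $R(\g)$, which is what is required to invoke \cite[Theorem~1.3]{Ju08} in the first place.

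The paper's fix is simpler than what you attempt. In the taut case, $(M',\g')$ is taut as well, so $SFH(M',\g')\neq 0$ by \cite[Theorem~1.4]{Ju08}. The decomposition formula then forces $O_S\cap S(M,\g)\neq\emptyset$; pick any $\s_0$ in this intersection. Since $O_S$ is precisely the set $\{\s:\langle\s-\s_0,[S]\rangle=0\}$, and since $z([S])=0$ pins every $\s\in S(M,\g)$ to this same level, you get $S(M,\g)\subset O_S$ directly. No appeal to $O_{-S}$ or to duality is needed.
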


\begin{proof}
In both cases we can orient $S$ such that $\partial S$ is boundary-coherent in $R(\g).$
Note that $(M',\g')$ is taut if and only if $(M,\g)$ is, and both are irreducible. So if
$(M',\g')$ is not taut, then by \cite[Proposition 9.18]{Ju06} we have  $$SFH(M,\g) = 0 = SFH(M',\g').$$

Now assume that $(M',\g')$ is taut, then $SFH(M',\g') \neq 0$ by \cite[Theorem 1.4]{Ju08}. \cite[Theorem 1.3]{Ju08} implies that
$$SFH(M',\g') \cong \bigoplus_{\s \in O_S} SFH(M,\g,\s).$$ So $O_S \cap S(M,\g) \neq \emptyset.$ Let $\s_0 \in O_S \cap S(M,\g),$ then for any $\s \in \spinc(M,\g)$ we have $\langle\, \s -\s_0, [S] \,\rangle = 0$ if and only if $\s \in O_S.$ Obviously, $x^s([S]) = 0,$ so by Theorem~\ref{thm:Tnorm1} $z([S]) = 0.$ Thus $\langle\, \s - \s_0,[S] \,\rangle = 0$ for every $\s \in S(M,\g).$ So $S(M,\g) \subset O_S,$ and hence $SFH(M,\g) \cong SFH(M',\g').$
\end{proof}

\section{Examples and Applications} \label{examples}

We conclude with some sample computations of the torsion and/or the sutured Floer homology, with emphasis on  the case where \((M,\g)\) is the complement of a Seifert surface \(R \subset S^3\).

\begin{example}
Suppose that \(R \subset S^3\) is an embedded annulus. Then \(\partial R\) consists of two parallel copies of a knot \(K\) with some linking number \(n\) corresponding to the framing of the annulus.
The complementary sutured manifold \(S^3(R)\) is homeomorphic to \(S^3 \setminus N(K)\). Its boundary is a torus with two sutures, each representing the homology class \(\ell + n m\) with respect to the canonical basis on \(H_1(\partial(S^3 \setminus N(K))\). Let \(K_n\) be the manifold obtained by filling this homology class (i.e., by performing \(n/1\) Dehn surgery on \(K\)), and let \(K(n) \subset K_n\) be the core circle of the filling. Then \(SFH(S^3(R))\) is isomorphic to \(\widehat{HFK}(K(n))\).
%\footnote{[S] see pg 40 remark of the referee, I leave this to Jake.}
Its Euler characteristic is given by
$$\tau(S^3(R))  \sim \Delta_K(t) \cdot \frac{t^n-1}{t-1},$$
{\it cf.} Proposition 3.1 of \cite{Ra07}.
Note that when \(n=0\), the torsion vanishes, regardless of what \(K\) is.
The group \( \widehat{HFK}(K(n))\) has been studied by Eftekhary \cite{Ef05} (in the case \(n=0\)) and Hedden \cite{He07}, who gives a complete calculation in terms of the groups \(HFK^-(K).\) See also Section 10 of \cite{LOT08}.
In particular,   \( \widehat{HFK}(K(0))\) is nontrivial unless \(K\) is the unknot.
\end{example}

\begin{example} \label{ex:4}
Suppose \(M\) is a solid torus, and that \(\g\) consists of \(2n\) parallel curves on \(\partial M\), each of which represents \(p\) times the generator of \(H_1(M)\). The group \(SFH(M,\g)\) was computed by the second author in \cite{Ju10}; its Euler characteristic is given by
$$ \tau(M,\g) \sim  \frac{(t^p-1)^{n}}{t-1}.$$
The homology in each \(\spinc\) structure is a free module of rank equal
%\footnote{[S] `equal' is perhaps confusing (rank is a number, Euler characteristic is a polynomial), perhaps `determined' is better?}
to the Euler characteristic in that \(\spinc\) structure. An important special case is when \(n=1\). In this case \((M,\gamma)=S^3(R)\), where \(R\) is a twisted band with \(p\) full twists (in other words, an unknotted annulus in \(S^3\) with framing \(p\).) \(SFH(M,\g)\) is supported in \(p\) consecutive \(\spinc\) structures, each containing a single copy of \(\Z\).
\end{example}

\begin{example}
\label{ex:2bridge}
Suppose \(K=K(p,q) \subset S^3\) is the two-bridge knot or link corresponding to the fraction \(p/q\). The set of minimal genus Seifert surfaces for \(K\)  has been classified up to isotopy by Hatcher and Thurston \cite{HT85}. Any such surface \(R\) is obtained as a Murasugi sum of twisted bands. By \cite[Cor. 8.8]{Ju08} and \cite[Theorem 5.11]{Ju10}, the sutured Floer homology of  the complement of a Murasugi sum is the tensor product of the \(SFH\) of the complements of the summands.  Thus the sutured Floer polytope \(SFH(S^3(R))\) is a rectangular prism whose dimension is given by the number of bands.
The length of its sides is determined by the number of twists in the different bands.

 The number of bands is
 the number of  terms in the unique continued fraction expansion of \(p/q\) all of whose terms are even, and the number of twists in a given band is half the corresponding coefficient in the continued fraction expansion \cite{HT85}. For example, the knot \(K(56,15)\) has continued fraction expansion
$$\frac{56}{15} = 4 - \cfrac{1}{4 - \frac{1}{4}}.$$
Any Seifert surface  \(R\) of \(K\) is a Murasugi sum of three twisted bands, each with two full twists. \(SFH(S^3(R)) \cong \Z^8\) is supported at the vertices of a \(2 \times 2 \times 2\) cube.

Whenever two of the bands have more than one twist, \(K\) will have more than one Seifert surface. The calculation above shows that these surfaces can not be distinguished by their sutured Floer polytope alone. In contrast, we have the following.

\begin{thm*} \cite{HJS08}  %\label{thm:hjs}
There exist two minimal genus Seifert surfaces $R_1$ and $R_2$ for \(K(17,4)\) which can be distinguished by combining sutured Floer homology with the Seifert form. More precisely, there does not exist an orientation--preserving diffeomorphism between the pairs $(S^3,R_1)$ and $(S^3,R_2)$.
\end{thm*}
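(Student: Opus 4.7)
The plan is to exhibit two specific minimal genus Seifert surfaces $R_1, R_2$ of $K(17,4)$ and then show that no orientation-preserving diffeomorphism of $S^3$ can take $(S^3,R_1)$ to $(S^3,R_2)$ by combining the sutured Floer polytope data with the Seifert form as a finer invariant of the pair. First I would invoke the Hatcher--Thurston classification \cite{HT85} to enumerate the isotopy classes of minimal genus Seifert surfaces of $K(17,4)$: each arises, up to isotopy, as a Murasugi sum of twisted bands whose twisting data is encoded by an admissible even continued fraction expansion of $17/4$. I would choose $R_1$ and $R_2$ to be two surfaces in that list whose band-data is combinatorially different but whose multisets of twisting parameters coincide.

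Next, I would compute $SFH(S^3(R_i))$ using the approach indicated in Example~\ref{ex:2bridge}: by the Murasugi sum formula \cite[Cor.~8.8]{Ju08} (together with Example~\ref{ex:4}), each $SFH(S^3(R_i))$ is the tensor product of sutured Floer homologies of the summand twisted-band complements, so its support is a rectangular prism in $\spinc(S^3(R_i))$ whose side lengths record the twists of the bands. Because $R_1$ and $R_2$ share the same multiset of twisting parameters, the two prisms are isomorphic as $\Z/2$-graded, $H_1$-equivariant objects; in particular the bare invariant $SFH$ cannot, on its own, distinguish $R_1$ from $R_2$.

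The distinguishing step is then to compute the Seifert forms $V_1, V_2$ of $R_1, R_2$ on $H_1(R_i;\Z)$ via the Murasugi decomposition (the form splits as a block-sum over the bands, with off-diagonal blocks governed by the plumbing pattern), and to show that no pair $(\Phi,\Psi)$ consisting of a chain-level isomorphism $\Phi:H_1(R_1)\to H_1(R_2)$ carrying $V_1$ to $V_2$ together with an $H_1$-equivariant isomorphism $\Psi$ of SFH polytopes that is compatible with $\Phi$ can exist. Since any orientation-preserving diffeomorphism of pairs would induce such a compatible pair $(\Phi,\Psi)$, this rules out the existence of any diffeomorphism $(S^3,R_1)\to(S^3,R_2)$.

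The main obstacle is the final compatibility check. The SFH polytope has very few symmetries beyond translations and the canonical $\Z/2$ reflection coming from Proposition~\ref{prop:duality}, so the candidates for $\Psi$ form a small and explicit finite set; the hard part is then the finite but delicate algebraic verification that, for each admissible $\Psi$, the induced constraints on $\Phi$ are incompatible with the equation $\Phi^{\top} V_2 \Phi = V_1$ over $\Z$. This is essentially the content of \cite{HJS08}, where the relevant bilinear-form obstruction is extracted and computed explicitly for $K(17,4)$.
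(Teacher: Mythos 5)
The paper does not actually prove this theorem; it is imported verbatim from \cite{HJS08}, and the only original remark made is the one-line observation (via Corollary~\ref{cor:alt}) that, two-bridge knots being alternating, one may replace $SFH(S^3(R_i))$ everywhere by the torsion function $T_{S^3(R_i)}$. Your reconstruction is consistent in outline with what \cite{HJS08} does, but the core mechanism is left vague at exactly the point where the work happens. The phrase ``combining sutured Floer homology with the Seifert form'' is not an ad hoc compatibility check on a pair $(\Phi,\Psi)$: what binds the two invariants together is Alexander duality, which gives a canonical identification $H_1(S^3(R))\cong H^1(R)$, placing the support of $SFH(S^3(R))$ and the Seifert form $V$ on dual lattices. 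The invariant \cite{HJS08} extract is then the image of the $SFH$ polytope under $V$ (or, equivalently, the pair of bilinear forms $V$ and $V^\top$ together with the affine support set) up to the evident equivalence. Without naming this duality, there is no reason the maps $\Phi$ and $\Psi$ in your sketch should be related at all, so the ``finite but delicate algebraic verification'' you describe has no well-posed content. Relatedly, for $K(17,4)$ you should be explicit that the two surfaces are the two plumbings of a $(+2)$- and a $(-2)$-twisted band, that their Seifert matrices are transposes of one another (hence indistinguishable by the abstract form alone), and that the $SFH$ support is a $2\times 2$ square for both; only then is the necessity of the combined invariant visible. You are also correct that the paper's modest add-on is that, since the groups are torsion-free and rank $\le 1$ by Corollary~\ref{cor:alt}, the torsion $\tau(S^3(R))$ carries the same information and the same argument goes through with $\tau$ in place of $SFH$.
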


By Corollary~\ref{cor:alt},  the groups $SFH(S^3(R_i),\s)$ for $i=1,2$ and $\s\in \spinc(S^3(R_i))$ are determined by $T_{S^3(R_i)}(\s)$ since two-bridge knots are alternating. Hence it is  straightforward to modify the proof of the theorem  to show that the Seifert surfaces can also be distinguished by the torsion and the Seifert form.

\end{example}

\begin{figure}[ht] \begin{center}
\begin{tabular}{cc}
\includegraphics[trim = 0mm 228mm 121mm 5mm, clip,
width=6.6cm]{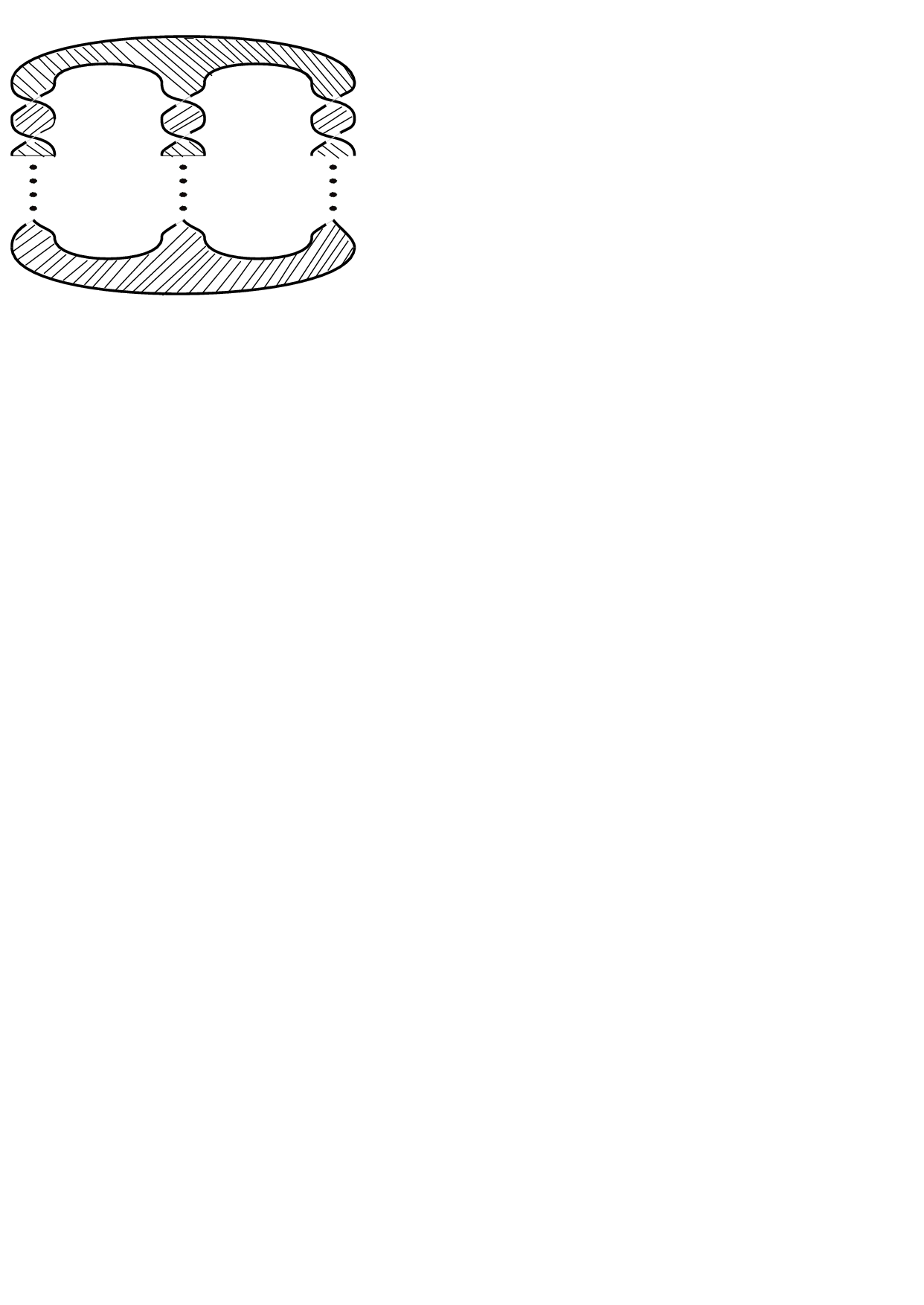} &
\includegraphics{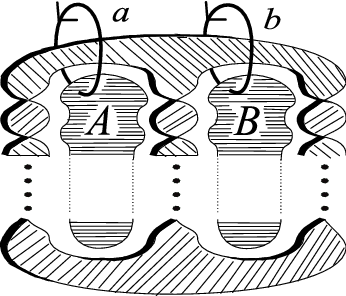}\\
(a) & (b)
\end{tabular}
 \caption{The pretzel knot $P(2r+1,2s+1,2t+1)$ with $2r+1, 2s+1,$ and $2t+1$ half twists.}
\label{fig:Pretzel}
\end{center}
 \end{figure}

%\begin{figure}[h] \begin{center}
%\includegraphics[trim = 0mm 228mm 81mm 5mm, clip, width=10cm]{pretzelabv2.eps}  %trim=l b r t
% \caption{The pretzel knot $P(2r+1,2s+1,2t+1)$ with canonical Seifert surface.}
%\label{fig:Pretzel}
%\end{center}
% \end{figure}

\begin{example}
%\footnote{[S] see pg 41 remark of the referee, I leave this to Jake.}

The pretzel knot \(P(2r+1,2s+1,2t+1)\) has an obvious Seifert surface \(R\), as shown in Figure~\ref{fig:Pretzel}(a).
%The complement \(M = S^3 \setminus N(\Sigma)\) is a handlebody of genus \(2\).
A natural pair
of compressing disks \(A\) and \(B\)  for the handlebody $M = S^3 \setminus N(R)$ is shown in Figure~\ref{fig:Pretzel}(b).
%\footnote{I now include both pictures, the picture on the right shows just $\S$, whereas the picture on the right is supposed to show %$S^3\sm N(\Sigma)$.}
Cutting \(M\) along these disks and using the Seifert--van Kampen theorem give an isomorphism between \(\pi_1(M)\) and the free group generated by \(a\) and \(b\).  If \(\alpha\) is a curve on \(\partial M\), we can read off the word it represents in \(\pi_1(M)\) by traversing \(\alpha\) and recording its intersections with \(\partial A\) and \(\partial B\).

Suppose   that \(R\) is oriented so that the region visible in the bottom of the  figure belongs to \(R_-(\gamma)\). Put \(p\) in this region, and let \(\alpha\) be a loop which runs from \(p\) up the left-hand strip, and back down via the middle strip. Similarly, let \(\beta\) be a loop which runs up the left-hand strip and back down the right, so that  \(\pi_1(R_-(\gamma),p)\) is generated by \(\alpha\) and \(\beta\). The reader can easily verify that
\begin{equation*}
\a  = a^{r}(a(b^{-1}a)^s), \quad \quad
\b  = a^rb^{t+1}.
\end{equation*}
Set $x = a^r,$ $y = a(b^{-1}a)^s,$ and $w = b^{t+1},$ then $\a = xy$ and $\b = xw.$ To compute the torsion, we evaluate
$$
\det
\begin{bmatrix}
 \partial \alpha/\partial a  & \partial \beta / \partial a \\ \partial  \alpha / \partial b &\partial  \beta/ \partial b
\end{bmatrix}
= \det
\begin{bmatrix}
d_a x + \phi(x) d_a y & d_a x \\
\phi(x) d_b y & \phi(x) d_b w
\end{bmatrix},
$$
where we have written \(d_ax\) for \(\phi(\partial x / \partial a)\), {\it etc.}
We find that
\begin{equation*}
\tau(S^3(R))  \sim d_a x \thinspace d_b w + \phi(x) d_ay \thinspace d_b w - d_b y \thinspace d_a x.
\end{equation*}
After evaluating the Fox derivatives and clearing fractions, we obtain
\begin{multline*}
(1-a)(1-b)(1-ab^{-1}) \tau(S^3(R))
= (1-a^r)(1-b^{t+1})(1-ab^{-1})     \\  \phantom{XXXXXXXXXXXXXXXXXX} + a^r(1-(ab^{-1})^{s+1})(1-b^{t+1})(1-a) \\ + ab^{-1}(1-(b^{-1}a)^s)(1-a^r)(1-b) .
\end{multline*}
Expanding the right-hand side, we get
\begin{multline*}
-a^{r+s+1}b^{-s}(1-ab^{-1})  + a^{r+s+1}b^{t-s}(1-a) - a^{r+1}b^t(1-b) \\
-b^{t+1}(1-ab^{-1}) + (1-a) - a^{s+1}b^{-s-1}(1-b).
\end{multline*}
To compute the torsion, we must divide this expression by \((1-a)(1-b)(1-ab^{-1})\).
If \(r,s,\) and \(t\) are all positive, we find that the torsion
 is supported on a hexagon, as illustrated in Figure~\ref{fig:+Pretzel}. (The easiest way to see this is to start with the sum of all monomials corresponding to vertices in the hexagon, and then multiply by
 \((1-a)(1-b)(1-ab^{-1})\). Regardless of \(r,s\) and \(t\), the product will have 6 pairs of terms, each supported near a vertex of a hexagon. These are the pairs appearing in the equation above.)

 With respect to the natural basis given by \(a\) and \(b\), the sides
 of the hexagon have slope \(0,-1\) and \(\infty\). Parallel sides
 have the same length, and the sides are of length \(r+1,t+1\), and
 \(s+1\). The coefficient of the torsion at each lattice point in the
 hexagon is \(1\), and the sutured Floer homology consists of a single
 copy of \(\Z\) at each lattice point since the pretzel knot is
 alternating.
\end{example}

 \begin{figure}[ht] \begin{center}
\includegraphics{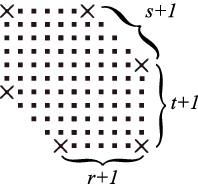}  %trim=l b r t
 \caption{Support of $\tau(S^3(R))$ for $r,s,t>0$.}
\label{fig:+Pretzel}
\end{center}
 \end{figure}

The case where \(r,t>0\) and \( s<0\) can be treated similarly. We distinguish two subcases, depending on whether \(|2s+1|\) is less than \(\min (2r+1,2t+1)\), or greater. In the first, the coefficients of \(\tau(S^3(R))\) take on both positive and negative signs. The torsion is supported on a ``bowtie'', as shown in Figure~\ref{fig:-Pretzela}. The coefficient of the torsion is \(1\) at each lattice point in the rectangle, and \(-1\) at each lattice point in the two triangles. In the second case, the support is a nonconvex hexagon, as illustrated  in  Figure~\ref{fig:-Pretzelb}. The coefficient of the torsion is \(-1\)
%\footnote{That should be $-1$, isn't it?{\bf We haven't fixed a homology orientation, so everythings up to an overall sign anyway. But for consistency, I agree, \(-1\) is better.}}
at each lattice point in the hexagon. To determine the sutured Floer homology, we compare with the calculation of \(\widehat{HFK}(P(2r+1,2s+1,2t+1))\) given in \cite{OS04c}. In both cases,  the top group  in the knot Floer homology is torsion free and its rank is equal to the number of vertices in the support of $\tau(S^3(R)).$ It follows that $SFH(S^3(R))$ has rank one at each vertex in the support and is trivial elsewhere.

\begin{figure}[ht] \begin{center}
\includegraphics{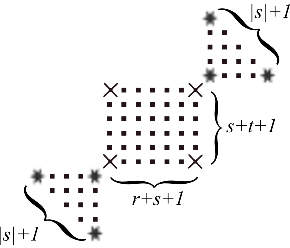}  %trim=l b r t
\caption{Support of $\tau(S^3(R))$ for $r,t>0$ and $-\min(r,t)\leq s+1 \leq 0$.}
\label{fig:-Pretzela}
\end{center}
 \end{figure}

 \begin{figure}[ht] \begin{center}
\includegraphics{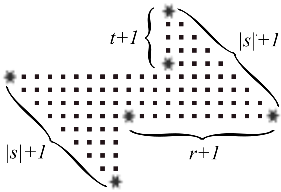}  %trim=l b r t
\caption{Support of $\tau(S^3(R))$ for $r,s>0$ and $s+1\leq -\min(r,t)$.}
% \caption{Schematic picture for $\widetilde{W}$.}
\label{fig:-Pretzelb}
\end{center}
 \end{figure}

\begin{example} \label{ex:nonsymmetric} The three-component pretzel link \(P(2r,2s,2t)\) has a Seifert surface \(R\) similar to that shown in Figure~\ref{fig:Pretzel}. We compute \(\tau(S^3(R))\)  as in the previous example. When \(r,s,t\) are all positive, the torsion is again supported on a hexagon with sides of slopes \(0,-1\) and \(\infty\). (In this case, the relevant words in \(\pi_1(M)\) are  \(\a = a^r(b^{-1}a)^s\) and \(\b = a^rb^t\).) However, in this case parallel sides of the hexagon do not have the same lengths. Instead, the sides have lengths \(r+1,t,s+1,r,t+1,s\) as we go around the hexagon. This gives a simple family of examples for which the torsion does not exhibit any symmetry. The phenomenon is already evident for \(P(2,2,2)\). In this case, the hexagon degenerates to a triangle supported at three vertices in the plane. With respect to the standard basis \(a,b\), these vertices can be taken to be \((0,0)\), \((1,0)\) and \((1,1)\). Note that for \(r,s,t>0\),  \(P(2r,2s,2t)\) is an alternating link, and hence \(SFH(S^3(R))\) has the same support as the torsion. Thus the sutured Floer polytope is asymmetric as well.
%\footnote{To be absolutely anal: this example a priori does not show that the support of SFH is not symmetric.}
\end{example}

\begin{example} Seifert surfaces of small knots.
%\footnote{So do these knots have unique Seifert surfaces? If yes, we should mention it and give a reference. {\bf Fixed.}}
Let \(K\) be a knot in \(S^3\) and suppose \(R\) is a Seifert surface for \(K\).  Among knots with nine crossings or fewer, most are either two-bridge or fibred. (See {\it e.g.} the tables in \cite{Ka96} or \cite{CL09}.) If \(K\) is fibred, \(SFH(S^3(R)) \cong \Z \); if it is two-bridge, \(SFH(S^3(R))\) was determined in Example~\ref{ex:2bridge}.  The remaining knots all have a unique Seifert surface \(R\) by \cite{Ka05}. We  briefly describe the groups
\(SFH( S^3(R))\). They fall into two broad classes, as well as a few knots with more interesting homology.

 \begin{itemize}
 \item The knots \(9_{16},9_{37},\) and \(9_{46}\)  have Seifert surfaces that decompose as Murasugi sums of a single twisted band with two full twists together with some other twisted bands with one full twist. Thus \(SFH(S^3(R)) \cong SFH(A_2)\), where \(A_2\) is an unknotted annulus with two full twists.
 \item The knots \(8_{15},9_{25},9_{39},9_{41}\), and \(9_{49}\)  have Seifert surfaces which are Murasugi sums of once-twisted bands and a single copy of \(R(2,2,2)\) ---  the Seifert surface of the \((2,2,2)\) pretzel link. Thus
  \(SFH(S^3(R) \cong SFH(S^3(R_{2,2,2}))\) is supported on a triangle.
   \item The knot \(9_{35}\) is \(P(3,3,3)\). Its sutured Floer polytope is a hexagon with sides of length \(2\).
 \item The knot \(9_{38}\) is the only knot with fewer than 10 crossings whose sutured Floer polytope is \(3\)-dimensional. The polytope is contained in a \(2\times2\times2\) cube, with \(\Z\) summands at five of the vertices: \((1,0,0)\), \((0,1,0)\), \((0,0,1),(1,1,0)\), and \((1,0,1)\). (We computed this directly using Proposition~\ref{prop:AMatrix}.)
 \end{itemize}
 For all of these knots, the top group in knot Floer homology is torsion free and supported in a single homological grading, so \(SFH\) is determined by the torsion.
% \footnote{OK, just to keep you up at night: What's an example of a Seifert surface
 %where $\tau\ne SFH$? \\
 %{\bf Whitehead doubles are the obvious examples of this phenomenon; perhaps we should mention this, but I'm not quite sure where.}
 %Perhaps we should stress the fact that some of the above Pretzel knots have Alexander polynomial one, so whereas the Alexander polynomial of the knot is useless, the torsion of the Seifert surface is as good as anybody else. Quite interesting I think.}
\end{example}

\begin{example}
The four-strand pretzel link \(L=P(n,-n,n,-n)\) has a genus one Seifert surface analogous to the one shown in Figure~\ref{fig:Pretzel}. The multivariable Alexander polynomial of this link is \(0\), but a calculation similar to the one in Example 4
 %\footnote{[S] example 8.7 remark of the referee, I leave this to Jake.}
 shows that the torsion polytope is a ``pinwheel'' which is a disjoint union of four square pyramids, each with side length \(n\). It follows that the rank of \(HFK(L)\) in the top Alexander grading is at least
$$ 4 \sum _{k=1}^n k^2 = \frac{2n(n+1)(2n+1)}3. $$
\end{example}

\begin{figure}[ht]
\begin{center}
\includegraphics{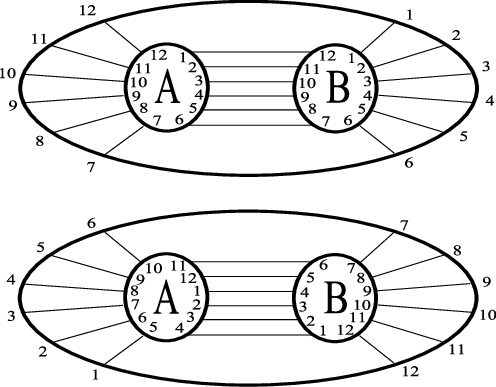}
\caption{Sutured genus two handlebody with no disk decomposition.}
\label{Fig:NoDisk}
\end{center}
\end{figure}

\begin{example} \label{ex:disk}
We conclude by using the torsion to give an example of a phenomenon first observed by Goda \cite{Go94}. Namely, there exist sutured manifolds whose total space is a genus two handlebody, but which are not disk-decomposable.
%Consider the genus two surface obtained by gluing  two pairs of pants  as illustrated in Figure~\ref{Fig:NoDisk}.
Consider the  two pairs of pants   illustrated in Figure~\ref{Fig:NoDisk}.
%\footnote{[S] I don't know why it doesn't give the right number, perhaps we have to do it by hand.
%[A]~Fixed: In subsections need to put label after caption, otherwise it writes the number of the section. Found solution online, but it's weird.}
We consider the genus two surface obtained by gluing the two pairs of paints along
the corresponding boundary curves  by identifying the corresponding numbers $1,2,\dots,12$.
Let \(M\) be the handlebody in which the curves labeled \(A\) and \(B\) bound compressing disks, and let \(s(\g)\) be the multi-curve shown in the figure. Then we easily compute
%\footnote{Shall I add in the words in \(\pi_1\)? [J]\\ I think's it OK (S) }
\begin{equation*}
\tau(M,\g) \sim 2a - 3 + 2a^{-1}.
\end{equation*}
\begin{proposition}
\((M,\g)\) is not disk-decomposable.
\end{proposition}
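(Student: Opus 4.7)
I would argue by contradiction, using the computed torsion $\tau(M,\gamma) \sim 2a - 3 + 2a^{-1}$ to rule out disk decomposability. Suppose $(M,\gamma)$ admits a disk decomposition, i.e., a sequence
\[
(M,\gamma) = (M_0,\gamma_0) \rightsquigarrow^{D_0} (M_1,\gamma_1) \rightsquigarrow^{D_1} \cdots \rightsquigarrow^{D_{n-1}} (M_n,\gamma_n)
\]
along properly embedded disks $D_i$, terminating in a product sutured manifold. The goal is to establish the bound $|T_{(M,\gamma)}(\s)| \le 1$ for every $\s \in \spinc(M,\gamma)$, which is directly violated by the coefficient $-3$ in $\tau(M,\gamma)$.

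The bound is proved by induction on the length $n$ of the hierarchy. The base case $n=0$ is immediate, since a product sutured manifold has $\tau = \pm h$ for some $h \in H_1(M)$ and so $|T(\s)| \in \{0,1\}$. For the inductive step, Juh\'asz's surface decomposition theorem \cite[Theorem~1.3]{Ju08} applied to $D_0$ gives
\[
SFH(M_1,\gamma_1) \cong \bigoplus_{\s \in O_{D_0}} SFH(M,\gamma,\s),
\]
and, taking Euler characteristics with respect to compatible homology orientations, $T_{(M_1,\gamma_1)}(\s') = \pm T_{(M,\gamma)}(\s)$ whenever $\s' \in \spinc(M_1,\gamma_1)$ corresponds to $\s \in O_{D_0}$. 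Since $(M_1,\gamma_1)$ is itself disk decomposable of shorter length, the inductive hypothesis gives $|T_{(M_1,\gamma_1)}| \le 1$, hence $|T_{(M,\gamma)}(\s)| \le 1$ for $\s \in O_{D_0}$. Running the same argument with the oppositely oriented disk $-D_0$ gives the bound on $O_{-D_0}$. To cover the remaining $\spinc$ structures, I would invoke Theorem~\ref{thm:Tnorm1}: the support $S(M,\gamma)$ is confined to a slab of width $x^s([D_0])$ in the direction of $[D_0]$, and its two extremal layers are precisely $O_{D_0} \cap S(M,\gamma)$ and $O_{-D_0} \cap S(M,\gamma)$; any $\s$ with nonzero $T_{(M,\gamma)}(\s)$ must therefore lie in $O_{D_0} \cup O_{-D_0}$ (and the only place one needs additional care is when the slab has more than two layers).

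\textbf{Main obstacle.} The delicate step is the final geometric-to-combinatorial translation: verifying that $S(M,\gamma) \subset O_{D_0} \cup O_{-D_0}$ for \emph{every} candidate first disk $D_0$, in particular handling the possibility that $S(M,\gamma)$ contains interior ``layers'' not captured by either extremal face. One way around this is to argue that for the specific $\tau(M,\gamma) = 2a-3+2a^{-1}$, the hypothetical central $\spinc$ structure with $T=-3$ cannot lie in any $O_{D_0}$ coming from a valid decomposition, since it is strictly interior to the support polytope in every admissible direction; combined with the inductive bound $|T|\le 1$ on the extremal layers, this still forces a contradiction. Once this is in place the contradiction with $|T_{(M,\gamma)}(\s_0)|=3$ is immediate, and we conclude that $(M,\gamma)$ is not disk decomposable.
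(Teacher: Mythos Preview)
Your approach differs from the paper's and has a gap that your workaround does not repair.

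The paper argues in one step, with no induction. After cutting along any first disk $D$, the total space $M'$ is either a solid torus ($\partial D$ non-separating) or two solid tori ($\partial D$ separating). In the first case the decomposition theorem forces $\tau(M',\gamma')$ to be the restriction of $2a-3+2a^{-1}$ to the $[D]$-extremal $\spinc$ structures, hence $0$, $\sim 2$, or $\sim 2t-3+2t^{-1}$; Example~\ref{ex:4} shows none of these is the torsion of a taut sutured solid torus. In the second case $\tau(M,\gamma)$ would have to factor as a product of two solid-torus torsions, which it visibly does not.

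Your induction, by contrast, does not close. The inductive step only yields $|T_{(M,\gamma)}(\s)|\le 1$ for $\s\in O_{D_0}$ (and perhaps $O_{-D_0}$), never for interior layers; so the full statement ``disk-decomposable $\Rightarrow |T|\le 1$ everywhere'' is not proved at length $n$ from its truth at length $n-1$, and hence is never established at any length beyond the trivial base case. You therefore cannot invoke it as the inductive hypothesis for $(M_1,\gamma_1)$. Your workaround is also off: the support of $\tau$ lies on the line $b=0$, so for any $D_0$ with $[D_0]$ in the $b$-direction the central point \emph{does} lie in $O_{D_0}$ (it is not ``strictly interior in every admissible direction''); and in the genuinely interior case (e.g.\ $[D_0]=[A]$) you give no mechanism by which the bound on the outer layers constrains the coefficient $-3$ sitting in the middle.
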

\begin{proof}
Suppose we decompose  \((M, \g)\) along a disk \(D\) to obtain a sutured manifold \((M', \g')\). If \(\partial D\) is a non-separating curve in \(\partial M\), then   \(M'\) is homeomorphic to \(S^1 \times D^2\). If \((M',\g')\) were taut, then by \cite{Ju10}, \(SFH(M',\g')\) would be isomorphic to the restriction of \(SFH(M,\g)\) to those \(\spinc\) structures which are extremal with respect to evaluation on \([D]\) . It follows that either \(\tau(M',\g') = 0\), \(\tau(M',\g') \sim  2\), or \(\tau(M',\g') \sim 2t-3+2t^{-1}\). Comparing with Example 2, we see that none of these are the torsion of a taut sutured manifold whose total space is the solid torus.

Similarly, if \(\partial D\) is a separating curve, then \(M'\) is homeomorphic to the disjoint union of two solid tori \(M_1\) and \(M_2\), and $$SFH(M,\g) \cong SFH(M', \g') \cong SFH(M_1,\g_1) \otimes SFH(M_2,\g_2).$$ Again, comparing \(\tau(M,\g)\) with Example~\ref{ex:4} shows that this is not possible.
\end{proof}
\end{example}


\begin{thebibliography}{10}
\bibitem[BP01]{BP01}
R. Benedetti and C. Petronio, {\em Reidemeister-Turaev torsion of 3-dimensional Euler structures with simple boundary tangency and pseudo-Legendrian knots}, Manuscripta Math. \textbf{106} (2001), no. 1, 13--61.
\bibitem[CC06]{CC06}
J. Cantwell and L. Conlon, {\em The sutured Thurston norm}, math.GT/060653.
\bibitem[Ce70]{Ce70}
J. Cerf, {\em La stratification naturelle des espaces de fonctions diff\'erentiables r\'eelles et le th\'eor$\grave{e}$me de la pseudo-isotopie}, Publ. Math. I.H.\'E.S. \textbf{39} (1970), 5--173.
\bibitem[CL09]{CL09}
J.C. Cha and C. Livingston, {\em KnotInfo: Table of Knot Invariants}, \\{\tt http://www.indiana.edu/  \(\sim\)knotinfo}, 2009.
\bibitem[CF77]{CF77}
R. Crowell and R. Fox, {\em Introduction to knot theory}, reprint of the 1963 original. Graduate
Texts in Mathematics, No. 57, Springer-Verlag, New York-Heidelberg, 1977.
\bibitem[Ef05]{Ef05}
E. Eftekhary, {\em Longitude Floer homology and the Whitehead double}, Algebr. Geom. Topol. \textbf{5}
(2005), 1389--1418.
\bibitem[Ga83]{Ga83}
D. Gabai, {\em Foliations and the topology of 3-manifolds}, J. Differential Geometry \textbf{18} (1983),
445-–503.
\bibitem[Ga86]{Ga86}
D. Gabai, {\em Detecting fibred links in $S^3$}, Comment. Math. Helv. \textbf{61} (1986), no. 4, 519–-555.
\bibitem[Go94]{Go94}
H. Goda, {\em A construction of taut sutured handlebodies which are not disk decomposable},  Kobe J. Math. \textbf{11} (1994),  no. 1, 107--116.
\bibitem[GS08]{GS08}
H. Goda and T. Sakasai, {\em Homology cylinders in knot theory}, math.GT/0807.4034.
\bibitem[HT72]{HT72}
S. Halperin and D. Toledo, {\em Stiefel-Whitney homology classes}, Ann. of Math \textbf{96} (1972), 511-–525.
\bibitem[HT85]{HT85}
A. Hatcher and W. Thurston, {\em Incompressible surfaces in $2$-bridge knot complements},  Invent. Math. \textbf{79}  (1985),  no. 2, 225--246.
\bibitem[He05]{He05}
M. Hedden, {\em On Knot Floer Homology and Cabling}, Algebraic and Geometric
Topology \textbf{5} (2005), 1197--1222.
\bibitem[He07]{He07}
M. Hedden, {\em Knot Floer homology of Whitehead doubles}, Geometry and Topology \textbf{11} (2007), 2277--2338.
\bibitem[HJS08]{HJS08}
M. Hedden, A. Juh\'asz, and S. Sarkar, {\em On sutured Floer homology and the equivalence of Seifert surfaces}, math.GT/0811.0178.
\bibitem[HS97]{HS97}
P. J.  Hilton and U. Stammbach, {\em A course in homological algebra},
Graduate Texts in Mathematics. 4. (1997)
\bibitem[Ho82]{Ho82} J. Howie, {\em On locally indicable groups}, Math. Z. \textbf{180} (1982), no. 4, 445--461.
\bibitem[HS83]{HS83} J. Howie and H. R. Schneebeli, {\em Homological and topological properties of locally indicable groups}, Manuscripta Math. \textbf{44} (1983), no. 3, 71 - 93.
\bibitem[Ju06]{Ju06}
A. Juh\'asz, {\em Holomorphic discs and sutured manifolds},  Algebraic \& Geometric Topology \textbf{6} (2006), 1429--1457.
\bibitem[Ju08]{Ju08}
A. Juh\'asz, {\em Floer homology and surface decompositions}, Geometry and Topology \textbf{12:1} (2008), 299--350.
\bibitem[Ju10]{Ju10}
A. Juh\'asz, {\em The sutured Floer homology polytope}, Geometry and Topology \textbf{14} (2010), 1303--1354.
%\footnote{[S] Andras, do you want to update the references to your papers? [A] Done. Also changed numbers of theorems in [Ju10].}
\bibitem[Ka05]{Ka05}
O. Kakimizu, {\em Classification of the incompressible spanning surfaces for prime knots of 10 or less crossings}, Hiroshima Math. J. \textbf{35} (2005), no. 1, 47--92.
\bibitem[Ka96]{Ka96}
A. Kawauchi, {\em A survey of knot theory}, Birkh\"auser Verlag, Basel, 1996.
\bibitem[LOT08]{LOT08}
R. Lipshitz, P. Ozsv\'ath, and D. Thurston, {\em Bordered Heegaard Floer homology: Invariance and pairing}, arXiv:0810:0687.
%\bibitem[KM08]{KM08}
%P. Kronheimer, T. Mrowka, {\em Knots, sutures and excision}, Preprint (2008).
\bibitem[MO08]{MO08}
C. Manolescu and P. Ozsv\'ath,
{\em
On the Khovanov and knot Floer homologies of quasi-alternating links},
 Proceedings of the 14th G\"okova geometry-topology conference, International Press. 60-81 (2008)
\bibitem[Mc02]{Mc02}
C. T. McMullen, {\em The Alexander polynomial of a 3--manifold and the Thurston norm on
cohomology}, Ann. Sci. Ecole Norm. Sup. (4) \textbf{35} (2002), no. 2, 153--171.
\bibitem[Mi66]{Mi66}
J. Milnor, {\em Whitehead torsion},
Bull. Am. Math. Soc. 72, 358-426 (1966).
\bibitem[Ni24]{Ni24}
J. Nielsen, {\em Die Isomorphismengruppe der freien Gruppen}, Math. Ann. \textbf{91} (1924), 169--209.
\bibitem[OS03]{OS03}
P. Ozsv\'ath and Z. Szab\'o, {\em  Heegaard Floer homology and alternating knots},  Geom. Topol. \textbf{7}  (2003), 225--254.
\bibitem[OS04a]{OS04a}
P. Ozsv\'ath and Z. Szab\'o, {\em Holomorphic disks and topological invariants for closed 3-manifolds}, Annals of Mathematics \textbf{159} (2004), no. 3, 1027--1158.
\bibitem[OS04b]{OS04b}
P. Ozsv\'ath and Z. Szab\'o, {\em Holomorphic disks and 3-manifold invariants: properties and applications}, Annals of Mathematics \textbf{159} (2004) no. 3, 1159--1245.
\bibitem[OS04c]{OS04c}
P. Ozsv\'ath and Z. Szab\'o, {\em Knot Floer homology, genus bounds and mutation}, Topology Appl. \textbf{141} (2004), 59-85.
\bibitem[OS05]{OS05}
P. Ozsv\'ath and Z. Szab\'o, {\em On the Heegaard Floer homology of branched double-covers}, Adv. Math.
194 (2005), 1-33.
\bibitem[OS08a]{OS08a}
P. Ozsv\'ath and Z. Szab\'o, {\em Holomorphic disks, link invariants, and the multivariable Alexander polynomial}, Algebr. Geom. Topol. \textbf{8} (2008), no. 2, 615--692.
\bibitem[OS08b]{OS08b}
P. Ozsv\'ath and Z. Szab\'o, {\em Link Floer homology and the Thurston norm}, J. Amer. Math. Soc. \textbf{21} (2008), no. 3, 671--709.
\bibitem[Ra03]{Ra03}
J. Rasmussen, {\em Floer homology and knot complements}, Harvard University thesis (2003), math.GT/0306378.
\bibitem[Ra07]{Ra07}
J. Rasmussen, {\em Lens space surgeries and L-space homology spheres},  arXiv:0710.2531.
\bibitem[Pe07]{Pe07}
T. Perutz, {\em Hamiltonian handleslides for Heegaard Floer homology},
Proceedings of the 17th G\"okova Geometry-Topology Conference 2007, page 15-35 (2007)
\bibitem[Sc89]{Sc89}
M. Scharlemann, {\em Sutured manifolds and generalized Thurston norms}, J. Differential Geom. \textbf{29} (1989), no. 3, 557--614.
\bibitem[Th86]{Th86}
W. P. Thurston, {\em A norm for the homology of 3--manifolds},
Mem. Amer. Math. Soc. \textbf{59} (1986), no. 339, i--vi and
99--130.
\bibitem[Tu90]{Tu90}
V. Turaev, {\em Euler structures, nonsingular vector fields, and Reidemeister-type torsions},  Math. USSR-Izv. \textbf{34}  (1990),  no. 3, 627--662.
\bibitem[Tu97]{Tu97} V. Turaev, {\em Torsion invariants of ${\rm Spin}\sp c$-structures on $3$-manifolds},  Math. Res. Lett. \textbf{4}  (1997),  no. 5, 679--695.
\bibitem[Tu98]{Tu98} V. Turaev, {\em
A combinatorial formulation for the Seiberg-Witten invariants of $3$-manifolds},
Math. Res. Lett. \textbf{5} (1998), no. 5, 583--598.
\bibitem[Tu01]{Tu01} V. Turaev, {\em Introduction to combinatorial torsions}, Birkh\"auser Verlag, Basel, 2001.
\bibitem[Tu02]{Tu02} V. Turaev, {\em Torsions of $3$-dimensional manifolds}, Progress in Mathematics, 208. Birkh\"auser Verlag, Basel, 2002.
\bibitem[Wa78]{Wa78} F. Waldhausen, {\em Algebraic $K$-theory of generalized free products. I, II.},  Ann. of Math. (2)  \textbf{108}  (1978), no. 1, 135--204.

\end{thebibliography}
\end{document}